\definecolor{newcolor}{rgb}{.8,.349,.1}
\newcommand{\Rmnum}[1]{\uppercase\expandafter{\romannumeral #1}}
\newtheorem{theorem}{Theorem}[section]
\newtheorem{example}{Example}[section]
\theoremstyle{definition}
\theoremstyle{remark}
\newtheorem{remark}{Remark}[]
\newcommand\bx{\bm{x}}
\newcommand\bU{\bm{U}}
\newcommand\bF{\bm{F}}
\newcommand\bbR{\mathbb{R}}
\newcommand\pd[2]{\dfrac{\partial {#1}}{\partial {#2}}}
\newcommand\mean[1]{\{\!\{ #1 \}\!\}}
\newcommand\abs[1]{\lvert #1 \rvert}
\def\jump#1{\llbracket #1 \rrbracket }
\renewcommand\baselinestretch{1.26}
\journal{Journal of Computational Physics}
\begin{document}
	
	\verso{Zhihao Zhang, Huazhong Tang, Kailiang Wu}

	\begin{frontmatter}
		
		\title{{\bf High-order accurate structure-preserving finite volume schemes\\ on adaptive moving meshes for shallow water equations: well-balancedness and positivity}\tnoteref{tnote1}}
		\author[1]{Zhihao {Zhang}}
		\ead{zhihaozhang@pku.edu.cn}
		\author[2,1]{Huazhong {Tang}}
		\ead{hztang@math.pku.edu.cn}
		\author[3,4]{Kailiang {Wu}\corref{cor1}}
		\cortext[cor1]{Corresponding author.}
		\ead{wukl@sustech.edu.cn}
		
		\address[1]{Center for Applied Physics and Technology, HEDPS and LMAM, School of Mathematical Sciences, Peking
			University, Beijing, 100871, P.R. China}
		\address[2]{Nanchang Hangkong University, Nanchang, 330000, Jiangxi Province, P.R. China}
		\address[3]{Department of Mathematics \& Shenzhen International Center for Mathematics, Southern University of Science and
			Technology, Shenzhen, Guangdong, 518055, P.R. China}
		\address[4]{Guangdong Provincial Key Laboratory of Computational Science and Material Design, Shenzhen 518055, China}
		

		\begin{abstract}
			This paper develops high-order accurate, well-balanced (WB), and positivity-preserving (PP) finite volume schemes for shallow water equations on adaptive moving structured meshes. The mesh movement poses new challenges in maintaining the WB property, which not only depends on the balance between flux gradients and source terms but is also affected by the mesh movement. To address these complexities, the WB property in curvilinear coordinates is decomposed into flux source balance and mesh movement balance. The flux source balance is achieved by suitable decomposition of the source terms, the numerical fluxes based on hydrostatic reconstruction, and appropriate discretization of the geometric conservation laws (GCLs). Concurrently, the mesh movement balance is maintained by integrating additional schemes to update the bottom topography during mesh adjustments. The proposed schemes are rigorously proven to maintain the WB property by using the discrete GCLs and these two balances. We provide rigorous analyses of the PP property under a sufficient condition enforced by a PP limiter. Due to the involvement of mesh metrics and movement, the analyses are nontrivial, while some standard techniques, such as splitting high-order schemes into convex combinations of formally first-order PP schemes, are not directly applicable. Various numerical examples validate the high-order accuracy, high efficiency, WB, and PP properties of the proposed schemes.

		\end{abstract}
		
		
		\begin{keyword}	
			{\bf Keywords:} 
			Shallow water equations;
			adaptive moving structured mesh;
			finite volume;
			well-balanced;
			positivity-preserving
			\vspace{-8mm}
		\end{keyword}
		
	\end{frontmatter}
	
	
	\renewcommand\baselinestretch{1.39}

	
	\section{Introduction}\label{section:Intro}
	
	Shallow water equations (SWEs) describe the behavior of a shallow layer of flowing water influenced by gravity and bottom topology. They are widely applied in research related to atmospheric, river, and coastal dynamics, as well as in studies of phenomena such as tsunamis. The two-dimensional (2D) SWEs are defined within a time-space physical domain $(t,\bx)\in\bbR^+\times\Omega_p$, where $\Omega_p\subset \bbR^2$ and $\bx=(x_1,x_2)$. With non-flat, time-independent bottom topography $b(\bx)$, the 2D SWEs can be expressed as the following hyperbolic balance laws
	\begin{equation}\label{eq:SWE}
		\left\{
		\begin{aligned}
			&~\pd{h}{t}+\pd{\left(hv_1\right)}{x_1}+\pd{\left(hv_2\right)}{x_2}=0, \\
			&\pd{\left(hv_1\right)}{t}+\pd{\left(h v_1^2+\frac{1}{2} g h^2\right)}{x_1}+\pd{\left(h v_1 v_2\right)}{x_2}=-g h \pd{b}{x_1}, \\
			&\pd{\left(hv_2\right)}{t}+\pd{\left(h v_1 v_2\right)}{x_1}+\pd{\left(h v_2^2+\frac{1}{2} g h^2\right)}{x_2}=-g h \pd{b}{x_2},
		\end{aligned}
		\right.
	\end{equation}
	where $h(t,\bx) \geqslant 0$ represents the water depth, $v_\ell(t,\bx)$ denotes the $x_\ell$-component of the fluid velocity for $\ell=1,2$, and $g$ is the gravitational constant. Numerical simulation plays a crucial role in the applications of SWEs and has been extensively studied in the literature; see, for example, \cite{AuDusse2004fast,Bermudez1994Upwind,Noelle2007High,Tang2004Solution,Xing2017Numerical,Xing2005High,Zhao2022Well}
	and the references therein.
	
	Due to the presence of the source terms, the SWEs \eqref{eq:SWE} admit nontrivial steady states, where these source terms balance the flux gradients,
	e.g., the lake at rest solution
	\begin{equation}\label{eq:SWEs_WB}
		h+b = C,~v_1=v_2 = 0.
	\end{equation}
	Many physical phenomena, such as waves on a lake or tsunamis in the deep ocean, can be viewed as small perturbations of these steady states. Consequently, capturing these small perturbations in numerical simulations is very important. This need has led to the development of well-balanced (WB) schemes, which are designed to preserve steady states discretely, thus facilitating the capture of perturbations even on coarse meshes. Standard numerical methods often lack WB properties, necessitating the use of fine meshes to accurately capture perturbations, which can result in prohibitively high computational costs.  The WB or the ``C-property'' was initially demonstrated in \cite{Bermudez1994Upwind}. Since then, various types of WB numerical methods for the SWEs have been explored, including finite difference methods \cite{Li2020High,Vukovic2002ENO,Xing2005High}, finite volume methods \cite{AuDusse2004fast,Li2012Hybrid,Noelle2006Well,Noelle2007High}, and discontinuous Galerkin (DG) methods \cite{Xing2014Exactly,Xing2013Positivity,Zhang2021High}. The readers are referred to the review article \cite{Kurganov2018Finite} and the references therein.
	
	Another challenge in the numerical solution of the SWEs is to preserve the non-negativity of the water depth. In fact, negative water depths can destroy the essential hyperbolicity of the equations, leading to serious numerical issues due to  numerical ill-posedness, such as nonphysical solutions, instability, or simulation crashes. Over the past few decades, there has been considerable interest in developing high-order positivity-preserving (PP) or bound-preserving schemes \cite{WuShu2023}. A general approach was proposed in \cite{Zhang2010On, Zhang2010OnM} for finite volume and DG schemes for scalar conservation laws and compressible Euler equations. The key step in this approach is to establish sufficient conditions to maintain the cell averages of the numerical solutions as PP and then apply a simple scaling limiter to enforce these conditions. This approach has been further extended to the SWEs, incorporating additional treatments to preserve both the PP and WB properties \cite{Xing2010Positivity,Xing2011High}. More recently, a universal geometric quasilinearization (GQL) approach was proposed in \cite{WuShu2023} for bound-preserving problems with nonlinear constraints. 
	
	The resolution of localized structures in solutions to hyperbolic balance laws, such as shock waves and sharp transitions, often requires fine meshes. In these situations, adaptive moving mesh methods are highly effective in enhancing both the efficiency and quality of numerical solutions. These methods have played a crucial role in solving partial differential equations; see, for example, \cite{Brackbill1993An, Brackbill1982Adaptive, CAO1999221, CENICEROS2001609, Davis1982, Miller1981, Ren2000An, Stockie2001, Tang2003Adaptive, Wang2004A, Winslow1967Numerical}. 
	For SWEs specifically, an adaptive moving mesh kinetic flux-vector splitting method was introduced in \cite{Tang2004Solution}. In recent years, there has been growing interest in developing structure-preserving schemes on moving meshes. For instance, high-order adaptive moving mesh DG schemes were proposed in \cite{Zhang2021High, Zhang2022AWell}, ensuring the preservation of non-negativity in water height and the lake at rest condition. In \cite{Zhang2023Structure}, a WB and PP finite volume weighted essentially non-oscillatory (WENO) arbitrary Lagrangian--Eulerian scheme was developed. More recently, high-order WB energy-stable adaptive moving mesh schemes for single- and multi-layer SWEs were proposed in \cite{Zhang2023High, Zhang2023HighML}.
	
	This paper aims to contribute to the development of high-order structure-preserving finite volume schemes for SWEs on adaptive moving structured meshes. These schemes maintain the WB property and ensure the non-negativity of water depth and preserve the positivity of the determinant of the Jacobi matrix of the coordinate transformation. The key elements of the proposed schemes include appropriate discretization of GCLs, numerical fluxes derived from hydrostatic reconstruction, special discretization of source terms to balance the flux gradient, and WENO reconstructions carefully designed in the computational domain. The primary efforts of this study include:
	
	\begin{itemize}
		\item  We establish the WB property of our schemes in time-dependent curvilinear coordinates for the SWEs. The mesh movement poses new challenges, as it impacts the WB property. To address this, we achieve a balance through flux-source integration that incorporates surface conservation laws (SCLs) and mesh movement, aligned with volume conservation law (VCL) and bottom topography adjustments. The source terms are decomposed following the approach proposed in \cite{Xing2006High}, numerical fluxes are designed through hydrostatic reconstruction, and the source term fluxes are meticulously crafted to balance these numerical fluxes. GCLs are discretized using linear interpolation coupled with PP limiter  to also preserve the free-stream property. Additional schemes for updating the bottom topography during mesh movement are integrated with the original SWEs to maintain mesh movement balance. The schemes are proven to be WB using the discrete GCLs and these two balances.
		\item  We provide a rigorous analysis of the PP property for our proposed schemes. Due to the involvement of mesh metrics and movement, this analysis is complex, and standard techniques, such as splitting high-order schemes into convex combinations of formally first-order PP schemes, are not directly applicable. We prove that the proposed WB schemes are PP under a sufficient condition, enforced by a PP limiter. The PP property here refers to the non-negativity of the water depth and the positivity of the determinant of the Jacobi matrix of the coordinate transformation.
		\item We implement the proposed schemes on adaptive moving structured meshes. Mesh adaptation is achieved through iterative solutions of the Euler--Lagrange equations from a mesh adaptation functional, using a monitor function to focus mesh points around critical features. Various numerical experiments are conducted to validate the accuracy, robustness, and both WB and PP properties of the proposed schemes. 
	\end{itemize}
	
	This paper is organized as follows. Section \ref{Sec:Reformulate_SWEs} introduces the SWEs with a technical decomposition of the source terms and the formulations in both Cartesian and curvilinear coordinates, along with a discussion of the WB property in curvilinear coordinates. Sections  \ref{Sec:Numerical_Schemes} and \ref{Sec:2D_case} 
	present the high-order WB PP finite volume schemes on moving meshes for the one-dimensional (1D) and 2D SWEs, respectively. Section \ref{Sec:MM} details the adaptive moving mesh strategy.
	Section \ref{sec:Numerical_Test} conducts several numerical experiments to verify the high-order accuracy, structure-preserving properties, shock-capturing ability, and high efficiency of our schemes. Section \ref{Sec:Conclusion} presents the conclusions of this paper. 
	
	\section{Formulations of SWEs with technical decomposition of the source terms }\label{Sec:Reformulate_SWEs}
	This section introduces the SWEs with the decomposition of the source terms in both Cartesian and curvilinear coordinates.
	\subsection{Formulation of SWEs in Cartesian coordinates}
	The 2D SWEs \eqref{eq:SWE} can be cast into the following form
	\begin{equation}\label{eq:2D_fix_BalanceLaw}
		\bm{U}_{t}+\nabla\cdot\bm{F}(\bm{U}) = \bm{S}(\bm{U},\bm{x}),   
	\end{equation}
	where $\bm{F}\left(\bm{U}\right) = \left(\bm{F}_1\left(\bm{U}\right), \bm{F}_2\left(\bm{U}\right)\right)$, and 
	\begin{align*}
		&\bm{U} = \left(h,hv_1,hv_2\right)^{\top}, \quad 
		\bm{F}_1 = \left(hv_1,hv_1^2+\frac{1}{2}gh^2,hv_1v_2\right)^{{\top
		}},~
		\\
		&\bm{F}_2 = \left(hv_2,hv_1v_2,hv_2^2+\frac{1}{2}gh^2\right)^{{\top
		}}, \quad 
		\bm{S} = \left(0,-ghb_{x_1},-ghb_{x_2}\right)^{\top},
	\end{align*}
	represent the conservative variables, physical fluxes, and source terms, respectively.
	The steady-state solutions described in \eqref{eq:SWEs_WB} and the source terms for the  SWEs in \eqref{eq:2D_fix_BalanceLaw} can be expressed using the formulations proposed in \cite{Xing2006High}. Specifically, the steady-state solutions \eqref{eq:SWEs_WB} satisfy
	\begin{equation}\label{eq:Steady_state} 
		\bm{U}+\bm{w}(\bm{x}
		)
		= C\bm{q}(\bm{x}),        
	\end{equation}
	where $\bm{w}=(b,0,0)^{\top}$ and $\bm{q}=(1,0,0)^{\top}$ are time-independent.
	The source terms $\bm{S}(\bm{U},\bm{x}) $ in \eqref{eq:2D_fix_BalanceLaw} can be decomposed as  
	\begin{equation}\label{eq:SourceTerm_Decom}
		\bm{S}(\bm{U},\bm{x}) = \sum_{m = 1}^{2} {s}_{m}(\bm{x},t)  \left(
		(\bm{r}_{m1})_{x_1}(\bm{x})+(\bm{r}_{m2})_{x_2}(\bm{x})
		\right) = \sum_{m = 1}^{2} {s}_{m}(\bm{x},t) \nabla_{\bm{x}} \cdot(\bm{r}_{m}(\bm{x})),
	\end{equation}
	where $s_{m}(\bm{x},t)$ and $\bm{r}_m(\bm{x}) = (\bm{r}_{m1}(\bm{x}),\bm{r}_{m2}(\bm{x}))$ are defined as 
	\begin{align*}
		&s_1 = -g(h+b),\quad \bm{r}_{11} = (0,b,0)^{\top}, \quad \bm{r}_{12} = (0,0,b)^{\top},\\
		&s_2 = \frac{1}{2}g,\quad \bm{r}_{21} = (0,b^2,0)^{\top}, \quad \bm{r}_{22} = (0,0,b^2)^{\top}.
	\end{align*}
	In physics, the water depth should be nonnegative for the SWEs \eqref{eq:2D_fix_BalanceLaw}, meaning that the conservative vector $\bm{U}$ should belong to the following admissible state set  
	\begin{equation*}
		\mathcal{G}= \left\{\bm{U}=(h,hv_1,hv_2)^\top\in\mathbf{R}^{3}:~ h \geqslant0\right\}.
	\end{equation*}
	The PP property of a numerical scheme for SWEs refers to maintaining the numerical solutions within the set $\mathcal{G}$.
	
	\subsection{Formulation of SWEs in curvilinear coordinates}\label{sec:22}
	This subsection will give the formulation of the SWEs in curvilinear coordinates. Denote $\Omega_c$ as the computational domain, and
	assume that there is a time-dependent, differentiable coordinate transformation $t = \tau, \bx = \bx(\tau, \bm{\xi})$ that maps from the computational domain $\Omega_c$ to the physical domain $\Omega_p$. 
	An adaptive mesh can be generated from the reference mesh in $\Omega_c$ based on such a transformation, as illustrated in \cite{DUAN2021109949,Duan2022High}.
	The determinant of the Jacobian matrix is
	$J = \det\left(\frac{\partial (t,\bm{x})}{\partial (\tau,\bm{\xi})}\right).$
	The mesh metrics induced by this transformation satisfy the geometric conservation laws (GCLs), which include the volume conservation law (VCL) and the surface conservation laws (SCLs):
	\begin{equation}\label{eq:GCL_2D}
		\begin{aligned}
			&\text { VCL: } \quad \frac{\partial J}{\partial \tau}+ \sum_{\ell=1}^{2}\frac{\partial}{\partial \xi_\ell}\left(J \frac{\partial \xi_\ell}{\partial t}\right)=0,\\
			&\text { SCLs: } \quad \sum_{\ell = 1}^2\frac{\partial}{\partial \xi_\ell}\left(J \frac{\partial \xi_\ell}{\partial x_k}\right)=0,~k = 1,2.
		\end{aligned}
	\end{equation}
	Based on the GCLs and the decomposition \eqref{eq:SourceTerm_Decom},  the 1D version of \eqref{eq:2D_fix_BalanceLaw} in curvilinear coordinates can be written as 
	\begin{equation}\label{eq:1D_Cur_eq}
		\pd{J\bm{U}}{t}+\pd{\bm{\mathcal{F}}}{\xi} = \sum_{m}s_m\pd{\bm{r}_m}{\xi},
	\end{equation}
	with 
	\begin{equation*}
		\bm{\mathcal{F}} = J\pd{\xi}{t}\bm{U}+\bm{F}.
	\end{equation*}
	For the 2D case, \eqref{eq:2D_fix_BalanceLaw} can be transformed into
	\begin{equation}\label{eq:2D_cur_eq}
		\pd{J\bm{U}}{t}
		+\sum_{\ell = 1}^{2}
		\pd{\bm{\mathcal{F}}_{\ell}}{\xi_{\ell}} 
		= \sum_{m = 1}^{2}
		s_m
		\sum_{\ell=1}^{2}\pd{\bm{\mathcal{R}}_{m\ell}}{\xi_{\ell}},
	\end{equation}
	and 
	\begin{equation*}
		\bm{\mathcal{F}}_{\ell} = J\pd{\xi_{\ell}}{t}\bm{U}+\sum_{k=1}^{2}J\pd{\xi_{\ell}}{x_{k}} \bm{F}_{k} = J\pd{\xi_{\ell}}{t}\bm{U} + L_\ell \langle \bm{n}_\ell, \bm{F} \rangle = J\pd{\xi_{\ell}}{t}\bm{U} + L_\ell \bm{F}_{n_\ell},
	\end{equation*}
	\begin{equation*}
		\bm{\mathcal{R}}_{m\ell} = \sum_{k=1}^{2}J\pd{\xi_{\ell}}{x_{k}}\bm{r}_{mk} 
		= 
		L_\ell 
		\langle \bm{n}_\ell, \bm{r}_{m} \rangle 
		= L_\ell (\bm{r}_m)_{n_\ell} ,
	\end{equation*}
	where
	\begin{equation}\label{eq:L_N_Def}
		L_\ell = 
		\sqrt{\sum_{k=1}^d \left(J \frac{\partial \xi_\ell}{\partial x_k}\right)^2},\quad
		\bm{n}_\ell = \frac{1}{L_\ell}\left(J \frac{\partial \xi_\ell}{\partial x_1},~J \frac{\partial \xi_\ell}{\partial x_2}\right)^{\top}, \quad  \langle \bm{n}_\ell,  \bm{F} \rangle  = \left(
		\bm{n}_{\ell}
		\right)_1 \bm{F}_1
		+
		\left(
		\bm{n}_{\ell}
		\right)_2 \bm{F}_2.
	\end{equation}
	Note that $J$ should be positive to ensure valid interval length (1D) or cell area (2D) in the physical domain.  
	 	The PP property of our adaptive moving mesh schemes aims to maintain $J>0$ and ensure the numerical solutions within $\mathcal{G}$.
	Developing WB schemes in curvilinear coordinates for \eqref{eq:2D_cur_eq} is difficult, as the mesh movement also affects the WB property. Motivated by the previous work \cite{Zhang2023High,Zhang2023HighML}, we separate the WB property in the time-dependent curvilinear coordinates into two parts.
	\begin{itemize}
		\item \textbf{Flux Source Balance:}  This refers to the balance between the flux gradients and the source terms. Specifically, in curvilinear coordinates, the following equilibrium is required at the continuous case upon reaching a steady state: \begin{equation*}
			\pd{}{\xi_{\ell}}
			\left(
			\sum_{k=1}^{2}J\pd{\xi_{\ell}}{x_{k}} \bm{F}_k
			\right) 
			= \sum_{m = 1}^{2}s_m\pd{}{\xi_{\ell}}
			\left(
			\sum_{k=1}^{2}J\pd{\xi_{\ell}}{x_{k}}\bm{r}_{mk}
			\right). 
		\end{equation*}
		
		\item \textbf{Mesh Movement Balance:} This balance arises from mesh movement. Specifically, when the system reaches a steady state, the following equations associated with mesh movement must be satisfied:
		\begin{equation*}
			\pd{J\bm{U}}{t}
			+\sum_{\ell = 1}^{2}
			\pd{}{\xi_{\ell}}
			\left(
			J\pd{\xi_{\ell}}{t} \bm{U}
			\right) = 0.
		\end{equation*}
		Furthermore, for the still water case of the SWEs,  the conditions $h+b = C$ and $v_1 = v_2 = 0$ must be maintained, so that $\bm{U}(\bm{x},t) + \bm{w}(\bm{x}) = C\bm{q}(\bm{x})$ remains preserved. 
	\end{itemize}
	There are two approaches to computing $\bm{w}$ and $\bm{q}$:
	\begin{itemize}
		\item $\bm{w}$ and $\bm{q}$ are obtained through their analytical expressions.
		\item $\bm{w}$ and $\bm{q}$ are treated as time-dependent variables, evolved concurrently with the original equations  \eqref{eq:2D_fix_BalanceLaw}. This method includes the integration of an additional equation, $\partial b/\partial t = 0$, into the original equations.
	\end{itemize}
	While the first approach tends to be more accurate, especially in scenarios involving discontinuous bottom topography, it can compromise the WB property during mesh movement, as demonstrated in Theorems \ref{Prop:1D_WB} and \ref{2D:Prove_WB}. Consequently, we opt for the second approach, which allows the numerical solutions to serve as high-order approximations of the analytical function $b$.

	\section{Structure-preserving schemes for 1D SWEs}\label{Sec:Numerical_Schemes}
	For clarity, this section describes the high-order structure-preserving finite volume schemes for the 1D SWEs in curvilinear coordinates.
	In the 1D case, since $J \frac{\partial \xi}{\partial x} \equiv 1$, the SCL holds automatically. 
	For the 1D SWEs, the conservative vector and flux are defined as
	\begin{equation*}
		\bm{U} = \left(h,hv_1\right)^{\top}, \quad 
		\bm{F} = \left(hv_1,hv_1^2+\frac{1}{2}gh^2\right)^{{\top
		}}, \quad 
		s = \left(0,-ghb_{x_1}\right)^{\top},
	\end{equation*}
	where $v_1$ denotes the flow velocity. The steady state corresponding to still water is given by 
	\begin{equation*}
		h+b = C,\quad v_1=0,
	\end{equation*}
	which satisfy \eqref{eq:Steady_state}  with $\bm{w}(x)=(b,0)^{\top}$ and $\bm{q}(x) = (1,0)^{\top}$.  
	The corresponding functions in the decomposition \eqref{eq:SourceTerm_Decom} become 
	\begin{equation*}
		s_1 = -g(h+b),\quad s_2 = \frac{1}{2}g, \quad \bm{r}_1 = \left(0,b\right)^{\top}, \quad \bm{r}_2 = \left(0,b^2\right)^{\top}.
	\end{equation*}
	The admissible state set is  
	\begin{equation*}
		\mathcal{G}= \left\{\bm{U}=(h,hv_1)^\top\in\mathbf{R}^{2}:~ h \geqslant0\right\}.
	\end{equation*}
	
	\subsection{Outline of 1D structure-preserving finite volume schemes on moving meshes}\label{Sec:1D_case}
	
	Denote the computational domain as $[0,1]$. Assume that a uniform computational mesh is used, given by 
	$\xi_i = i\Delta\xi,~i=0,1,\cdots,N-1,~\Delta\xi = 1/(N-1)$, and the cell $I_{i} = \left[\xi_{i-\frac{1}{2}}, \xi_{i+\frac{1}{2}}\right]$. The semi-discrete finite volume schemes for \eqref{eq:1D_Cur_eq}  can be expressed as
		\begin{align}
			\frac{d(\overline{J\bm{U}})_{i}}{dt} =& 
			-\frac{1}{\Delta \xi}
			\left(
			\bm{\widetilde{\mathcal{F}}}_{i+\frac{1}{2}} 
			-\bm{\widetilde{\mathcal{F}}}_{i-\frac{1}{2}}\right)
			+\dfrac{1}{\Delta \xi}
			\sum\limits_{m=1}^2
			\left(\overline{s}_{m}\right)_{i}
			\left(
			{\left(
				\widetilde{\bm{\mathcal{R}}}_{m}
				\right)_{i+\frac{1}{2}}} 
			-
			{\left(
				\widetilde{\bm{\mathcal{R}}}_{m}
				\right)_{i-\frac{1}{2}}}
			\right)\nonumber\\
			&
			+ \dfrac{1}{\Delta \xi}\int_{I_{i}}
			\sum\limits_{m=1}^2
			\left(
			s_m
			-\left(
			\overline{s}_{m}
			\right)_{i}
			\right)\pd{\bm{\mathcal{R}}_m}{\xi}\mathrm{d}\xi,  \label{eq:1D_dis_U}
			\\
			\frac{d\overline{J}_{i}}{dt} = &
			-\frac{1}{\Delta \xi}
			\left(
			\left(
			{
				J\pd{\xi}{t}}\right)_{i+\frac{1}{2}}
			-\left(
			{
				J\pd{\xi}{t}}\right)_{i-\frac{1}{2}}\right), \label{eq:1D_dis_J}
		\end{align}
	where
	\begin{equation}\label{eq:1D_F_Flux}
		\bm{\widetilde{\mathcal{F}}}_{i\pm\frac{1}{2}} =   \left({J\pd{\xi}{t}}\right)_{i\pm\frac{1}{2}}\mean{\bm{U}}_{i\pm\frac{1}{2}}  - \frac{\alpha}{2}\jump{\bm{U}}_{i\pm\frac{1}{2}} +\widetilde{\bm{F}}_{i\pm\frac{1}{2}},~\left({
			J\pd{\xi}{t}}\right)_{i\pm\frac{1}{2}} = 
		\left(\widehat{
			J\pd{\xi}{t}}\right)_{i\pm\frac{1}{2}}-\frac{\beta_{J}}{2}\jump{J}_{i\pm\frac{1}{2}}.
	\end{equation}
	Here, $\left(\widehat{
		J\frac{\partial\xi}{\partial t}}\right)_{i\pm\frac{1}{2}} = - \Dot{x}_{i\pm\frac{1}{2}}$, the notation $\Dot{x}$ represents the moving mesh velocity which will be defined in Section \ref{Sec:MM}, and  $$
	\alpha = \max\limits_{i}\abs{\Dot{x}_{i+\frac{1}{2}}},\quad \beta_{J} = \max\limits_{i}\abs{\Dot{x}_{i+\frac{1}{2}}/J_{i+\frac{1}{2}}},  \quad \mean{u}_{i+\frac{1}{2}} = \dfrac{1}{2}\left( u_{i+\frac{1}{2}}^{+} + u_{i+\frac{1}{2}}^{-}\right), \quad \jump{u}_{i+\frac{1}{2}} =  u_{i+\frac{1}{2}}^{+} - u_{i+\frac{1}{2}}^{-},$$ 
	where the superscripts ``$+$'' and  ``$-$'' indicate the right- and left-hand side limits at the cell interface. The procedure of obtaining $\bm{U}_{i\pm\frac{1}{2}}^{\pm}$
	will be discussed in Subsections \ref{Sec:1D_Qua_Recon}.   
	The integral in equation \eqref{eq:1D_dis_U} can be evaluated using numerical quadrature rules. Denote the quadrature points over cell $I_{i}$ as $\{ (\xi)_{i_{\mu}} \}$ with the corresponding weights $\{\omega_{\mu}\}$ and $\mu = 1,\dots, Q$. We use the notation $(\cdot)_{i_{\mu}}$ to represent the values at the quadrature points $\{ (\xi)_{i_{\mu}} \}$ within cell $I_{i}$. Unless otherwise specified, the four-point $(Q = 4)$ Gauss--Lobatto quadrature rule is adopted to achieve fifth-order accuracy.
	Then 
	\begin{equation}\label{eq:1D_integral}
		\dfrac{1}{\Delta \xi}\int_{I_{i}}
		\sum\limits_{m=1}^2
		\left(
		s_m
		-\left(
		\overline{s}_{m}
		\right)_{i}
		\right)\pd{\bm{\mathcal{R}}_m}{\xi}\mathrm{d}\xi 
		\approx
		\sum_{\mu=1}^{Q}\sum\limits_{m=1}^2 \omega_{\mu}
		\left((s_{m})_{i_\mu} - \left(\overline{s}_{m}\right)_{i}\right) \left[\pd{\bm{\mathcal{R}}_m}{\xi}\right]_{i_{\mu}}.
	\end{equation}
	For WB considerations, the numerical fluxes in equation \eqref{eq:1D_dis_U} are defined based on the hydrostatic reconstruction approach  \cite{AuDusse2004fast} as
	\begin{equation}\label{eq:1D_num_flux}
		\begin{aligned}
			&\widetilde{\bm{F}}_{i+\frac{1}{2}}
			= \widetilde{\bm{F}}_{i+\frac{1}{2}}^{L}
			= \widetilde{\bm{F}}_{i+\frac{1}{2}}
			(
			\bm{U}_{i+\frac{1}{2}}^{-,*}
			,
			\bm{U}_{i+\frac{1}{2}}^{+,*}
			) 
			+ \left(\begin{array}{cc}
				0  \\
				\frac{g}{2}\left(h_{i+\frac{1}{2}}^{-}\right)^2-\frac{g}{2}\left(h_{i+\frac{1}{2}}^{-,*}\right)^2 
			\end{array}
			\right),
			\\
			&\widetilde{\bm{F}}_{i-\frac{1}{2}}
			= \widetilde{\bm{F}}_{i-\frac{1}{2}}^{R}
			= \widetilde{\bm{F}}_{i-\frac{1}{2}}
			(
			\bm{U}_{i-\frac{1}{2}}^{-,*}
			,
			\bm{U}_{i-\frac{1}{2}}^{+,*}
			) 
			+ \left(\begin{array}{cc}
				0  \\
				\frac{g}{2}\left(h_{i-\frac{1}{2}}^{+}\right)^2-\frac{g}{2}\left(h_{i-\frac{1}{2}}^{+,*}\right)^2 
			\end{array}
			\right),
			\\
			&\left(
			\widetilde{\bm{\mathcal{R}}}_m
			\right)_{i+\frac{1}{2}} 
			= 
			\left(
			\bm{\mathcal{R}}_m
			\right)_{i+\frac{1}{2}}^{-}
			,~\left(
			\widetilde{\bm{\mathcal{R}}}_m
			\right)_{i-\frac{1}{2}} 
			= 
			\left(
			\bm{\mathcal{R}}_m
			\right)_{i-\frac{1}{2}}^{+}
		\end{aligned}
	\end{equation}
	with 
	\begin{equation*}
		\bm{\widetilde{F}}
		\left(\bm{U}_1,\bm{U}_2\right) 
		= \frac{1}{2}\left(\bm{F}
		\left(
		\bm{U}_1
		\right)+\bm{F}
		\left(\bm{U}_2
		\right)
		\right) 
		- \frac{\beta}{2}\left(\bm{U}_2-\bm{U}_1\right),~
		\beta = \max\limits_{i}\left(\abs{(\overline{v}_1)_{i}}+\sqrt{g\overline{h}_{i}}\right),
	\end{equation*}
	\begin{equation*}
		\bm{U}_{i\pm\frac{1}{2}}^{\pm,*} = 
		\left(\begin{array}{cc}
			h_{i\pm\frac{1}{2}}^{\pm,*} \\
			h_{i\pm\frac{1}{2}}^{\pm,*}(v_1)_{i\pm\frac{1}{2}}^{\pm}
		\end{array}\right),~h_{i\pm\frac{1}{2}}^{\pm,*} 
		= \max
		\left(
		0, h_{i\pm\frac{1}{2}}^{\pm}+b_{i\pm\frac{1}{2}}^{\pm} - \max
		\left(
		b_{i\pm\frac{1}{2}}^{+},
		b_{i\pm\frac{1}{2}}^{-}
		\right)
		\right).
	\end{equation*}
	Similar to the findings in \cite{Xing2006High}, when a steady state is reached, we have 
	\begin{equation}\label{eq:1D_Flux_Condition}
		\widetilde{\bm{\bm{F}}}_{i\pm\frac{1}{2}} - \sum\limits_{m=1}^2
		\left(
		\overline{s}_m\right)_{i} \left(\bm{\widetilde{R}}_{m}\right)_{i\pm\frac{1}{2}} \equiv C.
	\end{equation}
	As discussed in Section \ref{sec:22}, the computations of $\bm{w}$ and $\bm{q}$ are crucial for maintaining well-balancedness. The governing equations for $\bm{w}$ and $\bm{q}$ in curvilinear coordinates are given by 
	\begin{equation*}	\frac{\partial J\bm{w}}{ \partial t} +	\pd{}{\xi}	\left(	J\pd{\xi}{t}\bm{w}\right) = 0,~	\frac{\partial J\bm{q}}{ \partial t} +	\pd{}{\xi}	\left(	J\pd{\xi}{t}\bm{q}\right) = 0.
	\end{equation*}
	From these equations, we devise the following semi-discrete scheme for $\bm{w}$ and $\bm{q}$: 
	\begin{align}
		\frac{d(\overline{J\bm{w}})_{i}}{dt} =& 
		-\frac{1}{\Delta \xi}
		\left(
		\widetilde{\bm{w}}_{i+\frac{1}{2}} 
		-\widetilde{\bm{w}}_{i-\frac{1}{2}}\right), \label{eq:1D_dis_p}\\
		\frac{d(\overline{J\bm{q}})_{i}}{dt} =& 
		-\frac{1}{\Delta \xi}
		\left(
		\widetilde{\bm{q}}_{i+\frac{1}{2}} 
		-
		\widetilde{\bm{q}}_{i-\frac{1}{2}}\right),  \label{eq:1D_dis_q}
	\end{align}
	which updates the numerical values of $\bm{w}$ and $\bm{q}$ from the old mesh to the new mesh, with  
	\begin{equation}\label{eq:1D_flux_pq}
		\begin{aligned}
			&\widetilde{\bm{w}}_{i\pm\frac{1}{2}} = \left(
			J\pd{\xi}{t}
			\right)_{i\pm\frac{1}{2}}
			\mean{\bm{w}}_{i\pm\frac{1}{2}}  - \frac{\alpha}{2}
			\jump{\bm{w}}_{i\pm\frac{1}{2}},\\
			&\widetilde{\bm{q}}_{i\pm\frac{1}{2}} = \left(
			J\pd{\xi}{t}
			\right)_{i\pm\frac{1}{2}}
			\mean{\bm{q}}_{i\pm\frac{1}{2}}  - \frac{\alpha}{2}
			\jump{\bm{q}}_{i\pm\frac{1}{2}}.
		\end{aligned}
	\end{equation}
	The cell averages of $\bm{U}$, $\bm{w}$, and $\bm{q}$ in physical domain are then computed by 
	\begin{equation*}
		\overline{\bm{U}}_{i} := 
		\frac
		{\overline{J\bm{U}}_{i}}{\overline{J}_{i}} = \frac{\int_{I_i}J\bm{U}\mathrm{d}\xi}{\int_{I_i}J\mathrm{d}\xi},~
		\overline{\bm{w}}_{i} := 
		\frac
		{\overline{J\bm{w}}_{i}}
		{\overline{J}_{i}}=
		\frac{\int_{I_i}
			J\bm{w}
			\mathrm{d}\xi}
		{\int_{I_i}
			J\mathrm{d}\xi},~ 
		\overline{\bm{q}}_{i} := 
		\frac
		{\overline{J\bm{q}}_{i}}
		{\overline{J}_{i}}
		=
		\frac{\int_{I_i}
			J\bm{q}
			\mathrm{d}\xi}
		{\int_{I_i}
			J\mathrm{d}\xi}.
	\end{equation*}
	The quadrature rules are used here to approximate the integral, e.g., $\overline{\bm{U}}_{i} = \sum\limits_{\mu=1}^{Q}\omega_{\mu} J_{i_{\mu}}\bm{U}_{i_{\mu}}/\sum\limits_{\mu=1}^{Q}\omega_{\mu} J_{i_{\mu}}$.
	\subsection{High-order 1D reconstruction}\label{Sec:1D_Qua_Recon}
	This subsection discusses the 1D WENO reconstruction in the computational domain.  
	It is crucial to obtain high-order accurate approximate values of ${\bm U}$  at the quadrature nodes  
	$\left\{\left(\xi\right)_{i_{\mu}}\right\}$ when calculating the integral in \eqref{eq:1D_integral}. We utilize the WENO-Z  reconstruction technique \cite{Borges2008An}, specifically adapted to accommodate steady-state conditions during mesh movement. 
	In this work, we employ the following reconstruction approach:

	Step 1.  Starting with the cell average values $\overline{J\bm{U}}_{i}$ and $\overline{J}_{i}$, 
	we apply the WENO reconstructions to obtain the high-order approximate values 
	$\left\{\mathring{J\bm{U}}_{i_{\mu}}\right\}$ and $\left\{\mathring{J}_{i_{\mu}}\right\}$ of $J\bm{U}$ and $\bm{U}$ at the quadrature nodes $\left\{\left(\xi\right)_{i_{\mu}}\right\}$.
	Then  $\mathring{\bm{U}}_{i_{\mu}} = \mathring{J\bm{U}}_{i_{\mu}} / \mathring{J}_{i_{\mu}}$ serves as the high-order approximations for  $\bm{U}$ at the quadrature points. To enforce the cell average of $J$, we modify ${J}_{i_{\mu}} = \mathring{J}_{i_{\mu}}+\overline{J}_{i} - \sum\limits_{\mu}\omega_{\mu} \mathring{J}_{i_{\mu}}$ to obtain $\sum\limits_{\mu}\omega_{\mu} {J}_{i_{\mu}} = \overline{J}_{i}$.
	
	Step 2.  
Calculate the approximate cell average $\widetilde{\bm{U}}_{i} = \sum \limits_{\mu} \omega_{\mu} \mathring{\bm{U}}_{i_\mu}$ over cell $I_{i}$ within the computational domain. 
	Using these cell averages $\{\widetilde{\bm{U}}_{i}\}$, employ the WENO reconstruction to ascertain the robust high-order approximate values $\check{\bm{U}}_{i_{\mu}}$ at the quadrature points $\left\{\left(\xi\right)_{i_{\mu}}\right\}$.
	Denote the difference between the cell average $\overline{J\bm{U}}_{i}$ and the values $\sum\limits_{\mu}\omega_{\mu} J_{i_{\mu}} \check{\bm{U}}_{i_{\mu}}$ over cell $I_{i}$ by $\bm{\gamma}_{i} = \overline{J\bm{U}}_{i}-(\sum\limits_{\mu}\omega_{\mu} J_{i_{\mu}} \check{\bm{U}}_{i_{\mu}})$. To maintain the cell average after the reconstruction procedure. The reconstruction values are modified as ${\bm{U}}_{i_{\mu}}  = \check{\bm{U}}_{i_{\mu}}+ \bm{\gamma}_{i}/J_{i_{\mu}}$. Therefore, $\sum\limits_{\mu}\omega_{\mu} J_{i_{\mu}} {\bm{U}}_{i_{\mu}} = \overline{J\bm{U}}_{i}$.
\begin{remark}
	The modifications in Steps 1 and 2 are essential for the PP analysis in Theorems \ref{Pro:1D_SWE_PP_J} and \ref{Pro:1D_SWE_PP} and maintain the accuracy and conservation in the computational domain during the reconstruction procedure. Take the modification in Step 1 as an example. Assume that $J$ is sufficiently smooth. If the fifth-order WENO reconstruction is employed, one has $\mathring{J}_{i_{\mu}} = J_{i_{\mu}}^{exact}+O((\Delta \xi)^5)$, where $J_{i_{\mu}}^{exact}$ represents the exact value of $J$ at the quadrature points $\left(\xi\right)_{i_{\mu}}$ within $I_{i}$. Noting that the exact value $J_{i_{\mu}}^{exact}$ and the numerical cell average  $\overline{J}_{i}$ satisfy the relation $\sum\limits_{\mu}\omega_{\mu}J_{i_{\mu}}^{exact} = \overline{J}_{i}+O((\Delta \xi )^{5})$, one obtains  $ \sum\limits_{\mu}\omega_{\mu}\mathring{J}_{i_{\mu}} = \sum\limits_{\mu}\omega_{\mu}J_{i_{\mu}}^{exact}+O((\Delta \xi)^5) = \overline{J}_{i}+O((\Delta \xi )^{5})$. Therefore, $\overline{J}_{i} - \sum\limits_{\mu}\omega_{\mu} \mathring{J}_{i_{\mu}} = O((\Delta \xi)^{5})$. Consequently, $${J}_{i_{\mu}} = \mathring{J}_{i_{\mu}}+\overline{J}_{i} - \sum\limits_{\mu}\omega_{\mu} \mathring{J}_{i_{\mu}} = J_{i_{\mu}}^{exact}+O((\Delta \xi)^5) + \overline{J}_{i} - \sum\limits_{\mu}\omega_{\mu} \mathring{J}_{i_{\mu}} = J_{i_{\mu}}^{exact}+O((\Delta \xi)^5),$$
		which means this modification does not affect the accuracy. {\color{blue}}From ${J}_{i_{\mu}} = \mathring{J}_{i_{\mu}}+\overline{J}_{i} - \sum\limits_{\mu}\omega_{\mu} \mathring{J}_{i_{\mu}}$, one obtains $\sum\limits_{\mu} \omega_{\mu}J_{i_{\mu}}= \sum\limits_{\mu} \omega_{\mu}\mathring{J}_{i_{\mu}}+\sum\limits_{\mu} \omega_{\mu}\overline{J}_{i} -\sum\limits_{\mu} \omega_{\mu} \sum\limits_{\tilde{\mu}}\omega_{\tilde{\mu}}\mathring{J}_{i_{\tilde{\mu}}} $. Since $\sum\limits_{\mu} \omega_{\mu} = 1$, one has $\sum\limits_{\mu} \omega_{\mu}J_{i_{\mu}}= \sum\limits_{\mu} \omega_{\mu} \mathring{J}_{i_{\mu}}+\overline{J}_{i} - \sum\limits_{\tilde{\mu}}\omega_{\tilde{\mu}}\mathring{J}_{i_{\tilde{\mu}}} = \overline{J}_{i}$, which implies this  modification ensures the conservative property of $J$ in the computational domain. Similar to the analysis for $J$ in Step 1, the modification in Step 2 does not affect the accuracy and conservation in the computational domain. Note that $\overline{\bm{U}}_{i}  = \overline{J\bm{U}}_{i}/\overline{J}_{i} $,
		$ \sum\limits_{\mu} \omega_{\mu} J_{i_{\mu}}\bm{U}_{i_{\mu}} = \overline{J\bm{U}}_{i}$, and $ \sum\limits_{\mu} \omega_{\mu} J_{i_{\mu}} = \overline{J}_{i}$. The modifications do not influence the conservation in the physical domain during the reconstruction procedure, as we merely modify the reconstruction values at the  quadrature points. 
\end{remark}

\begin{remark}
It seems more straightforward to directly use Step 1 to calculate the reconstruction values $\mathring{\bm{U}}_{i_{\mu}}$ at the quadrature points $\left(\xi\right)_{i_{\mu}}$. This strategy also preserves to the free-stream property. However, our numerical experiments indicate that merely performing Step 1 can result in
significant overshoots or undershoots near discontinuities, as illustrated in Figure \ref{fig:1D_Perturbation_Compare}. 
To address
this issue, we use the reconstruction approach consisting of Steps 1 and 2.
\end{remark}
		\begin{remark}\rm
		During mesh movement, it is crucial that the coefficients used for the reconstruction in 
		$ \mathring{J(h+b)}_{i_{\mu}} $ and $ \mathring{J}_{i_\mu} $ remain identical. Specifically, if 
		$ \mathring{J(h+b)}_{i+\frac{1}{2}}^{-} $ is reconstructed as 
		$ \sum\limits_{l=-\gamma+1}^{\gamma-1} w_{l} \overline{J(h+b)}_{i+l} $
		to determine the left limit of $ J(h+b) $, then $\mathring{J}_{i+\frac{1}{2}}^{-} $ must be similarly  reconstructed using the same weights: 
		$ \sum\limits_{l=-\gamma+1}^{\gamma-1}w_{l} \overline{J}_{i+l} $. 
		This consistent treatment ensures that the ratio 
		$ \frac{\mathring{J(h+b)}_{i+\frac{1}{2}}^{-}}{\mathring{J}_{i+\frac{1}{2}}^{-}} = C $, 
		which is crucial under still water equilibrium conditions, where $ (v_1)_{i+\frac{1}{2}}^{-} = 0 $.  Therefore, the relationships 
		$ \mathring{\bm{U}}_{i_{\mu}} + \mathring{\bm{w}}_{i_{\mu}} = C{\bm{q}}_{i_{\mu}}$ and $ \check{\bm{U}}_{i_{\mu}} + \check{\bm{w}}_{i_{\mu}} = C{\bm{q}}_{i_{\mu}}$  
		are maintained, aligning with the principles of hydrostatic balance. We modify ${\bm{w}}_{i_{\mu}} = \check{\bm{w}}_{i_{\mu}}- \bm{\gamma}_{i}/J_{i_{\mu}}$ to ensure ${\bm{U}}_{i_{\mu}} + {\bm{w}}_{i_{\mu}} = C{\bm{q}}_{i_{\mu}}$.  This is important for the WB property of our scheme under mesh movement. 
	\end{remark}
		The condition 
		\begin{equation}\label{eq:PP_Condition}
			J_{i_{\mu}} >0,\quad 
			h_{i_{\mu}}\geqslant 0 \quad \forall i, \forall \mu, 
		\end{equation}
		is essential for the  PP property of our moving mesh finite volume schemes in Theorems \ref{Pro:1D_SWE_PP_J} and \ref{Pro:1D_SWE_PP}. However, the reconstruction procedure does not always guarantee satisfaction of condition \eqref{eq:PP_Condition}.
	Following \cite{Zhang2010On,Zhang2011Maximum,Xing2010Positivity,Zhang2017On}, a PP limiter should be implemented to enforce this condition.
		\begin{remark}[1D PP limiter] \label{rmk:1D_PP_Limiter}
			If $\overline{J}_{i}>0$ and $\overline{h}_{i}\geqslant 0$, the PP limiter modifies the reconstructed values $\{{J}_{i_{\mu}}\}$ and $\{{h}_{i_{\mu}}\}$ as 
			\begin{equation}\label{eq:modifiedhJ}
				\widehat{J}_{i_\mu}
				= {\theta_J}
				\left({J}_{i_\mu}
				-\overline{J}_{i}\right)
				+\overline{J}_{i},\quad \widehat{h}_{i_\mu}
				= \mathcal{\theta}_{h}
				\left({h}_{i_\mu}
				-\overline{h}_{i}\right)
				+\overline{h}_{i} \quad \forall \mu
			\end{equation}
			with
			\begin{equation}
				\mathcal{\theta}_{J}: = 
				\begin{cases}
					~1, &\text{if}\quad \min\limits_{\mu} 
					\left\{
					{J}_{i_\mu}
					\right\} = \overline{J}_{i},\\
					~\min 
					\left\{
					\left|
					\frac{
						\overline{J}_{i}-\epsilon_{0}}
					{\overline{J}_{i}-\min\limits_{\mu} 
						\left\{
						{J}_{i_\mu}
						\right\}}
					\right|, 
					1
					\right\}, &\text{otherwise},
				\end{cases}
			\end{equation}
			\begin{equation}
				\mathcal{\theta}_{h}:=
				\begin{cases}
					~1, &\text{if}\quad \min\limits_{\mu} 
					\left\{
					{h}_{i_\mu}
					\right\} = \overline{h}_{i},\\
					~\min 
					\left\{
					\left|
					\frac{
						\overline{h}_{i}}
					{\overline{h}_{i}-\min\limits_{\mu} 
						\left\{
						{h}_{i_\mu}
						\right\}}
					\right|, 
					1
					\right\}, &\text{otherwise}.
				\end{cases}
			\end{equation}
		\end{remark}
		The value $\epsilon_0$ is a small positive number, which represents the desired lower bound of $J$ and can be taken as $\epsilon_0 =  \min\left\{10^{-13}, \overline{J}_{i}\right\}$. 
		After applying the PP limiter, the modified values, given by \eqref{eq:modifiedhJ}, satisfy 
		$\widehat {J}_{i_\mu} > 0$
		and		$\widehat {h}_{i_\mu} \geqslant 0$. Notice that, if ${h}_{i_\mu} \geqslant 0$, one has $\theta_{h} = 1$ and $\widehat{h}_{i_\mu} = {h}_{i_\mu}$. We use $\widehat{J}_{i_{\mu}}$ to replace $J_{i_{\mu}}$ in Step 2 so as to ensure $\sum\limits_{\mu}\omega_{\mu}\widehat{J}_{i_{\mu}}\bm{U}_{i_{\mu}} = \overline{J\bm{U}}_{i}$.
	
	If $\overline{J}_{i}>0$ and $\overline{h}_{i}\geqslant 0$, the PP limiter maintains high-order accuracy, as discussed in \cite{Zhang2010On,Xing2010Positivity,Zhang2017On}. However, it may impact the WB property under still water equilibrium conditions. To address this issue, following \cite{Zhang2021High}, we update the bottom topography by 
	$\widehat{b}_{i_\mu}  = {h}_{i_\mu} 
	+ {b}_{i_\mu} 
	- \widehat{h}_{i_\mu}.
	$ 
	When a steady state is reached, we have 
	$
	{h}_{i_\mu} 
	+ 
	{b}_{i_\mu}
	\equiv C,~({v}_1)_{i_\mu} 
	= 0,
	$  and then $
	\widehat{h}_{i_\mu} 
	+ 
	\widehat{b}_{i_\mu}
	\equiv C. 
	$ 
	In our computations, we use $\widehat{\bm{U}}_{i_{\mu}} = \left(\widehat{h}_{i_{\mu}}, {\left(hv_1\right)}_{i_\mu}\right)^{\top}$,  $\widehat{\bm{w}}_{i_{\mu}} = \left(\widehat{b}_{i_{\mu}},0\right)^{\top}$ and $\widehat{J}_{i_{\mu}}$ to replace $\bm{U}_{i_{\mu}}$, $\bm{w}_{i_{\mu}}$ and ${J}_{i_{\mu}}$ respectively.
	Hence,  $\widehat{\bm{U}}_{i_\mu}+\widehat{\bm{w}}_{i_{\mu}}  = \bm{U}_{i_\mu}+\bm{w}_{i_{\mu}} = C\bm{q}_{i_\mu}$.
	This modification ensures the maintenance of the WB property while simultaneously preserving the non-negativity of the water depth. 
	The conservative property is preserved, since the cell averages of the conservative variables $\overline{h}_{i}$ and $\left(\overline{hv_1}\right)_{i}$ do not change.	
	Specifically
	\begin{align*}
		&\frac{\sum\limits_{\mu} \omega_{\mu} {\widehat{J}_{i_{\mu}}\widehat{h}_{i_{\mu}}}}{\sum\limits_{\mu}  \omega_{\mu} {\widehat{J}_{i_{\mu}}}} = \frac{\sum\limits_{\mu}  \omega_{\mu} \widehat{J}_{i_{\mu}} \left(\mathcal{\theta}
			\left({h}_{i_\mu}
			-\overline{h}_{i}\right) +\overline{h}_{i}\right)}{{\sum\limits_{\mu}  \omega_{\mu} {\widehat{J}_{i_{\mu}}}}} =  
		\frac{\mathcal{\theta}\left(\sum\limits_{\mu}  \omega_{\mu} \widehat{J}_{i_{\mu}}{h}_{i_\mu}		-\overline{Jh}_{i}\right) + \overline{Jh}_{i}}{\overline{J}_{i}} = \overline{h}_{i},\\
		&
		\frac{\sum\limits_{\mu}  \omega_{\mu} {\widehat{J}_{i_{\mu}}(hv_1)_{i_{\mu}}}}{\sum\limits_{\mu}  \omega_{\mu} {\widehat{J}_{i_{\mu}}}} = \frac{\overline{J(hv_1)}_{i}}{\overline{J}_{i}} =  (\overline{hv_1})_{i}.
	\end{align*}
	Therefore, the conservation is preserved.
	
	\subsection{Analysis of WB property}
	This subsection shows the WB property of the proposed schemes.
	\begin{theorem}\rm\label{Prop:1D_WB}
		
		Consider the semi-discrete schemes \eqref{eq:1D_dis_U}, \eqref{eq:1D_dis_J}, \eqref{eq:1D_dis_p}, and \eqref{eq:1D_dis_q} with the numerical fluxes \eqref{eq:1D_F_Flux} and \eqref{eq:1D_flux_pq}.
		These schemes are WB, in the sense that, under the forward Euler or explicit SSP RK discretizations, when a steady state is reached at the $n$th time level $t^n$, i.e., 
		\begin{equation*}
			\overline{\bm{U}}_{i}^{n}+\overline{\bm{w}}_{i}^{n} = C\overline{\bm{q}}_{i}^{n}\quad \forall i,
		\end{equation*}
		the numerical solution at $t^{n+1}$ also satisfies 
		\begin{equation*}
			\overline{\bm{U}}_{i}^{n+1}+\overline{\bm{w}}_{i}^{n+1} = C\overline{\bm{q}}_{i}^{n+1}\quad \forall i.
		\end{equation*}
	\end{theorem}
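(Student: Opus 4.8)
The plan is to reduce everything to a single forward Euler update together with a telescoping flux argument. First I would note that an explicit SSP Runge--Kutta step is a convex combination of forward Euler stages, and that the relation to be preserved becomes, after multiplication by $\overline{J}_i$, the \emph{linear} identity $\overline{J\bm{U}}_i+\overline{J\bm{w}}_i=C\,\overline{J\bm{q}}_i$; hence it suffices to treat the forward Euler scheme, the same computation applying at every RK stage once one invokes the three remarks above, which ensure that the shared-weight WENO reconstruction and the bottom-topography update of the PP limiter carry the discrete steady state from the cell averages to the reconstructed data. Thus at $t^n$ one has $\bm{U}+\bm{w}=C\bm{q}$ at every quadrature node and at every one-sided interface limit $(\cdot)_{i\pm\frac12}^{\pm}$; in particular $h+b\equiv C$, $v_1\equiv 0$, the source terms $s_1\equiv -gC$ and $s_2\equiv\frac12 g$ are cell-wise constant, and the hydrostatically reconstructed heights reduce to a single interface value $h_{i+\frac12}^{+,*}=h_{i+\frac12}^{-,*}=\max\bigl(0,\,C-\max(b_{i+\frac12}^{+},b_{i+\frac12}^{-})\bigr)$.

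Next I would dispatch the three terms on the right-hand side of \eqref{eq:1D_dis_U}. The quadrature source term \eqref{eq:1D_integral} vanishes because $(s_m)_{i_\mu}=(\overline{s}_m)_i$ at every node, each $s_m$ being constant on the cell. The hydrostatic flux and source-flux terms are handled by \eqref{eq:1D_Flux_Condition}, which states that $\widetilde{\bm{F}}_{i\pm\frac12}-\sum_m(\overline{s}_m)_i(\widetilde{\bm{\mathcal{R}}}_m)_{i\pm\frac12}$ is a constant independent of $i$ and of the sign. For the remaining mesh-movement part of $\bm{\widetilde{\mathcal{F}}}_{i\pm\frac12}$ in \eqref{eq:1D_F_Flux}, I would insert $\mean{\bm{U}}=C\mean{\bm{q}}-\mean{\bm{w}}$ and $\jump{\bm{U}}=C\jump{\bm{q}}-\jump{\bm{w}}$ at the interface; since the identical coefficient $\left(J\pd{\xi}{t}\right)_{i\pm\frac12}$ and the identical $\alpha$ appear in \eqref{eq:1D_flux_pq}, linearity gives
\[
\left(J\pd{\xi}{t}\right)_{i\pm\frac12}\mean{\bm{U}}_{i\pm\frac12}-\frac{\alpha}{2}\jump{\bm{U}}_{i\pm\frac12}=C\,\widetilde{\bm{q}}_{i\pm\frac12}-\widetilde{\bm{w}}_{i\pm\frac12}.
\]
Adding the two contributions shows that $\bm{\widetilde{\mathcal{F}}}_{i\pm\frac12}-\sum_m(\overline{s}_m)_i(\widetilde{\bm{\mathcal{R}}}_m)_{i\pm\frac12}+\widetilde{\bm{w}}_{i\pm\frac12}-C\,\widetilde{\bm{q}}_{i\pm\frac12}$ is a constant, independent of $i$ and of the sign.

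Finally I would add \eqref{eq:1D_dis_U}, \eqref{eq:1D_dis_p} and $-C$ times \eqref{eq:1D_dis_q}: the quadrature source term is zero and every flux difference telescopes to zero by the previous step, so $\frac{d}{dt}\bigl(\overline{J\bm{U}}_i+\overline{J\bm{w}}_i-C\,\overline{J\bm{q}}_i\bigr)=0$ at the steady state. A forward Euler step then preserves $\overline{J\bm{U}}_i^{n+1}+\overline{J\bm{w}}_i^{n+1}=C\,\overline{J\bm{q}}_i^{n+1}$, and dividing by $\overline{J}_i^{n+1}>0$ (positivity from the PP property, Theorem~\ref{Pro:1D_SWE_PP_J}) recovers $\overline{\bm{U}}_i^{n+1}+\overline{\bm{w}}_i^{n+1}=C\,\overline{\bm{q}}_i^{n+1}$; the SSP RK case follows by convexity. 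I expect the middle paragraph to be the main obstacle: one must verify that the mesh-movement advective flux plus the Lax--Friedrichs dissipation in \eqref{eq:1D_F_Flux} splits \emph{exactly} into $C\widetilde{\bm{q}}-\widetilde{\bm{w}}$ --- which requires \eqref{eq:1D_F_Flux} and \eqref{eq:1D_flux_pq} to use the same mesh-velocity coefficient $\left(J\pd{\xi}{t}\right)_{i\pm\frac12}$, the same parameter $\alpha$, and interface values linked at steady state by $\bm{U}^{\pm}=C\bm{q}^{\pm}-\bm{w}^{\pm}$ --- and that the hydrostatic-reconstruction balance \eqref{eq:1D_Flux_Condition} remains valid with the consistently reconstructed source fluxes $(\widetilde{\bm{\mathcal{R}}}_m)_{i\pm\frac12}$; securing both is exactly why the WENO reconstruction must use weights shared across $\bm{U}$, $\bm{w}$, $\bm{q}$ and $J$, and why the PP limiter must be coupled with the bottom-topography update.
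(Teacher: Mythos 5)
Your proposal is correct and follows essentially the same route as the paper's proof: reduce to forward Euler by convexity, invoke the flux--source balance \eqref{eq:1D_Flux_Condition} and the vanishing of the quadrature source term to strip \eqref{eq:1D_dis_U} down to its mesh-movement part, then use the linearity of the remaining advective/dissipative fluxes together with $\bm{U}+\bm{w}=C\bm{q}$ at the reconstructed interface values to combine with \eqref{eq:1D_dis_p} and \eqref{eq:1D_dis_q} and telescope. The only difference is that you spell out a few steps the paper leaves implicit (why the quadrature term vanishes, the equality of the two hydrostatically reconstructed heights, and the positivity of $\overline{J}_i^{n+1}$ when dividing), which is fine.
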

	\begin{proof}
		Since an SSP RK method is a convex combination of forward Euler method, we only need to show the conclusion for  the forward Euler discretization. 
		When the steady state is reached at time $t^n$, 
		the condition \eqref{eq:1D_Flux_Condition} holds, and the discrete evolution of $J\bm{U}$ can be expressed as follows: 
		\begin{equation}
			\left(
			\overline{J\bm{U}}
			\right)_{i}^{n+1} - \left(
			\overline{J\bm{U}}
			\right)_{i}^{n} = 
			-\dfrac{\Delta t}
			{\Delta \xi}
			\left(
			\left(
			J\pd{\xi}{t}
			\right)_{i+\frac{1}{2}}
			\mean{\bm{U}}_{i+\frac{1}{2}}  
			- \frac{\alpha}{2}
			\jump{\bm{U}}_{i+\frac{1}{2}}
			-\left(J\pd{\xi}{t}\right)_{i-\frac{1}{2}}
			\mean{\bm{U}}_{i-\frac{1}{2}}  + \frac{\alpha}{2}
			\jump{\bm{U}}_{i-\frac{1}{2}}
			\right),\label{eq:1D_U_full_dis}
		\end{equation}
		where and hereafter the superscript $n$ is omitted for convenience. 
		The fully-discrete schemes of  $J\bm{w}$ and $J\bm{q}$ are given by
		\begin{align}
			\left(
			\overline{J\bm{w}}
			\right)_{i}^{n+1} - \left(
			\overline{J\bm{w}}
			\right)_{i}^{n} &= 
			-\dfrac{\Delta t}
			{\Delta \xi}
			\left(
			\left(
			J\pd{\xi}{t}
			\right)_{i+\frac{1}{2}}
			\mean{\bm{w}}_{i+\frac{1}{2}}  
			- \frac{\alpha}{2}\jump{\bm{w}}_{i+\frac{1}{2}}
			-\left(J\pd{\xi}{t}\right)_{i-\frac{1}{2}}
			\mean{\bm{w}}_{i-\frac{1}{2}}  
			+ \frac{\alpha}{2}
			\jump{\bm{w}}_{i-\frac{1}{2}}
			\right),\label{eq:1D_p_full_dis}\\
			\left(
			\overline{J\bm{q}}
			\right)_{i}^{n+1} - \left(
			\overline{J\bm{q}}
			\right)_{i}^{n} &= 
			-\dfrac{\Delta t}
			{\Delta \xi}
			\left(
			\left(
			J\pd{\xi}{t}
			\right)_{i+\frac{1}{2}}
			\mean{\bm{q}}_{i+\frac{1}{2}}  
			- \frac{\alpha}{2}\jump{\bm{q}}_{i+\frac{1}{2}}
			-\left(J\pd{\xi}{t}\right)_{i-\frac{1}{2}}
			\mean{\bm{q}}_{i-\frac{1}{2}}  
			+ \frac{\alpha}{2}
			\jump{\bm{q}}_{i-\frac{1}{2}}
			\right).\label{eq:1D_q_full_dis}
		\end{align}
		Summing \eqref{eq:1D_U_full_dis} and  \eqref{eq:1D_p_full_dis} yields
		\begin{align*}
			\left(
			\overline{J\left(\bm{U}+\bm{w}\right)}
			\right)_{i}^{n+1} - \left(
			\overline{J\left(\bm{U}+\bm{w}\right)}
			\right)_{i}^{n} = 
			&-\dfrac{\Delta t}
			{\Delta \xi}
			\Bigg(
			\left(
			J\pd{\xi}{t}
			\right)_{i+\frac{1}{2}}
			\mean{\bm{U}+\bm{w}}_{i+\frac{1}{2}}  
			- \frac{\alpha}{2}
			\jump{\bm{U}+\bm{w}}_{i+\frac{1}{2}}
			\\
			&\quad -\left(J\pd{\xi}{t}\right)_{i-\frac{1}{2}}
			\mean{\bm{U}+\bm{w}}_{i-\frac{1}{2}}  + \frac{\alpha}{2}
			\jump{\bm{U}+\bm{w}}_{i-\frac{1}{2}}
			\Bigg).
		\end{align*}
		Based on the	relation  
		$ {\bm{U}}_{i_{\mu}} + {\bm{w}}_{i_{\mu}} = C{\bm{q}}_{i_{\mu}} $, one obtains $\mean{\bm{U}+\bm{w}} = C\mean{\bm{q}}$ and $\jump{\bm{U}+\bm{w}} = C\jump{\bm{q}}$. This implies
		\begin{align*}
			\left(
			\overline{J\left(\bm{U}+\bm{w}\right)}
			\right)_{i}^{n+1} - \left(
			\overline{J\left(C\bm{q}\right)}
			\right)_{i}^{n} = 
			&-\dfrac{\Delta t}
			{\Delta \xi}
			\Bigg(
			\left(
			J\pd{\xi}{t}
			\right)_{i+\frac{1}{2}}
			\mean{C\bm{q}}_{i+\frac{1}{2}}  
			- \frac{\alpha}{2}
			\jump{C\bm{q}}_{i+\frac{1}{2}}
			\\
			&\quad -\left(J\pd{\xi}{t}\right)_{i-\frac{1}{2}}
			\mean{C\bm{q}}_{i-\frac{1}{2}}  
			+ \frac{\alpha}{2}
			\jump{C\bm{q}}_{i-\frac{1}{2}}
			\Bigg).
		\end{align*}
		Plugging \eqref{eq:1D_q_full_dis} into the above equations, one obtains
		\begin{align*}
			\left(
			\overline{
				J\left(
				\bm{U}+\bm{w}
				\right)
			}
			\right)_{i}^{n+1} 
			- \left(
			\overline{
				J\left(C\bm{q}\right)
			}
			\right)_{i}^{n+1} = 0,
		\end{align*}
		which implies 
		\begin{equation*}
			\left(
			\overline{
				\bm{U}+\bm{w}
			}\right)_{i}^{n+1} 
			= C
			\overline{\bm{q}}_{i}^{n+1}.
		\end{equation*}
		The proof is completed.
	\end{proof}
	\subsection{Analysis of PP property} \label{sec:1DPP}
	The semi-discrete schemes \eqref{eq:1D_dis_U} and \eqref{eq:1D_dis_J}  coupled with the forward Euler time discretization can be expressed as 
	\begin{equation*}
		\overline{J\bm{U}}_{i}^{\Delta t}  = \overline{J\bm{U}}_{i}  - \Delta t\bm{L}_{i},\quad \overline{J}_{i}^{\Delta t}  = \overline{J}_{i}  - \Delta t{L}_{i},
	\end{equation*}
	where $\bm{L}_{i}$ and $L_{i}$ denote the right-hand-side terms of  \eqref{eq:1D_dis_U}  and \eqref{eq:1D_dis_J}, respectively; $\overline{J\bm{U}}_{i}$ and $\overline{J}_{i}$ represent the approximations at the $n$th time level.
		\begin{theorem}\rm\label{Pro:1D_SWE_PP_J}
			Assume that $\overline{J}_{i}>0$ for all $i$. If 
			$
			J_{i_{\mu}}>0  
			$
			for all $i$, $\mu$,
			then 
			$
			\overline{J}_{i}^{\Delta t} > 0
			$
			holds under the condition
			\begin{equation}\label{eq:1D_CFL_condition_J}
				\Delta t \leqslant  		\dfrac{
					\omega_1 
					\Delta \xi 
				}
				{
					\beta_{J}
				}. 
			\end{equation}
		\end{theorem}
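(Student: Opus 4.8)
The plan is to exhibit $\overline{J}_{i}^{\Delta t}$ as a linear combination, with coefficients that are nonnegative under \eqref{eq:1D_CFL_condition_J}, of the quadrature-node values $\{J_{i_{\mu}}\}$ in cell $I_{i}$ together with the reconstructed interface traces of $J$ from the two neighbouring interfaces, all of which are positive by \eqref{eq:PP_Condition}. First I would use that the four-point Gauss--Lobatto rule places its first and last nodes at the cell endpoints, so that $J_{i_{1}}=J_{i-\frac12}^{+}$ and $J_{i_{Q}}=J_{i+\frac12}^{-}$, the endpoint weights satisfy $\omega_{1}=\omega_{Q}$, and, owing to the conservative correction in Step~1 of the reconstruction (which the PP limiter also respects), $\overline{J}_{i}=\sum_{\mu=1}^{Q}\omega_{\mu}J_{i_{\mu}}$. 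Writing $\mathcal{J}_{i\pm\frac12}:=\big(J\pd{\xi}{t}\big)_{i\pm\frac12}$, the forward-Euler form of \eqref{eq:1D_dis_J} can then be regrouped as
\begin{equation*}
\overline{J}_{i}^{\Delta t}
=\sum_{\mu=2}^{Q-1}\omega_{\mu}J_{i_{\mu}}
+\Big(\omega_{Q}J_{i+\frac12}^{-}-\tfrac{\Delta t}{\Delta\xi}\,\mathcal{J}_{i+\frac12}\Big)
+\Big(\omega_{1}J_{i-\frac12}^{+}+\tfrac{\Delta t}{\Delta\xi}\,\mathcal{J}_{i-\frac12}\Big),
\end{equation*}
so it suffices to prove that each of the two parenthesized interface contributions is nonnegative, since the interior sum is strictly positive (the interior Gauss--Lobatto weights and all $J_{i_{\mu}}$ being positive).

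The key step is to read the discrete VCL flux in \eqref{eq:1D_F_Flux} as a local Lax--Friedrichs flux for $J$. Since $\big(\widehat{J\pd{\xi}{t}}\big)_{i+\frac12}=-\dot{x}_{i+\frac12}=a_{i+\frac12}\,\mean{J}_{i+\frac12}$ with $a_{i+\frac12}:=-\dot{x}_{i+\frac12}/\mean{J}_{i+\frac12}$ --- well defined because $\mean{J}_{i+\frac12}>0$, and obeying $\abs{a_{i+\frac12}}\le\beta_{J}$ by the definition of $\beta_{J}$ (in which the interface value of $J$ is the average $\mean{J}_{i+\frac12}$) --- one gets
\begin{equation*}
\mathcal{J}_{i+\frac12}=\tfrac12\,a_{i+\frac12}\big(J_{i+\frac12}^{-}+J_{i+\frac12}^{+}\big)-\tfrac{\beta_{J}}{2}\big(J_{i+\frac12}^{+}-J_{i+\frac12}^{-}\big),
\end{equation*}
and similarly at $i-\frac12$. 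Substituting, each interface contribution becomes a combination of the two positive traces with coefficients of the form $\omega_{1}-\tfrac{\Delta t}{2\Delta\xi}(\beta_{J}\pm a)$ and $\tfrac{\Delta t}{2\Delta\xi}(\beta_{J}\mp a)$ where $\abs{a}\le\beta_{J}$. Because $0\le\beta_{J}\pm a\le 2\beta_{J}$, the condition \eqref{eq:1D_CFL_condition_J} --- equivalently $\Delta t\,\beta_{J}/\Delta\xi\le\omega_{1}=\omega_{Q}$ --- renders every such coefficient nonnegative, whence both interface contributions are $\ge0$ and therefore $\overline{J}_{i}^{\Delta t}>0$.

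I expect the main obstacle to be exactly this reinterpretation of the metric flux. Because the Jacobian $J$ enters the update multiplicatively through $\big(J\pd{\xi}{t}\big)_{i\pm\frac12}$, the standard trick of splitting a high-order update into a convex combination of formally first-order PP updates does not apply verbatim; instead one must identify the discrete advection speed $a_{i+\frac12}=-\dot{x}_{i+\frac12}/\mean{J}_{i+\frac12}$ attached to the VCL and check that $\beta_{J}$ dominates it, which is precisely how $\beta_{J}$ is built. Once this structural observation is made, the remainder is the quadrature-splitting bookkeeping above, and the positivity of $\mean{J}_{i+\frac12}$ needed for $a_{i+\frac12}$ and the decomposition to make sense follows immediately from the hypothesis $J_{i_{\mu}}>0$.
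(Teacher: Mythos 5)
Your proof is correct and follows essentially the same route as the paper's: both split $\overline{J}_{i}$ into the interior Gauss--Lobatto contributions plus the endpoint traces $J_{i+\frac{1}{2}}^{-}$, $J_{i-\frac{1}{2}}^{+}$ (using the enforced identity $\overline{J}_{i}=\sum_{\mu}\omega_{\mu}J_{i_{\mu}}$), and absorb the discrete VCL flux into the endpoint weight $\omega_{1}$ under the same CFL condition, the key fact in both being that $\beta_{J}$ dominates the ratio of the mesh velocity to the interface Jacobian. The only difference is bookkeeping: the paper bounds the four quantities $\beta_{J}J_{i+\frac{1}{2}}^{\pm}-\bigl(\widehat{J\pd{\xi}{t}}\bigr)_{i+\frac{1}{2}}$ and $\beta_{J}J_{i-\frac{1}{2}}^{\pm}+\bigl(\widehat{J\pd{\xi}{t}}\bigr)_{i-\frac{1}{2}}$ below by zero and then subtracts $\beta_{J}\bigl(J_{i+\frac{1}{2}}^{-}+J_{i-\frac{1}{2}}^{+}\bigr)$, whereas you rewrite $\bigl(\widehat{J\pd{\xi}{t}}\bigr)_{i+\frac{1}{2}}=a_{i+\frac{1}{2}}\mean{J}_{i+\frac{1}{2}}$ and exhibit explicit nonnegative coefficients, which has the minor advantage of making precise which interface value of $J$ must enter the definition of $\beta_{J}$.
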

	\begin{proof}
	Consider the discrete evolution equation for $J$: 
	\begin{equation}\label{eq:1D_PP_J}
		\begin{aligned}
			\overline{J}_{i}^{\Delta t}  - 
			\overline{J}_{i}
			=&  
			- \frac{\Delta t}{\Delta \xi}
			\left[
			\left(\widehat{J \pd{\xi}{t}}\right)
			_{i+\frac{1}{2}}
			-\frac{\beta_{J}}{2}
			\jump{J}
			_{i+\frac{1}{2}}
			-
			\left(\widehat{J \pd{\xi}{t}}\right)
			_{i-\frac{1}{2}}
			+\frac{\beta_{J}}{2}
			\jump{J}
			_{i-\frac{1}{2}}
			\right]
			\\
			=&~\frac{\Delta t}{2\Delta \xi}
			\left[
			\beta_J J
			_{i+\frac{1}{2}}^{+}
			-
			\left(\widehat{J \pd{\xi}{t}}
			\right)
			_{i+\frac{1}{2}}
			+
		\beta_J J
		_{i+\frac{1}{2}}^{-}
		-
		\left(\widehat{J \pd{\xi}{t}}
		\right)
		_{i+\frac{1}{2}}
			\right]
				\\
			& +\frac{\Delta t}{2\Delta \xi}
			\left[
			\beta_J J
			_{i-\frac{1}{2}}^{+}
			+
			\left(\widehat{J \pd{\xi}{t}}
			\right)
			_{i-\frac{1}{2}}
			+
			\beta_J J
			_{i-\frac{1}{2}}^{-}
			+
			\left(\widehat{J \pd{\xi}{t}}
			\right)
			_{i-\frac{1}{2}}
			\right]\\
			& -\frac{\beta_J\Delta t }{\Delta \xi}
				\left(
			J
			_{i+\frac{1}{2}}^{-}
			+
			J
			_{i-\frac{1}{2}}^{+}
			\right).
		\end{aligned}
	\end{equation}
	Noting that
	\begin{equation*}
		\beta_J J_{i+\frac{1}{2}}^{\pm} - 	\left(\widehat{J \pd{\xi}{t}}
		\right)
		_{i+\frac{1}{2}}\geqslant 0,\quad	\beta_J J_{i-\frac{1}{2}}^{\pm} + 	\left(\widehat{J \pd{\xi}{t}}
		\right)
		_{i-\frac{1}{2}}\geqslant 0,\quad 
		\overline{J}_{i}
	=\sum_{\mu = 2}^{Q-1}
	\omega_{\mu}
	J_{i_{\mu}} 
	+ \omega_1
	\left(
	J
	_{i+\frac{1}{2}}^{-}
	+
	J
	_{i-\frac{1}{2}}^{+}
	\right),
\end{equation*}
  a lower bound of the right-hand-side of \eqref{eq:1D_PP_J} is given by  
	\begin{align*}
		\overline{J}_{i}^{\Delta t} \geqslant &\sum_{\mu = 2}^{Q-1}
		\omega_{\mu}
		J_{i_{\mu}} 
		+ \omega_1
			\left(
		J
		_{i+\frac{1}{2}}^{-}
		+
		J
		_{i-\frac{1}{2}}^{+}
		\right) - \frac{\beta_J\Delta t }{\Delta \xi}
			\left(
		J
		_{i+\frac{1}{2}}^{-}
		+
		J
		_{i-\frac{1}{2}}^{+}
		\right)
		\\
		= & \left(\omega_1 - \frac{\beta_J\Delta t }{\Delta \xi}\right)
			\left(
		J
		_{i+\frac{1}{2}}^{-}
		+
		J
		_{i-\frac{1}{2}}^{+}
		\right) + \sum_{\mu = 2}^{Q-1}
		\omega_{\mu}
		J_{i_{\mu}}.
	\end{align*}  
	Under the condition $J_{i_{\mu}}>0$ and the condition \eqref{eq:1D_CFL_condition_J}, one obtains
	\begin{equation*}
		\overline{J}_{i}^{\Delta t}  > 0.
	\end{equation*} 
	The proof is completed.
\end{proof}
		\begin{remark}\label{rm:1D_J_pp}
			The mesh should not interleave or overlap during computation, i.e., if $x_{i-\frac{1}{2}}< x_{i+\frac{1}{2}}$ for $i = 0, \dots, N-1$ then for the new mesh $x_{i-\frac{1}{2}}^{\Delta t}< x_{i+\frac{1}{2}}^{\Delta t}$ still holds, otherwise the numerical simulation will fail.  The treatment to avoid mesh interleaving or overlapping is illustrated in Section \ref{Sec:MM}.
		\end{remark}
	\begin{theorem}\rm\label{Pro:1D_SWE_PP}
		Assume that $\overline{\bm{U}}_{i}\in \mathcal{G}$ and $\overline{J}_{i}>0$ for all $i$. If $\overline{J}_{i}^{\Delta t}>0$, and 
		\begin{equation}\label{eq:1D_PP_condition}
			J_{i_{\mu}}>0, \quad	h_{i_{\mu}}\geqslant0 \qquad \forall i,~\forall \mu, 
		\end{equation}
		then the PP property 
		\begin{equation*}
			\overline{\bm{U}}_{i}^{\Delta t} := \overline{J\bm{U}}_{i}^{\Delta t}/\overline{J}_{i}^{\Delta t} \in \mathcal{G},
		\end{equation*}
		holds under the CFL condition
		\begin{equation}\label{eq:1D_CFL_condition}
			\Delta t \leqslant  		\dfrac{
				\omega_1 
				\Delta \xi 
			}
			{\alpha
				+
				\beta
			} \min_{i} \left\{ J_{i\pm\frac{1}{2}}^{\mp} \right \}. 
		\end{equation}
	\end{theorem}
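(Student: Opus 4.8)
The plan is to reduce the assertion to the nonnegativity of the first component of $\overline{J\bm{U}}_i^{\Delta t}$. Since $\mathcal{G}$ constrains only the water depth, writing $\overline{Jh}_i^{\Delta t}$ for that component and using the hypothesis $\overline{J}_i^{\Delta t}>0$, it suffices to prove $\overline{Jh}_i^{\Delta t}\geqslant 0$, after which $\overline{\bm{U}}_i^{\Delta t}=\overline{J\bm{U}}_i^{\Delta t}/\overline{J}_i^{\Delta t}$ automatically lies in $\mathcal{G}$. The first observation is that the mass component of \eqref{eq:1D_dis_U} carries no source contribution: in one dimension $\bm{r}_m$, hence $\bm{\mathcal{R}}_m$ and $\widetilde{\bm{\mathcal{R}}}_m$, have vanishing first entry, and the hydrostatic-reconstruction correction in \eqref{eq:1D_num_flux} affects only the momentum component. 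Thus $\overline{Jh}_i^{\Delta t}=\overline{Jh}_i-\frac{\Delta t}{\Delta\xi}\big(\widetilde{\mathcal{F}}^{(1)}_{i+\frac{1}{2}}-\widetilde{\mathcal{F}}^{(1)}_{i-\frac{1}{2}}\big)$, where the superscript marks the first component, and $\widetilde{\mathcal{F}}^{(1)}_{i\pm\frac{1}{2}}$ is the sum of a metric part $\big(J\pd{\xi}{t}\big)_{i\pm\frac{1}{2}}\mean{h}_{i\pm\frac{1}{2}}-\frac{\alpha}{2}\jump{h}_{i\pm\frac{1}{2}}$ and a convective Lax--Friedrichs part at the hydrostatically reconstructed states, which simplifies to $\frac{1}{2} h^{-,*}_{i\pm\frac{1}{2}}\big((v_1)^{-}_{i\pm\frac{1}{2}}+\beta\big)+\frac{1}{2} h^{+,*}_{i\pm\frac{1}{2}}\big((v_1)^{+}_{i\pm\frac{1}{2}}-\beta\big)$.

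Next I would write $\overline{Jh}_i=\sum_{\mu}\omega_\mu J_{i_{\mu}}h_{i_{\mu}}$ (the conservation property preserved by the reconstruction and the PP limiter) and use the Gauss--Lobatto quadrature to peel off the two endpoint terms $\omega_1 J^{-}_{i+\frac{1}{2}}h^{-}_{i+\frac{1}{2}}$ and $\omega_1 J^{+}_{i-\frac{1}{2}}h^{+}_{i-\frac{1}{2}}$, leaving $\sum_{\mu=2}^{Q-1}\omega_\mu J_{i_{\mu}}h_{i_{\mu}}\geqslant 0$ untouched by \eqref{eq:1D_PP_condition}. Substituting the two fluxes, I would collect the coefficient of each boundary quantity $h^{\mp}_{i\pm\frac{1}{2}}$ and $h^{\pm,*}_{i\pm\frac{1}{2}}$. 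The structural inputs are: the metric flux is a Lax--Friedrichs-type flux whose viscosity coefficient $\alpha$ dominates the metric speed $\big(J\pd{\xi}{t}\big)_{i\pm\frac{1}{2}}$, so its $h^{+}$- and $h^{-}$-biased pieces have the expected signs; $\beta$ dominates the reconstructed velocities, so the convective piece has its first summand $\geqslant 0$ and its second $\leqslant 0$; and $0\leqslant h^{\pm,*}_{i\pm\frac{1}{2}}\leqslant h^{\pm}_{i\pm\frac{1}{2}}$, which lets me dominate the starred depths by $h^{\pm}_{i\pm\frac{1}{2}}$ in the terms biased toward cell $i$. These are the moving-mesh analogues of the sign facts already exploited in the proof of Theorem \ref{Pro:1D_SWE_PP_J}.

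Reassembling, a lower bound for $\overline{Jh}_i^{\Delta t}$ is the nonnegative interior sum, plus the coefficients of the neighbour-cell quantities $h^{+}_{i+\frac{1}{2}}$, $h^{-}_{i-\frac{1}{2}}$, $h^{+,*}_{i+\frac{1}{2}}$, $h^{-,*}_{i-\frac{1}{2}}$, which are already $\geqslant 0$ by the sign facts, plus two own-cell groups of the form $\Big(\omega_1 J^{\mp}_{i\pm\frac{1}{2}}-\frac{\Delta t}{2\Delta\xi}\big[\big(J\pd{\xi}{t}\big)_{i\pm\frac{1}{2}}+(v_1)^{\mp}_{i\pm\frac{1}{2}}+\alpha+\beta\big]\Big)h^{\mp}_{i\pm\frac{1}{2}}$, whose bracket is at most $2(\alpha+\beta)$. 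Each such group is nonnegative exactly when $\Delta t\leqslant \omega_1\Delta\xi\, J^{\mp}_{i\pm\frac{1}{2}}/(\alpha+\beta)$, and taking the minimum over all interfaces yields precisely the CFL condition \eqref{eq:1D_CFL_condition}, the factor $\min_i\{J^{\mp}_{i\pm\frac{1}{2}}\}$ being exactly what is needed here. Hence $\overline{Jh}_i^{\Delta t}\geqslant 0$, so dividing by $\overline{J}_i^{\Delta t}>0$ gives $\overline{\bm{U}}_i^{\Delta t}\in\mathcal{G}$; the extension to explicit SSP Runge--Kutta time stepping follows from their convex-combination structure, as in Theorem \ref{Prop:1D_WB}.

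The step I expect to be the main obstacle is precisely this coefficient bookkeeping and its interaction with the mesh metrics. Unlike the fixed-mesh hydrostatic-reconstruction analysis, one cannot first reduce the high-order scheme for $\overline{h}$ to a convex combination of formally first-order positivity-preserving schemes: the genuine unknown is $\overline{Jh}$, the numerical flux $\widetilde{\mathcal{F}}^{(1)}$ simultaneously mixes the moving-mesh metric flux (including its $\jump{J}$-stabilization with the constant $\beta_{J}$), the Lax--Friedrichs viscosity with speed $\alpha$, and the hydrostatic correction through the starred depths, and positivity of the water depth is recovered only after dividing by $\overline{J}^{\Delta t}$, whose own positivity is supplied separately (Theorem \ref{Pro:1D_SWE_PP_J}, or here the hypothesis $\overline{J}_i^{\Delta t}>0$). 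Tracking which reconstructed quantity ($J$ versus $h$, cell $i$ versus $i\pm1$) each sub-term depends on, and checking that the mixed contributions produced by the metric stabilization still carry the correct sign or are absorbed into the own-cell groups under a CFL bound of the stated form, is where the argument is genuinely nontrivial.
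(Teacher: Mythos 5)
Your proposal is correct and follows essentially the same route as the paper's proof: isolate the first (mass) component, use the Gauss--Lobatto decomposition $\overline{Jh}_{i}=\sum_{\mu}\omega_{\mu}J_{i_{\mu}}h_{i_{\mu}}$ to peel off the two endpoint terms, exploit the domination of the metric speed by $\alpha$ and of the reconstructed velocities by $\beta$ together with $0\leqslant h^{\pm,*}\leqslant h^{\pm}$, and collect the own-cell coefficients to arrive at exactly the CFL condition \eqref{eq:1D_CFL_condition}. The only difference is cosmetic bookkeeping (you collect per-boundary-value coefficients directly, whereas the paper first isolates the nonnegative mean-value terms $I_1$ and the remainder $I_2$), and your closing remark on SSP RK convex combinations is harmless extra content beyond the forward-Euler statement.
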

	\begin{proof}
		Consider the discrete evolution equation for $Jh$: 
		\begin{equation}\label{eq:1D_PP_h}
			\begin{aligned}
				\overline{Jh}_{i}^{\Delta t}  - 
				\overline{Jh}_{i}
				=&  
				-
				\frac{\Delta t}{\Delta \xi}
				\left[
				\left(J \pd{\xi}{t}\right)
				_{i+\frac{1}{2}}
				\mean{h}
				_{i+\frac{1}{2}}
				-\frac{\alpha}{2}
				\jump{h}
				_{i+\frac{1}{2}}
				-
				\left(J \pd{\xi}{t}\right)
				_{i-\frac{1}{2}}
				\mean{h}
				_{i-\frac{1}{2}}
				+\frac{\alpha}{2}
				\jump{h}
				_{i-\frac{1}{2}}
				\right]
				\\
				&-\frac{\Delta t}{\Delta \xi}
				\Bigg[
				\frac{1}{2}\left(h_{i+\frac{1}{2}}^{-,*}(v_1)_{i+\frac{1}{2}}^{-}
				+h_{i+\frac{1}{2}}^{+,*}(v_1)_{i+\frac{1}{2}}^{+}
				\right)-
				\frac{\beta}{2}
				\jump{h}
				_{i+\frac{1}{2}}^{*} 
				\\&\quad\quad\quad\quad\quad-
				\frac{1}{2}\left(h_{i-\frac{1}{2}}^{-,*}(v_1)_{i-\frac{1}{2}}^{-}
				+h_{i-\frac{1}{2}}^{+,*}(v_1)_{i-\frac{1}{2}}^{+}
				\right)
				+
				\frac{\beta}{2}
				\jump{h}
				_{i-\frac{1}{2}}^{*}
				\Bigg],
			\end{aligned}
		\end{equation}
		Based on the relations 
		\begin{align*}
			&\pm\frac{1}{2}\left(h_{i\pm\frac{1}{2}}^{-,*}(v_1)_{i\pm\frac{1}{2}}^{-}
			+h_{i\pm\frac{1}{2}}^{+,*}(v_1)_{i\pm\frac{1}{2}}^{+}
			\right)  \leqslant 
			\beta \mean{h}
			_{i\pm\frac{1}{2}}^{*},
			\\
			& \frac{\alpha}{2}
			\jump{h}
			_{i+\frac{1}{2}}
			-\frac{\alpha}{2}
			\jump{h}
			_{i-\frac{1}{2}}
			=
			\alpha
			\mean{h}
			_{i+\frac{1}{2}}
			+\alpha
			\mean{h}
			_{i-\frac{1}{2}}
			-
			\alpha
			{h}
			_{i+\frac{1}{2}}^{-} 
			-\alpha
			{h}
			_{i-\frac{1}{2}}^{+},
		\end{align*}
		the lower bound of the right-hand side of \eqref{eq:1D_PP_h} can be estimated as
		\begin{equation}\label{eq:JHDelta_1D}
			\overline{Jh}_{i}^{\Delta t}  - 
			\overline{Jh}_{i}
			\geqslant I_1+I_2
		\end{equation}
		with
		\begin{equation*}
			\begin{aligned}
				I_1 &= \frac{\Delta t}{\Delta \xi}
				\left[
				\left(
				\alpha
				-
				J \pd{\xi}{t}
				\right)
				_{i+\frac{1}{2}}
				\mean{h}
				_{i+\frac{1}{2}}
				+
				\left(
				\alpha
				+
				J \pd{\xi}{t}
				\right)
				_{i-\frac{1}{2}}
				\mean{h}
				_{i-\frac{1}{2}}
				\right],
				\\
				I_2 &= -\frac{\Delta t}{\Delta \xi}
				\left[	
				\beta{h}
				_{i+\frac{1}{2}}^{-,*}
				+
				\beta{h}
				_{i-\frac{1}{2}}^{+,*}
				+\alpha
				{h}
				_{i+\frac{1}{2}}^{-}+\alpha
				{h}
				_{i-\frac{1}{2}}^{+}\right].\\
			\end{aligned}
		\end{equation*}
		Noting that $I_1\geqslant0$ and
		$h_{i\pm\frac{1}{2}}^{\mp,*}\leqslant h_{i\pm\frac{1}{2}}^{\mp}$, one has
		\begin{align}\label{eq:1D_h_pp_delta_t}
			\overline{h}_{i}^{\Delta t}  - \dfrac{\overline{Jh}_{i}}{\overline{J}_{i}^{\Delta t}} \geqslant
			&
			-\frac{\Delta t}{\overline{J}_{i}^{\Delta t}\Delta \xi}
			\left[
			\left(
			\alpha
			+ 
			\beta
			\right)
			{h}
			_{i+\frac{1}{2}}^{+}
			+
			\left(
			\alpha 
			+ 
			\beta
			\right)
			{h}
			_{i-\frac{1}{2}}^{+}
			\right].
		\end{align}
		Furthermore, by using the fact
		\begin{align*}
			\overline{Jh}_{i}
			=&\sum_{\mu = 2}^{Q-1}
			\omega_{\mu}
			J_{i_{\mu}} h_{i_{\mu}}
			+ \omega_1
			\left(
			J
			_{i+\frac{1}{2}}^{-}
			{h}
			_{i+\frac{1}{2}}^{-}
			+
			J
			_{i-\frac{1}{2}}^{+}
			{h}
			_{i-\frac{1}{2}}^{+}
			\right),
		\end{align*}
		the inequality \eqref{eq:1D_h_pp_delta_t} can be reformulated as
		\begin{equation*}
			\begin{aligned}
				\overline{h}_{i}^{\Delta t}   \geqslant
				&
				\frac{1}{\overline{J}_{i}^{\Delta t}}
				\left[
				\left(
				\omega_1
				{J}
				_{i+\frac{1}{2}}^{-}
				-\frac{\Delta t}{\Delta \xi}
				\left(
				\alpha
				+ 
				\beta
				\right)
				\right)
				{h}
				_{i+\frac{1}{2}}^{-}
				+
				\left(
				\omega_1
				{J}
				_{i-\frac{1}{2}}^{+}
				-\frac{\Delta t}{\Delta \xi}
				\left(
				\alpha
				+ 
				\beta
				\right)
				\right)
				{h}
				_{i-\frac{1}{2}}^{+}
				\right]+
				\frac{1}{\overline{J}_{i}^{\Delta t}}\sum_{\mu=2} ^{Q-1}\omega_{\mu} J_{i_{\mu}} h_{i_{\mu}},
			\end{aligned}
		\end{equation*}
		where we have used the facts ${h}
		_{i-\frac{1}{2}}^{+} = h_{i_{1}}$ and ${h}
		_{i+\frac{1}{2}}^{-} = h_{i_{Q}}$ for the adopted Gauss--Lobatto quadrature.   
		Under the condition \eqref{eq:1D_PP_condition} and the CFL condition \eqref{eq:1D_CFL_condition}, one obtains
		\begin{equation*}
			\overline{h}_{i}^{\Delta t}  \geqslant 0,
		\end{equation*}
		which means $\overline{\bm{U}}_{i}^{\Delta t} \in \mathcal{G}$. 
		The proof is completed.
	\end{proof}

	\section{Structure-preserving schemes for 2D SWEs}\label{Sec:2D_case}
	This section presents the high-order structure-preserving finite volume schemes for the 2D SWEs in curvilinear coordinates. In the 2D scenario, the SCLs in \eqref{eq:GCL_2D} become nontrivial and must be preserved under appropriate discretization.

	\subsection{Outline of 2D structure-preserving finite volume schemes on moving meshes}
	Denote the computational domain by $[0,1]\times[0,1]$. Assume a uniform mesh configuration, defined as follows:
	\begin{align*}
		&(\xi_1)_i =  i\Delta\xi_1, \quad i=0,1,\cdots,N_1-1, \quad \Delta\xi_1 = 1/(N_1-1),
		\\
		& (\xi_2)_j = j\Delta\xi_2,\quad j=0,1,\cdots,N_2-1, \quad \Delta\xi_2 = 1/(N_2-1). 
	\end{align*}
	Define the cell $I_{i,j} := \big[(\xi_1)_{i-\frac{1}{2}},(\xi_1)_{i+\frac{1}{2}}\big]\times\big[(\xi_2)_{j-\frac{1}{2}},(\xi_2)_{j+\frac{1}{2}}\big]$.

	Our semi-discrete finite volume schemes for \eqref{eq:2D_cur_eq} and the GCLs \eqref{eq:GCL_2D} are given by
	\begin{align}
		\frac{d(\overline{J\bm{U}})_{i,j}}{dt} =& 
		-\frac{1}{\Delta \xi_1}
		\left(
		\left(\bm{\widetilde{\mathcal{F}}}_1\right)_{i+\frac{1}{2},j} 
		-
		\left(\bm{\widetilde{\mathcal{F}}}_1\right)_{i-\frac{1}{2},j}\right)
		+\dfrac{1}{\Delta \xi_1}
		\sum\limits_{m=1}^2
		\left(\overline{s}_{m}\right)_{i,j}
		\left(
		{
			\left(
			\widetilde{\bm{\mathcal{R}}}_{m1}
			\right)_{i+\frac{1}{2},j}} 
		-
		{\left(
			\widetilde{\bm{\mathcal{R}}}_{m1}
			\right)_{i-\frac{1}{2},j}}
		\right)\nonumber\\
		& 
		-\frac{1}{\Delta \xi_2}
		\left(
		\left(\bm{\widetilde{\mathcal{F}}}_2\right)_{i,j+\frac{1}{2}} 
		-
		\left(\bm{\widetilde{\mathcal{F}}}_2\right)_{i,j-\frac{1}{2}}\right)
		+\dfrac{1}{\Delta \xi_2}
		\sum\limits_{m=1}^2
		\left(\overline{s}_{m}\right)_{i,j}
		\left(
		{
			\left(
			\widetilde{\bm{\mathcal{R}}}_{m2}
			\right)_{i,j+\frac{1}{2}}} 
		-
		{\left(
			\widetilde{\bm{\mathcal{R}}}_{m2}
			\right)_{i,j-\frac{1}{2}}}
		\right)
		\nonumber\\
		& +
		\dfrac{1}{\Delta \xi_1 \Delta \xi_2}
		\int_{I_{i,j}} 
		\sum\limits_{m=1}^2
		\left(
		s_m
		-\left(
		\overline{s}_{m}
		\right)_{i,j}
		\right)
		\nabla _{\bm{\xi}} \cdot \bm{\mathcal{R}}_m,  \label{eq:2D_dis_U}\\
		\frac{d\overline{J}_{i,j}}{dt} =& 
		-\frac{1}{\Delta \xi_1}
		\left(
		\left(
		\widetilde{J\frac{\partial \xi_1}{\partial t}}
		\right)_{i+\frac{1}{2},j} 
		-{\left(
			\widetilde{J\frac{\partial \xi_1}{\partial t}}
			\right)}_{i-\frac{1}{2},j}\right)
		-\frac{1}{\Delta \xi_2}
		\left(
		\left(
		\widetilde{J\frac{\partial \xi_2}{\partial t}}
		\right)_{i,j+\frac{1}{2}} 
		\left(
		\widetilde{J\frac{\partial \xi_2}{\partial t}}
		\right)_{i,j-\frac{1}{2}}\right)
		, 
		\label{eq:2D_dis_J}
		\\
		\frac{1}{
			\Delta\xi_{1}
		}
		&
		\left(
		\left(\widetilde{J \frac{\partial \xi_{1}} 
			{\partial x_{1}}
		}\right)
		_{i+\frac{1}{2},j}
		- \left(\widetilde{J \frac{\partial \xi_{1}} 
			{\partial x_{1}}
		}\right)
		_{i-\frac{1}{2},j}
		\right)
		+
		\frac{1}{
			\Delta\xi_{2}
		}
		\left(
		\left(\widetilde{J \frac{\partial \xi_{2}} 
			{\partial x_{1}}
		}
		\right)
		_{i,j+\frac{1}{2}}
		- \left(\widetilde{J \frac{\partial \xi_{2}} 
			{\partial x_{1}}
		}\right)
		_{i,j-\frac{1}{2}}
		\right)=0,\label{eq:2D_dis_SCL1}
		\\
		\frac{1}{
			\Delta\xi_{1}
		}
		&
		\left(
		\left(\widetilde{J \frac{\partial \xi_{1}} 
			{\partial x_{2}}
		}\right)
		_{i+\frac{1}{2},j}
		- \left(\widetilde{J \frac{\partial \xi_{1}} 
			{\partial x_{2}}
		}\right)
		_{i-\frac{1}{2},j}
		\right)
		+
		\frac{1}{
			\Delta\xi_{2}
		}
		\left(
		\left(\widetilde{J \frac{\partial \xi_{2}} 
			{\partial x_{2}}
		}
		\right)
		_{i,j+\frac{1}{2}}
		- \left(\widetilde{J \frac{\partial \xi_{2}} 
			{\partial x_{2}}
		}\right)
		_{i,j-\frac{1}{2}}
		\right)=0,\label{eq:2D_dis_SCL2}
	\end{align}
	where
	\begin{equation}\label{eq:2D_F_1_Flux}
		\left(
		\bm{
			\widetilde{\mathcal{F}}
		}_1
		\right)
		_{i\pm\frac{1}{2},j} 
		=   \sum_{\mu=1}^{Q}
		\omega_{\mu}
		\left[
		\left(
		{
			J\pd{\xi_1}{t}}
		\right)
		_{i\pm\frac{1}{2},j}^{\mu}
		\mean{\bm{U}}_{i\pm\frac{1}{2},j}^{\mu}
		- \frac{\alpha_1}{2}
		\jump{\bm{U}}_{i\pm\frac{1}{2},j}^{\mu} 
		+
		\left(
		L_{1}
		\right)
		_{i\pm\frac{1}{2},j}^{\mu}
		\left(
		\widetilde{\bm{F}}_{n_{1}}
		\right)
		_{i\pm\frac{1}{2},j}^{\mu}
		\right],
	\end{equation}
	\begin{equation}\label{eq:2D_F_2_Flux}
		\left(
		\bm{
			\widetilde{\mathcal{F}}
		}_2
		\right)
		_{i,j\pm\frac{1}{2}} 
		=   \sum_{\mu=1}^{Q}
		\omega_{\mu}
		\left[
		\left(
		{
			J\pd{\xi_2}{t}}
		\right)
		_{i,j\pm\frac{1}{2}}^{\mu}
		\mean{\bm{U}}_{i,j\pm\frac{1}{2}}^{\mu}
		- \frac{\alpha_2}{2}
		\jump{\bm{U}}_{i,j\pm\frac{1}{2}}^{\mu} 
		+
		\left(
		L_{2}
		\right)
		_{i,j\pm\frac{1}{2}}^{\mu}
		\left(
		\widetilde{\bm{F}}_{n_{2}}
		\right)
		_{i,j\pm\frac{1}{2}}^{\mu}
		\right],
	\end{equation}
	\begin{equation}\label{eq:2D_J_dis_flux}
		\left({J\frac{\partial \xi_1}{\partial t}}\right)_{i\pm\frac{1}{2},j}^{\mu} = 	\left(\widehat
		{J\frac{\partial \xi_1}{\partial t}}
		\right)_{i\pm\frac{1}{2},j}^{\mu}
		-\frac{(\beta_1)_{J}}{2}\jump{J}_{i\pm\frac{1}{2},j}^{\mu},
		~	
		\left({J\frac{\partial \xi_2}{\partial t}}\right)_{i,j\pm\frac{1}{2}}^{\mu} = 	\left(\widehat
		{J\frac{\partial \xi_2}{\partial t}}
		\right)_{i,j\pm\frac{1}{2}}^{\mu}
		-\frac{(\beta_2)_{J}}{2}\jump{J}_{i,j\pm\frac{1}{2}}^{\mu},
	\end{equation}
	\begin{equation}\label{eq:2D_J_dis}
		\begin{small}
			\left(
			\widetilde{J\frac{\partial \xi_1}{\partial t}}
			\right)
			_{i\pm\frac{1}{2},j} 
			= 
			\sum_{\mu=1}^{Q}
			\omega_{\mu}\left(
			{
				{J\frac{\partial \xi_1}{\partial t}}
			}
			\right)
			_{i\pm\frac{1}{2},j}^{\mu} ,
			~
			\left(
			\widetilde{J\frac{\partial \xi_2}{\partial t}}
			\right)
			_{i,j\pm\frac{1}{2}} 
			= 
			\sum_{\mu=1}^{Q}
			\omega_{\mu}
			\left(
			{J\frac{\partial \xi_2}{\partial t}}
			\right)
			_{i,j\pm\frac{1}{2}}^{\mu},
		\end{small}
	\end{equation}
	\begin{equation}
		\left(
		\widetilde{J\frac{\partial \xi_1}{\partial x_k}}
		\right)
		_{i\pm\frac{1}{2},j} 
		= 
		\sum_{\mu=1}^{Q}
		\omega_{\mu}
		\left(
		{J\frac{\partial \xi_1}{\partial x_k}}
		\right)
		_{i\pm\frac{1}{2},j}^{\mu},
		~
		\left(
		\widetilde{J\frac{\partial \xi_2}{\partial x_k}}
		\right)
		_{i,j\pm\frac{1}{2}} 
		= 
		\sum_{\mu=1}^{Q}
		\omega_{\mu}
		\left(
		{J\frac{\partial \xi_2}{\partial x_k}}
		\right)
		_{i,j\pm\frac{1}{2}}^{\mu},\label{eq:2D_dis_SCL_mesh_metrics}
	\end{equation}
	\begin{equation}\label{eq:2D_R_1_Flux}
		\left(
		\bm{
			\widetilde{\mathcal{R}}
		}_{m1}
		\right)
		_{i\pm\frac{1}{2},j} 
		=   \sum_{\mu=1}^{Q}
		\omega_{\mu}
		\left[
		\left(
		L_{1}
		\right)
		_{i\pm\frac{1}{2},j}^{\mu}
		\left(
		\left(
		\widetilde{\bm{r}}_m
		\right)_{n_1}
		\right)
		_{i\pm\frac{1}{2},j}^{\mu}
		\right],
	\end{equation}
	\begin{equation}\label{eq:2D_R_2_Flux}
		\left(
		\bm{
			\widetilde{\mathcal{R}}
		}_{m2}
		\right)
		_{i,j\pm\frac{1}{2}} 
		=   \sum_{\mu=1}^{Q}
		\omega_{\mu}
		\left[
		\left(
		L_{2}
		\right)
		_{i,j\pm\frac{1}{2}}^{\mu}
		\left(
		\left(
		\widetilde{\bm{r}}_m
		\right)_{n_2}
		\right)
		_{i,j\pm\frac{1}{2}}^{\mu}
		\right]. 
	\end{equation}
	Here, $(\cdot)_{i\pm\frac{1}{2},j}^{\mu}$ and $(\cdot)_{i,j\pm\frac{1}{2}}^{\mu}$  represent the approximations at the quadrature points $\left((\xi_1)_{i\pm\frac{1}{2}}, (\xi_2)_{j_\mu}\right)$ and $\left((\xi_1)_{i_{\mu}}, (\xi_2)_{j\pm\frac{1}{2}}\right)$, respectively, with the corresponding quadrature weights denoted as $\{ \omega_{\mu}\}_{\mu=1}^Q$. The procedures of obtaining $\bm{U}_{i\pm\frac{1}{2},j}^{\pm,\mu}$ and $\bm{U}_{i,j\pm\frac{1}{2}}^{\pm,\mu}$ will be given in Subsections \ref{Sec:Qua_Recon}. 
		The parameters $\alpha_{1}$, $\alpha_{2}$, $(\beta_1)_{J}$ and $(\beta_2)_{J}$ in \eqref{eq:2D_F_1_Flux}, \eqref{eq:2D_F_2_Flux} and \eqref{eq:2D_J_dis_flux} are defined as 
		\begin{equation}\label{eq:alpha_def}
			\begin{aligned}
				&\alpha_{1} = \max_{i,j,\mu}
				\left\{
				\left | 
				\left(
				J\pd{\xi_1}{t}
				\right)
				_{i\pm\frac{1}{2},j}^{\mu}
				\right |\right\},~
				\alpha_{2} = \max_{i,j,\mu}
				\left\{
				\left | 
				\left({
				J\pd{\xi_2}{t}
			}\right)
				_{i,j\pm\frac{1}{2}}^{\mu}
				\right| \right\},
				\\
				&(\beta_{1})_{J} = \max_{i,j,\mu}
				\left\{
				\left | 
				\left(\widehat{
				J\pd{\xi_1}{t}}/J
				\right)
				_{i\pm\frac{1}{2},j}^{\mu}
				\right |\right\},~
				(\beta_{2})_{J} = \max_{i,j,\mu}
				\left\{
				\left | 
				\left(\widehat{
				J\pd{\xi_2}{t}}/J
				\right)
				_{i,j\pm\frac{1}{2}}^{\mu}
				\right| \right\}.
			\end{aligned}
		\end{equation}
	The computations of
	$
	\displaystyle
	\left(\widehat{J \frac{\partial \xi_\ell}{\partial t}}\right)_{i\pm\frac{1}{2},j}^{\mu}$, $
	\displaystyle
	\left(\widehat{J \frac{\partial \xi_\ell}{\partial t}}\right)_{i,j\pm\frac{1}{2}}^{\mu}$, $
	\displaystyle
	\left(J \frac{\partial \xi_\ell}{\partial x_k}\right)_{i\pm\frac{1}{2},j}^\mu
	$, and $
	\displaystyle
	\left(J \frac{\partial \xi_\ell}{\partial x_k}\right)_{i,j\pm\frac{1}{2}}^\mu
	$ will be given in Section \ref{Sec:Mesh_coeff}.
	
	The integral term in \eqref{eq:2D_dis_U} should be approximated by a proper quadrature rule on $I_{i,j}$:
	\begin{equation}\label{eq:2D_Source_Term_Dis}
		\dfrac{1}{\Delta \xi_1 \Delta \xi_2}
		\int_{I_{i,j}} 
		\sum_{m=1}^{2}
		\left(
		s_m
		-\left(
		\overline{s}_{m}
		\right)_{i,j}
		\right)
		\nabla _{\bm{\xi}} \cdot \bm{\mathcal{R}}_m\approx
		\sum_{\mu,\nu= 1}^{Q} \omega_{\mu,\nu}\sum_{m=1}^{2}\left((s_{m})_{i_{\mu},j_{\nu}} - (\overline{s}_m)_{i,j}\right)\left[\nabla _{\bm{\xi}} \cdot \bm{\mathcal{R}}_m\right]_{i_{\mu},j_{\nu}},
	\end{equation}
	where $(\cdot)_{i_{\mu},j_{\nu}}$ denotes the approximate values at the quadrature point  $\left((\xi_1)_{i_{\mu}},(\xi_2)_{j_{\nu}}\right)$, and $\{\omega_{\mu,\nu}\}$ denote the corresponding quadrature weights. 
	For the WB consideration, the numerical fluxes 	$\left(\widetilde{\bm{F}}_{n_1}\right)_{i+\frac{1}{2},j}^{\mu}$ and $\left(
	\bm{
		\widetilde{\mathcal{R}}
	}_{m1}
	\right)
	_{i+\frac{1}{2},j}$  in equations \eqref{eq:2D_F_2_Flux} and \eqref{eq:2D_R_1_Flux}  can be defined based on the hydrostatic reconstruction \cite{AuDusse2004fast}  as
	\begin{equation*}
		\begin{aligned}
			&
			\left(\widetilde{\bm{F}}_{n_1}\right)_{i+\frac{1}{2},j}^{\mu}
			= \left(\widetilde{\bm{F}}_{n_1}\right)_{i+\frac{1}{2},j}^{L,\mu}
			= \langle
			\left(
			\bm{n}_1
			\right)
			_{i+\frac{1}{2},j}^{\mu}
			,
			\widetilde{\bm{F}}
			_{i+\frac{1}{2},j}^{L,\mu}
			\rangle
			-
			\frac{\beta}{2}
			(
			\bm{U}_{i+\frac{1}{2},j}^{+,\mu,*}
			-
			\bm{U}_{i+\frac{1}{2},j}^{-,\mu,*}
			),
			\\
			&
			\left(
			\bm{
				\widetilde{\mathcal{R}}
			}_{m1}
			\right)
			_{i+\frac{1}{2},j} 
			=   \sum_{\mu=1}^{Q}
			\omega_{\mu}
			\left[
			\left(
			L_{1}
			\right)
			_{i+\frac{1}{2},j}^{\mu}
			\left(
			\left(
			\widetilde{\bm{r}}_m
			\right)_{n_1}
			\right)
			_{i+\frac{1}{2},j}^{\mu}
			\right],\\
			&
			\left(
			\left(
			\widetilde{\bm{r}}_m
			\right)_{n_1}
			\right)
			_{i+\frac{1}{2},j}^{\mu} = 
			\langle
			\left(
			\bm{n}_1
			\right)
			_{i+\frac{1}{2},j}^{\mu}
			,\left(
			{\bm{r}}_m
			\right)_{i+\frac{1}{2},j}^{-,\mu}
			\rangle,
		\end{aligned}
	\end{equation*}
	with 
	\begin{equation*}
		\left(\widetilde{\bm{F}}\right)
		_{i+\frac{1}{2},j}^{L,\mu}
		= \left(
		\left(
		\bm{\widetilde{F}}_1
		\right)
		_{i+\frac{1}{2},j}^{L,\mu}
		, 
		\left(
		\bm{\widetilde{F}}_2
		\right)
		_{i+\frac{1}{2},j}^{L,\mu}
		\right)^{\top}, \qquad  \beta = \max_{i,j}\left(\abs{\overline{\bm{v}}_{i,j}}+\sqrt{g\overline{h}_{i,j}}\right),
	\end{equation*}
	where
	\begin{equation*}
		\left(
		\widetilde{\bm{F}}_1
		\right)
		_{i+\frac{1}{2},j}^{L,\mu}    
		= \dfrac{1}{2}
		\left(
		{\bm{F}}_1
		(
		\bm{U}_{i+\frac{1}{2},j}^{-,\mu,*}
		)
		+
		{\bm{F}}_1
		(
		\bm{U}_{i+\frac{1}{2},j}^{+,\mu,*}
		)
		\right)
		+ \left(\begin{array}{cc}
			0  \\
			\frac{g}{2}\left(h_{i+\frac{1}{2},j}^{-,\mu}\right)^2-\frac{g}{2}
			\left(h_{i+\frac{1}{2},j}
			^{-,\mu,*}
			\right)^2 \\
			0
		\end{array}
		\right),
	\end{equation*}
	\begin{equation*}
		\left(
		\widetilde{\bm{F}}_2
		\right)
		_{i+\frac{1}{2},j}^{L,\mu}    
		= \dfrac{1}{2}
		\left(
		{\bm{F}}_2
		(
		\bm{U}_{i+\frac{1}{2},j}^{-,\mu,*}
		)
		+
		{\bm{F}}_2
		(
		\bm{U}_{i+\frac{1}{2},j}^{+,\mu,*}
		)
		\right)
		+ \left(\begin{array}{cc}
			0  \\
			0  \\
			\frac{g}{2}\left(h_{i+\frac{1}{2},j}^{-,\mu}\right)^2-\frac{g}{2}
			\left(h_{i+\frac{1}{2},j}
			^{-,\mu,*}
			\right)^2  
		\end{array}
		\right),
	\end{equation*}
	\begin{equation*}
		\bm{U}_{i+\frac{1}{2},j}^{\pm,\mu,*} = 
		\left(\begin{array}{cc}
			h_{i+\frac{1}{2},j}^{\pm,\mu,*} \\
			h_{i+\frac{1}{2},j}^{\pm,\mu,*}(v_1)_{i+\frac{1}{2},j}^{\pm,\mu}
			\\
			h_{i+\frac{1}{2},j}^{\pm,\mu,*}(v_2)_{i+\frac{1}{2},j}^{\pm,\mu}
		\end{array}\right), \quad 
		h_{i+\frac{1}{2},j}^{\pm,\mu,*} 
		= \max
		\left(
		0,  h_{i+\frac{1}{2},j}^{\pm,\mu}+b_{i+\frac{1}{2},j}^{\pm,\mu} - \max
		\left(
		b_{i+\frac{1}{2},j}^{+,\mu},
		b_{i+\frac{1}{2},j}^{-,\mu}
		\right)
		\right).
	\end{equation*}
	The numerical fluxes in \eqref{eq:2D_F_1_Flux}-\eqref{eq:2D_R_2_Flux} maintain 
	\begin{equation}\label{eq:2D_condition}
		\begin{aligned}
			&\left(
			\widetilde{\bm{F}}_{n_{1}}
			\right)
			_{i\pm\frac{1}{2},j}^{\mu} - \sum\limits_{m=1}^2(\overline{s}_m)_{i,j}
			\left(
			\left(
			\widetilde{\bm{r}}_m
			\right)_{n_1}
			\right)
			_{i\pm\frac{1}{2},j}^{\mu}  = 
			\langle
			\left(
			\bm{n}_1
			\right)
			_{i\pm\frac{1}{2},j}^{\mu}
			,\bm{C}\rangle,\\
			&\left(
			\widetilde{\bm{F}}_{n_{2}}
			\right)
			_{i,j\pm\frac{1}{2}}^{\mu} - \sum\limits_{m=1}^2(\overline{s}_m)_{i,j}
			\left(
			\left(
			\widetilde{\bm{r}}_m
			\right)_{n_2}
			\right)
			_{i,j\pm\frac{1}{2}}^{\mu}  = 
			\langle
			\left(
			\bm{n}_2
			\right)
			_{i,j\pm\frac{1}{2}}^{\mu}
			,\bm{C}\rangle,
		\end{aligned}
	\end{equation}
	when a steady state is reached.
	\ref{section:Appendix} details the fluxes $\left(\widetilde{\bm{F}}_{n_1}\right)_{i-\frac{1}{2},j}^{\mu}$, $\left(
	\bm{
		\widetilde{\mathcal{R}}
	}_{m1}
	\right)
	_{i-\frac{1}{2},j}$, $\left(\widetilde{\bm{F}}_{n_2}\right)_{i,j\pm\frac{1}{2}}^{\mu}$, $\left(
	\bm{
		\widetilde{\mathcal{R}}
	}_{m2}
	\right)
	_{i,j\pm\frac{1}{2}}$ and the proof for the condition \eqref{eq:2D_condition}.
	As discussed in Section \ref{sec:22}, the governing equations of $\bm{w}$ and $\bm{q}$ have been expressed as
	\begin{equation*}
		\pd{J\bm{w}}{t}
		+\sum_{\ell = 1}^{2}
		\pd{}{\xi_{\ell}}
		\left(
		J\pd{\xi_{\ell}}{t} \bm{w}
		\right) = 0, \quad	\pd{J\bm{q}}{t}
		+\sum_{\ell = 1}^{2}
		\pd{}{\xi_{\ell}}
		\left(
		J\pd{\xi_{\ell}}{t} \bm{q}
		\right) = 0.
	\end{equation*}
	Our semi-discrete schemes for  $J\bm{w}$ and $J\bm{q}$ are given by 
	\begin{align}
		\frac{d(\overline{J\bm{w}})_{i,j}}{dt} =& 
		-\frac{1}{\Delta \xi_1}
		\left(
		\widetilde{\bm{w}}_{i+\frac{1}{2},j} 
		-\widetilde{\bm{w}}_{i-\frac{1}{2},j}\right)
		-\frac{1}{\Delta \xi_2}
		\left(
		\widetilde{\bm{w}}_{i,j+\frac{1}{2}} 
		-\widetilde{\bm{w}}_{i,j-\frac{1}{2}}\right)
		, 
		\label{eq:2D_dis_p}\\
		\frac{d(\overline{J\bm{q}})_{i,j}}{dt} =& 
		-\frac{1}{\Delta \xi_1}
		\left(
		\widetilde{\bm{q}}_{i+\frac{1}{2},j} 
		-
		\widetilde{\bm{q}}_{i-\frac{1}{2},j}\right)
		-\frac{1}{\Delta \xi_2}
		\left(
		\widetilde{\bm{q}}_{i,j+\frac{1}{2}} 
		-
		\widetilde{\bm{q}}_{i,j-\frac{1}{2}}
		\right),  \label{eq:2D_dis_q}
	\end{align}
	with 
	\begin{equation}\label{eq:2D_flux_pq}
		\begin{aligned}
			&\widetilde{\bm{w}}_{i\pm\frac{1}{2},j} = \sum_{\mu=1}^{Q} \omega_{\mu}
			\left[
			\left(
			J\pd{\xi}{t}
			\right)_{i\pm\frac{1}{2},j}^{\mu}
			\mean{\bm{w}}_{i\pm\frac{1}{2},j}^{\mu}  
			- \frac{\alpha_1}{2}
			\jump{\bm{w}}_{i\pm\frac{1}{2},j}^{\mu}
			\right],\\
			&\widetilde{\bm{w}}_{i,j\pm\frac{1}{2}} = \sum_{\mu=1}^{Q} \omega_{\mu}
			\left[
			\left(
			J\pd{\xi}{t}
			\right)_{i,j\pm\frac{1}{2}}^{\mu}
			\mean{\bm{w}}_{i,j\pm\frac{1}{2}}^{\mu}  
			- \frac{\alpha_2}{2}
			\jump{\bm{w}}_{i,j\pm\frac{1}{2}}^{\mu}
			\right],\\
			&\widetilde{\bm{q}}_{i\pm\frac{1}{2},j} = \sum_{\mu=1}^{Q} \omega_{\mu}
			\left[
			\left(
			J\pd{\xi}{t}
			\right)_{i\pm\frac{1}{2},j}^{\mu}
			\mean{\bm{q}}_{i\pm\frac{1}{2},j}^{\mu}  
			- \frac{\alpha_1}{2}
			\jump{\bm{q}}_{i\pm\frac{1}{2},j}^{\mu}
			\right],
			\\
			&\widetilde{\bm{q}}_{i,j\pm\frac{1}{2}} = \sum_{\mu=1}^{Q} \omega_{\mu}
			\left[
			\left(
			J\pd{\xi}{t}
			\right)_{i,j\pm\frac{1}{2}}^{\mu}
			\mean{\bm{q}}_{i,j\pm\frac{1}{2}}^{\mu}  
			- \frac{\alpha_2}{2}
			\jump{\bm{q}}_{i,j\pm\frac{1}{2}}^{\mu}
			\right].
		\end{aligned}
	\end{equation}
	The cell averages of $\bm{U}$, $\bm{w}$, and $\bm{q}$ in the physical domain are then computed by 
	\begin{equation*}
		\overline{\bm{U}}_{i,j} := 
		\frac
		{\overline{J\bm{U}}_{i,j}}{\overline{J}_{i,j}} = \frac{\int_{I_{i,j}}J\bm{U}\mathrm{d}\bm{\xi}}{\int_{I_{i,j}}J\mathrm{d}\bm{\xi}}, \quad 
		\overline{\bm{w}}_{i,j} := 
		\frac
		{\overline{J\bm{w}}_{i,j}}
		{\overline{J}_{i,j}}=
		\frac{\int_{I_{i,j}}
			J\bm{w}
			\mathrm{d}\bm{\xi}}
		{\int_{I_{i,j}}
			J\mathrm{d}\bm{\xi}}, \quad 
		\overline{\bm{q}}_{i,j} := 
		\frac
		{\overline{J\bm{q}}_{i,j}}
		{\overline{J}_{i,j}}
		=
		\frac{\int_{I_{i,j}}
			J\bm{q}
			\mathrm{d}\bm{\xi}}
		{\int_{I_{i,j}}
			J\mathrm{d}\bm{\xi}}.
	\end{equation*}
	One can use the numerical quadrature rules to calculate the integral above.
	\subsection{Discretization of mesh metrics}\label{Sec:Mesh_coeff}

	This subsection first details the computations of $\left\{ \left(J \dfrac{\partial \xi_\ell}{\partial x_k}\right)_{i_{\mu},j_{\nu}} \right\}$ at the quadrature points $\left\{\left((\xi_1)_{i_{\mu}},(\xi_2)_{j_\nu}\right)\right\}$ for the calculations in  \eqref{eq:2D_Source_Term_Dis}. 
	We then give the high-order approximations of $
	\displaystyle
	\left(\widehat{J \frac{\partial \xi_\ell}{\partial t}}\right)_{i\pm\frac{1}{2},j}^{\mu}$, $
	\displaystyle
	\left(\widehat{J \frac{\partial \xi_\ell}{\partial t}}\right)_{i,j\pm\frac{1}{2}}^{\mu}$, $
	\displaystyle
	\left(J \frac{\partial \xi_\ell}{\partial x_k}\right)_{i\pm\frac{1}{2},j}^\mu
	$, and $
	\displaystyle
	\left(J \frac{\partial \xi_\ell}{\partial x_k}\right)_{i,j\pm\frac{1}{2}}^\mu
	$ at the quadrature points $\left((\xi_1)_{i\pm\frac{1}{2}},(\xi_2)_{j_\mu}\right)$ and $\left((\xi_1)_{i_\mu},(\xi_2)_{j\pm\frac{1}{2}}\right)$.

	The high-order approximation for the mesh metrics satisfying the discrete version of the SCLs can be constructed by using the method proposed in \cite{Xu_freestream}. Note that the mesh metrics satisfy the following relations
	\begin{equation}\label{eq:Mesh_metrics_relation}
		\begin{aligned}
			&J \frac{\partial \xi_1}{\partial x_1} = \frac{\partial x_2}{\partial \xi_2},\quad\,\,\,\,
			J \frac{\partial \xi_1}{\partial x_2} = -\frac{\partial x_1}{\partial \xi_2},\\
			&J \frac{\partial \xi_2}{\partial x_1} = -\frac{\partial x_2}{\partial \xi_1},\quad
			J \frac{\partial \xi_2}{\partial x_2} = \frac{\partial x_1}{\partial \xi_1}.\\
		\end{aligned}
	\end{equation}
	The values $\displaystyle \left(J \frac{\partial \xi_\ell}{\partial x_k}\right)_{i_{\mu},j_{\nu}} $
	can be obtained by using the polynomial interpolation based on the mesh point $\bm{x}$. 
	Take the fifth-order approximation to
	$\left(J\dfrac{\partial \xi_1}{\partial x_1}\right)_{i_{\mu},j_{\nu}}$ as an example. Notice that the $J \dfrac{\partial {\xi_1}}{\partial x_1} = \dfrac{\partial x_2}{\partial \xi_{2}}$. If one has a sixth-order approximation $\widehat{x}_2(\bm{\xi})$ to $x_2(\bm{\xi})$ on the cell $I_{i,j}$, then $\left(J\dfrac{\partial \xi_1}{\partial x_1}\right)_{i_{\mu},j_{\nu}}$ can be evaluated by $\dfrac{\partial \widehat{x}_2}{\partial \xi_2}\left((\xi_1)_{i_{\mu}},(\xi_2)_{j_{\nu}}\right)$.
	To obtain the values $\dfrac{\partial \widehat{x}_2}{\partial \xi_2}\left((\xi_1)_{i_{\mu}},(\xi_2)_{j_{\nu}}\right)$, we use the dimension-by-dimension approach. Let $\mathbb{P}^{5}$  denote the space of 1D  polynomials of degree up to five.  One can derive the polynomial $\widehat{x}_2(\xi_1)|_{{\mathcal E}_{i,j+\frac{1}{2}}} \in \mathbb{P}^{5}$ at the edge ${\mathcal E}_{i,j+\frac{1}{2}}$ connecting $\left((x_1)_{i-\frac{1}{2},j+\frac{1}{2}},(x_2)_{i-\frac{1}{2},j+\frac{1}{2}}\right)$ and $\left((x_1)_{i+\frac{1}{2},j+\frac{1}{2}},(x_2)_{i+\frac{1}{2},j+\frac{1}{2}}\right)$  by using a 1D polynomial interpolation. This gives the point values $\widehat{x}_2((\xi_1)_{i_{\mu},j+\frac{1}{2}})|_{{\mathcal E}_{i,j+\frac{1}{2}}}$ at $(\xi_1)_{i_{\mu},j+\frac{1}{2}}$. We interpolate these values $\left\{\widehat{x}_2((\xi_1)_{i_{\mu},j+\frac{1}{2}})|_{{\mathcal E}_{i,j+\frac{1}{2}}}\right\}$ to obtain
	$\widehat{x}_2(\xi_2)|_{{\mathcal E}_{i_{\mu},j}} \in \mathbb{P}^{5}$ at the edge ${\mathcal E}_{i_{\mu},j}$ connecting $\left((x_1)_{i_\mu,j-\frac{1}{2}},(x_2)_{i_{\mu},j-\frac{1}{2}}\right)$ and $\left((x_1)_{i_\mu,j+\frac{1}{2}},(x_2)_{i_{\mu},j+\frac{1}{2}}\right)$, which gives the point values $ \dfrac{\partial \widehat{x}_2}{\partial {\xi_2}}\left((\xi_2)_{j_{\nu}}\right)|_{{\mathcal E}_{i_{\mu},j}}$.
	Specifically, the 1D polynomial $\widehat{x}_2(\xi_1)|_{{\mathcal E}_{i,j+\frac{1}{2}}}$ can be obtained by using the stencil $\left\{(x_2)_{i\pm\frac{r}{2},j+\frac{1}{2}}\right\}_{r=1}^{3}$. Assume that $\widehat{x}_2(\xi_1)|_{{\mathcal E}_{i,j+\frac{1}{2}}}$ has the following form
	\begin{equation*}
		\begin{aligned}
			x_2(\xi_1)|_{{\mathcal E}_{i,j+\frac{1}{2}}} =&~a_0 \left(\xi_1-(\xi_1)_{i,j+\frac{1}{2}}\right)^5+a_1 \left(\xi_1-(\xi_1)_{i,j+\frac{1}{2}}\right)^4+a_2 \left(\xi_1-(\xi_1)_{i,j+\frac{1}{2}}\right)^3\\
			&+a_3 \left(\xi_1-(\xi_1)_{i,j+\frac{1}{2}}\right)^4+a_4 \left(\xi_1-(\xi_1)_{i,j+\frac{1}{2}}\right)
			+a_5.
		\end{aligned}
	\end{equation*}
	Using the interpolation condition $\widehat{x}_2((\xi_1)_{i\pm\frac{r}{2},j+\frac{1}{2}})|_{{\mathcal E}_{i,j+\frac{1}{2}}} = 
	\left(x_2\right)_{i\pm\frac{r}{2},j+\frac{1}{2}},~r = 1,2,3$, and solving the system of the linear equations, we obtain the  coefficients
	\begin{align*}
		a_0 = &-(10 \left(x_2\right)_{i-\frac{1}{2},j+\frac{1}{2}} - 5 \left(x_2\right)_{i-\frac{3}{2},j+\frac{1}{2}} +  \left(x_2\right)_{i-\frac{5}{2},j+\frac{1}{2}} - 10 \left(x_2\right)_{i+\frac{1}{2},j+\frac{1}{2}} + 5 \left(x_2\right)_{i+\frac{3}{2},j+\frac{1}{2}} \\
		&-  \left(x_2\right)_{i+\frac{5}{2},j+\frac{1}{2}})/(120(\Delta \xi_1) ^5),\\
		a_1 =& ~(2 \left(x_2\right)_{i-\frac{1}{2},j+\frac{1}{2}} - 3 \left(x_2\right)_{i-\frac{3}{2},j+\frac{1}{2}} +  \left(x_2\right)_{i-\frac{5}{2},j+\frac{1}{2}} + 2 \left(x_2\right)_{i+\frac{1}{2},j+\frac{1}{2}} - 3 \left(x_2\right)_{i+\frac{3}{2},j+\frac{1}{2}} \\
		&+  \left(x_2\right)_{i+\frac{5}{2},j+\frac{1}{2}})/(48(\Delta \xi_1)^4),\\
		a_2 =& ~(34 \left(x_2\right)_{i-\frac{1}{2},j+\frac{1}{2}} - 13 \left(x_2\right)_{i-\frac{3}{2},j+\frac{1}{2}} +  \left(x_2\right)_{i-\frac{5}{2},j+\frac{1}{2}} - 34 \left(x_2\right)_{i+\frac{1}{2},j+\frac{1}{2}} + 13 \left(x_2\right)_{i+\frac{3}{2},j+\frac{1}{2}} \\
		&-  \left(x_2\right)_{i+\frac{5}{2},j+\frac{1}{2}})/(48(\Delta \xi_1)^3),\\
		a_3 =& -(34 \left(x_2\right)_{i-\frac{1}{2},j+\frac{1}{2}} - 39 \left(x_2\right)_{i-\frac{3}{2},j+\frac{1}{2}} + 5 \left(x_2\right)_{i-\frac{5}{2},j+\frac{1}{2}} + 34 \left(x_2\right)_{i+\frac{1}{2},j+\frac{1}{2}} - 39 \left(x_2\right)_{i+\frac{3}{2},j+\frac{1}{2}} \\
		&+ 5 \left(x_2\right)_{i+\frac{5}{2},j+\frac{1}{2}})/(96(\Delta \xi_1)^2),\\
		a_4 = &-(2250 \left(x_2\right)_{i-\frac{1}{2},j+\frac{1}{2}} - 125 \left(x_2\right)_{i-\frac{3}{2},j+\frac{1}{2}} + 9 \left(x_2\right)_{i-\frac{5}{2},j+\frac{1}{2}} - 2250 \left(x_2\right)_{i+\frac{1}{2},j+\frac{1}{2}} + 125 \left(x_2\right)_{i+\frac{3}{2},j+\frac{1}{2}} \\
		& - 9 \left(x_2\right)_{i+\frac{5}{2},j+\frac{1}{2}})/1920(\Delta \xi_1),
		\\
		a_5 = &~(75 \left(x_2\right)_{i-\frac{1}{2},j+\frac{1}{2}})/128 - (25 \left(x_2\right)_{i-\frac{3}{2},j+\frac{1}{2}})/256 + (3 \left(x_2\right)_{i-\frac{5}{2},j+\frac{1}{2}})/256 + (75 \left(x_2\right)_{i+\frac{1}{2},j+\frac{1}{2}})/128 \\
		&- (25 \left(x_2\right)_{i+\frac{3}{2},j+\frac{1}{2}})/256
		+ (3 \left(x_2\right)_{i+\frac{5}{2},j+\frac{1}{2}})/256.
	\end{align*}
	It follows that $(x_2)_{i_{\mu},j+\frac{1}{2}} = \widehat{x}_2\left((\xi_1)_{i_\mu,j+\frac{1}{2}}\right)$ at the edge ${\mathcal E}_{i,j+\frac{1}{2}}$. The 1D polynomial $\widehat{x}_2(\xi_2)|_{{\mathcal E}_{i_{\mu},j}}$ 
	can be obtained by using the stencil $\left\{(x_2)_{i_\mu,j\pm\frac{r}{2}}\right\}_{r=1}^{3}$ in a similar way. Assume the polynomials can be represented as 
	\begin{equation*}
		\begin{aligned}
			\widehat{x}_2(\xi_2)|_{{\mathcal E}_{i_{\mu},j}} = &~\widetilde{a}_0 \left(\xi_2-(\xi_2)_{i_{\mu},j}\right)^5+\widetilde{a}_1 \left(\xi_2-(\xi_2)_{i_{\mu},j}\right)^4+\widetilde{a}_2 \left(\xi_2-(\xi_2)_{i_{\mu},j}\right)^3
			\\
			&+\widetilde{a}_3 \left(\xi_2-(\xi_2)_{i_{\mu},j}\right)^4+\widetilde{a}_4 \left(\xi_2-(\xi_2)_{i_{\mu},j}\right)
			+\widetilde{a}_5.
		\end{aligned}
	\end{equation*}
	Based on the interpolation conditions $\widehat{x}_2((\xi_2)_{i_{\mu},j+\frac{r}{2}})|_{{\mathcal E}_{i_{\mu},j}} = 
	\left(x_2\right)_{i_{\mu},j+\frac{r}{2}}$, the coefficients are computed as  
	\begin{align*}
		\widetilde{a}_0 = &-(10 \left(x_2\right)_{i_{\mu},j-\frac{1}{2}} - 5 \left(x_2\right)_{i_{\mu},j-\frac{3}{2}} +  \left(x_2\right)_{i_{\mu},j-\frac{5}{2}} - 10 \left(x_2\right)_{i_{\mu},j+\frac{1}{2}} + 5 \left(x_2\right)_{i_{\mu},j+\frac{3}{2}} \\
		&-  \left(x_2\right)_{i_{\mu},j+\frac{5}{2}})/(120(\Delta \xi_1) ^5),\\
		\widetilde{a}_1 =& ~(2 \left(x_2\right)_{i_{\mu},j-\frac{1}{2}} - 3 \left(x_2\right)_{i_{\mu},j-\frac{3}{2}} +  \left(x_2\right)_{i_{\mu},j-\frac{5}{2}} + 2 \left(x_2\right)_{i_{\mu},j+\frac{1}{2}} - 3 \left(x_2\right)_{i_{\mu},j+\frac{3}{2}} \\
		&+  \left(x_2\right)_{i_{\mu},j+\frac{5}{2}})/(48(\Delta \xi_1)^4),\\
		\widetilde{a}_2 =& ~(34 \left(x_2\right)_{i_{\mu},j-\frac{1}{2}} - 13 \left(x_2\right)_{i_{\mu},j-\frac{3}{2}} +  \left(x_2\right)_{i_{\mu},j-\frac{5}{2}} - 34 \left(x_2\right)_{i_{\mu},j+\frac{1}{2}} + 13 \left(x_2\right)_{i_{\mu},j+\frac{3}{2}} \\
		&-  \left(x_2\right)_{i_{\mu},j+\frac{5}{2}})/(48(\Delta \xi_1)^3),\\
		\widetilde{a}_3 =& -(34 \left(x_2\right)_{i_{\mu},j-\frac{1}{2}} - 39 \left(x_2\right)_{i_{\mu},j-\frac{3}{2}} + 5 \left(x_2\right)_{i_{\mu},j-\frac{5}{2}} + 34 \left(x_2\right)_{i_{\mu},j+\frac{1}{2}} - 39 \left(x_2\right)_{i_{\mu},j+\frac{3}{2}} \\
		&+ 5 \left(x_2\right)_{i_{\mu},j+\frac{5}{2}})/(96(\Delta \xi_1)^2),\\
		\widetilde{a}_4 = &-(2250 \left(x_2\right)_{i_{\mu},j-\frac{1}{2}} - 125 \left(x_2\right)_{i_{\mu},j-\frac{3}{2}} + 9 \left(x_2\right)_{i_{\mu},j-\frac{5}{2}} - 2250 \left(x_2\right)_{i_{\mu},j+\frac{1}{2}} + 125 \left(x_2\right)_{i_{\mu},j+\frac{3}{2}} \\
		& - 9 \left(x_2\right)_{i_{\mu},j+\frac{5}{2}})/1920(\Delta \xi_1),
		\\
		\widetilde{a}_5 = &~(75 \left(x_2\right)_{i_{\mu},j-\frac{1}{2}})/128 - (25 \left(x_2\right)_{i_{\mu},j-\frac{3}{2}})/256 + (3 \left(x_2\right)_{i_{\mu},j-\frac{5}{2}})/256 + (75 \left(x_2\right)_{i_{\mu},j+\frac{1}{2}})/128 \\
		&- (25 \left(x_2\right)_{i_{\mu},j+\frac{3}{2}})/256
		+ (3 \left(x_2\right)_{i_{\mu},j+\frac{5}{2}})/256.
	\end{align*}
	Using the relations \eqref{eq:Mesh_metrics_relation} gives $\left(J\dfrac{\partial \xi_1}{\partial x_1}\right)_{i_{\mu},j_{\nu}} = \dfrac{\partial \widehat{x}_2}{\partial {\xi_2}}\left((\xi_2)_{j_{\nu}}\right)|_{{\mathcal E}_{i_{\mu},j}}$. Similarly, we can obtain the mesh metrics $
	\displaystyle
	\left(J \frac{\partial \xi_\ell}{\partial x_k}\right)_{i\pm\frac{1}{2},j}^\mu
	$ and $
	\displaystyle
	\left(J \frac{\partial \xi_\ell}{\partial x_k}\right)_{i,j\pm\frac{1}{2}}^\mu
	$ in \eqref{eq:2D_dis_SCL_mesh_metrics}.
	As illustrated in \cite{Xu_freestream}, such approximations maintain the discrete SCLs.
	
	Based on the relation
	\begin{equation*}
		J\pd{\xi_{\ell} }{t} = -\sum_{k =1}^{2}\left(\pd{x_{k}}{t}\right)\left( J\pd{\xi_{\ell} }{x_{k}}\right),
	\end{equation*}
	the values of the temporal metric at the quadrature points can be evaluated by
	\begin{align}\label{eq:temporal_metric}
		\left(\widehat{J\pd{\xi_{k} }{t}}\right)_{i_\mu,j_\nu} 
		= -\sum_{\ell =1}^{d}\left[\left(\pd{x_{\ell}}{t}\right)\left( J\pd{\xi_{k} }{x_{\ell}}\right)\right]_{i_{\mu},j_{\nu}},
	\end{align}
	where 
	$\left(\pd{x_{\ell}}{t}\right)_{i_{\mu},j_{\nu}}$
	is the mesh velocity determined by the adaptive moving mesh strategy detailed in Section \ref{Sec:MM}. 
	
	\subsection{High-order 2D reconstruction}\label{Sec:Qua_Recon}
	This section discusses the 2D WENO reconstruction in the computational domain.
	For the computations in \eqref{eq:2D_Source_Term_Dis}, the high-order approximate values of $\bm{U}$ at $\left\{\left(\xi_1)_{i_{\mu}},(\xi_2)_{j_\nu}\right)\right\}$ are required. These values are obtained by generalizing the 1D reconstruction approach in Subsection \ref{Sec:1D_Qua_Recon} to the 2D case in a dimension-by-dimension fashion.

	Step 1. Starting from the cell average values $\overline{J\bm{U}}_{i,j}$ and $\overline{J}_{i,j}$, we apply the 1D WENO reconstruction to obtain the edge average values $\left\{\overline{J\bm{U}}_{i_{\mu},j}\right\}$ and $\left\{\overline{J}_{i_{\mu},j}\right\}$. For example, we perform 
	\begin{align*}
		\left\{\overline{J\bm{U}}_{i,j}\right\} \xrightarrow{\tt WENO} \left\{\overline{J\bm{U}}_{i_{\mu},j}\right\}~\text{for~fixed}~j,~~
		\left\{\overline{J}_{i,j}\right\}\xrightarrow{\tt WENO} \left\{\overline{J}_{i_{\mu},j}\right\}~\text{for~fixed}~j. 
	\end{align*}
	
	Step 2. Based on the edge values $\left\{\overline{J\bm{U}}_{i_{\mu},j}\right\}$ and $\left\{\overline{J}_{i_{\mu},j}\right\}$, we perform the 1D WENO reconstruction along the other direction to get the point values $\mathring{J\bm{U}}_{i_{\mu},j_{\nu}}$,  and $\mathring{J}_{i_{\mu},j_{\nu}}$, i.e.,
	\begin{align*}	
		\left\{\overline{J\bm{U}}_{i_{\mu},j}\right\} \xrightarrow{\tt WENO} \left\{ \mathring{J\bm{U}}_{i_{\mu},j_{\nu}} \right\},~~\left\{\overline{J}_{i_{\mu},j}\right\} \xrightarrow{\tt WENO} \left\{ \mathring{J}_{i_{\mu},j_{\nu}} \right\} ~\text{for~fixed}~i.
	\end{align*} 
	Then  $\mathring{\bm{U}}_{i_{\mu},j_{\nu}} = \mathring{J\bm{U}}_{i_{\mu},j_{\nu}} / \mathring{J}_{i_{\mu},j_{\nu}}$.  To ensure the cell average of $J$, we modify ${J}_{i_{\mu},j_{\nu}} = \overline{J}_{i} - \sum\limits_{\mu,\nu}\omega_{\mu,\nu} \mathring{J}_{i_{\mu},j_{\nu}}$ to obtain $\sum\limits_{\mu,\nu}\omega_{\mu,\nu} {J}_{i_{\mu},j_{\nu}} = \overline{J}_{i,j}$.
	
	Step 3. Define the approximate cell average over $I_{i,j}$ in the computational domain by  $$\widetilde{\bm{U}}_{i,j} = \sum \limits_{\mu,\nu=1}^{Q} \omega_{\mu,\nu} \mathring{\bm{U}}_{\mu,\nu},$$
	where $\omega_{\mu,\nu}$ is the corresponding quadrature weights.
	
	Step 4. Based on the cell averages $\left\{\widetilde{\bm{U}}_{i,j}\right\}$, employ the dimension-by-dimension WENO reconstruction to obtain the high-order approximate values
	$\{\check{\bm{U}}_{i_{\mu},j_{\nu}}\}$ at the quadrature points $\left\{\left(\left(\xi_{1}\right)_{i_\mu},\left(\xi_{2}\right)_{j_\nu}\right)\right\}$.
	Denote $\bm{\gamma}_{i,j} = \overline{J\bm{U}}_{i,j}-\sum \limits_{\mu,\nu}\omega_{\mu,\nu} J_{i_{\mu}, j_{\nu}} \check{\bm{U}}_{i_{\mu},j_{\nu}}$. We modify $\bm{U}_{i_{\mu},j_{\nu}} = \check{\bm{U}}_{i_{\mu},j_{\nu}}+\bm{\gamma}_{i,j}/J_{i_{\mu},j_{\nu}}$ to ensure that $\sum\limits_{\mu,\nu}\omega_{\mu,\nu} J_{i_{\mu},j_{\nu}}{\bm{U}}_{i_{\mu},j_{\nu}}=\overline{J\bm{U}}_{i,j}
	$.
	One can obtain the values $\bm{U}_{i+\frac{1}{2},j}^{-,\mu} = \bm{U}_{i_Q,j_{\mu}},~\bm{U}_{i-\frac{1}{2},j}^{+,\mu} = \bm{U}_{i_1,j_{\mu}}$,  $\bm{U}_{i,j+\frac{1}{2}}^{-,\mu} = \bm{U}_{i_{\mu},j_{Q}}$, and $\bm{U}_{i,j-\frac{1}{2}}^{+,\mu} = \bm{U}_{i_{\mu},j_{1}}$.
	As the 1D discussions in Section \ref{Sec:1D_case}, the coefficients utilized for the reconstructions in  $\mathring{J(h+b)}_{i_{\mu},j_{\nu}}$ and $\mathring{J}_{i_\mu,j_{\nu}}$ need to be the same, so that  ${\bm{U}}_{i_{\mu},j_{\nu}}+{\bm{w}}_{i_{\mu},j_{\nu}} = C{\bm{q}}_{i_{\mu},j_{\nu}}$.
	
		The condition 
		\begin{equation}\label{eq:Condition_PP_2D}
			J_{i_{\mu},j_{\nu}}>0,	\quad 
			h_{i_{\mu},j_{\nu}}\geqslant0, \quad \forall i,j,\mu,\nu,
		\end{equation}
		is important for the PP property of our 2D moving mesh finite volume schemes in Theorems \ref{Pro:SWE_PP_J} and \ref{Pro:SWE_PP}.
	 However, the WENO reconstruction procedure does not guarantee satisfaction of condition \eqref{eq:Condition_PP_2D}. As the 1D case, the following 2D PP limiter is performed to enforce this condition. 
	\begin{remark}[2D PP limiter]
		If $\overline{J}_{i,j}>0$ and $\overline{h}_{i,j}\geqslant 0 $, the 2D PP limiter modifies ${J}_{i_{\mu},j_{\nu}}$ and ${h}_{i_{\mu},j_{\nu}}$ to  
		\begin{equation*}
			\widehat{J}_{i_\mu,j_\nu}
			= \mathcal{\theta}_{J}
			\left({J}_{i_\mu,j_\nu}
			-\overline{J}_{i,j}\right)
			+\overline{J}_{i,j}
			,\quad
			\widehat{h}_{i_\mu,j_\nu}
			= \mathcal{\theta}_{h}
			\left({h}_{i_\mu,j_\nu}
			-\overline{h}_{i,j}\right)
			+\overline{h}_{i,j} \qquad \forall \mu, \nu
		\end{equation*}
		with
		\begin{equation*}
			\theta_{J}:= \begin{cases}
				~1, & \text{if}\quad \min\limits_{\mu,\nu} 
				\left\{
				{J}_{i_\mu,j_\nu}
				\right\} = \overline{J}_{i,j},\\
				\min 
				\left\{
				\left|
				\frac{
					\overline{J}_{i,j} - \epsilon_{0}}
				{\overline{J}_{i,j}-\min\limits_{\mu,\nu} 
					\left\{
					{J}_{i_\mu,j_\nu}
					\right\}}
				\right|, 
				1
				\right\}, & \text{otherwise},
			\end{cases}
		\end{equation*}
		\begin{equation*}
			\theta_{h}:= \begin{cases}
				~1, & \text{if}\quad \min\limits_{\mu,\nu} 
				\left\{
				{h}_{i_\mu,j_\nu}
				\right\} = \overline{h}_{i,j},\\
				\min 
				\left\{
				\left|
				\frac{
					\overline{h}_{i,j}}
				{\overline{h}_{i,j}-\min\limits_{\mu,\nu} 
					\left\{
					{h}_{i_\mu,j_\nu}
					\right\}}
				\right|, 
				1
				\right\}, & \text{otherwise}.
			\end{cases}
		\end{equation*}
		Here, $\epsilon_{0} = \min\{10^{-13},\overline{J}_{i,j}\}$. The condition \eqref{eq:Condition_PP_2D} is guaranteed after the PP limiter. When $J_{i_{\mu},j_{\nu}}>0$ and $ {h}_{i_\mu,j_\nu}\geqslant0 $, one has
		$\widehat{J}_{i_\mu,j_\nu} = {J}_{i_\mu,j_\nu}$
		and  $\widehat{h}_{i_\mu,j_\nu} = {h}_{i_\mu,j_\nu}$. We use $\widehat{J}_{i_\mu,j_\nu}$ to replace ${J}_{i_\mu,j_\nu}$ in Step 4 to ensure $\sum\limits_{\mu,\nu}\omega_{\mu,\nu} \widehat{J}_{i_{\mu},j_{\nu}}{\bm{U}}_{i_{\mu},j_{\nu}}=\overline{J\bm{U}}_{i,j}
		$.
		To maintain the WB property after performing the PP limiting procedure, the bottom topography is updated by
		\begin{equation*}
			\widehat{b}_{i_\mu,j_\nu} = {h}_{i_\mu,j_\nu} 
			+ {b}_{i_\mu,j_\nu} 
			- \widehat{h}_{i_\mu,j_\nu}.
		\end{equation*}
		In the computation, we employ $\widehat{\bm{U}}_{i_{\mu},j_{\nu}} = \left(\widehat{h}_{i_{\mu},j_{\nu}}, \left(hv_1\right)_{i_{\mu},j_{\nu}},\left(hv_2\right)_{i_{\mu},j_{\nu}}\right)^{\top}$, $\widehat{\bm{w}}_{i_{\mu},j_{\nu}} = \left(\widehat{b}_{i_{\mu},j_{\nu}},0,0\right)^{\top}$ and $J_{i_{\mu},j_{\nu}}$ to replace ${\bm{U}}_{i_{\mu},j_{\nu}}$, ${\bm{w}}_{i_{\mu},j_{\nu}}$ and $J_{i_{\mu},j_{\nu}}$, respectively.  This ensures 
		$\widehat{\bm{U}}_{i_{\mu},j_{\nu}}+\widehat{\bm{w}}_{i_{\mu},j_{\nu}}  = C\bm{q}_{i_{\mu},j_{\nu}}$. 
		It also preserves the conservative property.
	\end{remark}

	\subsection{Analysis of WB property}
	The subsection proves the WB property of our 2D schemes.
	\begin{theorem}\rm\label{2D:Prove_WB}
		Consider the semi-discrete schemes  \eqref{eq:2D_dis_U}--\eqref{eq:2D_dis_SCL2},  \eqref{eq:2D_dis_p}--\eqref{eq:2D_dis_q} with the numerical fluxes \eqref{eq:2D_F_1_Flux}--\eqref{eq:2D_R_2_Flux} and \eqref{eq:2D_flux_pq}. 		These schemes are WB, in the sense that, under the forward Euler or explicit SSP RK discretizations, when a steady state is reached at the $n$th time level $t^n$, i.e., 
		\begin{equation*}
			\overline{\bm{U}}_{i,j}^{n}+\overline{\bm{w}}_{i,j}^{n} = C\overline{\bm{q}}_{i,j}^{n}\quad\forall i,j,
		\end{equation*}
		the numerical solution at $t^{n+1}$ also satisfies 
		\begin{equation*}
			\overline{\bm{U}}_{i,j}^{n+1}+\overline{\bm{w}}_{i,j}^{n+1} = C\overline{\bm{q}}_{i,j}^{n+1}\quad\forall i,j. 
		\end{equation*}
	\end{theorem}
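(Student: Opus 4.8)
The plan is to follow the structure of the proof of Theorem~\ref{Prop:1D_WB}, now accounting for the nontrivial SCLs in 2D. Since an explicit SSP RK method is a convex combination of forward Euler steps, it suffices to establish the claim for a single forward Euler step. Assume the steady state $\overline{\bm{U}}_{i,j}^{n}+\overline{\bm{w}}_{i,j}^{n}=C\overline{\bm{q}}_{i,j}^{n}$ holds for all $i,j$. By the consistency-preserving WENO reconstruction together with the PP limiter and its bottom-topography update described in Section~\ref{Sec:Qua_Recon} (which preserves $\widehat{\bm{U}}_{\bullet}+\widehat{\bm{w}}_{\bullet}=C\bm{q}_{\bullet}$), the pointwise relations $h+b\equiv C$, $v_1=v_2=0$, and $\bm{U}_{\bullet}+\bm{w}_{\bullet}=C\bm{q}_{\bullet}$ hold at every quadrature node used in the schemes.

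First I would dispose of the interior integral term in \eqref{eq:2D_dis_U}: at the steady state $s_1=-g(h+b)=-gC$ and $s_2=\tfrac12 g$ are constant on each cell, hence equal to their cell averages $(\overline{s}_m)_{i,j}$, so $s_m-(\overline{s}_m)_{i,j}\equiv 0$ and the quadrature \eqref{eq:2D_Source_Term_Dis} returns zero. Next, the flux--source balance: with $v_1=v_2=0$ and every $h_{\bullet}^{\pm,\mu,*}$ collapsing to the common value $\max(0,C-\max(b^{+},b^{-}))$, the hydrostatic-reconstruction fluxes satisfy \eqref{eq:2D_condition}, i.e. $(\widetilde{\bm{F}}_{n_1})^{\mu}-\sum_m(\overline{s}_m)_{i,j}((\widetilde{\bm{r}}_m)_{n_1})^{\mu}=\langle(\bm{n}_1)^{\mu},\bm{C}\rangle$ and likewise in the $\xi_2$-direction. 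Substituting into \eqref{eq:2D_F_1_Flux}--\eqref{eq:2D_R_2_Flux}, using $L_\ell\langle\bm{n}_\ell,\bm{C}\rangle=\sum_k(J\partial\xi_\ell/\partial x_k)\bm{C}_k$ and the metric averages \eqref{eq:2D_dis_SCL_mesh_metrics}, the ``flux minus source-flux'' combination in \eqref{eq:2D_dis_U} reduces, direction by direction, to the constant-state contribution $\sum_k\bm{C}_k(\widetilde{J\partial\xi_\ell/\partial x_k})$ plus the purely advective pieces $(\widetilde{J\partial\xi_\ell/\partial t}\,\mean{\bm{U}})$ and their numerical dissipation $-\tfrac{\alpha_\ell}{2}\jump{\bm{U}}$. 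Forming the divergence over the cell, the constant-state parts cancel precisely because the discrete mesh metrics obey the SCLs \eqref{eq:2D_dis_SCL1}--\eqref{eq:2D_dis_SCL2}.

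What survives is the mesh-movement part: the forward Euler update of $\overline{J\bm{U}}_{i,j}$ is then driven only by terms $(\widetilde{J\partial\xi_\ell/\partial t}\,\mean{\bm{U}})_{\bullet}$ and $-\tfrac{\alpha_\ell}{2}\jump{\bm{U}}_{\bullet}$, which is exactly the structure of the $\overline{J\bm{w}}_{i,j}$ update \eqref{eq:2D_dis_p} and, up to the factor $C$, of the $\overline{J\bm{q}}_{i,j}$ update \eqref{eq:2D_dis_q}. Adding the discrete equations for $J\bm{U}$ and $J\bm{w}$, using $\mean{\bm{U}+\bm{w}}=C\mean{\bm{q}}$ and $\jump{\bm{U}+\bm{w}}=C\jump{\bm{q}}$ at every quadrature node, and then subtracting $C$ times \eqref{eq:2D_dis_q}, I obtain $\overline{J(\bm{U}+\bm{w})}_{i,j}^{n+1}=C\,\overline{J\bm{q}}_{i,j}^{n+1}$, hence $\overline{\bm{U}}_{i,j}^{n+1}+\overline{\bm{w}}_{i,j}^{n+1}=C\overline{\bm{q}}_{i,j}^{n+1}$ after dividing by $\overline{J}_{i,j}^{n+1}$, which completes the proof.

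The main obstacle I expect is the bookkeeping of the SCL cancellation in 2D: one must track carefully how $L_\ell$, $\bm{n}_\ell$, and the edge quadrature weights recombine so that the constant state $\bm{C}$ factors out of both the flux and the source-flux sums and then vanishes exactly via \eqref{eq:2D_dis_SCL1}--\eqref{eq:2D_dis_SCL2}; the analogous step is trivial in 1D because $J\partial\xi/\partial x\equiv 1$. A secondary point needing care is confirming that the WENO reconstruction (with identical stencil weights for $\mathring{J(h+b)}$ and $\mathring{J}$) and the PP limiter with its bottom update do not break the pointwise hydrostatic relation, since that is precisely what forces the advective terms to collapse as in the 1D argument.
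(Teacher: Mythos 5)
Your proposal is correct and follows essentially the same route as the paper's proof: reduction to forward Euler, vanishing of the interior source integral and the flux--source balance \eqref{eq:2D_condition} at steady state, cancellation of the constant state via the discrete SCLs \eqref{eq:2D_dis_SCL1}--\eqref{eq:2D_dis_SCL2}, and finally combining the $J\bm{U}$, $J\bm{w}$, and $J\bm{q}$ updates using the pointwise relation $\bm{U}_{\bullet}+\bm{w}_{\bullet}=C\bm{q}_{\bullet}$. Your explicit remark that $s_m-(\overline{s}_m)_{i,j}\equiv 0$ kills the quadrature term is a detail the paper leaves implicit, but it is the same argument.
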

	\begin{proof}
		Since an SSP RK method is a convex combination of forward Euler method, we only need to show the conclusion for  the forward Euler discretization without loss of generality. 
		The discrete SCLs \eqref{eq:2D_dis_SCL1} and \eqref{eq:2D_dis_SCL2} give
		\begin{equation*}
			\sum
			\limits_{\mu=1}^{Q}
			\omega_{\mu}
			\left[
			\frac{1}{
				\Delta\xi_{1}
			}
			\left(
			\left(J \frac{\partial \xi_{1} }
			{\partial x_{1}}
			\right)
			_{i+\frac{1}{2},j}
			^{\mu} 
			- \left(
			J \frac{
				\partial \xi_{1} 
			}{
				\partial x_{1}
			}\right)
			_{i-\frac{1}{2},j}^{\mu}
			\right)
			+
			\frac{1}{
				\Delta\xi_{2}
			}
			\left(
			\left(J \frac{\partial \xi_{2} }
			{\partial x_{1}}
			\right)
			_{i,j+\frac{1}{2}}
			^{\mu} 
			- \left(
			J \frac{
				\partial \xi_{2} 
			}{
				\partial x_{1}
			}\right)
			_{i,j-\frac{1}{2}}^{\mu}
			\right)
			\right]=0,
		\end{equation*}
		\begin{equation*}
			\sum
			\limits_{\mu=1}^{Q}
			\omega_{\mu}
			\left[
			\frac{1}{
				\Delta\xi_{1}
			}
			\left(
			\left(J \frac{\partial \xi_{1} }
			{\partial x_{2}}
			\right)
			_{i+\frac{1}{2},j}
			^{\mu} 
			- \left(
			J \frac{
				\partial \xi_{1} 
			}{
				\partial x_{2}
			}\right)
			_{i-\frac{1}{2},j}^{\mu}
			\right)
			+
			\frac{1}{
				\Delta\xi_{2}
			}
			\left(
			\left(J \frac{\partial \xi_{2} }
			{\partial x_{2}}
			\right)
			_{i,j+\frac{1}{2}}
			^{\mu} 
			- \left(
			J \frac{
				\partial \xi_{2} 
			}{
				\partial x_{2}
			}\right)
			_{i,j-\frac{1}{2}}^{\mu}
			\right)
			\right]=0.
		\end{equation*}
		Summing the above two equations and using the definitions of $L_1$ and $L_2$ in \eqref{eq:L_N_Def}, we obtain the following equality
		\begin{align}
			&\sum
			\limits_{\mu=1}^{Q}
			\omega_{\mu} 
			\frac{1}{\Delta \xi_1}
			\left[
			\left(
			L_1
			\right)
			_{i+\frac{1}{2},j}
			^{\mu} 
			\langle
			\left(
			\bm{n}_1
			\right)
			_{i+\frac{1}{2},j}
			^{\mu}
			,\bm{C}
			\rangle
			- \left(
			L_1
			\right)
			_{i-\frac{1}{2},j}
			^{\mu} 
			\langle
			\left(
			\bm{n}_1
			\right)
			_{i-\frac{1}{2},j}
			^{\mu}
			,\bm{C}
			\rangle
			\right]\nonumber
			\\
			&+\sum
			\limits_{\mu=1}^{Q}
			\omega_{\mu} 
			\frac{1}{\Delta \xi_2}
			\left[
			\left(
			L_2
			\right)
			_{i,j+\frac{1}{2}}
			^{\mu} 
			\langle
			\left(
			\bm{n}_2
			\right)
			_{i,j+\frac{1}{2}}
			^{\mu}
			,\bm{C}
			\rangle
			- \left(
			L_2
			\right)
			_{i,j-\frac{1}{2}}
			^{\mu} 
			\langle
			\left(
			\bm{n}_2
			\right)
			_{i,j-\frac{1}{2}}
			^{\mu}
			,\bm{C}
			\rangle
			\right]=0.\label{eq:2D_SCLs_dis_C}
		\end{align}
		When a steady state is reached, the condition \eqref{eq:2D_condition} holds. Based on the condition \eqref{eq:2D_condition} and the equality \eqref{eq:2D_SCLs_dis_C}, 
		the discrete evolution equations \eqref{eq:2D_dis_U} of $J\bm{U}$  degenerate to
		\begin{align}\label{eq:2D_prove_U}
			\left(
			\overline{J\bm{U}}
			\right)_{i,j}^{n+1} 
			-
			\left(
			\overline{J\bm{U}}
			\right)_{i,j}^{n}
			=& 
			-\frac{\Delta t}{\Delta \xi_1}
			\left(
			\left(
			\bm{\widetilde{\mathcal{F}}}_1
			\right)_{i+\frac{1}{2},j} 
			-
			\left(
			\bm{\widetilde{\mathcal{F}}}_1
			\right)_{i-\frac{1}{2},j}
			\right)
			-\frac{\Delta t}{\Delta \xi_2}
			\left(
			\left(
			\bm{\widetilde{\mathcal{F}}}_2
			\right)_{i,j+\frac{1}{2}} 
			-
			\left(
			\bm{\widetilde{\mathcal{F}}}_2
			\right)_{i,j-\frac{1}{2}}
			\right)
		\end{align}
		with 
		\begin{align*}
			\left(
			\bm{
				\widetilde{\mathcal{F}}
			}_1
			\right)
			_{i+\frac{1}{2},j} 
			=  \sum
			\limits_{\mu=1}^{Q}
			\omega_{\mu}
			\left[
			\left(
			J\pd{\xi_1}{t}
			\right)
			_{i+\frac{1}{2},j}^{\mu}
			\mean{\bm{U}}_{i+\frac{1}{2},j}^{\mu}
			- 
			\frac{\alpha_1}{2}
			\jump{\bm{U}}_{i+\frac{1}{2},j}^{\mu}
			\right],
			\\
			\left(
			\bm{
				\widetilde{\mathcal{F}}
			}_1
			\right)
			_{i-\frac{1}{2},j} 
			=   \sum
			\limits_{\mu=1}^{Q}
			\omega_{\mu}
			\left[
			\left(
			J\pd{\xi_1}{t}
			\right)
			_{i-\frac{1}{2},j}^{\mu}
			\mean{\bm{U}}_{i-\frac{1}{2},j}^{\mu}
			- \frac{\alpha_1}{2}
			\jump{\bm{U}}_{i-\frac{1}{2},j}^{\mu}
			\right],
			\\
			\left(
			\bm{
				\widetilde{\mathcal{F}}
			}_2
			\right)
			_{i,j+\frac{1}{2}} 
			=   \sum
			\limits_{\mu=1}^{Q}
			\omega_{\mu}
			\left[
			\left(
			J\pd{\xi_2}{t}
			\right)
			_{i,j+\frac{1}{2}}^{\mu}
			\mean{\bm{U}}_{i,j+\frac{1}{2}}^{\mu}
			- \frac{\alpha_2}{2}
			\jump{\bm{U}}_{i,j+\frac{1}{2}}^{\mu}
			\right],
			\\
			\left(
			\bm{
				\widetilde{\mathcal{F}}
			}_2
			\right)
			_{i,j-\frac{1}{2}} 
			=   \sum
			\limits_{\mu=1}^{Q}
			\omega_{\mu}
			\left[
			\left(
			J\pd{\xi_2}{t}
			\right)
			_{i,j-\frac{1}{2}}^{\mu}
			\mean{\bm{U}}_{i,j-\frac{1}{2}}^{\mu}
			- \frac{\alpha_2}{2}
			\jump{\bm{U}}_{i,j-\frac{1}{2}}^{\mu}
			\right].
		\end{align*}
		The discrete evolution equations of $J\bm{w}$ and $J\bm{q}$ can be expressed as 
		\begin{align}
			(\overline{J\bm{w}})_{i,j}^{n+1}
			- (\overline{J\bm{w}})_{i,j}^{n}=& 
			-\frac{1}{\Delta \xi_1}
			\left(
			\widetilde{\bm{w}}_{i+\frac{1}{2},j} 
			-\widetilde{\bm{w}}_{i-\frac{1}{2},j}\right)
			-\frac{1}{\Delta \xi_2}
			\left(
			\widetilde{\bm{w}}_{i,j+\frac{1}{2}} 
			-\widetilde{\bm{w}}_{i,j-\frac{1}{2}}\right)
			, 
			\label{eq:2D_dis_proof_w}
			\\
			(\overline{J\bm{q}})_{i,j}^{n+1} - (\overline{J\bm{q}})_{i,j}^{n} =& 
			-\frac{1}{\Delta \xi_1}
			\left(
			\widetilde{\bm{q}}_{i+\frac{1}{2},j} 
			-
			\widetilde{\bm{q}}_{i-\frac{1}{2},j}\right)
			-\frac{1}{\Delta \xi_2}
			\left(
			\widetilde{\bm{q}}_{i,j+\frac{1}{2}} 
			-
			\widetilde{\bm{q}}_{i,j-\frac{1}{2}}
			\right).  \label{eq:2D_dis_proof_q}
		\end{align}
		Combining \eqref{eq:2D_prove_U} with \eqref{eq:2D_dis_proof_w} gives 
		\begin{align}
			\left(
			\overline{J\left(
				\bm{U}+\bm{w}
				\right)}
			\right)_{i,j}^{n+1} 
			-
			\left(
			\overline{J\left(
				\bm{U}+\bm{w}
				\right)}
			\right)_{i,j}^{n}
			=& 
			-\sum
			\limits_{\mu=1}^{Q} \omega_{\mu}\dfrac{\Delta t}{\Delta \xi_1}
			\Bigg(
			\left(
			J\pd{\xi_1}{t}
			\right)
			_{i+\frac{1}{2},j}^{\mu}
			\mean{\bm{U}+\bm{w}}_{i+\frac{1}{2},j}^{\mu}
			-\frac{\alpha_1}{2}
			\jump{\bm{U}+\bm{w}}_{i+\frac{1}{2},j}^{\mu}\nonumber
			\\
			&-\left(
			J\pd{\xi_1}{t}
			\right)
			_{i-\frac{1}{2},j}^{\mu}
			\mean{\bm{U}+\bm{w}}_{i-\frac{1}{2},j}^{\mu}
			+ \frac{\alpha_1}{2}
			\jump{\bm{U}+\bm{w}}_{i-\frac{1}{2},j}^{\mu}
			\Bigg)\nonumber
			\\
			& 
			-\sum
			\limits_{\mu=1}^{Q} \omega_{\mu}\dfrac{\Delta t}{\Delta \xi_2}
			\Bigg(
			\left(
			J\pd{\xi_2}{t}
			\right)
			_{i,j+\frac{1}{2}}^{\mu}
			\mean{\bm{U}+\bm{w}}_{i,j+\frac{1}{2}}^{\mu}
			- \frac{\alpha_2}{2}
			\jump{\bm{U}+\bm{w}}_{i,j+\frac{1}{2}}^{\mu}\nonumber
			\\
			&-\left(
			J\pd{\xi_2}{t}
			\right)
			_{i,j-\frac{1}{2}}^{\mu}
			\mean{\bm{U}+\bm{w}}_{i,j-\frac{1}{2}}^{\mu}
			+ \frac{\alpha_2}{2}
			\jump{\bm{U}+\bm{w}}_{i,j-\frac{1}{2}}^{\mu}
			\Bigg).\label{eq:2D_Prove_JUw}
		\end{align}
		The reconstruction procedure yields ${\bm{U}}_{i_{\mu},j_{\nu}}+{\bm{w}}_{i_{\mu},j_{\nu}} = C{\bm{q}}_{i_{\mu},j_{\nu}}$. This implies
		\begin{equation*}
			\mean{\bm{U}+\bm{w}} = C\mean{\bm{q}},\quad \jump{\bm{U}+\bm{w}} = C\jump{\bm{q}}.
		\end{equation*}
		Plugging the above relations into \eqref{eq:2D_Prove_JUw}, one gets 
		\begin{align*}
			\left(
			\overline{J\left(
				\bm{U}+\bm{w}
				\right)}
			\right)_{i,j}^{n+1} 
			-
			\left(
			\overline{JC\bm{q}}
			\right)_{i,j}^{n}
			=& 
			-\sum
			\limits_{\mu=1}^{Q} \omega_{\mu}\dfrac{\Delta t}{\Delta \xi_1}
			\Bigg(
			\left(
			J\pd{\xi_1}{t}
			\right)
			_{i+\frac{1}{2},j}^{\mu}
			\mean{C\bm{q}}_{i+\frac{1}{2},j}^{\mu}
			- \frac{\alpha_1}{2}
			\jump{C\bm{q}}_{i+\frac{1}{2},j}^{\mu}
			\\
			&-\left(
			J\pd{\xi_1}{t}
			\right)
			_{i-\frac{1}{2},j}^{\mu}
			\mean{C\bm{q}}_{i-\frac{1}{2},j}^{\mu}
			+ \frac{\alpha_1}{2}
			\jump{C\bm{q}}_{i-\frac{1}{2},j}^{\mu}
			\Bigg)
			\\
			& 
			-\sum
			\limits_{\mu=1}^{Q} \omega_{\mu}\dfrac{\Delta t}{\Delta \xi_2}
			\Bigg(
			\left(
			J\pd{\xi_2}{t}
			\right)
			_{i,j+\frac{1}{2}}^{\mu}
			\mean{C\bm{q}}_{i,j+\frac{1}{2}}^{\mu}
			- \frac{\alpha_2}{2}
			\jump{C\bm{q}}_{i,j+\frac{1}{2}}^{\mu}
			\\
			&-\left(
			J\pd{\xi_2}{t}
			\right)
			_{i,j-\frac{1}{2}}^{\mu}
			\mean{C\bm{q}}_{i,j-\frac{1}{2}}^{\mu}
			+ \frac{\alpha_2}{2}
			\jump{C\bm{q}}_{i,j-\frac{1}{2}}^{\mu}
			\Bigg).
		\end{align*}
		Substituting \eqref{eq:2D_dis_proof_q} into the above equation gives 
		\begin{equation*}
			\left(
			\overline{J\left(
				\bm{U}+\bm{w}
				\right)}
			\right)_{i,j}^{n+1} 
			= C\overline{J}_{i,j}^{n+1}\overline{\bm{q}}_{i,j}^{n+1},
		\end{equation*}
		which implies 
		\begin{equation*}
			\overline{
				\bm{U}}_{i,j}^{n+1}+\overline{\bm{w}}
			_{i,j}^{n+1} 
			= C\overline{\bm{q}}_{i,j}^{n+1}.
		\end{equation*}
		The proof is completed.	\end{proof}
	
	\subsection{Analysis of PP property}\label{Sec:2DPP}
	The semi-discrete schemes \eqref{eq:2D_dis_U} and \eqref{eq:2D_dis_J} coupled with the forward Euler time discretization can be represented as
	\begin{equation*}
		\overline{J\bm{U}}_{i,j}^{\Delta t}  = \overline{J\bm{U}}_{i,j}  - \Delta t\bm{L}_{i,j},\quad\overline{J}_{i,j}^{\Delta t}  = \overline{J}_{i,j}  - \Delta t{L}_{i,j},
	\end{equation*}
	where $\bm{L}_{i,j}$ and $L_{i,j}$ denote the right-hand-side terms of  \eqref{eq:2D_dis_U}  and \eqref{eq:2D_dis_J}, respectively. $\overline{J\bm{U}}_{i,j}$ and $\overline{J}_{i,j}$ denote the approximations at the $n$th time level. 
		\begin{theorem}\rm\label{Pro:SWE_PP_J}
			Assume that $\overline{J}_{i,j}>0$ for all $i$ and $j$. If 
			$
			J_{i_{\mu},j_{\nu}} > 0 
			$
			for all $i,j,\mu,\nu,$
			then $
			\overline{J}_{i,j}^{\Delta t}>0
			$
			holds under the  condition
			\begin{equation}\label{eq:CFL_condition_J}
				\Delta t \leqslant \dfrac{1}{2} \min
				\left\{
				\dfrac{
					\omega_1 
					\Delta \xi_1 
				}
				{\left(\beta_1\right)_{J}
				},
				\dfrac{ 
					\omega_1 
					\Delta \xi_2 
				}
				{\left(\beta_2\right)_{J}
				}
				\right\}.
			\end{equation}
		\end{theorem}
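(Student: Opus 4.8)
The plan is to treat Theorem~\ref{Pro:SWE_PP_J} as the two-dimensional counterpart of Theorem~\ref{Pro:1D_SWE_PP_J}: I would run the same argument dimension by dimension, after distributing the cell average $\overline{J}_{i,j}$ in equal halves between the two coordinate directions. It is exactly this even splitting that accounts for the factor $\tfrac12$ in the CFL bound \eqref{eq:CFL_condition_J}.

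First I would write the forward Euler update of \eqref{eq:2D_dis_J} with the flux definitions \eqref{eq:2D_J_dis}--\eqref{eq:2D_J_dis_flux} inserted, so that $\overline{J}_{i,j}^{\Delta t}-\overline{J}_{i,j}$ equals $-\tfrac{\Delta t}{\Delta\xi_1}\sum_{\mu}\omega_\mu$ applied to the same bracketed one-dimensional $J$-flux difference that is rearranged in \eqref{eq:1D_PP_J} (with $\beta_J$ and $\widehat{J\partial\xi/\partial t}$ replaced by $(\beta_1)_J$ and $(\widehat{J\partial\xi_1/\partial t})^\mu_{\cdot,j}$), minus the corresponding $\xi_2$ contribution. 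I would then write $\overline{J}_{i,j}=\tfrac12\overline{J}_{i,j}+\tfrac12\overline{J}_{i,j}$ and assign one half to the $\xi_1$ flux term and the other to the $\xi_2$ flux term, so the whole estimate decouples into two independent one-dimensional estimates.

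For the $\xi_1$ part, with the transverse index $\mu$ fixed, I would repeat the rearrangement in the proof of Theorem~\ref{Pro:1D_SWE_PP_J} verbatim: the flux difference is written as the sum of the two manifestly nonnegative groups $(\beta_1)_J J^{\pm,\mu}_{i+\frac12,j}-(\widehat{J\partial\xi_1/\partial t})^\mu_{i+\frac12,j}$ and $(\beta_1)_J J^{\pm,\mu}_{i-\frac12,j}+(\widehat{J\partial\xi_1/\partial t})^\mu_{i-\frac12,j}$ --- their nonnegativity following from the definition of $(\beta_1)_J$ in \eqref{eq:alpha_def} together with the hypothesis $J_{i_\mu,j_\nu}>0$ --- plus the leftover term $-\tfrac{(\beta_1)_J\Delta t}{\Delta\xi_1}\big(J^{-,\mu}_{i+\frac12,j}+J^{+,\mu}_{i-\frac12,j}\big)$. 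Since the tensor-product Gauss--Lobatto nodes lying on a $\xi_1$-edge are precisely the boundary nodes of the cell, so that $J^{-,\mu}_{i+\frac12,j}=J_{i_Q,j_\mu}$, $J^{+,\mu}_{i-\frac12,j}=J_{i_1,j_\mu}$ and $\omega_Q=\omega_1$, the half of $\overline{J}_{i,j}=\sum_{\mu,\nu}\omega_\mu\omega_\nu J_{i_\mu,j_\nu}$ assigned to this direction delivers $\tfrac12\omega_1\sum_\mu\omega_\mu\big(J_{i_Q,j_\mu}+J_{i_1,j_\mu}\big)$ on those edge nodes plus a strictly positive contribution from the nodes interior in $\xi_1$. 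Matching the coefficients of the edge values then yields nonnegativity provided $\Delta t\le\omega_1\Delta\xi_1/\big(2(\beta_1)_J\big)$; the $\xi_2$ part is completely symmetric and needs $\Delta t\le\omega_1\Delta\xi_2/\big(2(\beta_2)_J\big)$.

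Adding the two directional contributions, every term in the resulting lower bound for $\overline{J}_{i,j}^{\Delta t}$ is nonnegative under the CFL condition \eqref{eq:CFL_condition_J}, while the $\xi_1$- and $\xi_2$-interior node terms are strictly positive; hence $\overline{J}_{i,j}^{\Delta t}>0$. I expect the only genuine obstacle to be the combinatorial bookkeeping of the two-dimensional quadrature rather than any new estimate: one must check that the edge nodes entering the numerical fluxes \eqref{eq:2D_J_dis} are exactly the boundary tensor-product nodes carrying weight $\omega_1\omega_\mu$ in $\overline{J}_{i,j}$, and that halving the cell average between the two directions --- instead of distributing it optimally --- still leaves each direction with enough positive mass; this choice is what fixes the $\tfrac12$ in the stated CFL number.
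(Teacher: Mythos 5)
Your proposal is correct and follows exactly the route the paper indicates for this omitted proof: run the 1D argument of Theorem \ref{Pro:1D_SWE_PP_J} in each coordinate direction after splitting the cell average $\overline{J}_{i,j}$ evenly between the two directions, which is the same device (the weights $\tfrac{\omega_\mu\omega_1}{2}$ on the edge nodes) used in the proof of Theorem \ref{Pro:SWE_PP} and is precisely what produces the factor $\tfrac12$ in \eqref{eq:CFL_condition_J}. The key supporting facts you cite — nonnegativity of the grouped flux terms via the definition of $(\beta_\ell)_J$ in \eqref{eq:alpha_def}, and the identification of the edge quadrature nodes $J^{-,\mu}_{i+\frac12,j}=J_{i_Q,j_\mu}$, $J^{+,\mu}_{i-\frac12,j}=J_{i_1,j_\mu}$ with weight $\omega_1\omega_\mu$ — are all as in the paper.
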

		The proof is omitted here, since it is similar to the proof of Theorems \ref{Pro:1D_SWE_PP_J} and \ref{Pro:SWE_PP}. Indeed, one can let $h=1, L_1 = 0, L_2 = 0$ in Theorem \ref{Pro:SWE_PP}, and utilize the similar arguements in Theorems \ref{Pro:1D_SWE_PP_J} and \ref{Pro:SWE_PP} to get the conclusion.
		\begin{remark}
			As discussed in Remark \ref{rm:1D_J_pp}, the mesh should not interleave or overlap during computation, i.e., if the mesh  satisfy $(x_1)_{i-\frac{1}{2},j+\frac{1}{2}}< (x_1)_{i+\frac{1}{2},j+\frac{1}{2}}$ for $i = 0, \dots, N_1-1$ and $(x_2)_{i+\frac{1}{2},j-\frac{1}{2}}< (x_2)_{i+\frac{1}{2},j+\frac{1}{2}}$ for $j = 0, \dots, N_2-1$, then for the new  mesh  $(x_1)_{i-\frac{1}{2},j+\frac{1}{2}}^{\Delta t}< (x_1)_{i+\frac{1}{2},j+\frac{1}{2}}^{\Delta t}$ and $(x_2)_{i+\frac{1}{2},j-\frac{1}{2}}^{\Delta t}< (x_2)_{i+\frac{1}{2},j+\frac{1}{2}}^{\Delta t}$ still holds.  The treatment will be detailed in Section \ref{Sec:MM}.
		\end{remark}
	\begin{theorem}\rm\label{Pro:SWE_PP}
		Assume that $\overline{\bm{U}}_{i,j}\in \mathcal{G}$ and $\overline{J}_{i,j}>0$  for all $i$ and $j$. If $\overline{J}_{i,j}^{\Delta t}>0$, and 
		\begin{equation}\label{eq:Condition_PP}
			J_{i_{\mu},j_{\nu}}>0,~	h_{i_{\mu},j_{\nu}}\geqslant0, \quad \forall i,j,\mu,\nu,
		\end{equation}  
		then the PP property
		\begin{equation*}
			\overline{\bm{U}}_{i,j}^{\Delta t}:= \overline{J\bm{U}}_{i,j}^{\Delta t}/\overline{J}_{i,j}^{\Delta t} \in \mathcal{G},
		\end{equation*}
		holds under the CFL condition
		\begin{equation}\label{eq:CFL_condition}
			\Delta t \leqslant \dfrac{1}{2} \min_{i,j,\mu}
			\left\{
			\dfrac{
				\omega_1 
				J_{i\pm\frac{1}{2},j}^{\mp,\mu}
				\Delta \xi_1 
			}
			{\alpha_1
				+(L_1)
				_{i\pm\frac{1}{2},j}^{\mu}
				\beta
			},
			\dfrac{ 
				\omega_1 
				J_{i,j\pm\frac{1}{2}}^{\mp,\mu}
				\Delta \xi_2 
			}
			{\alpha_2
				+(L_2)
				_{i,j\pm\frac{1}{2}}^{\mu}
				\beta
			}
			\right\}.
		\end{equation}
	\end{theorem}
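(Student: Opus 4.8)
The plan is to follow the strategy of the proof of Theorem~\ref{Pro:1D_SWE_PP}, carried out now edge by edge and quadrature-node by quadrature-node, with the length metrics $L_1,L_2$ taking over the role that the constant $\beta$ played in one dimension. Since $\overline{J}_{i,j}^{\Delta t}>0$ is assumed, it suffices to prove $\overline{Jh}_{i,j}^{\Delta t}\geqslant 0$ for the mass component, because then $\overline{h}_{i,j}^{\Delta t}=\overline{Jh}_{i,j}^{\Delta t}/\overline{J}_{i,j}^{\Delta t}\geqslant 0$ and hence $\overline{\bm U}_{i,j}^{\Delta t}\in\mathcal G$. First I would note that the source part of \eqref{eq:2D_dis_U} does not touch the mass component: every $\bm r_{mk}$, and therefore every $\bm{\mathcal R}_{m\ell}$, $\widetilde{\bm{\mathcal R}}_{m\ell}$ and $\nabla_{\bm\xi}\cdot\bm{\mathcal R}_m$, has vanishing first entry, so the $\overline{Jh}$-update reduces to the four numerical mass fluxes,
\begin{equation*}
\overline{Jh}_{i,j}^{\Delta t}-\overline{Jh}_{i,j}=-\frac{\Delta t}{\Delta\xi_1}\Big[\big(\bm{\widetilde{\mathcal F}}_1\big)^{(1)}_{i+\frac12,j}-\big(\bm{\widetilde{\mathcal F}}_1\big)^{(1)}_{i-\frac12,j}\Big]-\frac{\Delta t}{\Delta\xi_2}\Big[\big(\bm{\widetilde{\mathcal F}}_2\big)^{(1)}_{i,j+\frac12}-\big(\bm{\widetilde{\mathcal F}}_2\big)^{(1)}_{i,j-\frac12}\Big].
\end{equation*}

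Next, for each fixed edge index $\mu$ the bracketed mass-flux contribution in \eqref{eq:2D_F_1_Flux}--\eqref{eq:2D_F_2_Flux} has exactly the one-dimensional form treated in Theorem~\ref{Pro:1D_SWE_PP}, with $\alpha\rightsquigarrow\alpha_\ell$, $J\tfrac{\partial\xi}{\partial t}\rightsquigarrow\big(J\tfrac{\partial\xi_\ell}{\partial t}\big)^\mu$ (which satisfies $\big|\big(J\tfrac{\partial\xi_\ell}{\partial t}\big)^\mu\big|\leqslant\alpha_\ell$ by \eqref{eq:alpha_def}), and $\beta\rightsquigarrow (L_\ell)^\mu\beta$ for the Lax--Friedrichs hydrostatic part; here one uses that the first entry of $\big(\widetilde{\bm F}_{n_\ell}\big)^\mu$ equals $\tfrac12\big(h^{-,*}\langle\bm n_\ell,\bm v^-\rangle+h^{+,*}\langle\bm n_\ell,\bm v^+\rangle\big)^\mu-\tfrac{\beta}{2}\big(h^{+,*}-h^{-,*}\big)^\mu$ together with $\big|\langle\bm n_\ell,\bm v^{\pm}\rangle\big|\leqslant\beta$ (because $\bm n_\ell$ is a unit vector). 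Invoking the hydrostatic-reconstruction inequalities $\pm\tfrac12\big(h^{-,*}\langle\bm n_\ell,\bm v^-\rangle+h^{+,*}\langle\bm n_\ell,\bm v^+\rangle\big)\leqslant\beta\mean{h}^{*}$ and $h^{\mp,*}\leqslant h^{\mp}$, and the telescoping identity for the $\alpha_\ell$-jumps exactly as in \eqref{eq:JHDelta_1D}--\eqref{eq:1D_h_pp_delta_t}, I obtain, for each $\mu$, a lower bound equal to a nonnegative term plus $-\tfrac{\Delta t}{\Delta\xi_1}\omega_\mu\big[(\alpha_1+(L_1)^\mu_{i+\frac12,j}\beta)\,h^{-,\mu}_{i+\frac12,j}+(\alpha_1+(L_1)^\mu_{i-\frac12,j}\beta)\,h^{+,\mu}_{i-\frac12,j}\big]$ for the $\xi_1$-fluxes, and likewise for the $\xi_2$-fluxes.

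The decisive bookkeeping step is to split the cell average, using the tensor-product Gauss--Lobatto rule, as $\overline{Jh}_{i,j}=\tfrac12 A+\tfrac12 B$, where $A$ keeps the left/right edge nodes explicit via $\sum_\mu\omega_\mu J_{i_\mu,j_\nu}h_{i_\mu,j_\nu}=\omega_1\big(J^{-,\nu}_{i+\frac12,j}h^{-,\nu}_{i+\frac12,j}+J^{+,\nu}_{i-\frac12,j}h^{+,\nu}_{i-\frac12,j}\big)+\sum_{\mu=2}^{Q-1}\omega_\mu J_{i_\mu,j_\nu}h_{i_\mu,j_\nu}$ for each $\nu$, and $B$ does the same for the bottom/top edge nodes (using $\omega_Q=\omega_1$; one checks each of the four corners is recovered with the correct total weight, so that $\tfrac12 A+\tfrac12 B=\overline{Jh}_{i,j}$ indeed). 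Assigning the $\xi_1$-flux terms to $\tfrac12 A$ and the $\xi_2$-flux terms to $\tfrac12 B$, every boundary node, e.g. $h^{-,\mu}_{i+\frac12,j}$, then enters $\overline{Jh}_{i,j}^{\Delta t}$ with coefficient $\omega_\mu\big(\tfrac12\omega_1 J^{-,\mu}_{i+\frac12,j}-\tfrac{\Delta t}{\Delta\xi_1}(\alpha_1+(L_1)^\mu_{i+\frac12,j}\beta)\big)$, every interior node with coefficient $\tfrac12\omega_\mu\omega_\nu J_{i_\mu,j_\nu}\geqslant0$, and the leftover $I_1$-type terms are nonnegative. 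Under \eqref{eq:Condition_PP} (so all node values satisfy $J_{i_\mu,j_\nu}>0$, $h_{i_\mu,j_\nu}\geqslant0$ and $\sum_{\mu,\nu}\omega_{\mu,\nu}J_{i_\mu,j_\nu}h_{i_\mu,j_\nu}=\overline{Jh}_{i,j}$ by construction of the scheme and of the PP limiter) and the CFL condition \eqref{eq:CFL_condition}, every coefficient is nonnegative, whence $\overline{Jh}_{i,j}^{\Delta t}\geqslant0$ and the claim follows.

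I expect the main obstacle to be the two-dimensional quadrature bookkeeping rather than any new analytic content: matching the edge-flux weights $\omega_\mu$ with the boundary weights produced when the tensor-product cell average is decomposed, counting each of the four corner nodes exactly once with the correct total weight in $\tfrac12 A+\tfrac12 B$, and threading the metrics $(L_\ell)^\mu$ through consistently so that the per-node nonnegativity conditions reproduce precisely the entries $\tfrac12\,\omega_1 J^{\mp,\mu}_{i\pm\frac12,j}\Delta\xi_1/(\alpha_1+(L_1)^\mu_{i\pm\frac12,j}\beta)$ (and their $\xi_2$-analogues) in the minimum of \eqref{eq:CFL_condition} --- the factor $\tfrac12$ there being exactly the price of dividing $\overline{Jh}_{i,j}$ between the two coordinate directions. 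Everything else (the hydrostatic inequalities, the jump--mean telescoping, the bound $\big|\langle\bm n_\ell,\bm v\rangle\big|\leqslant\beta$) is identical to the one-dimensional proof and needs no new idea; this is precisely why the statement and proof are set up to piggyback on Theorems~\ref{Pro:1D_SWE_PP_J} and \ref{Pro:1D_SWE_PP}.
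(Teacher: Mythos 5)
Your proposal is correct and follows essentially the same route as the paper's own proof: reduction to the mass component (the source terms have zero first entry), per-quadrature-node estimates that mirror the 1D argument with $\beta\rightsquigarrow (L_\ell)^{\mu}\beta$ and the hydrostatic inequalities, and the key $\tfrac12 A+\tfrac12 B$ tensor-product splitting of $\overline{Jh}_{i,j}$ that produces the factor $\tfrac12$ in the CFL condition \eqref{eq:CFL_condition}. The bookkeeping you flag as the main obstacle is exactly what the paper's proof carries out, and your decomposition matches its identity for $\overline{Jh}_{i,j}$ term for term.
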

	\begin{proof}
		Consider the discrete evolution equation for  $Jh$: 
		\begin{equation}\label{eq:2D_PP_h}
			\begin{aligned}
				\overline{Jh}_{i,j}^{\Delta t}  - 
				\overline{Jh}_{i,j}
				=&  
				-
				\frac{\Delta t}{\Delta \xi_{1}}
				\sum
				\limits_{\mu=1}^{Q}\omega_{\mu}\left[
				\left(J \pd{\xi_1}{t}\right)
				_{i+\frac{1}{2},j}^{\mu}
				\mean{h}
				_{i+\frac{1}{2},j}^{\mu}
				-\frac{\alpha_1}{2}
				\jump{h}
				_{i+\frac{1}{2},j}^{\mu}
				-
				\left(J \pd{\xi_1}{t}\right)
				_{i-\frac{1}{2},j}^{\mu}
				\mean{h}
				_{i-\frac{1}{2},j}^{\mu}
				+\frac{\alpha_1}{2}
				\jump{h}
				_{i-\frac{1}{2},j}^{\mu}
				\right]
				\\
				&-\frac{\Delta t}{\Delta \xi_{1}}
				\sum
				\limits_{\mu=1}^{Q}\omega_{\mu}
				\Bigg[
				\left(L_1\right)
				_{i+\frac{1}{2},j}^{\mu}
				\langle
				(\bm{n}_1)
				_{i+\frac{1}{2},j}^{\mu},
				\left(\bm{\widetilde{F}}
				_{i+\frac{1}{2},j}^{\mu}
				\right)_1
				\rangle
				-\left(L_1\right)
				_{i+\frac{1}{2},j}^{\mu}
				\frac{\beta}{2}
				\jump{h}
				_{i+\frac{1}{2},j}^{\mu,*}
				\\
				&\quad \quad \quad \quad \quad \quad -
				\left(L_1\right)
				_{i-\frac{1}{2},j}^{\mu}
				\langle
				(\bm{n}_1)
				_{i-\frac{1}{2},j}^{\mu},
				\left(\bm{\widetilde{F}}
				_{i-\frac{1}{2},j}^{\mu}
				\right)_1
				\rangle
				+
				\left(L_1\right)
				_{i-\frac{1}{2},j}^{\mu}
				\frac{\beta}{2}
				\jump{h}
				_{i-\frac{1}{2},j}^{\mu,*}
				\Bigg]
				\\
				&-
				\frac{\Delta t}{\Delta \xi_2}
				\sum
				\limits_{\mu=1}^{Q}\omega_{\mu}\left[
				\left(J \pd{\xi_2}{t}\right)
				_{i,j+\frac{1}{2}}^{\mu}
				\mean{h}
				_{i,j+\frac{1}{2}}^{\mu}
				-\frac{\alpha_2}{2}
				\jump{h}
				_{i,j+\frac{1}{2}}^{\mu}
				-
				\left(J \pd{\xi_2}{t}\right)
				_{i,j-\frac{1}{2}}^{\mu}
				\mean{h}
				_{i,j-\frac{1}{2}}^{\mu}
				+\frac{\alpha_2}{2}
				\jump{h}
				_{i,j-\frac{1}{2}}^{\mu}
				\right]
				\\
				&-\frac{\Delta t}{\Delta \xi_{2}}
				\sum
				\limits_{\mu=1}^{Q}\omega_{\mu}
				\Bigg[
				\left(L_2\right)
				_{i,j+\frac{1}{2}}^{\mu}
				\langle
				(\bm{n}_2)
				_{i,j+\frac{1}{2}}^{\mu},
				\left(\bm{\widetilde{F}}
				_{i,j+\frac{1}{2}}^{\mu}
				\right)_1
				\rangle
				-\left(L_2\right)
				_{i,j+\frac{1}{2}}^{\mu}
				\frac{\beta}{2}
				\jump{h}
				_{i,j+\frac{1}{2}}^{\mu,*}
				\\
				&\quad \quad \quad \quad\quad\quad -
				\left(L_2\right)
				_{i,j-\frac{1}{2}}^{\mu}
				\langle
				(\bm{n}_1)
				_{i,j-\frac{1}{2}}^{\mu},
				\left(\bm{\widetilde{F}}
				_{i,j-\frac{1}{2}}^{\mu}
				\right)_1
				\rangle
				+\left(L_2\right)
				_{i,j-\frac{1}{2}}^{\mu}
				\frac{\beta}{2}
				\jump{h}
				_{i,j-\frac{1}{2}}^{\mu,*}
				\Bigg].
			\end{aligned}
		\end{equation}
		Based on the relations 
		\begin{align*}
			&\pm
			\langle
			(\bm{n}_1)
			_{i\pm\frac{1}{2},j}^{\mu},
			\left(\bm{\widetilde{F}}
			_{i\pm\frac{1}{2},j}^{\mu}
			\right)_1
			\rangle  \leqslant 
			\beta \mean{h}
			_{i\pm\frac{1}{2},j}^{\mu,*}
			,~
			\pm
			\langle
			(\bm{n}_2)
			_{i,j\pm\frac{1}{2}}^{\mu},
			\left(\bm{\widetilde{F}}
			_{i,j\pm\frac{1}{2}}^{\mu}
			\right)_1
			\rangle  \leqslant 
			\beta \mean{h}
			_{i,j\pm\frac{1}{2}}^{\mu,*},
			\\
			& \frac{\alpha_1}{2}
			\jump{h}
			_{i+\frac{1}{2},j}^{\mu} 
			-\frac{\alpha_1}{2}
			\jump{h}
			_{i-\frac{1}{2},j}^{\mu}
			=
			\alpha_1
			\mean{h}
			_{i+\frac{1}{2},j}^{\mu} 
			+\alpha_1
			\mean{h}
			_{i-\frac{1}{2},j}^{\mu}
			-
			\alpha_1
			{h}
			_{i+\frac{1}{2},j}^{-,\mu} 
			-\alpha_1
			{h}
			_{i-\frac{1}{2},j}^{+,\mu},
			\\
			&   \frac{\alpha_2}{2}
			\jump{h}
			_{i,j+\frac{1}{2}}^{\mu} 
			-\frac{\alpha_2}{2}
			\jump{h}
			_{i,j-\frac{1}{2}}^{\mu}
			=
			\alpha_2
			\mean{h}
			_{i,j+\frac{1}{2}}^{\mu} 
			+\alpha_2
			\mean{h}
			_{i,j-\frac{1}{2}}^{\mu}
			-
			\alpha_2
			{h}
			_{i,j+\frac{1}{2}}^{-,\mu} 
			-\alpha_2
			{h}
			_{i,j-\frac{1}{2}}^{+,\mu},
		\end{align*}
		the lower bound of the right-hand side of \eqref{eq:2D_PP_h} can be estimated as
		\begin{equation}\label{eq:JHDelta}
			\overline{Jh}_{i,j}^{\Delta t}  - 
			\overline{Jh}_{i,j}
			\geqslant I_1+I_2+I_3+I_4
		\end{equation}
		with
		\begin{equation*}
			\begin{aligned}
				I_1 &= \frac{\Delta t}{\Delta \xi_1}
				\sum
				\limits_{\mu=1}^{Q}
				\omega_{\mu}
				\left[
				\left(
				\left(
				\alpha_1
				-
				J \pd{\xi_1}{t}
				\right)
				_{i+\frac{1}{2},j}^{\mu}
				\right)
				\mean{h}
				_{i+\frac{1}{2},j}^{\mu}
				+
				\left(
				\left(
				\alpha_1
				+
				J \pd{\xi_1}{t}
				\right)
				_{i-\frac{1}{2},j}^{\mu}
				\right)
				\mean{h}
				_{i-\frac{1}{2},j}^{\mu}
				\right],\\
				I_2 &= \frac{\Delta t}{\Delta \xi_2}
				\sum
				\limits_{\mu=1}^{Q}
				\omega_{\mu}
				\left[
				\left(
				\left(
				\alpha_2
				-
				J \pd{\xi_2}{t}
				\right)
				_{i,j+\frac{1}{2}}^{\mu}
				\right)
				\mean{h}
				_{i,j+\frac{1}{2}}^{\mu}
				+
				\left(
				\left(
				\alpha_2
				+
				J \pd{\xi_2}{t}
				\right)
				_{i,j-\frac{1}{2}}^{\mu} 
				\right)
				\mean{h}
				_{i,j-\frac{1}{2}}^{\mu}
				\right],
			\end{aligned}
		\end{equation*}
		\begin{equation*}
			\begin{aligned}
				I_3 &= -\frac{\Delta t}{\Delta \xi_1}
				\sum
				\limits_{\mu=1}^{Q}
				\omega_{\mu}
				\left[
				\left(
				L_1
				\right)
				_{i+\frac{1}{2},j}^{\mu}
				\beta{h}
				_{i+\frac{1}{2},j}^{-,\mu,*}
				+
				\left(
				L_1
				\right)
				_{i-\frac{1}{2},j}^{\mu}
				\beta{h}
				_{i-\frac{1}{2},j}^{+,\mu,*}
				\right]\\
				&\quad -\frac{\Delta t}{\Delta \xi_2}
				\sum
				\limits_{\mu=1}^{Q}
				\omega_{\mu}
				\left[
				\left(
				L_2
				\right)
				_{i,j+\frac{1}{2}}^{\mu}
				\beta{h}
				_{i,j+\frac{1}{2}}^{-,\mu,*}
				+
				\left(
				L_2
				\right)
				_{i,j-\frac{1}{2}}^{\mu}
				\beta{h}
				_{i,j-\frac{1}{2}}^{+,\mu,*}
				\right],\\
				I_4 &= -\frac{\Delta t}{\Delta \xi_1}
				\sum
				\limits_{\mu=1}^{Q}
				\omega_{\mu}
				\left[
				\alpha_1
				{h}
				_{i+\frac{1}{2},j}^{-,\mu}
				+
				\alpha_1
				{h}
				_{i-\frac{1}{2},j}^{+,\mu}
				\right] -\frac{\Delta t}{\Delta \xi_2}
				\sum
				\limits_{\mu=1}^{Q}
				\omega_{\mu}
				\left[
				\alpha_2
				{h}
				_{i,j+\frac{1}{2}}^{-,\mu}
				+
				\alpha_2
				{h}
				_{i,j-\frac{1}{2}}^{+,\mu}
				\right].\\
			\end{aligned}
		\end{equation*}
		The definitions of $\alpha_1$ and $\alpha_2$ in \eqref{eq:alpha_def} ensure $I_1\geqslant0$ and $I_2\geqslant0$. The remaining task is to estimate $I_3$ and $I_4$.
		Utilizing $h_{i,j\pm\frac{1}{2}}^{\mp,\mu,*}\leqslant h_{i,j\pm\frac{1}{2}}^{\mp,\mu}$ and $h_{i\pm\frac{1}{2},j}^{\mp,\mu,*}\leqslant h_{i\pm\frac{1}{2},j}^{\mp,\mu}$ gives 
		\begin{align}
			I_3+I_4 \geqslant
			&
			-\frac{\Delta t}{\Delta \xi_1}
			\sum
			\limits_{\mu=1}^{Q}
			\omega_{\mu}
			\left[
			\left(
			L_1
			\right)
			_{i+\frac{1}{2},j}^{\mu}
			\beta{h}
			_{i+\frac{1}{2},j}^{-,\mu}
			+
			\left(
			L_1
			\right)
			_{i-\frac{1}{2},j}^{\mu}
			\beta{h}
			_{i-\frac{1}{2},j}^{+,\mu}
			\right]\nonumber
			\\
			&-\frac{\Delta t}{\Delta \xi_2}
			\sum
			\limits_{\mu=1}^{Q}
			\omega_{\mu}
			\left[
			\left(
			L_2
			\right)
			_{i,j+\frac{1}{2}}^{\mu}
			\beta{h}
			_{i,j+\frac{1}{2}}^{-,\mu}
			+
			\left(
			L_2
			\right)
			_{i,j-\frac{1}{2}}^{\mu}
			\beta{h}
			_{i,j-\frac{1}{2}}^{+,\mu}
			\right]\nonumber
			\\
			&-\frac{\Delta t}{\Delta \xi_1}
			\sum
			\limits_{\mu=1}^{Q}
			\omega_{\mu}
			\left[
			\alpha_1
			{h}
			_{i+\frac{1}{2},j}^{-,\mu}
			+
			\alpha_1
			{h}
			_{i-\frac{1}{2},j}^{+,\mu}
			\right] -\frac{\Delta t}{\Delta \xi_2}
			\sum
			\limits_{\mu=1}^{Q}
			\omega_{\mu}
			\left[
			\alpha_2
			{h}
			_{i,j+\frac{1}{2}}^{-,\mu}
			+
			\alpha_2
			{h}
			_{i,j-\frac{1}{2}}^{+,\mu}
			\right]\nonumber
			\\
			=&
			-\frac{\Delta t}{\Delta \xi_1}
			\sum
			\limits_{\mu=1}^{Q}
			\omega_{\mu}
			\left[
			\left(
			\alpha_1 
			+ 
			\left(
			L_1
			\right)
			_{i+\frac{1}{2},j}^{\mu}
			\beta
			\right)
			{h}
			_{i+\frac{1}{2},j}^{-,\mu}
			+
			\left(
			\alpha_1 
			+ 
			\left(
			L_1
			\right)
			_{i-\frac{1}{2},j}^{\mu}
			\beta
			\right)
			{h}
			_{i-\frac{1}{2},j}^{+,\mu}
			\right]\nonumber
			\\
			&-\frac{\Delta t}{\Delta \xi_2}
			\sum
			\limits_{\mu=1}^{Q}
			\omega_{\mu}
			\left[
			\left(
			\alpha_2 
			+ 
			\left(
			L_2
			\right)
			_{i,j+\frac{1}{2}}^{\mu}
			\beta
			\right)
			{h}
			_{i,j+\frac{1}{2}}^{-,\mu}
			+
			\left(
			\alpha_2
			+ 
			\left(
			L_2
			\right)
			_{i,j-\frac{1}{2}}^{\mu}
			\beta
			\right)
			{h}
			_{i,j-\frac{1}{2}}^{+,\mu}
			\right].\label{eq:I3I4}
		\end{align}
		Plugging $I_1\geqslant0$,  $I_2\geqslant0$, and \eqref{eq:I3I4} into equation \eqref{eq:JHDelta}, one obtains
		\begin{align}\label{eq:h_pp_delta_t}
			\overline{h}_{i,j}^{\Delta t}  - \dfrac{\overline{Jh}_{i,j}}{\overline{J}_{i,j}^{\Delta t}} \geqslant
			&
			-\frac{\Delta t}{\overline{J}_{i,j}^{\Delta t}\Delta \xi_1}
			\sum
			\limits_{\mu=1}^{Q}
			\omega_{\mu}
			\left[
			\left(
			\alpha_1 
			+ 
			\left(
			L_1
			\right)
			_{i+\frac{1}{2},j}^{\mu}
			\beta
			\right)
			{h}
			_{i+\frac{1}{2},j}^{-,\mu}
			+
			\left(
			\alpha_1 
			+ 
			\left(
			L_1
			\right)
			_{i-\frac{1}{2},j}^{\mu}
			\beta
			\right)
			{h}
			_{i-\frac{1}{2},j}^{+,\mu}
			\right]\nonumber
			\\
			&-\frac{\Delta t}{\overline{J}_{i,j}^{\Delta t} \Delta \xi_2}
			\sum
			\limits_{\mu=1}^{Q}
			\omega_{\mu}
			\left[
			\left(
			\alpha_2 
			+ 
			\left(
			L_2
			\right)
			_{i,j+\frac{1}{2}}^{\mu}
			\beta
			\right)
			{h}
			_{i,j+\frac{1}{2}}^{-,\mu}
			+
			\left(
			\alpha_2
			+ 
			\left(
			L_2
			\right)
			_{i,j-\frac{1}{2}}^{\mu}
			\beta
			\right)
			{h}
			_{i,j-\frac{1}{2}}^{+,\mu}
			\right].
		\end{align}
		Furthermore, by using the fact
		\begin{align*}
			\overline{Jh}_{i,j}
			=&\frac{1}{2} \sum
			\limits_{\mu=1}^{Q}\sum_{\nu = 2}^{Q-1} \omega_{\mu} \omega_{\nu}
			{J}_{i_{\mu},j_{\nu}}{h}_{i_{\mu},j_{\nu}} 
			+ \frac{1}{2} \sum_{\mu=2}^{Q-1}\sum_{\nu=1}^{Q} \omega_{\mu} \omega_{\nu}
			{J}_{i_{\mu},j_{\nu}}{h}_{i_{\mu},j_{\nu}} 
			\\&+ \sum
			\limits_{\mu=1}^{Q}
			\frac{\omega_{\mu}\omega_1}{2}
			\left[
			J
			_{i+\frac{1}{2},j}^{-,\mu}
			{h}
			_{i+\frac{1}{2},j}^{-,\mu}
			+
			J
			_{i-\frac{1}{2},j}^{+,\mu}
			{h}
			_{i-\frac{1}{2},j}^{+,\mu}
			+
			J
			_{i,j+\frac{1}{2}}^{-,\mu}
			{h}
			_{i,j+\frac{1}{2}}^{-,\mu}
			+
			J
			_{i,j-\frac{1}{2}}^{+,\mu}
			{h}
			_{i,j-\frac{1}{2}}^{+,\mu}
			\right],
		\end{align*}
		the inequality \eqref{eq:h_pp_delta_t} can be reformulated as
		\begin{small}
			\begin{equation*}
				\begin{aligned}
					\overline{h}_{i,j}^{\Delta t}   \geqslant
					&
					\frac{1}{\overline{J}_{i,j}^{\Delta t}}
					\sum
					\limits_{\mu=1}^{Q}
					\omega_{\mu}
					\left[
					\left(
					\frac{1}{2}
					\omega_1
					{J}
					_{i+\frac{1}{2},j}^{-,\mu}
					-\frac{\Delta t}{\Delta \xi_1}
					\left(
					\alpha_1 
					+ 
					\left(
					L_1
					\right)
					_{i+\frac{1}{2},j}^{\mu}
					\beta
					\right)
					\right)
					{h}
					_{i+\frac{1}{2},j}^{-,\mu}
					+
					\left(
					\frac{1}{2}
					\omega_1
					{J}
					_{i-\frac{1}{2},j}^{+,\mu}
					-\frac{\Delta t}{\Delta \xi_1}
					\left(
					\alpha_1 
					+ 
					\left(
					L_1
					\right)
					_{i-\frac{1}{2},j}^{\mu}
					\beta
					\right)
					\right)
					{h}
					_{i-\frac{1}{2},j}^{+,\mu}
					\right]
					\\
					&+\frac{1}{\overline{J}_{i,j}^{\Delta t}}
					\sum
					\limits_{\mu=1}^{Q}
					\omega_{\mu}
					\left[
					\left(
					\frac{1}{2}
					\omega_1 {J}
					_{i,j+\frac{1}{2}}^{-,\mu}
					-\frac{\Delta t}{\Delta \xi_2}
					\left(
					\alpha_2
					+ 
					\left(
					L_2
					\right)
					_{i,j+\frac{1}{2}}^{\mu}
					\beta
					\right)
					\right)
					{h}
					_{i,j+\frac{1}{2}}^{-,\mu}
					+
					\left(
					\frac{1}{2}
					\omega_1 {J}
					_{i,j-\frac{1}{2}}^{+,\mu}
					-\frac{\Delta t}{\Delta \xi_2}
					\left(
					\alpha_2	
					+ 
					\left(
					L_2
					\right)
					_{i,j-\frac{1}{2}}^{\mu}
					\beta
					\right)
					\right)
					{h}
					_{i,j-\frac{1}{2}}^{+,\mu}
					\right]
					\\ &+\frac{1}{2\overline{J}_{i,j}^{\Delta t}} \sum
					\limits_{\mu=1}^{Q}\sum_{\nu = 2}^{Q-1} \omega_{\mu} \omega_{\nu}
					{J}_{i_{\mu},j_{\nu}}{h}_{i_{\mu},j_{\nu}} 
					+ \frac{1}{2\overline{J}_{i,j}^{\Delta t}} \sum_{\mu=2}^{Q}\sum_{\nu=1}^{Q} \omega_{\mu} \omega_{\nu}
					{J}_{i_{\mu},j_{\nu}}{h}_{i_{\mu},j_{\nu}} ,
				\end{aligned}
			\end{equation*}
		\end{small}
		\hspace{-0.25cm}
		where we have used the facts ${h}
		_{i-\frac{1}{2},j}^{+,\mu} = h_{i_{1},j_{\mu}}$, ${h}
		_{i+\frac{1}{2},j}^{-,\mu} = h_{i_{Q},j_{\mu}}$, ${h}
		_{i,j-\frac{1}{2}}^{+,\mu} = h_{i_{\mu},j_{1}}$, and ${h}
		_{i,j+\frac{1}{2}}^{-,\mu} = h_{i_{\mu},j_{Q}}$.
		Under the condition \eqref{eq:Condition_PP} and the CFL condition \eqref{eq:CFL_condition}, one obtains
		\begin{equation*}
			\overline{h}_{i,j}^{\Delta t}  \geqslant 0,
		\end{equation*}
		which means $\overline{\bm{U}}_{i,j}^{\Delta t} \in \mathcal{G}$.
		The proof is completed.
	\end{proof}
	
	\section{Adaptive moving mesh}\label{Sec:MM}
	This section presents the adaptive moving mesh strategy that determines the mesh velocities involved in the temporal mesh metrics;
	see \cite{DUAN2021109949,Duan2022High,Tang2003Adaptive} for more details. We will take the 2D case as an example.
	Define the following mesh adaptation functional
	\begin{equation*}
		\widetilde{E}(\bm{x})=\frac{1}{2} \sum_{\ell=1}^2 \int_{\Omega_c}\left(\nabla_{\bm{\xi}} x_\ell\right)^{\top} \bm{G}_\ell\left(\nabla_{\bm{\xi}} x_\ell\right) \mathrm{d} \bm{\xi},
	\end{equation*}
	whose Euler-Lagrange equations are  
	\begin{equation}\label{eq:Mesh_Control}
		\nabla_{\bm{\xi}} \cdot\left(\bm{G}_\ell \nabla_{\bm{\xi}} x_\ell\right)=0, ~\bm{\xi} \in \Omega_c, ~\ell=1,2,
	\end{equation}
	where $\nabla_{\bm{\xi}}$ denotes the spatial gradient in the computational domain.
	Then the new redistributed mesh $\bm{x}$ in the physical domain $\Omega_p$ can be obtained by solving \eqref{eq:Mesh_Control} for a given symmetric positive definite matrix $\bm{G}_{\ell}$,
	which controls the mesh concentration.
	The choice of $\bm{G}_{\ell}$ is one of the most important parts of the adaptive moving mesh method,
	and usually depends on the solutions of the underlying governing equations or their derivatives.
	Referring to the variable diffusion method in \cite{Winslow1967Numerical}, the simplest choice is the isotropic one as follows
	\begin{equation*}
		\bm{G}_\ell=\omega I_2, ~\ell=1,2,
	\end{equation*}
	where the positive scalar $\omega$ is called the monitor function. It can be taken as
	\begin{align}\label{eq:monitor}
		\omega=\Big({1+\sum_{k=1}^K \theta_k\Big(
			\dfrac{\abs{\nabla_{\bm{\xi}}\sigma_k}}
			{\max\abs{\nabla_{\bm{\xi}}\sigma_k}}\Big)^2}\Big)^{1/2}.
	\end{align}
	Here $\{\sigma_k\}_{k=1}^K$ are the chosen physical variables, and $\theta_k$ is a positive parameter. 
	The monitor function \eqref{eq:monitor} is approximated by using the second-order central difference for the derivatives in the computational domain, 
	and it is further smoothed by performing $5\sim10$ times via the following low-pass filter
	\begin{equation*}
		\omega_{i+\frac{1}{2},j+\frac{1}{2}} = \sum_{i_1,j_1 = 0,\pm 1} \left(\frac{1}{2}\right)^{|i_1|+|j_1|+2}\omega_{i+\frac{1}{2}+i_1,j+\frac{1}{2}+j_1}.
	\end{equation*}
	Utilizing the second-order central difference scheme to discretize the mesh governing equations \eqref{eq:Mesh_Control} and coupling it with the Jacobi iteration method give 
	\begin{equation*}
		\begin{aligned}
			&\left[\left(\omega_{i+\frac{1}{2},j+\frac{1}{2}}+\omega_{i+\frac{3}{2},j+\frac{1}{2}}\right)\left(\bm{x}_{i+\frac{3}{2},j+\frac{1}{2}}^{[\nu]}-\bm{x}_{i+\frac{1}{2},j+\frac{1}{2}}^{[\nu+1]}\right)-\left(\omega_{i+\frac{1}{2},j+\frac{1}{2}}+\omega_{i-\frac{1}{2},j+\frac{1}{2}}\right)\left(\bm{x}_{i+\frac{1}{2},j+\frac{1}{2}}^{[\nu+1]}-\bm{x}_{i-\frac{1}{2},j+\frac{1}{2}}^{[\nu]}\right)\right]/\Delta \xi_1^{2}\\
			&\left[\left(\omega_{i+\frac{1}{2},j+\frac{1}{2}}+\omega_{i+\frac{1}{2},j+\frac{3}{2}}\right)\left(\bm{x}_{i+\frac{1}{2},j+\frac{3}{2}}^{[\nu]}-\bm{x}_{i+\frac{1}{2},j+\frac{1}{2}}^{[\nu+1]}\right)-\left(\omega_{i+\frac{1}{2},j+\frac{1}{2}}+\omega_{i+\frac{1}{2},j-\frac{1}{2}}\right)\left(\bm{x}_{i+\frac{1}{2},j+\frac{1}{2}}^{[\nu+1]}-\bm{x}_{i+\frac{1}{2},j-\frac{1}{2}}^{[\nu]}\right)\right]/\Delta \xi_2^{2}\\
			&=~0, ~\nu=0,1, \cdots, \mu-1,
		\end{aligned}
	\end{equation*}
	where the initial mesh in the iterations $\bm{x}_{i+\frac{1}{2},j+\frac{1}{2}}^{[0]}:=\bm{x}_{i+\frac{1}{2},j+\frac{1}{2}}^n
	$ is chosen as the mesh at $t^n$, 
	and the monitor function $\omega$ is computed based on the solution at $t^n$.
	Once the grid $\left\{\bm{x}_{i+\frac{1}{2},j+\frac{1}{2}}^{[\mu]}\right\}$ is obtained, the final adaptive mesh is given by
	\begin{equation*}
		\bm{x}_{i+\frac{1}{2},j+\frac{1}{2}}^{n+1}:=\bm{x}_{i+\frac{1}{2},j+\frac{1}{2}}^n+\Delta_\tau\left(\delta_\tau \bm{ x}\right)_{i+\frac{1}{2},j+\frac{1}{2}}^n,~\left(\delta_\tau \bm{x}\right)_{i+\frac{1}{2},j+\frac{1}{2}}^n:=\bm{x}_{i+\frac{1}{2},j+\frac{1}{2}}^{[\mu]}- \bm{x}_{i+\frac{1}{2},j+\frac{1}{2}}^n
	\end{equation*}
	with the parameter $\Delta_{\tau}$ to control the mesh movement
	\begin{equation*}
		\Delta_\tau \leqslant \begin{cases}-\frac{1}{2\left(\delta_\tau x_1\right)_{i+\frac{1}{2},j+\frac{1}{2}}}\left[\left(x_1\right)_{i+\frac{1}{2},j+\frac{1}{2}}^n-\left(x_1\right)_{i-\frac{1}{2},j+\frac{1}{2}}^n\right], & \left(\delta_\tau x_1\right)_{i+\frac{1}{2},j+\frac{1}{2}}<0, \\ +\frac{1}{2\left(\delta_\tau x_1\right)_{i+\frac{1}{2},j+\frac{1}{2}}}\left[\left(x_1\right)_{i+\frac{3}{2},j+\frac{1}{2}}^n-\left(x_1\right)_{i+\frac{1}{2},j+\frac{1}{2}}^n\right], & \left(\delta_\tau x_1\right)_{i+\frac{1}{2},j+\frac{1}{2}}>0 ,
			\\
			-\frac{1}{2\left(\delta_\tau x_2\right)_{i+\frac{1}{2},j+\frac{1}{2}}}\left[\left(x_2\right)_{i+\frac{1}{2},j+\frac{1}{2}}^n-\left(x_2\right)_{i+\frac{1}{2},j-\frac{1}{2}}^n\right], & \left(\delta_\tau x_2\right)_{i+\frac{1}{2},j+\frac{1}{2}}<0, 
			\\ 
			+\frac{1}{2\left(\delta_\tau x_2\right)_{i+\frac{1}{2},j+\frac{1}{2}}}\left[\left(x_2\right)_{i+\frac{1}{2},j+\frac{3}{2}}^n-\left(x_2\right)_{i+\frac{1}{2},j+\frac{1}{2}}^n\right], & \left(\delta_\tau x_2\right)_{i+\frac{1}{2},j+\frac{1}{2}}>0 .
		\end{cases}
	\end{equation*}
	Finally, the mesh velocities for the next time step are defined as 
	\begin{equation*}
		\dot{\bm{x}}_{i+\frac{1}{2},j+\frac{1}{2}}^n:=\Delta_\tau\left(\delta_\tau \bm{x}\right)_{i+\frac{1}{2},j+\frac{1}{2}}^n / \Delta t^n,
	\end{equation*}
	where $\Delta t^{n}$ is the time stepsize. One can get the values $\dot{\bx}_{i_{\mu},j_{\nu}}$ by using the similar dimension-by-dimension approach illustrated in Subsection \ref{Sec:Mesh_coeff}, so we omit it here. 
	
	To get the fully-discrete schemes, we use the explicit SSP RK3 time discretization 
	\begin{equation*}
		\begin{aligned}
			&\left(J\bm{{U}}\right)^{*} = \ \left(J\bm{{U}}\right)^{n}+\Delta t^n \bm{{L}}\left(\bU^{n}, \bx^n\right),~
			J^{*} = J^{n}+\Delta t^n {{L}}\left(
			\bx^n\right),\\
			&\bx^{*} = \bx^{n}+\Delta t^n \dot{\bx}^n,
			\\
			&\left(J\bm{{U}}\right)^{**} = \ \frac{3}{4}\left(J\bm{{U}}\right)^{n}+\frac{1}{4}\left(\left(J\bm{{U}}\right)^{*}+\Delta t^n \bm{{L}}\left(\bU^{*},
			{\bx}^*\right)\right),~
			J^{**} = \frac{3}{4}J^{n}+\frac{1}{4}\left(J^{*}+\Delta t^n {{L}}\left(
			{\bx}^*\right)\right),\\
			&\bx^{**} = \frac{3}{4}\bx^{n}+\frac{1}{4}\left(\bx^{*} + \Delta t^n\dot{\bx}^n\right),
			\\
			&\left(J\bm{{U}}\right)^{n+1} = \ \frac{1}{3}\left(J\bm{{U}}\right)^{n}+\frac{2}{3}\left(\left(J\bm{{U}}\right)^{**}+\Delta t^n \bm{{L}}\left(\bU^{**},
			{\bx}^{**}\right)\right),~
			J^{n+1} = \frac{1}{3}J^{n}+\frac{2}{3}\left(J^{**}+\Delta t^n {{L}}\left(
			{\bx}^{**}\right)\right),\\
			&\bx^{n+1} = \frac{1}{3}\bx^{n}+\frac{2}{3}\left(\bx^{**} + \Delta t^n{\bx}^n\right),
		\end{aligned}
	\end{equation*}
	where $\dot{\bx}^n$ is the mesh velocity determined by using the adaptive moving mesh strategy in
	Section \ref{Sec:MM}, 
	$\bm{{L}}$ is the right-hand side of \eqref{eq:1D_dis_U} or \eqref{eq:2D_dis_U}, 
	and ${{L}}$ is the right-hand side of the semi-discrete VCL \eqref{eq:1D_dis_J} or \eqref{eq:2D_dis_J}.

	\section{Numerical tests}\label{sec:Numerical_Test}
	
	This section conducts several numerical experiments to 
	validate the high-order accuracy, high resolution, and WB and PP properties of the proposed moving mesh scheme. 
	For the adaptive mesh redistribution,
	the total number of iterations for solving the adaptive mesh equation 
	(see \cite{Duan2022High,Li2022High})
	is set as $10$, 
	and the monitor function $\omega$ will be given in each example.
	The fifth-order 
	WENO-Z method \cite{Borges2008An} is employed for spatial reconstruction.  
	The time stepsize $\Delta t^{n}$ is chosen according to Theorems \ref{Pro:1D_SWE_PP_J}, \ref{Pro:1D_SWE_PP}, \ref{Pro:SWE_PP_J} and \ref{Pro:SWE_PP}, except for 
	the accuracy tests in Examples \ref{ex:1DSmooth} and \ref{ex:2D_Smooth_PP}, where it is taken as $ \min \{ \Delta t^n,  (\Delta \xi)^{5/3} \}$ and $ \min \{ \Delta t^n,  (\min_{\ell}\{ \Delta \xi_\ell \})^{5/3} \}$ , respectively, to align the temporal and spatial convergence rates. Our structure-preserving schemes on fixed uniform and moving meshes are denoted as ``\texttt{UM-SP}" and ``\texttt{MM-SP}", respectively.  To address the effect of round-off error for very shallow water depths, the velocities are computed using the following formula proposed in \cite{Alexander2007Asecond}: 
	\begin{equation*}
		(v_\ell) = \begin{cases}
			(hv_{\ell})/(h), & \text{if}\quad h>10^{-4},
			\\
			\dfrac{\sqrt{2}h(hv_\ell)}{\sqrt{h^4+\max\{h^4,\mathcal{T}\}}}, &\text{otherwise}, 
		\end{cases}
	\end{equation*}
	where $\mathcal{T} = (\Delta \xi)^{4}$ in 1D cases and $T = (\Delta \xi_1 \Delta \xi_2)^{2}$ in 2D cases, respectively.

	\subsection{1D tests}
	\begin{example}[Accuracy test with manufactured solution]\label{ex:1DSmooth}\rm
		This example \cite{Zhang2023High} is used to validate the convergence rate of the proposed moving mesh scheme by solving the following equations with an additional source term:
		\begin{equation*}
			\pd{\bU}{t} + \pd{\bF}{x} = -gh\pd{\bm{B}}{x} + \bm{S},
		\end{equation*}
		where the gravitational acceleration $g=1$. 
		The physical domain is set as $[0,2]$ with periodic boundary conditions. The exact solution is given by
		\begin{equation*}
			h(x,t) = 4+\cos(\pi x)\cos(\pi t),~v_1(x,t) = \frac{\sin(\pi x)\sin(\pi t)}{h},~b(x) = 1.5+\sin(\pi x),
		\end{equation*}
		and the additional source term is defined as
		\begin{align*}
			\bm{S}(x,t) = (0, ~&4\pi \cos(\pi x) + \pi\cos(\pi t)\cos^2(\pi x) - 3\pi\cos(\pi t)\sin(\pi x) - \pi\cos^2(\pi t)\cos(\pi x)\sin(\pi x) \\&+ (\pi\cos(\pi t)\sin^2(\pi t) \sin^3(\pi x))/(\cos(\pi t)\cos(\pi x) + 4)^2 \\&+ (2\pi\cos(\pi x)\sin^2(\pi t)\sin(\pi x))/(\cos(\pi t)\cos(\pi x) + 4)
			,~0)^{\top}.
		\end{align*}
		The numerical simulations are carried out up to $t = 0.1$. The monitor function is chosen as
		\begin{equation*}
			\omega=\left(1+\theta\left(\frac{ |\nabla_{{\xi}} (h+b)|}{\max |\nabla_{{\xi}} (h+b)|}\right)^2\right)^{1/2}
		\end{equation*}
		with $\theta = 3$.
	\end{example}
	Figure \ref{fig:1D_Smooth_Accuracy} presents the errors and the convergence behavior in the water depth $h$ computed by the \texttt{MM-SP} scheme at $t = 0.1$. The results clearly verify the designed fifth-order accuracy of the proposed scheme. 
	\begin{figure}[hbt!]
		\centering
		\begin{subfigure}[b]{0.5\textwidth}
			\centering
			\includegraphics[width=1.0\linewidth]{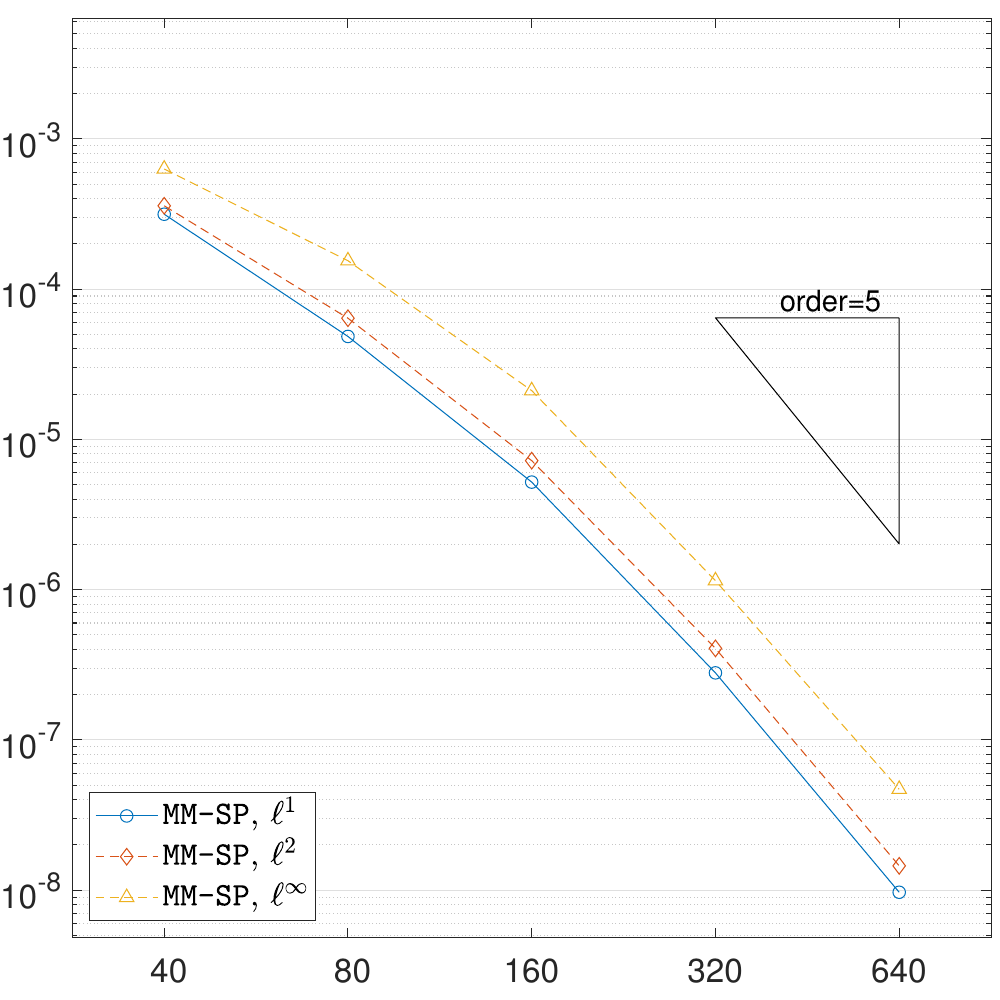}
		\end{subfigure}
		\caption{Example \ref{ex:1DSmooth}. The $\ell^{1}$, $\ell^{2}$, and $\ell^{\infty}$ errors and convergence rate in $h$ obtained by the \texttt{MM-SP} scheme at $t=0.1$.}\label{fig:1D_Smooth_Accuracy}
	\end{figure}

	\begin{example}[1D WB test]\label{ex:1D_WB_Test}\rm
		To demonstrate the WB property of the proposed scheme for the 1D SWEs, two different bottom topographies are considered within the physical domain $[0, 10]$ with outflow boundary conditions. The first topography is a smooth Gaussian profile:  
		\begin{equation}\label{eq:b_Smooth}
			b(x) = 5e^{-\frac{2}{5}(x-5)^2},
		\end{equation}
		and the second is a discontinuous square step:
		\begin{equation}\label{eq:b_dis}
			b(x)= \begin{cases}4,~&\text{if}\quad x \in[4,8], \\ 0,~&\text{otherwise}.\end{cases}
		\end{equation}
		The initial condition is given by
		$h = 10 - b$ and zero velocity. The gravitational acceleration constant is taken as $g = 1$.
		The output time is $t = 1$. 
		The monitor function is
		\begin{equation*}
			\omega=\left({1+100\left(\frac{|\nabla_{{\xi}} h|}{\max |\nabla_{{\xi}} h|}\right)^2}\right)^{1/2}.
		\end{equation*}
	\end{example}
	Table \ref{tab:1D Well_Balance} displays the $\ell^{1}$ and $\ell^{\infty}$ errors in water surface level $h+b$ and velocity $v_1$ computed with 200 mesh points. The errors are at the level of rounding error in double precision, demonstrating that both the \texttt{UM-SP} and \texttt{MM-SP} schemes are indeed WB. 
	Figure \ref{fig:1D_Well_balance} illustrates the water surface level $h+b$, bottom topography $b$, and the adaptive mesh for the \texttt{MM-SP} scheme. These visualizations show that our moving mesh scheme effectively preserves the lake-at-rest condition and that mesh points are densely concentrated around the sharp transitions in the bottom topography.

	\begin{table}[!htb]
		\centering
		\caption{Example \ref{ex:1D_WB_Test}.
			Errors in $h+b$ and $v_1$ obtained by using our schemes with $200$ mesh points at $t=1$, for two different bottom topographies \eqref{eq:b_Smooth} and \eqref{eq:b_dis}, respectively.}\label{tab:1D Well_Balance}
		\begin{tabular}{cccccc}
			\hline\hline
			\multicolumn{2}{c}{\multirow{2}{*}{}} & \multicolumn{2}{c}{\texttt{UM-SP}} & \multicolumn{2}{c}{\texttt{MM-SP}} \\ 
			\cmidrule(lr){3-4}\cmidrule(lr){5-6}
			\multicolumn{2}{c}{} & \multicolumn{1}{c}{$\ell^{1}$~error}  & \multicolumn{1}{c}{$\ell^{\infty}$~error} &  \multicolumn{1}{c}{$\ell^{1}$~error}  & \multicolumn{1}{c}{$\ell^{\infty}$~error}  \\
			\hline
			\multirow{2}{*}{$b$ in \eqref{eq:b_Smooth}} &
			$h+b$ & 1.24e-14 	 & 4.27e-14 	 & 2.13e-14 	 & 4.92e-14  \\ 
			& $v_1$ & 3.32e-15 	 & 1.21e-14 	 & 7.04e-15 	 & 1.63e-14  \\ 
			\hline
			\multirow{2}{*}{$b$ in \eqref{eq:b_dis}} &
			$h+b$ & 1.24e-14 	 & 2.79e-14 	 & 1.24e-14 	 & 3.76e-14  \\ 
			&	$v_1$ & 3.15e-15 	 & 1.23e-14 	 & 4.39e-15 	 & 1.46e-14  \\ 
			\hline\hline
		\end{tabular}
	\end{table}
	
	\begin{figure}[!htb]
		\centering
		\begin{subfigure}[b]{0.35\textwidth}
			\centering
			\includegraphics[width=1.0\textwidth]{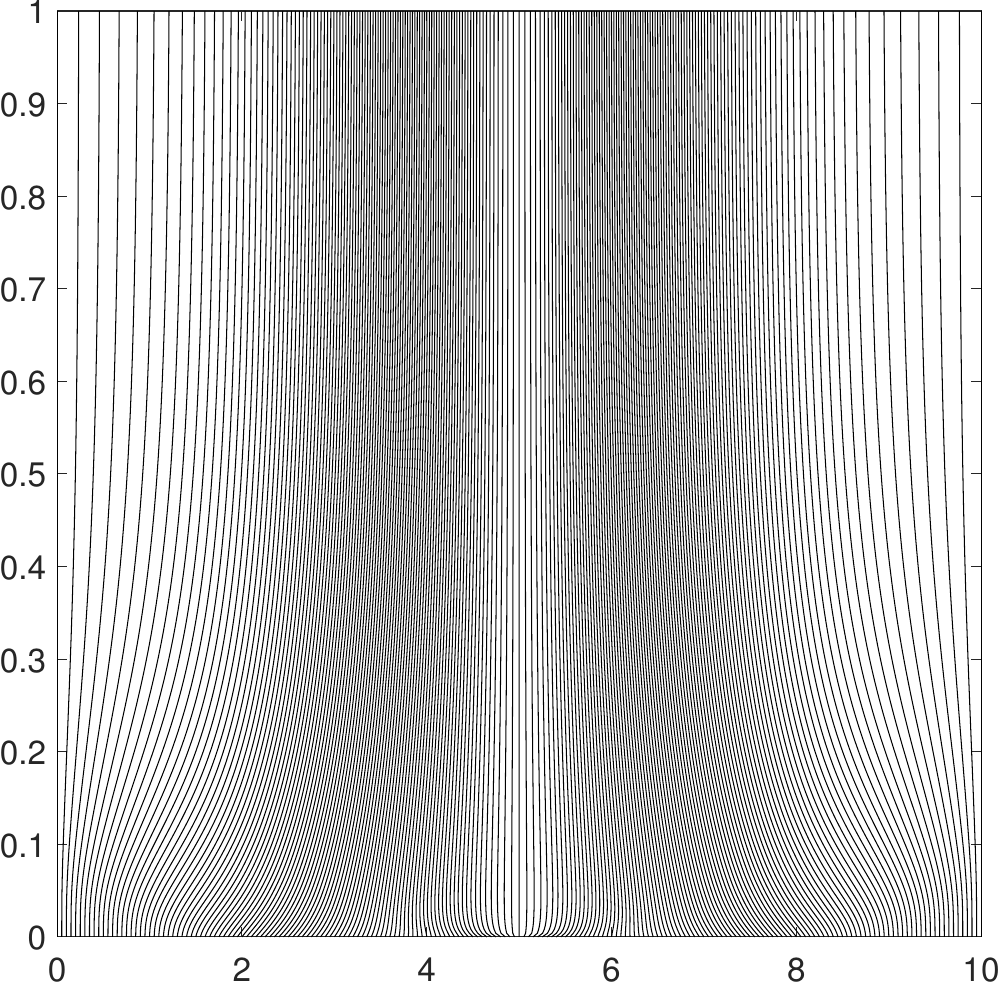}
		\end{subfigure}
		\begin{subfigure}[b]{0.35\textwidth}
			\centering
			\includegraphics[width=1.0\textwidth,  clip]{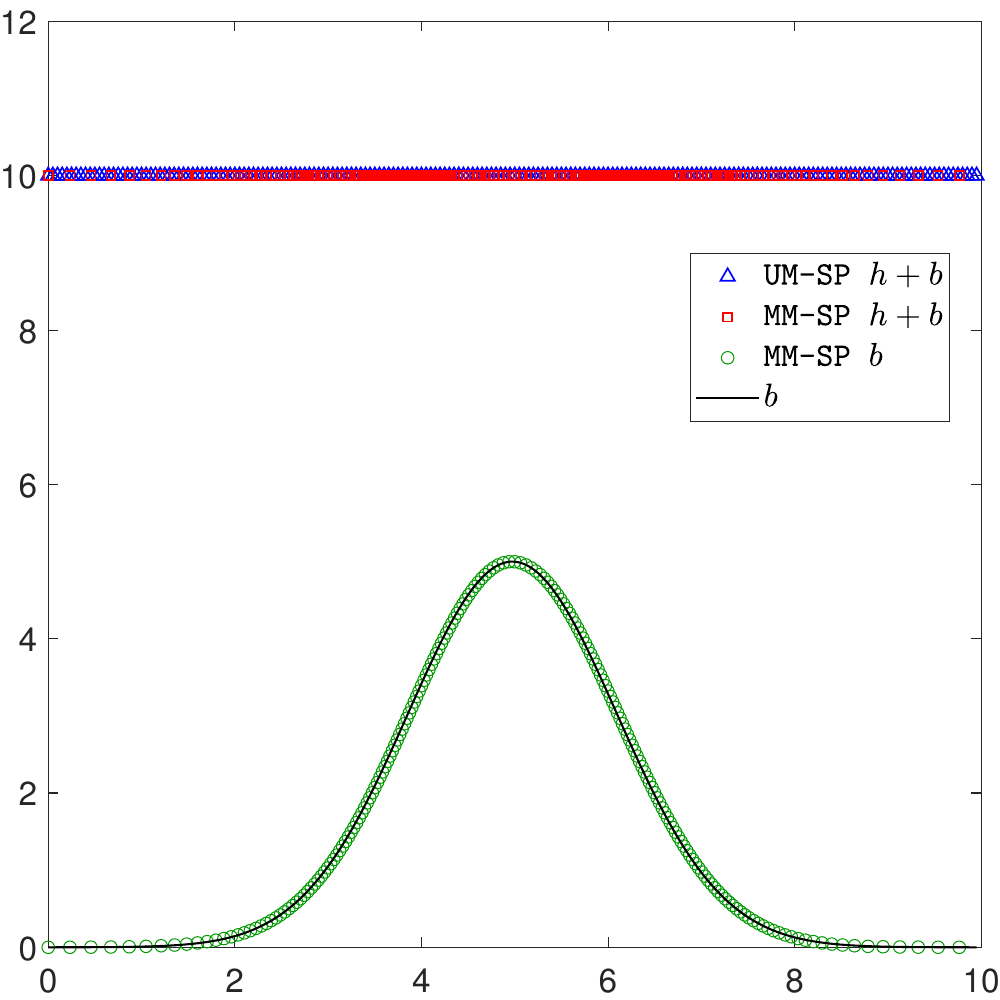}
		\end{subfigure}
		\\
		\begin{subfigure}[b]{0.35\textwidth}
			\centering
			\includegraphics[width=1.0\textwidth]{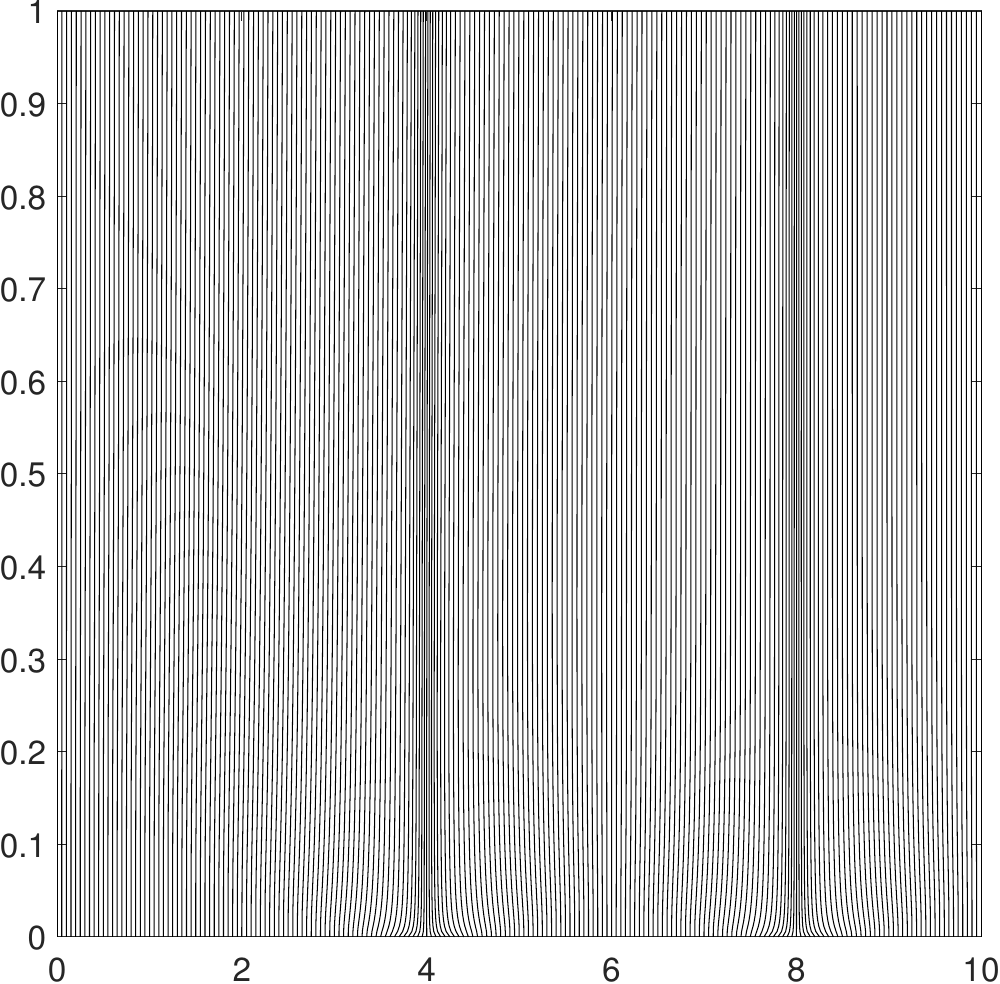}
		\end{subfigure}
		\begin{subfigure}[b]{0.35\textwidth}
			\centering
			\includegraphics[width=1.0\textwidth,  clip]{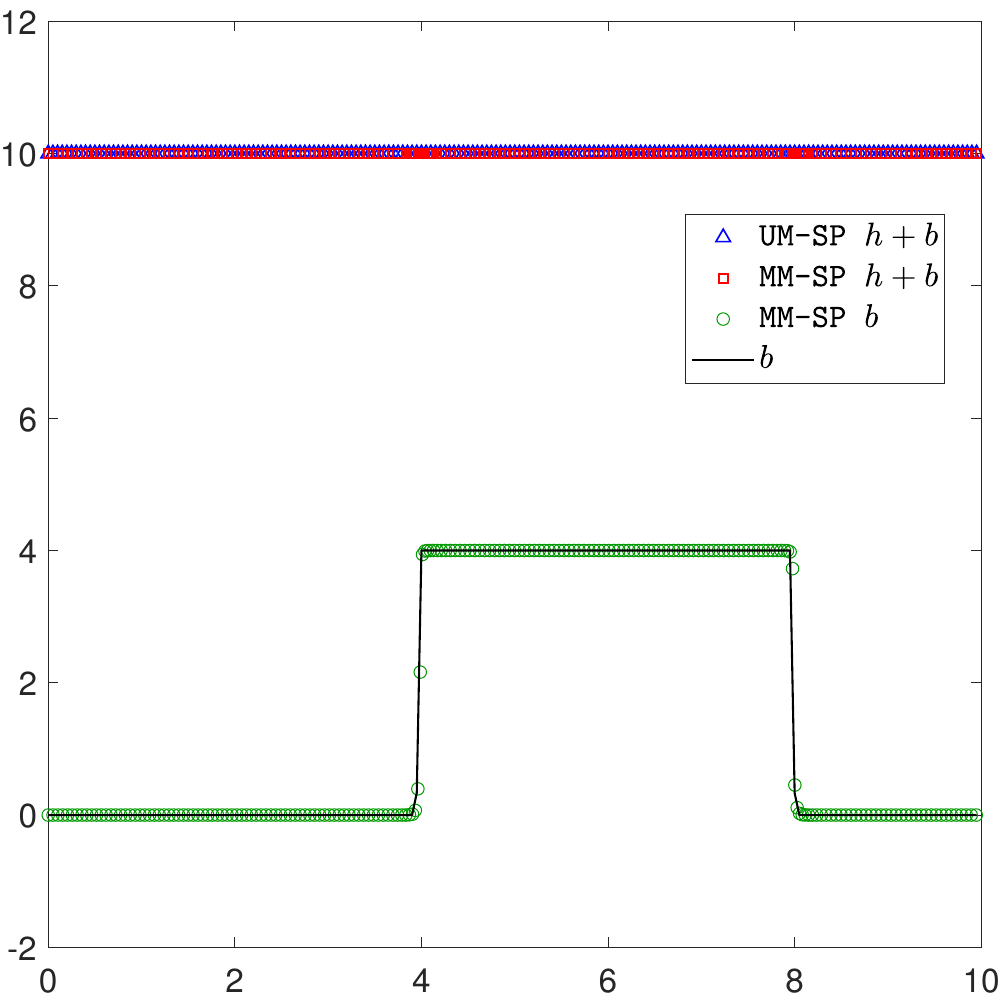}
		\end{subfigure}
		\caption{Example \ref{ex:1D_WB_Test}.
			Left: the mesh trajectory obtained by the \texttt{MM-SP} scheme,
			right: the bottom topography $b$ and water surface level $h+b$.
			Top: with the bottom topography \eqref{eq:b_Smooth},
			bottom: with the bottom topography \eqref{eq:b_dis}.
			The results are obtained with $200$ mesh points at $t=1$.
		}
		\label{fig:1D_Well_balance}
	\end{figure}

	\begin{example}[1D PP test]\rm\label{ex:1D_PP_Mose}
		This example \cite{holden2015front} is used to test the PP property of our schemes for the 1D SWEs with a flat bottom. The physical domain is $[-300,300]$, and the initial conditions are given by 
		\begin{equation*}
			h(x,0) = \begin{cases}
				10, &\text{if}\quad  x<  -70,\\
				0, &\text{if}\quad  -70 \leqslant x\leqslant 70,\\
				10, &\text{if}\quad x>70,
			\end{cases}
		\end{equation*}
		with zero velocity. The monitor function is the same as that in Example \ref{ex:1D_WB_Test}. 
	\end{example}
	
	Figure \ref{fig:1D_PP_Mose} presents the numerical solution at $t=1$, showing the
	comparison between the \texttt{UM-SP} scheme with $200$ and $600$ cells and the \texttt{MM-SP} scheme with $200$
	cells. 
	The mesh points for the \texttt{MM-SP} scheme are observed to concentrate around the regions where the water depth $h$ exhibits a large gradient. The numerical experiments confirm that both the \texttt{UM-SP} and \texttt{MM-SP} schemes effectively preserve the non-negativity of $h$. Notably, without the PP limiter, the code fails at the first time step.

	\begin{figure}[hbt!]
		\centering
		\begin{subfigure}[b]{0.3\textwidth}
			\centering
			\includegraphics[width=1.0\linewidth]{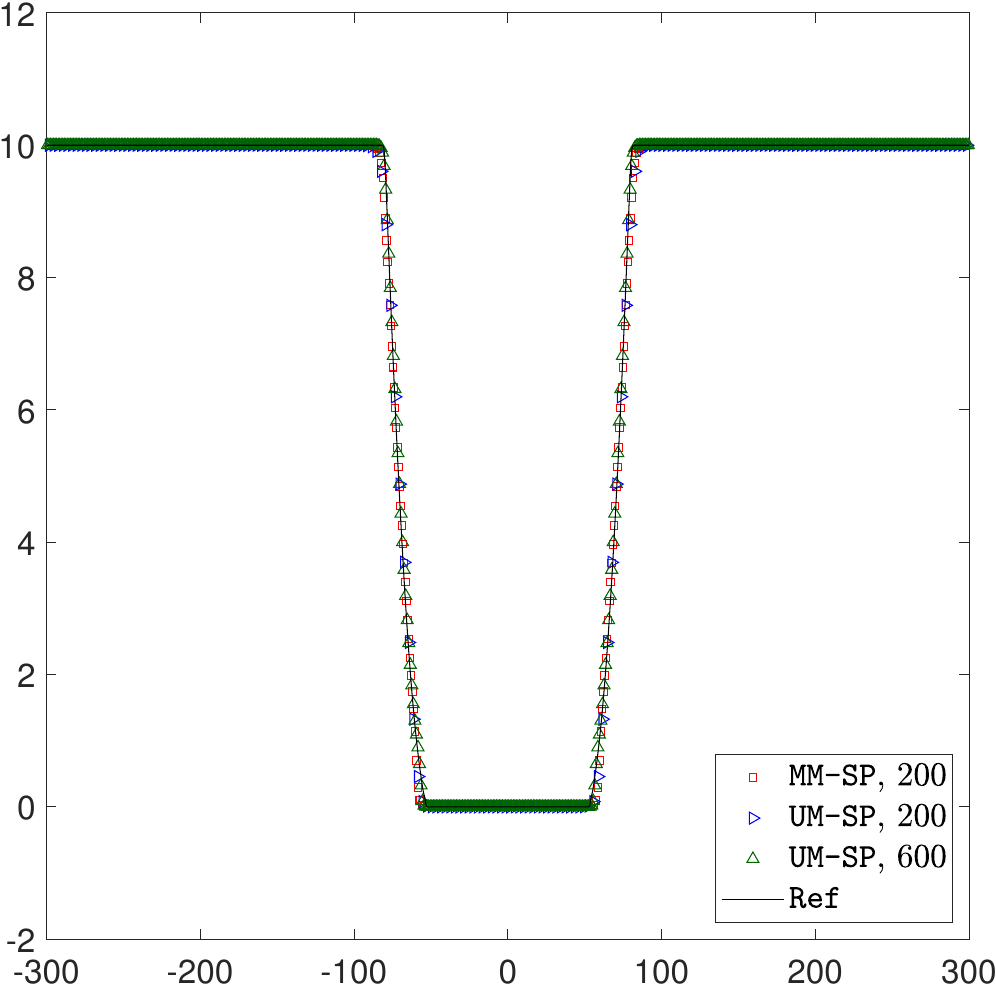}
			\caption{$h$}
		\end{subfigure}
		\begin{subfigure}[b]{0.3\textwidth}
			\centering
			\includegraphics[width=1.0\linewidth]{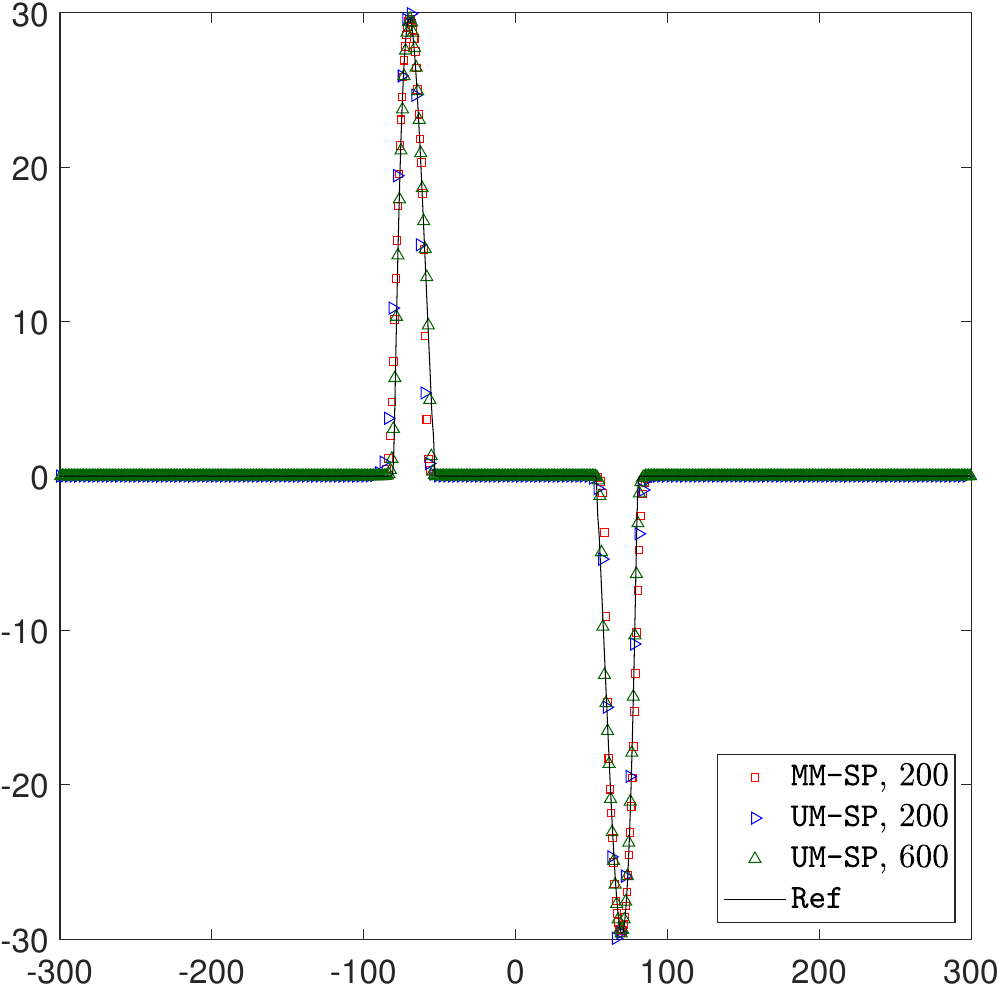}
			\caption{$hv_1$}
		\end{subfigure}
		\quad
		\begin{subfigure}[b]{0.3\textwidth}
			\centering
			\includegraphics[width=1.0\linewidth]{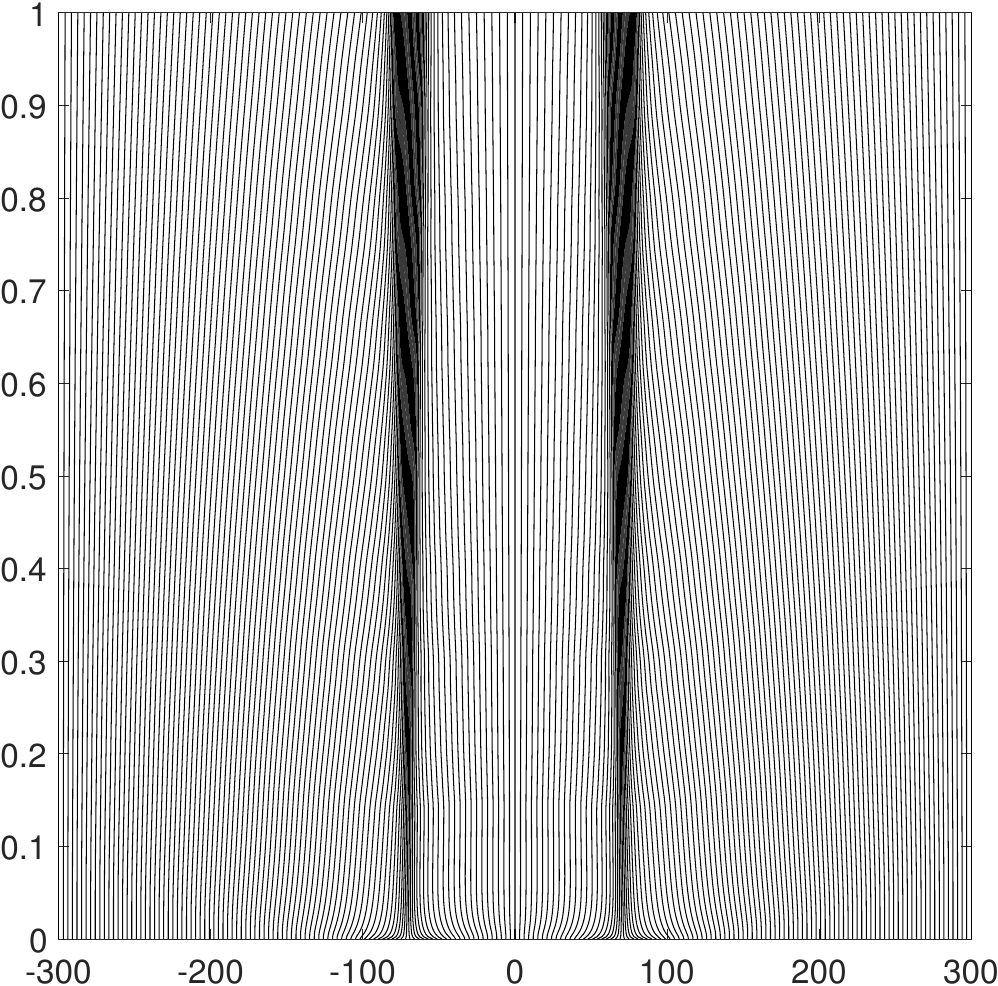}
			\caption{mesh trajectory}
		\end{subfigure}
		\caption{Example \ref{ex:1D_PP_Mose}. The results obtained by using the \texttt{UM-SP} and \texttt{MM-SP} schemes.}\label{fig:1D_PP_Mose}
	\end{figure}

	\begin{example}[Small perturbation test]\label{ex:1D_Pertubation_Test}\rm
		To demonstrate the capability of the \texttt{MM-SP} scheme in capturing small perturbations of steady-state flow, we consider a bottom topography modeled as a ``hump" \cite{Leveque1998Balancing}:
		\begin{equation*}
			b(x) = \begin{cases}
				0.25(\cos(10\pi(x-1.5))+1), &\text{if}\quad 1.4 \leqslant x \leqslant 1.6,\\
				0,   &\text{otherwise}. 
			\end{cases}
		\end{equation*}
		The initial conditions for water depth and velocities are defined as
		\begin{equation*}
			h = \begin{cases}
				1-b+\epsilon, &\text{if}\quad 1.1\leqslant x\leqslant 1.2,\\
				1-b, &\text{otherwise},
			\end{cases},~v_1 = v_2 = 0.0,
		\end{equation*}
		in the physical domain $[0, 2]$ with outflow boundary conditions. The gravitational acceleration constant is set to $g = 9.812$. We test two small perturbations, $\epsilon = 0.2$ and $0.001$, respectively. The monitor function is the same as in Example \ref{ex:1DSmooth}, with $\theta = 100$.  
		Figure \ref{fig:1D_Pertubation_WB} displays the numerical results at $t=0.2$ from both the \texttt{UM-SP} and \texttt{MM-SP} schemes using 200 mesh points, along with a reference solution obtained using the \texttt{UM-SP} scheme with 1000 mesh points. The results illustrate that the wave structures are well captured without notable spurious oscillations, and the \texttt{MM-SP} scheme outperforms the \texttt{UM-SP} scheme with an equivalent number of mesh points. Figure \ref{fig:1D_Perturbation_Compare} shows a comparison of using  different reconstructions. It is observed that merely using Step 1 in Section 3.2 (denoted as \texttt{REC-1}) leads to overshoot and undershoot around the sharp wave structures. This observation has motivated our modifications to perform the WENO reconstruction within the computational domain.

		To further demonstrate the WB and PP properties of the proposed schemes, we consider the following bottom topography 
		\begin{equation}\label{b:1D_WB_PP}
			b(x) = \begin{cases}
				0.5(\cos(10\pi(x-1.5))+1), &\text{if}\quad 1.4 \leqslant x \leqslant 1.6,\\
				0,   &\text{otherwise},
			\end{cases}
		\end{equation}
		which includes a dry region where the water depth $h = 0$ at $x = 1.5$. We introduce a small perturbation with $\epsilon = 0.001$ to evaluate the capability of our schemes in capturing such subtle variations.
		The monitor function is the same as in Example \ref{ex:1DSmooth}, with $\theta = 100$.  
		The numerical results are shown 
		Figure \ref{fig:1D_Pertubation_WB_PP}, validating that both the \texttt{UM-SP} and \texttt{MM-SP} schemes are able to handle the small perturbation cases within a dry region.
		The wave structures captured by the \texttt{MM-SP} scheme with $200$ cells closely align with the reference solutions and surpass the performance of the \texttt{UM-SP} scheme with the same cell count. Notably, the numerical simulation fails at $t \approx 0.1$ if the PP limiter is turned off. 
	\end{example}

	\begin{figure}[!htb]
		\centering
		\begin{subfigure}[b]{0.35\textwidth}
			\centering
			\includegraphics[width=1.0\textwidth]{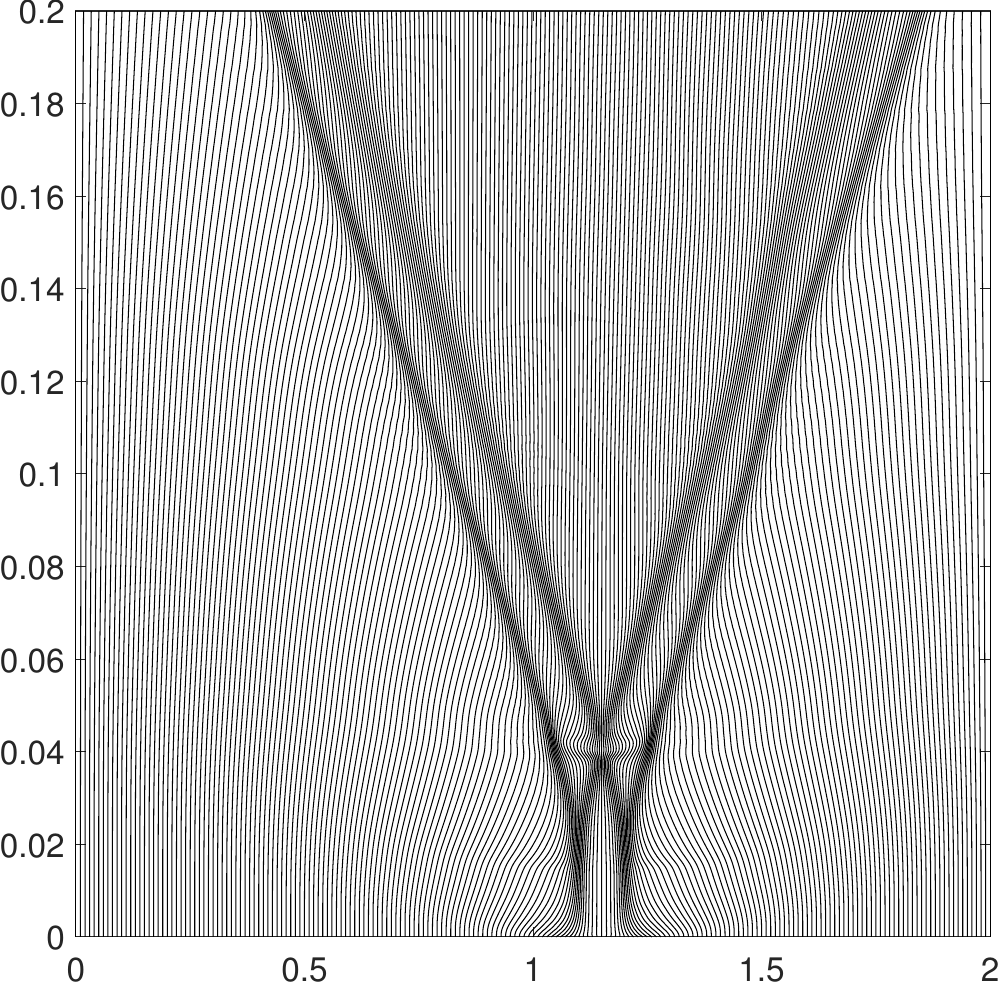}
		\end{subfigure}
		\begin{subfigure}[b]{0.35\textwidth}
			\centering
			\includegraphics[width=1.0\textwidth,  clip]{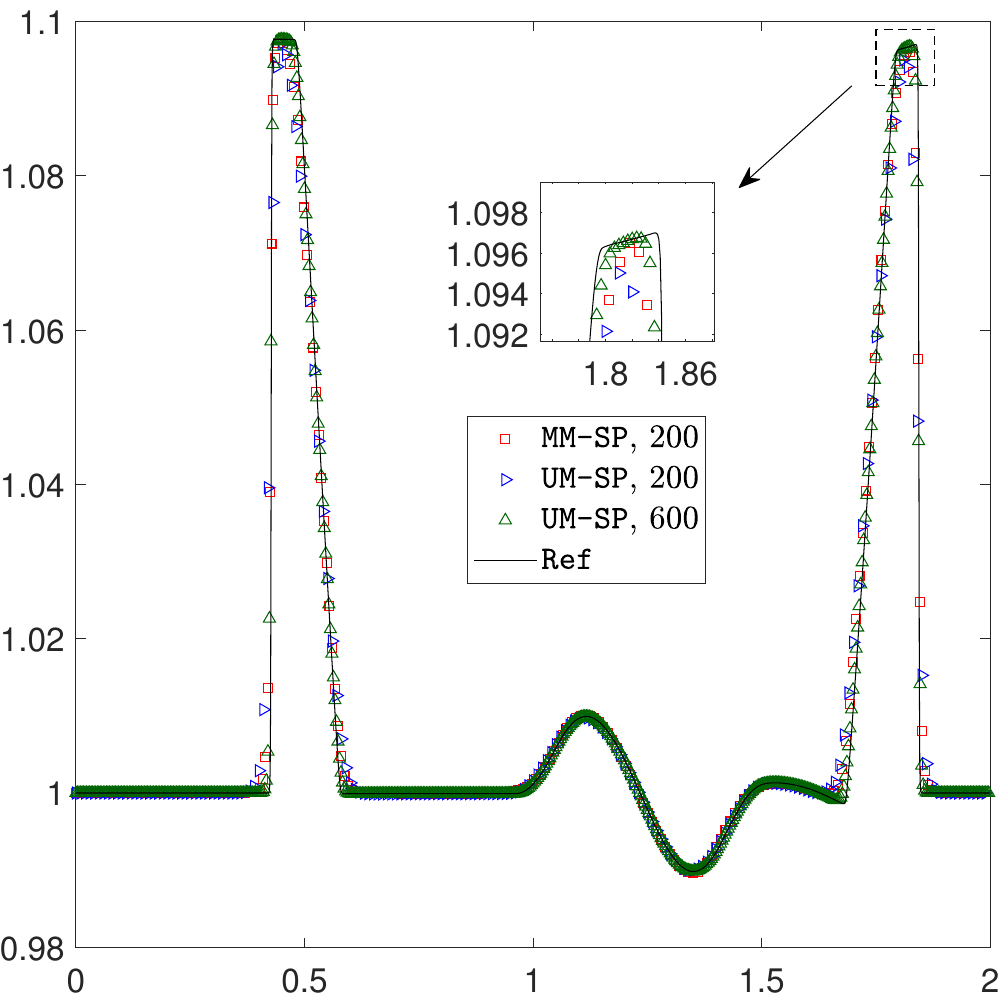}
		\end{subfigure}
		\\
		\begin{subfigure}[b]{0.35\textwidth}
			\centering
			\includegraphics[width=1.0\textwidth]{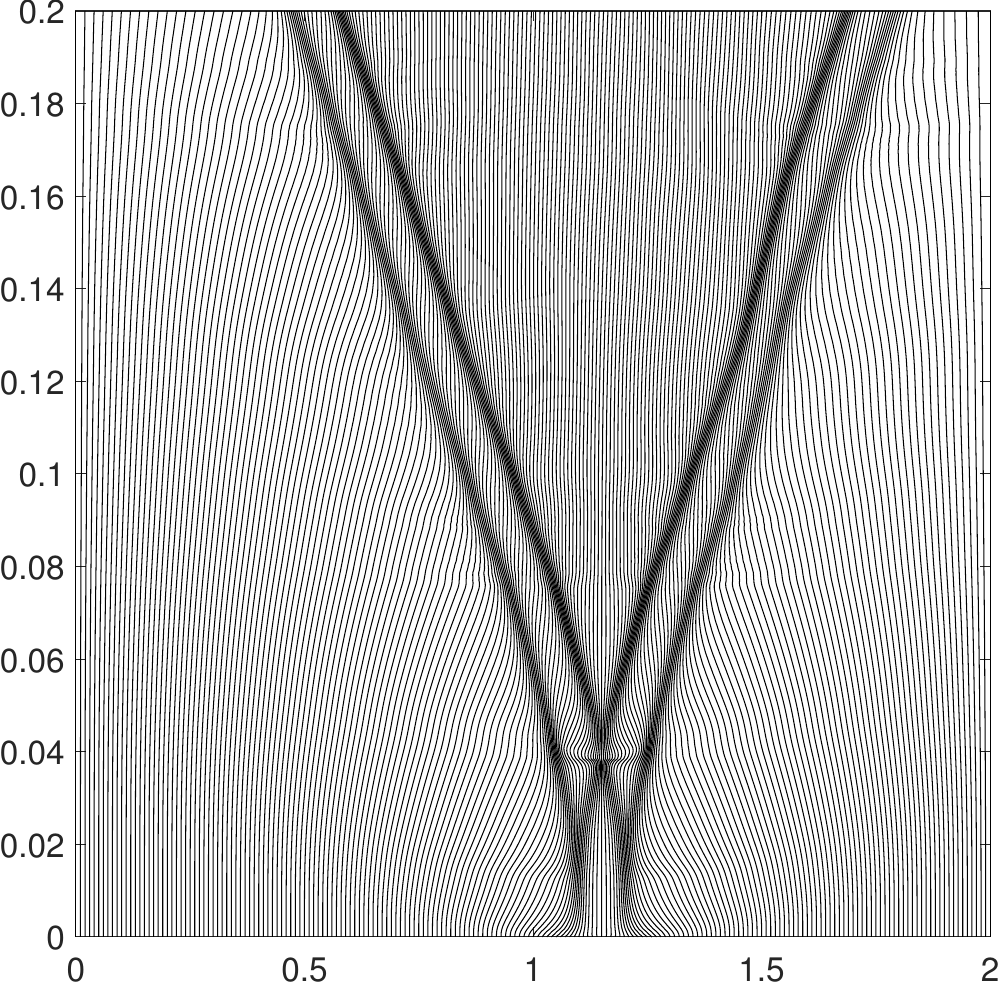}
		\end{subfigure}
		\begin{subfigure}[b]{0.35\textwidth}
			\centering
			\includegraphics[width=1.0\textwidth,  clip]{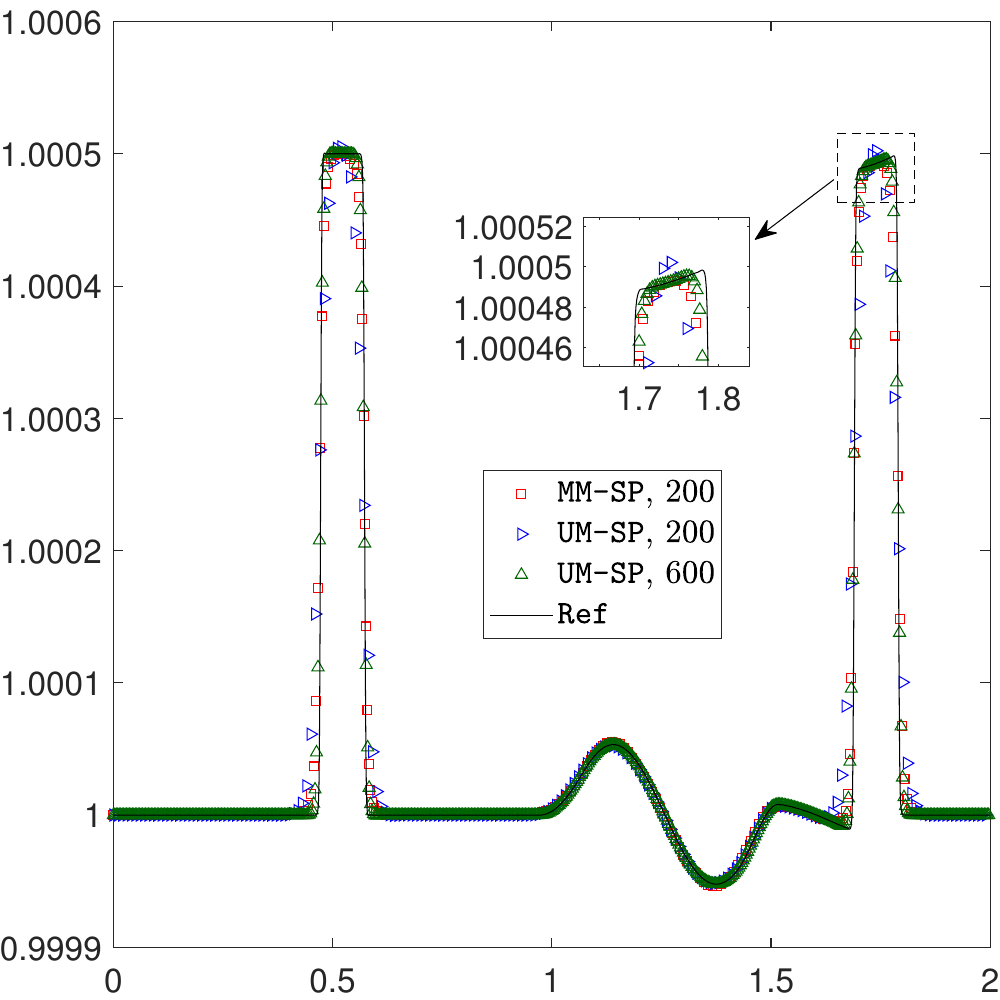}
		\end{subfigure}
		\caption{Example \ref{ex:1D_Pertubation_Test}.
			The numerical solutions obtained by using the \texttt{UM-SP} and \texttt{MM-SP} schemes at $t=0.2$.
			The reference solution is obtained by using the \texttt{UM-SP} scheme with $1000$ mesh points. Left: the adaptive mesh obtained by the \texttt{MM-SP} scheme,
			right: the water surface level $h+b$.
			Top: the case of $\epsilon = 0.2$,
			bottom: the case of $\epsilon = 0.001$.
		}
		\label{fig:1D_Pertubation_WB}
	\end{figure}

	\begin{figure}[hbt!]
		\centering
		\begin{subfigure}[b]{0.3\textwidth}
			\centering
			\includegraphics[width=1.0\linewidth]{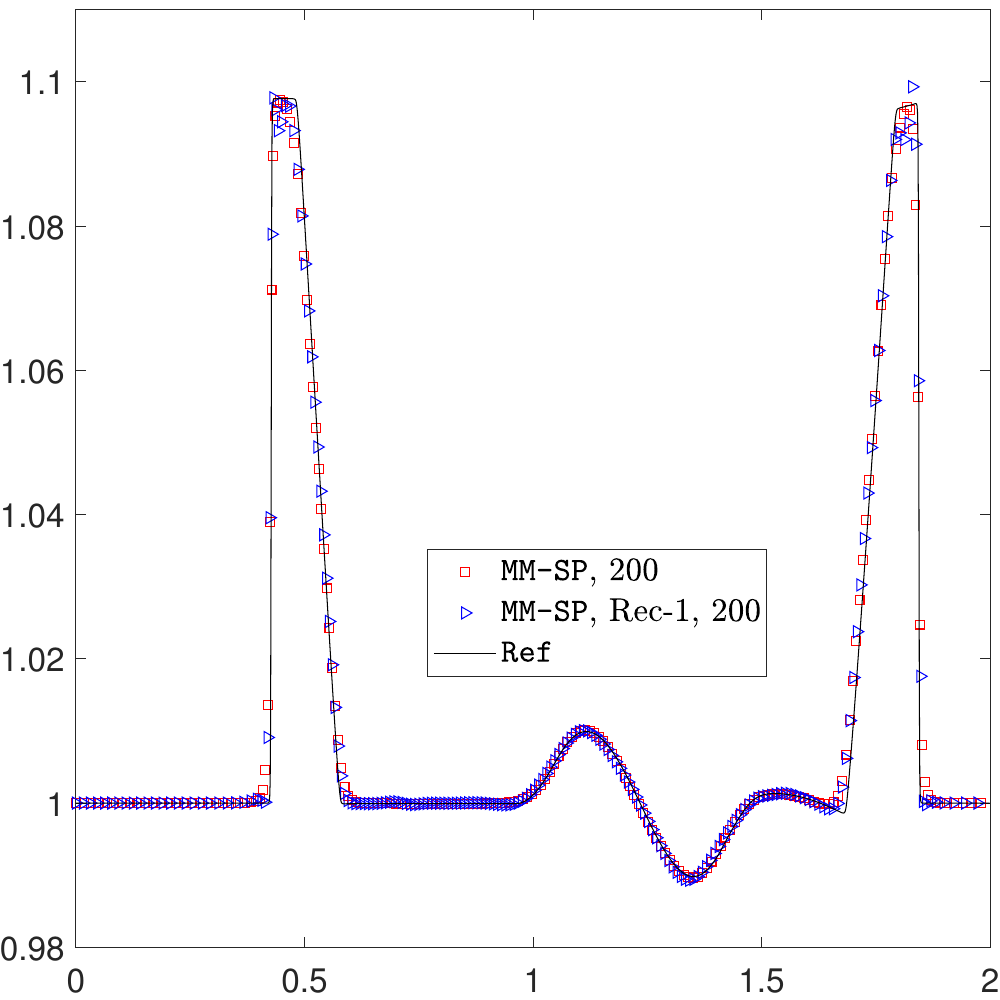}
		\end{subfigure}
		\begin{subfigure}[b]{0.15\textwidth}
			\centering
			\includegraphics[height=2.0\linewidth]{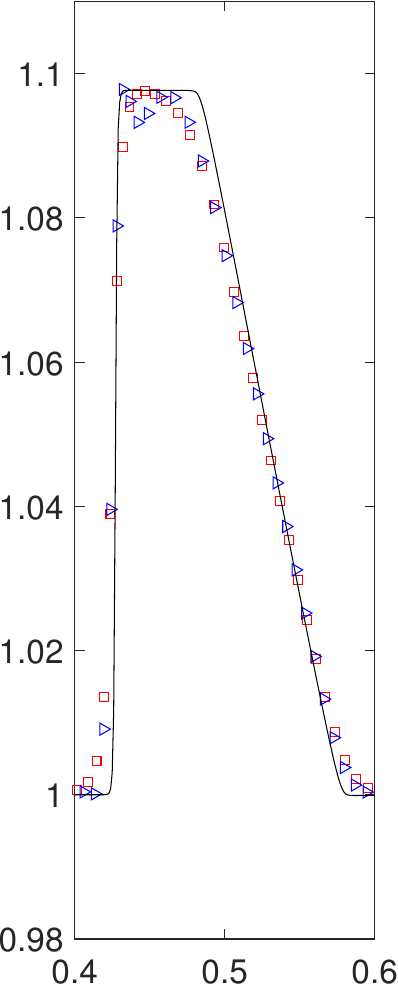}
		\end{subfigure}
		\quad
		\begin{subfigure}[b]{0.15\textwidth}
			\centering
			\includegraphics[height=2.0\linewidth]{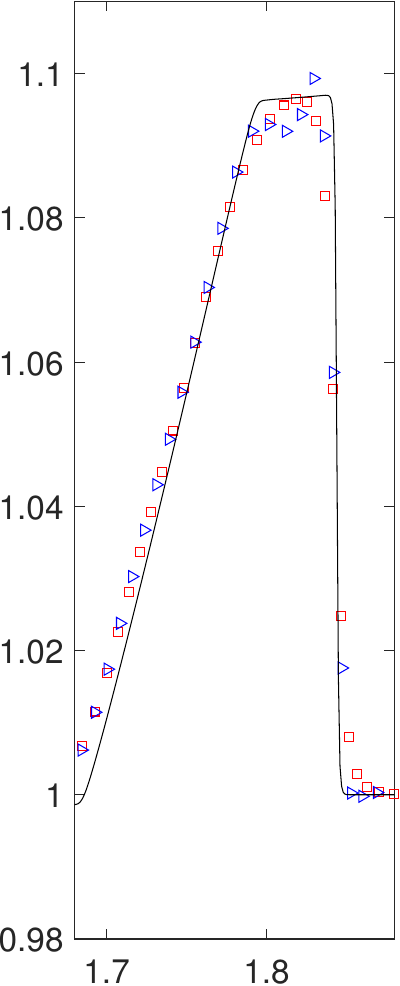}
		\end{subfigure}
		\\
		\begin{subfigure}[b]{0.3\textwidth}
			\centering
			\includegraphics[width=1.0\linewidth]{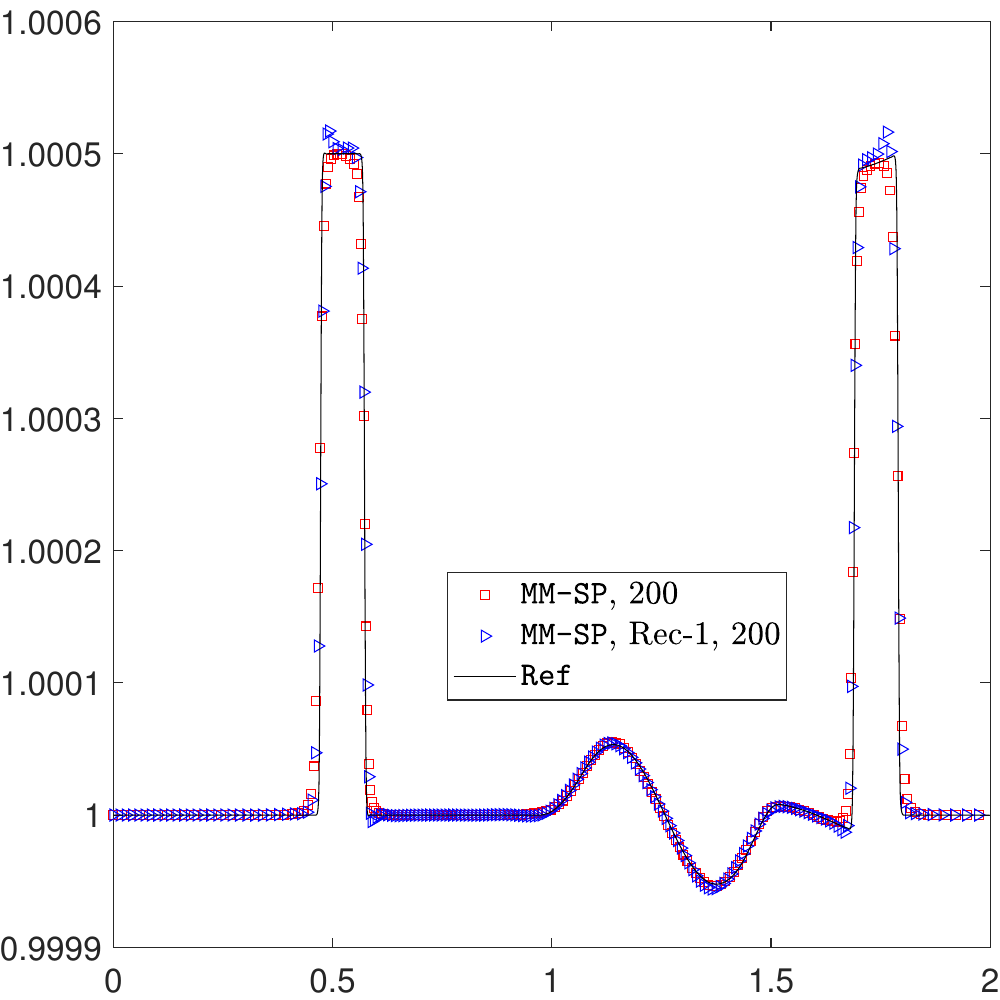}
		\end{subfigure}
		\begin{subfigure}[b]{0.15\textwidth}
			\centering
			\includegraphics[height=2.0\linewidth]{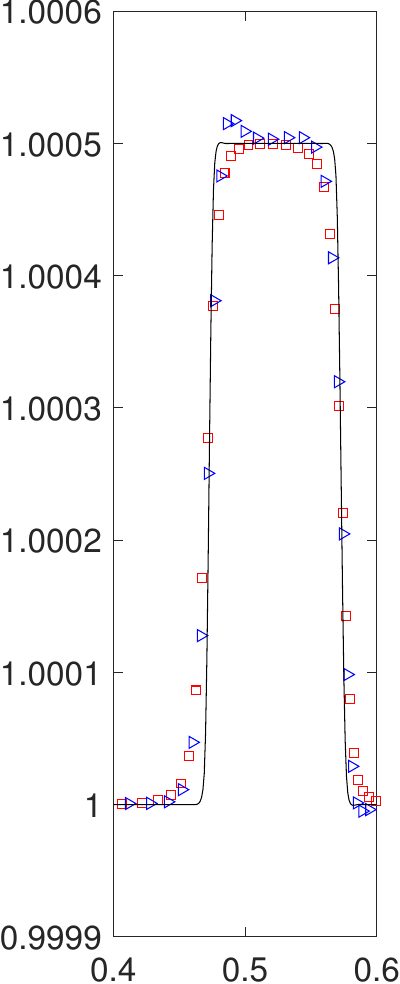}
		\end{subfigure}
		\quad
		\begin{subfigure}[b]{0.15\textwidth}
			\centering
			\includegraphics[height=2.0\linewidth]{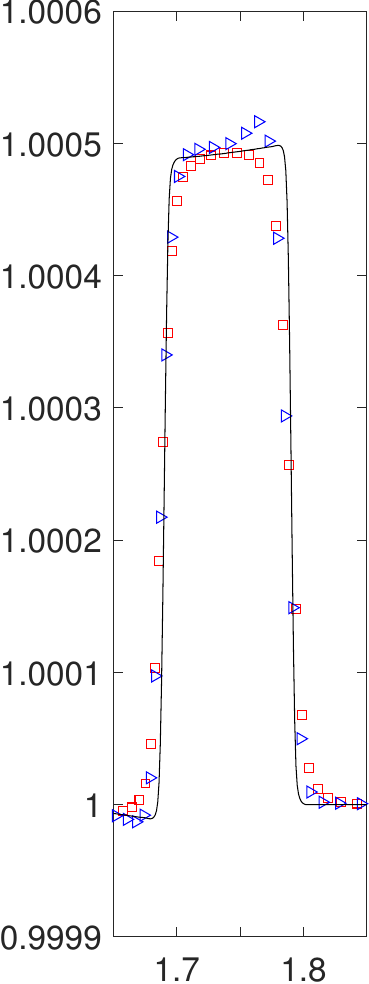}
		\end{subfigure}
		\caption{Example \ref{ex:1D_Pertubation_Test}. The results obtained by using the  \texttt{MM-SP} scheme and \texttt{MM-SP} scheme with {\tt Rec-1}. Left: the water surface level $h+b$,
			middle: the close view around $x = 0.5$, right: the close view around $x = 1.75$. }\label{fig:1D_Perturbation_Compare}
	\end{figure}

	\begin{figure}[!htb]
		\centering
		\begin{subfigure}[b]{0.32\textwidth}
			\centering
			\includegraphics[width=1.0\textwidth]{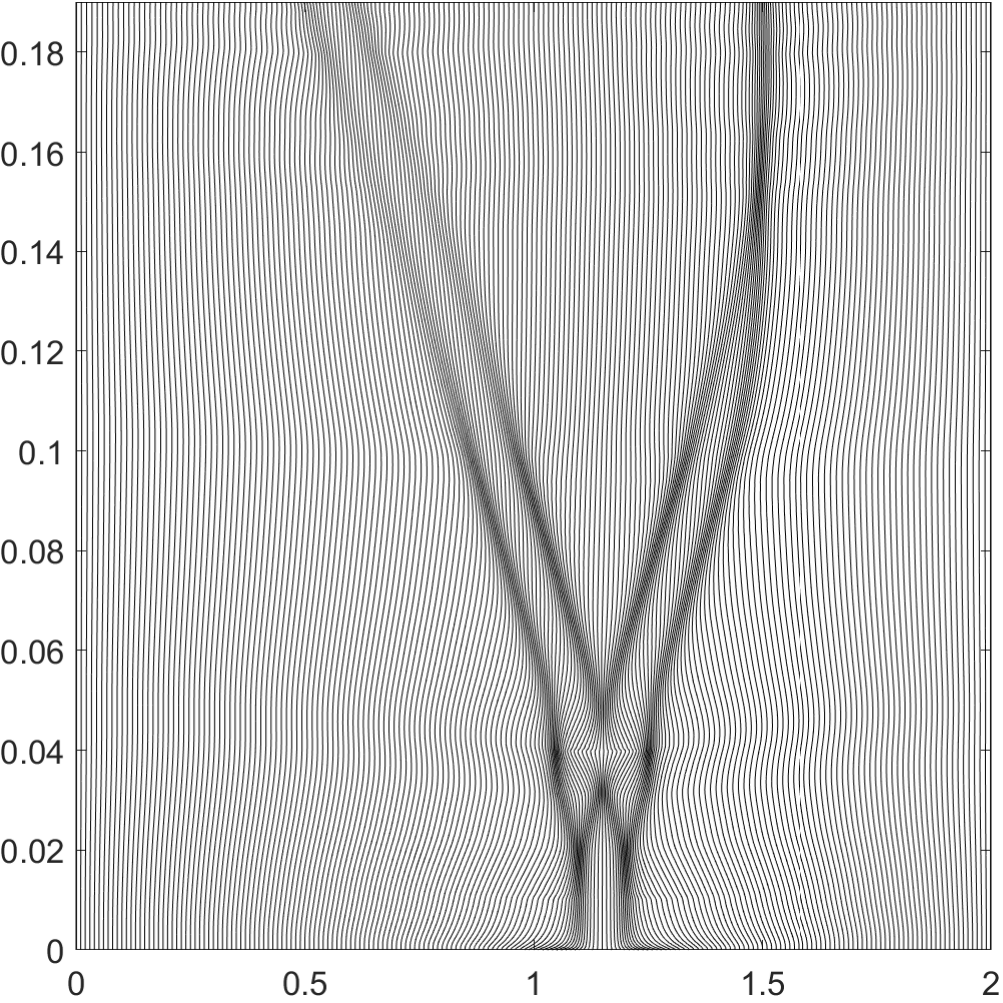}
		\end{subfigure}
		\begin{subfigure}[b]{0.32\textwidth}
			\centering
			\includegraphics[width=1.0\textwidth]{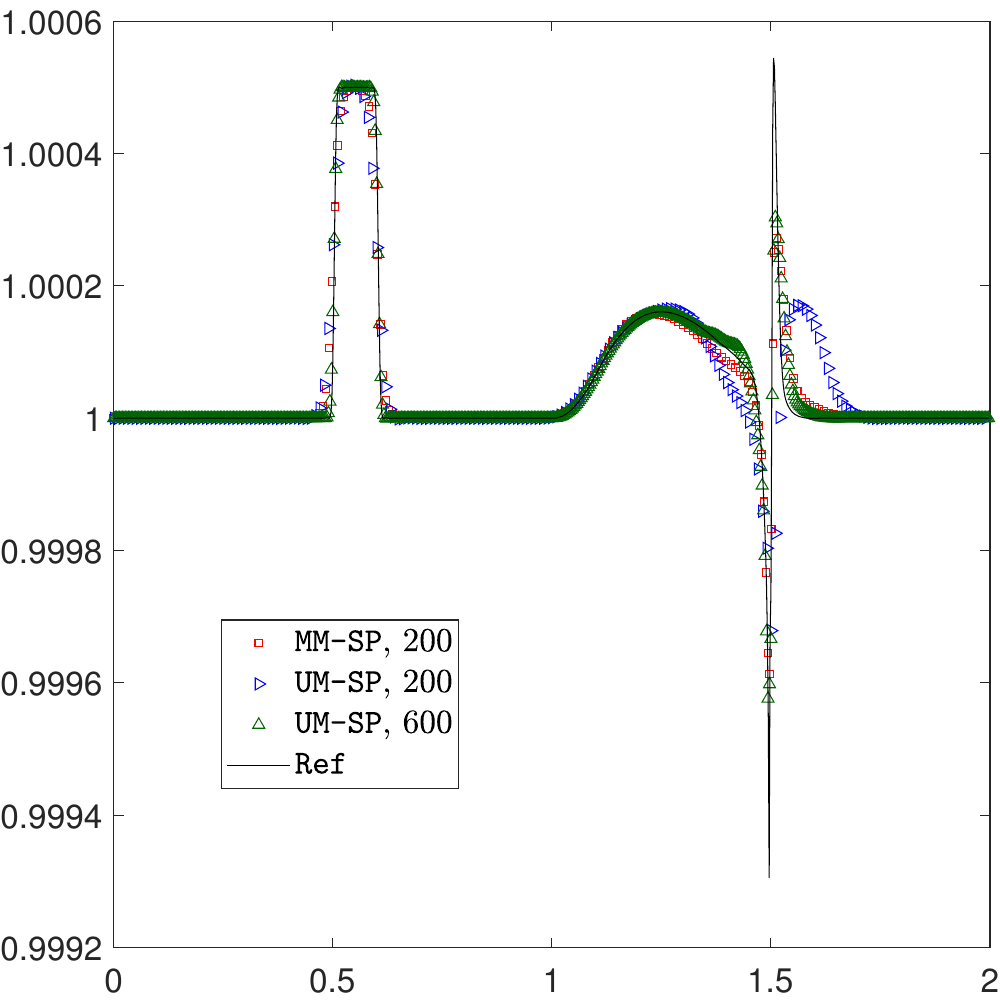}
		\end{subfigure}
		\begin{subfigure}[b]{0.32\textwidth}
			\centering
			\includegraphics[width=1.0\textwidth]{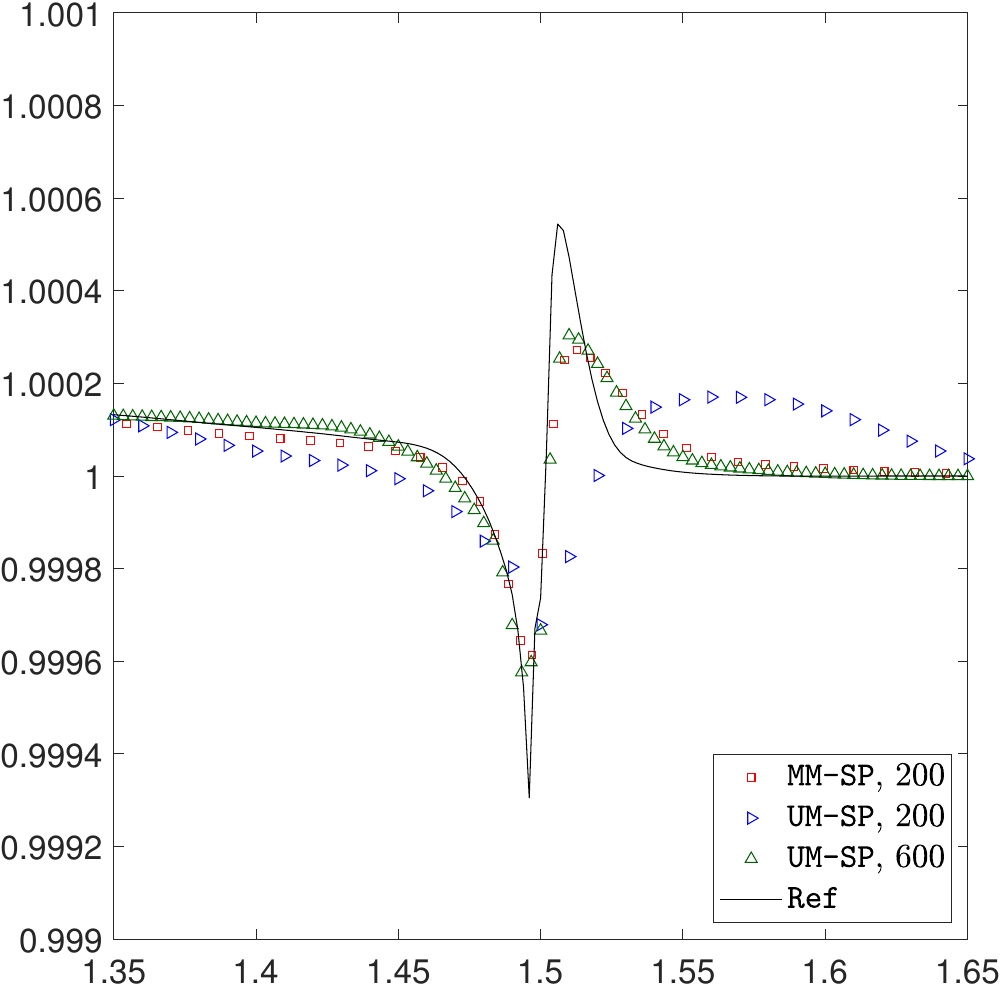}
		\end{subfigure}
		\caption{Example \ref{ex:1D_Pertubation_Test}.
			The numerical solutions obtained by using the \texttt{UM-SP} and \texttt{MM-SP} schemes at $t=0.19$.
			The reference solution is obtained by using the \texttt{UM-SP} scheme with $1000$ mesh points. Left: the mesh trajectory obtained by the \texttt{MM-SP} scheme, middle: the water surface level $h+b$,
			right: the close view around $x\in [1.35,1.65]$.
		}
		\label{fig:1D_Pertubation_WB_PP}
	\end{figure}

	\subsection{2D tests}
	
	\begin{example}[Accuracy test with moving vortex]\label{ex:2D_Smooth_PP}\rm
		The 2D vortex problem proposed in \cite{Zhang2023High} is used to verify the accuracy of our schemes with the PP limiter. A steady-state vortex is described by
		\begin{equation*}
			\begin{aligned}
				&h^{\prime}=h_{\max}-v_{\max }^2 e^{1-r^2} /(2 g), \\
				&\left(v_1^{\prime}, v_2^{\prime}\right)=v_{\max} e^{0.5\left(1-r^2\right)}(-x_2, x_1), \\
			\end{aligned}
		\end{equation*}
		where $h_{\max} = 10^{-6} + \frac{v_{\max}^2 e}{2g}, v_{\max} = 0.2, r = \sqrt{x_1^2 + x_2^2}, g = 1$, ensuring the minimum water depth is $10^{-6}$ so that the PP limiter is necessary for this test. 
		A moving vortex with a constant velocity of $(1,1)$ is then obtained by using the Galilean transformation
		\begin{equation*}
			\begin{aligned}
				& h(x_1, x_2, t)=h^{\prime}(x_1-t, x_2-t, t),~\left(v_1, v_2\right)(x_1, x_2, t)=(1,1)+\left(v_1^{\prime}, v_2^{\prime}\right)(x_1-t, x_2-t, t). \\
			\end{aligned}
		\end{equation*}
		The simulation runs up to $t = 0.1$ in the physical domain $[-10, 10] \times [-10, 10]$ with periodic boundary conditions. The monitor function used is
		\begin{equation*}
			\omega=\left({1+5 \left(\frac{|\nabla_{\bm{\xi}} (h+b)|}{\max |\nabla_{\bm{\xi}} (h+b)|}\right)^2+5\left(\frac{ |\Delta_{\bm{\xi}} (h+b)|}{\max |\Delta_{\bm{\xi}} (h+b)|}\right)^2}\right)^{1/2}.
		\end{equation*}
	\end{example}
	Figure \ref{fig:2D_Smooth_PP_Accuracy} displays the adaptive mesh and ten equally spaced contours of the water surface level $h+b$, computed using the \texttt{MM-SP} scheme with $80\times80$ cells. The figure also reports the $\ell^{1}$, $\ell^{2}$, and $\ell^{\infty}$ errors, along with the corresponding convergence behavior of the \texttt{MM-SP} scheme for the water depth $h$ at $t=0.1$. It can be observed that the mesh points concentrate around the vortex. The \texttt{MM-SP} scheme achieves fifth-order accuracy as expected, and the PP limiter does not compromise this accuracy.
	\begin{figure}[hbt!]
		\centering
		\begin{subfigure}[b]{0.4\textwidth}
			\centering
			\includegraphics[width=1.0\linewidth]{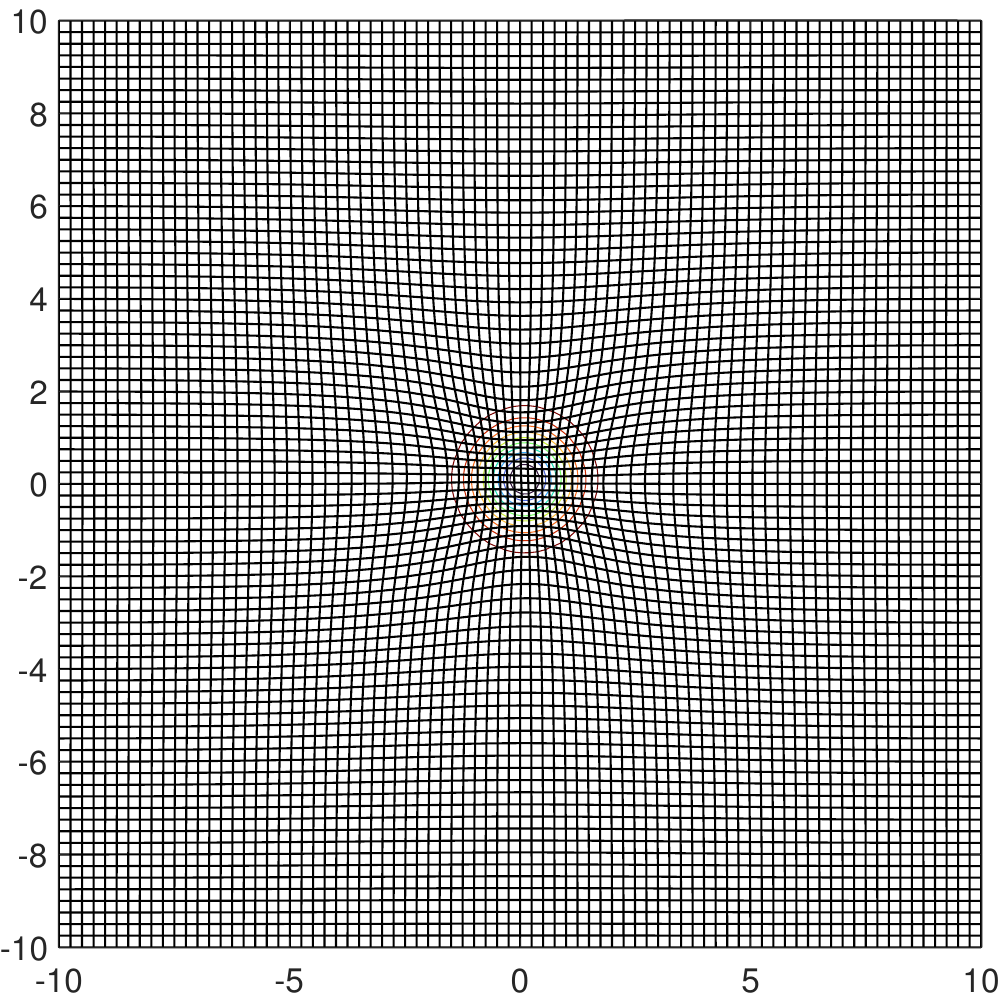}
		\end{subfigure}
		\begin{subfigure}[b]{0.4\textwidth}
			\centering
			\includegraphics[width=1.0\linewidth]{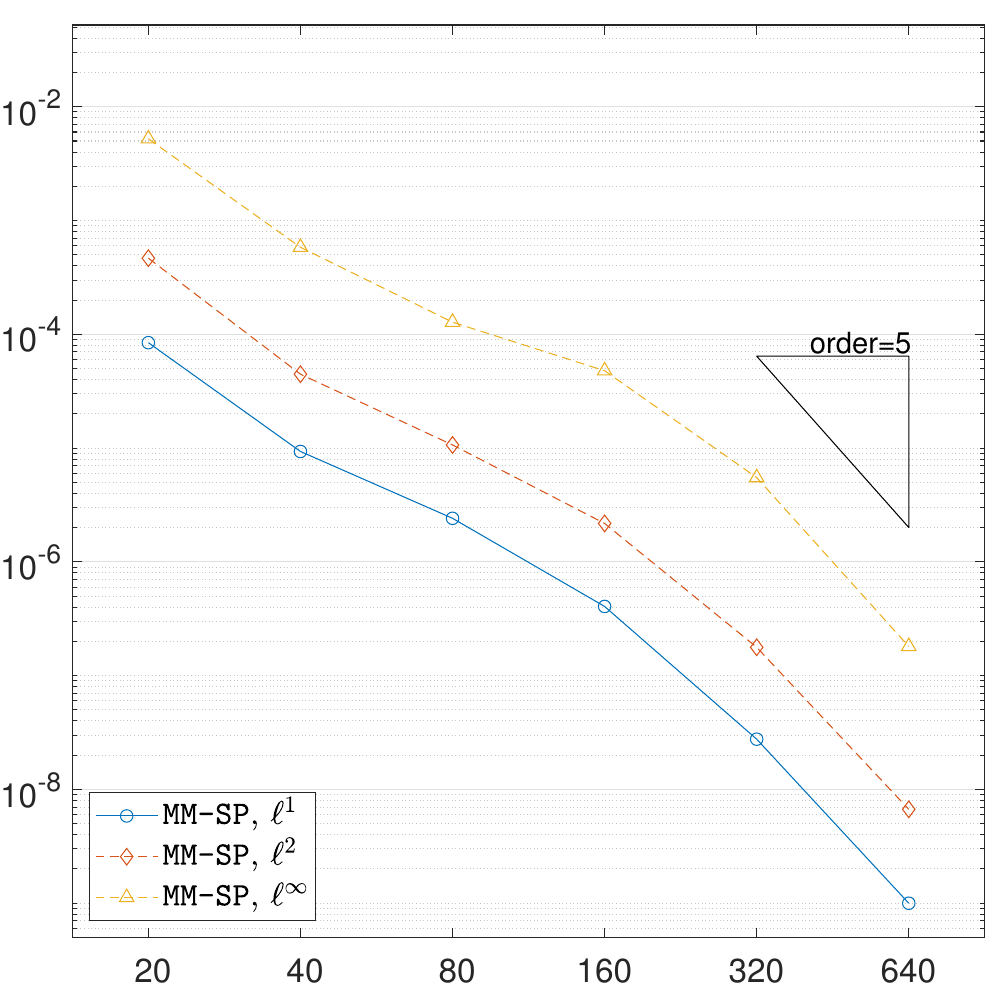}
		\end{subfigure}
		\caption{Example \ref{ex:2D_Smooth_PP}. The results computed by the \texttt{MM-SP} scheme at $t=0.1$. Left: the adaptive mesh and $10$ equally spaced contours of $h+b$ obtained with $80\times80$ cells. Right: the $\ell^{1}$, $\ell^{2}$, and $\ell^{\infty}$ errors and convergence behavior for $h$.}\label{fig:2D_Smooth_PP_Accuracy}
	\end{figure}

	\begin{example}[2D PP test]\label{ex:2D_PP_Flat}\rm
		This numerical experiment is performed to verify the PP property of the proposed schemes on fixed and moving meshes \cite{Xing2013Positivity}.
		The physical domain is $[-100,100]\times[-100,100]$ with outflow boundary conditions.  
		The initial conditions for water depth and velocities are given by
		\begin{equation*}
			\begin{aligned}
				& h = \begin{cases}
					10, & \text{if}~\sqrt{(x_1)^2+(x_2)^2} \leqslant 60, \\
					0, & \text {otherwise},
				\end{cases} \\
				& v_1=v_2=0.
			\end{aligned}
		\end{equation*}
		The simulation runs until an output time of $t=0.35$. The gravitational acceleration constant is set to $g = 9.812$. 
		The monitor function is chosen as
		\begin{equation*}
			\omega = \left(1+\theta\left(\frac{\left|\nabla_{\bm{\xi}}\sigma\right|}{\max\left|\nabla_{\bm{\xi}}\sigma\right|}\right)^2\right)^{\frac{1}{2}},
		\end{equation*}
		where $\theta = 50$ and $\sigma = h+b$.
		
	\end{example}
	
	Figure \ref{fig:2D_PP_Flat} presents the numerical results obtained by using the \texttt{UM-SP} scheme with $100\times 100$ and $300\times 300$ cells, and the \texttt{MM-SP} scheme with $100\times 100$ cells. These results demonstrate that our schemes, applied to either fixed or moving meshes, work effectively for challenging scenarios where the water depth includes a dry region. Cut lines along $x_2=1$ reveal that the numerical results from the \texttt{MM-SP} scheme are superior to those obtained by the \texttt{UM-SP} scheme with the same number of mesh points. Notably, the numerical simulation fails rapidly without the PP limiter, given that the minimum value of water depth $h$ reaches $0$.

	\begin{figure}[hbt!]
		\centering
		\begin{subfigure}[b]{0.3\textwidth}
			\centering
			\includegraphics[width=1.0\linewidth]{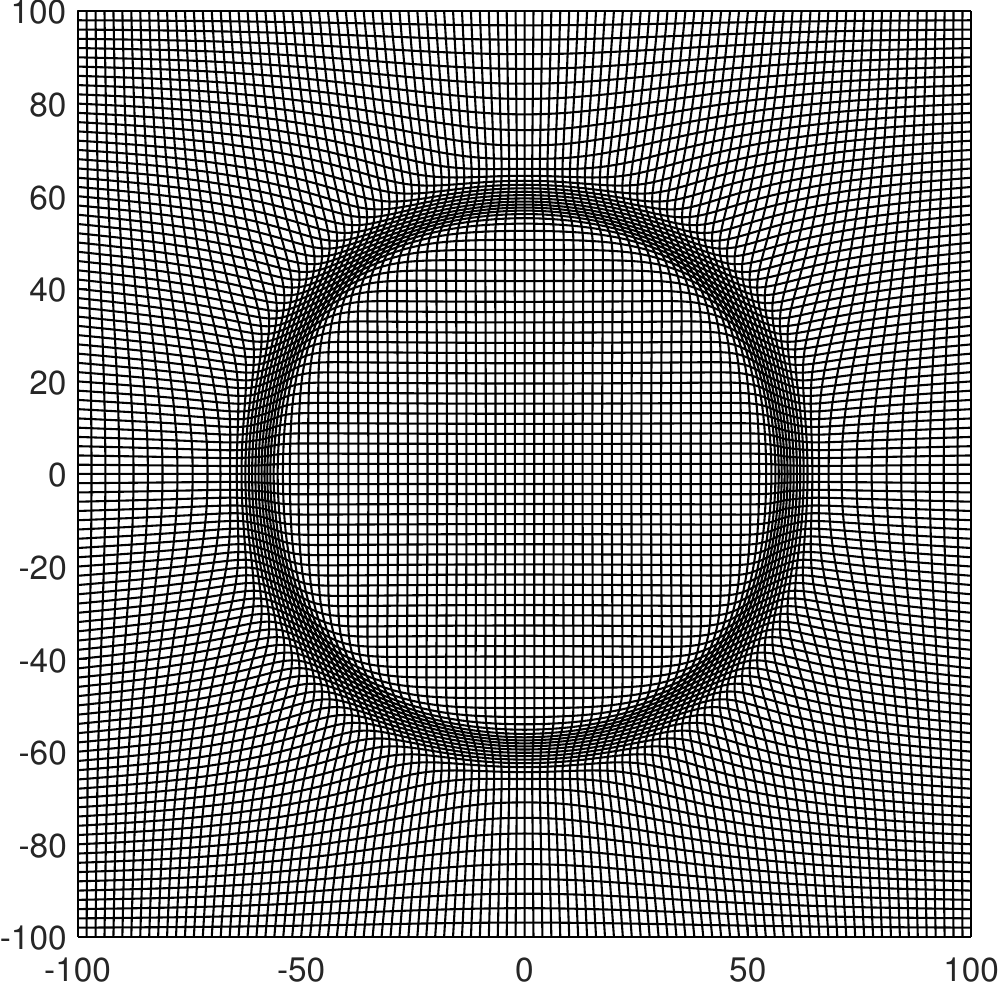}
			\caption{$200\times200$ adaptive mesh, \texttt{MM-SP}}
		\end{subfigure}
		\begin{subfigure}[b]{0.3\textwidth}
			\centering
			\includegraphics[width=1.0\linewidth]{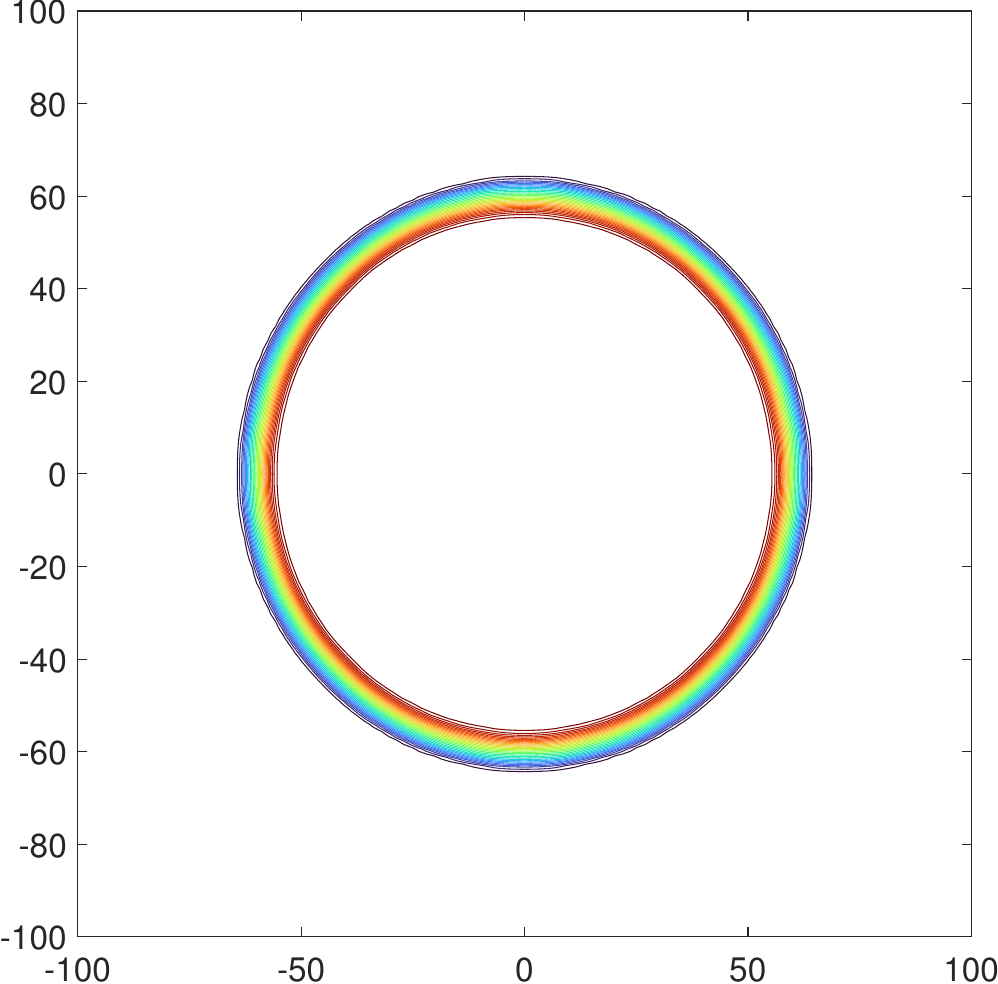}
			\caption{$h+b$, \texttt{MM-SP}}
		\end{subfigure}
		\quad
		\begin{subfigure}[b]{0.3\textwidth}
			\centering
			\includegraphics[width=1.0\linewidth]{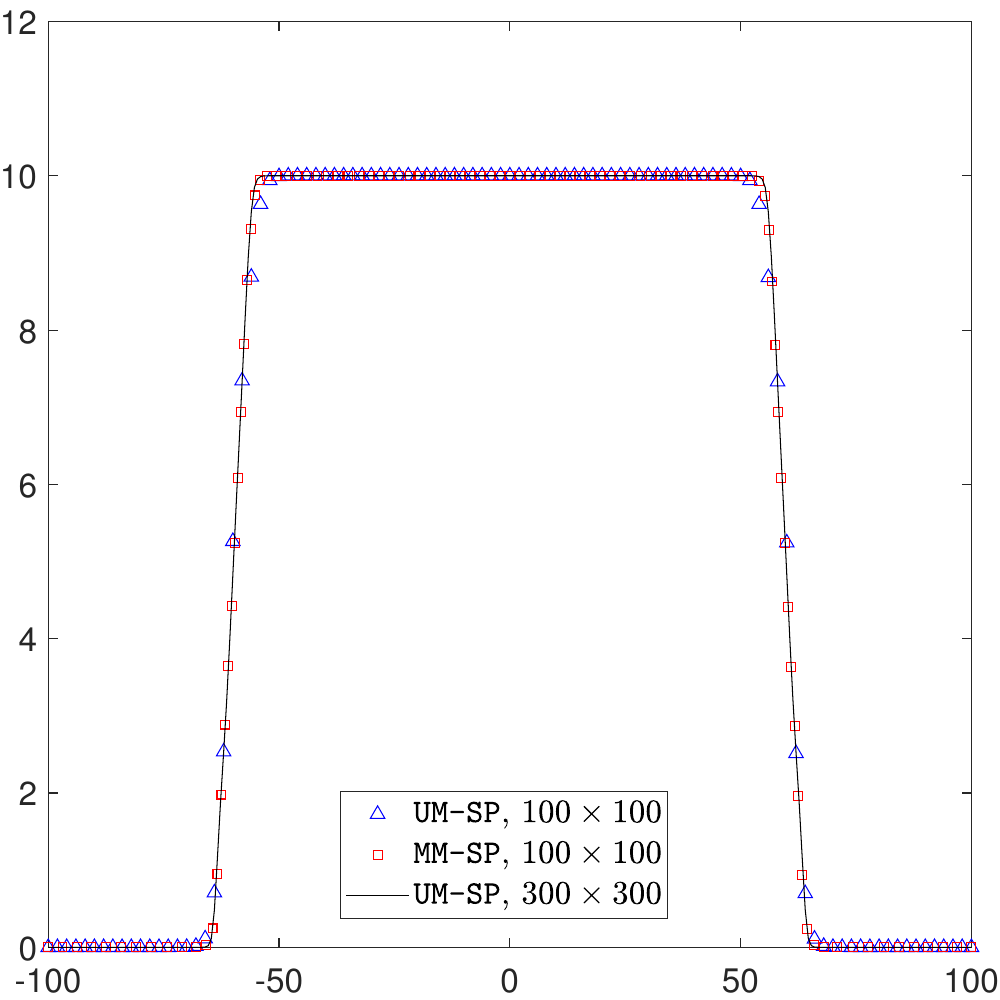}
			\caption{cut lines along $x_2=1$}
		\end{subfigure}
		\caption{Example \ref{ex:2D_PP_Flat}. The results obtained by using the \texttt{UM-SP} and \texttt{MM-SP} schemes.}\label{fig:2D_PP_Flat}
	\end{figure}

	\begin{example}[2D PP test for the bowl]\label{ex:2D_PP_Bowl}\rm
		This numerical experiment proposed in \cite{vater2019limiter} is conducted to further validate the PP property of our schemes with non-flat bottom topography on both fixed and moving meshes.
		The physical domain is $[-4000,4000]\times[-4000,4000]$ with outflow boundary conditions. The initial bottom topography is taken as 
		\begin{equation*}
			b(x_1,x_2) = \dfrac{h_0}{a^2}(x_1^2+x_2^2). 
		\end{equation*}
		The initial velocity are zero, and 
		the initial water depth is given by 
		\begin{equation*}
			h = \max\left\{0, h_0\left(\dfrac{\sqrt{1-A^2}}{1-A} - \dfrac{x^2+y^2}{a^2}\dfrac{1-A^2}{\left(1-A\right)^2}\right)\right\},
		\end{equation*}
		where $A = \frac{a^4-r_0^4}{a^4+r_0^4}$, $h_0 = 1$, $r_0 = 2000$, and $a = 2500$. This example has the analytic solution as follows
		\begin{align*}
			&h(x_1,x_2,t) = \max\left\{0, h_0\left(\dfrac{\sqrt{1-A^2}}{1-A\cos{\kappa t}} - \dfrac{x^2+y^2}{a^2}\dfrac{1-A^2}{\left(1-A\cos{\kappa t}\right)^2}\right)\right\},\\
			&v_1(x_1,x_2,t) = \dfrac{\kappa A \sin{\kappa t}}{2(1-A\cos{\kappa t})}x_1
			,~
			v_2(x_1,x_2,t) = \dfrac{\kappa A \sin{\kappa t}}{2(1-A\cos{\kappa t})}x_2.
		\end{align*}
		The simulations are performed up to two output times $t=925$ and $t=2775$. 
		The gravitational acceleration constant
		is set as $g = 9.812$.
		The monitor function is the same as in Example \ref{ex:2D_PP_Flat}.
	\end{example}
	
	Figure \ref{fig:2D_PP_Bowl} presents the numerical results using both the \texttt{UM-SP} and \texttt{MM-SP} schemes, each with $100 \times 100$ cells. It includes plots of the adaptive mesh, the water surface level $h+b$, and cross-sectional cut lines along $x_2 = 0$. These results demonstrate that our schemes effectively  resolve the shallow water dynamics on non-flat bottom topography and closely match the exact solution. The minimum cell-average value of water depth $h$ reaches $0$. Without the PP limiter, the numerical simulation would fail at the first time step.

	\begin{figure}[hbt!]
		\centering
		\begin{subfigure}[b]{0.3\textwidth}
			\centering
			\includegraphics[width=1.0\linewidth]{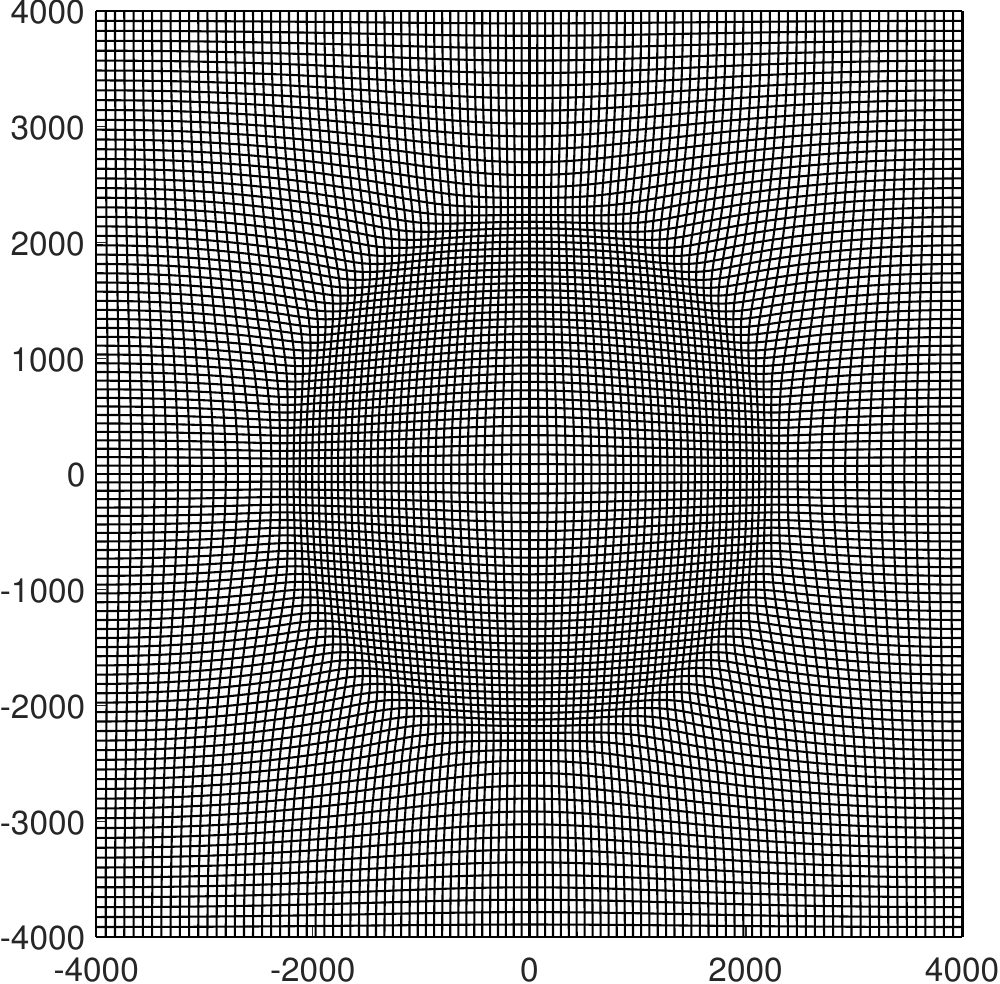}
			\caption{$100\times100$ adaptive mesh, \texttt{MM-SP}}
		\end{subfigure}
		\begin{subfigure}[b]{0.33\textwidth}
			\centering
			\includegraphics[width=1.0\linewidth]{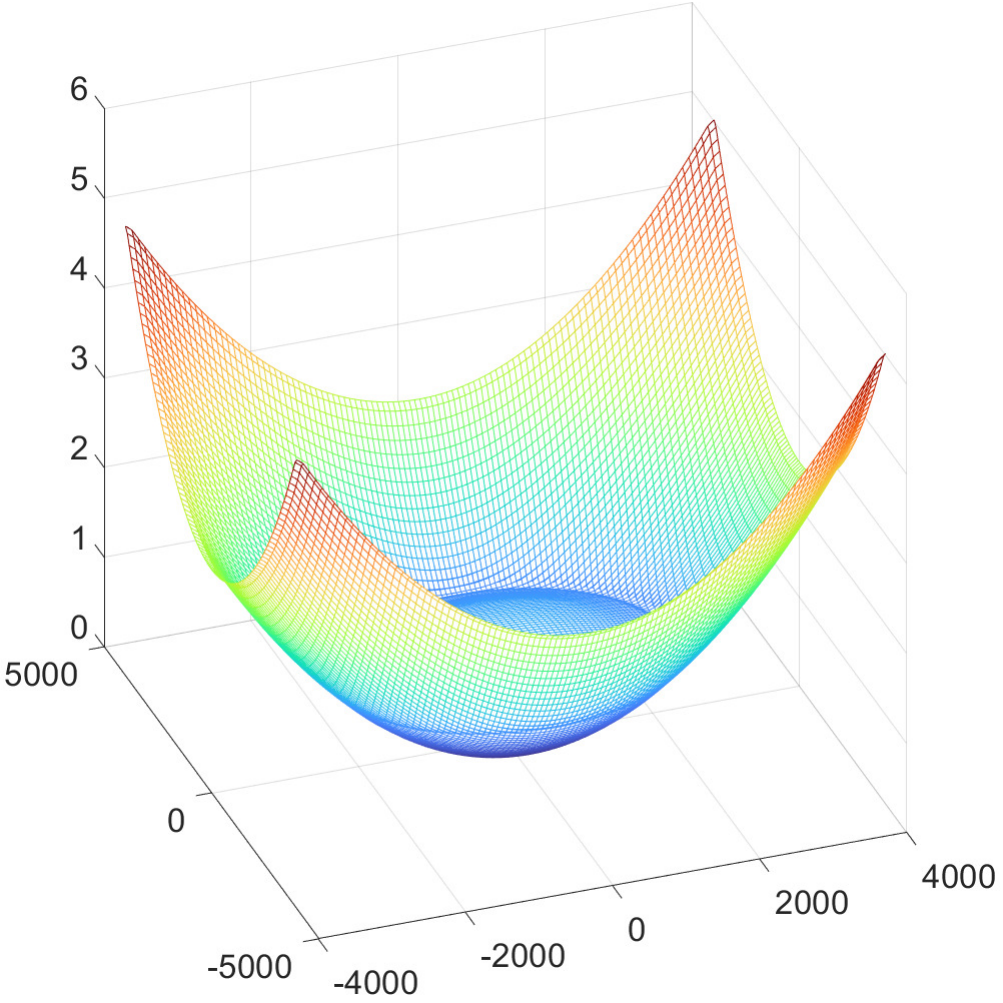}
			\caption{$h$, \texttt{MM-SP}}
		\end{subfigure}
		\quad
		\begin{subfigure}[b]{0.3\textwidth}
			\centering
			\includegraphics[width=1.0\linewidth]{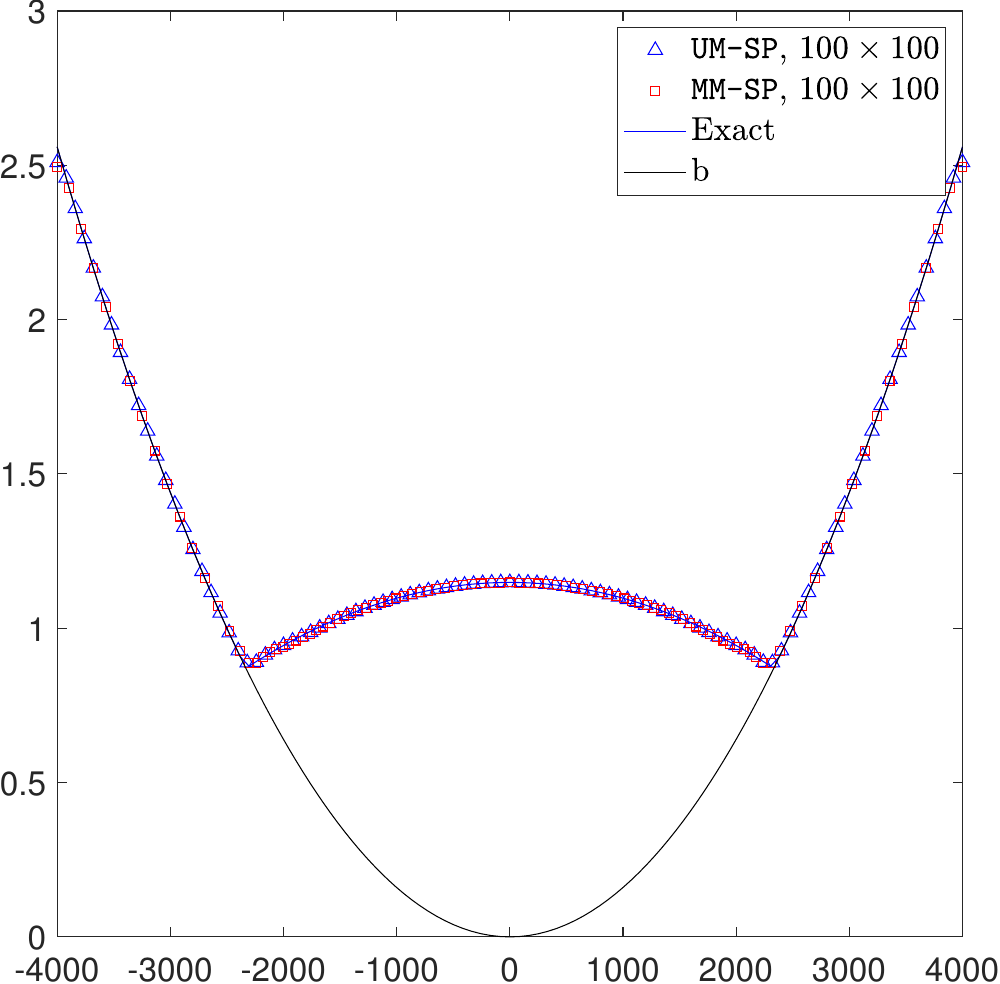}
			\caption{cut lines along $x_2=0$}
		\end{subfigure}
		\begin{subfigure}[b]{0.3\textwidth}
			\centering
			\includegraphics[width=1.0\linewidth]{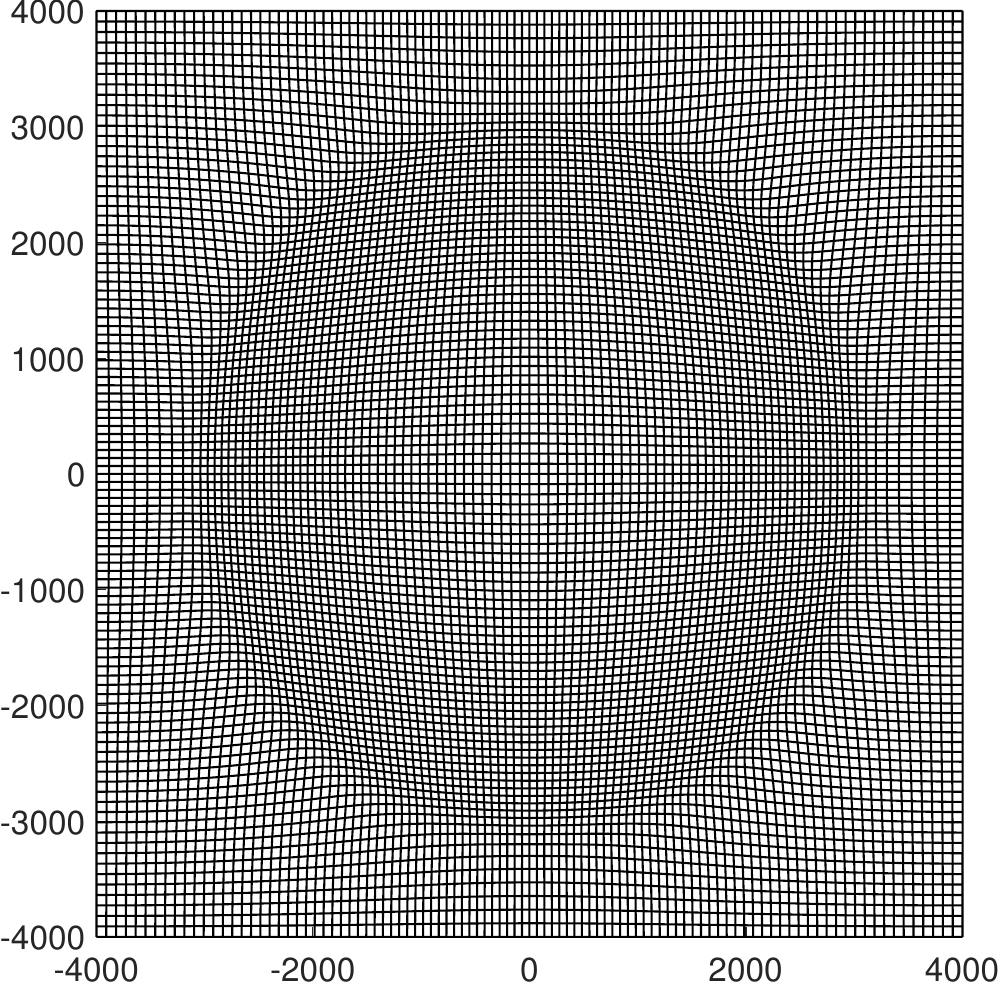}
			\caption{$100\times100$ adaptive mesh, \texttt{MM-SP}}
		\end{subfigure}
		\begin{subfigure}[b]{0.33\textwidth}
			\centering
			\includegraphics[width=1.0\linewidth]{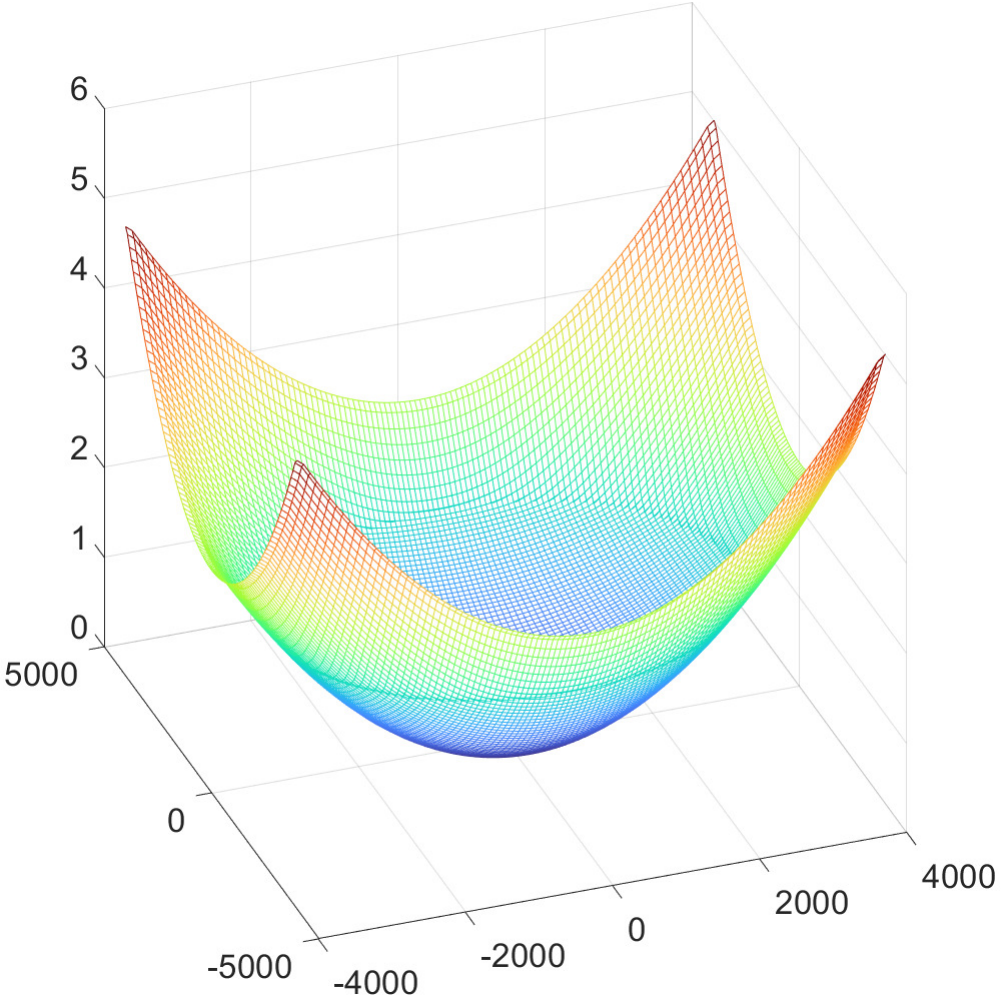}
			\caption{$h$, \texttt{MM-SP}}
		\end{subfigure}
		\quad
		\begin{subfigure}[b]{0.3\textwidth}
			\centering
			\includegraphics[width=1.0\linewidth]{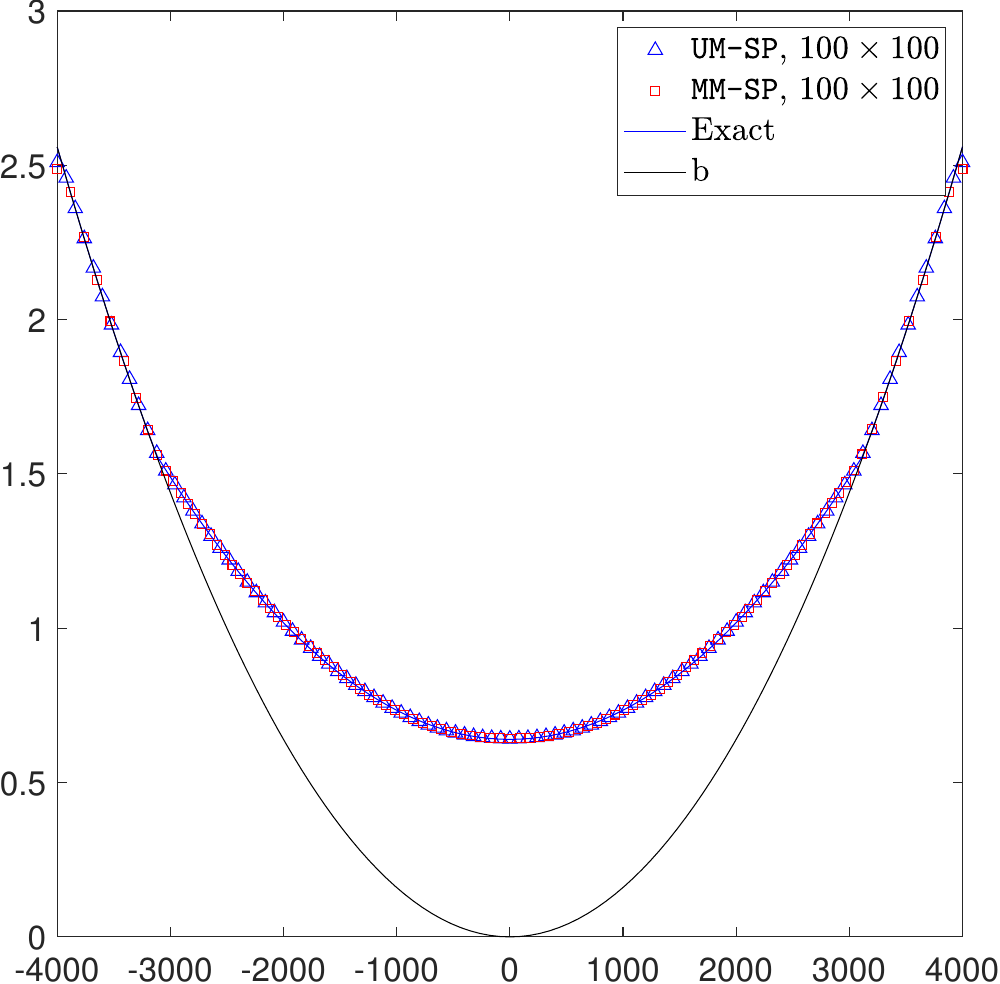}
			\caption{cut lines along $x_2=0$}
		\end{subfigure}
		\caption{Example \ref{ex:2D_PP_Bowl}. The results obtained by using the \texttt{UM-SP} and \texttt{MM-SP} schemes. Top: $t=925$, bottom: $t=2775$.}\label{fig:2D_PP_Bowl}
	\end{figure}
	
	\begin{example}[2D WB test]\label{ex:2D_WB_Test}\rm
		This example is designed to test the WB properties of our 2D schemes. The setup is similar to that used in Example \ref{ex:1D_WB_Test}, with two types of bottom topography:  
		\begin{equation}\label{eq:2D_b_Smooth}
			b(x_1, x_2) = 0.8 \exp\left(-50\left((x_1-0.5)^2+(x_2-0.5)^2\right)\right),~ (x_1,x_2) \in [0,1]\times[0,1],
		\end{equation}
		for the smooth topography, and
		\begin{equation}\label{eq:2D_b_dis}
			b(x_1, x_2) = \begin{cases}
				0.5 , &\text{if}\quad  (x_1,x_2) \in [0.5,1.0]\times[0.5,1.0],\\
				0, &\text{otherwise},
			\end{cases}
		\end{equation}
		for the discontinuous topography. The initial water depth and velocities are given by 
		\begin{equation*}
			h = 1-b,~v_1=v_2 = 0.
		\end{equation*}
		The physical domain is $[0,1] \times [0,1]$ with outflow boundary conditions, and the gravitational acceleration constant is $g = 1$. To focus the mesh points on the areas with sharp transitions in $b$, the monitor function is chosen as
		\begin{equation*}
			\omega=\sqrt{1+\theta\left(\frac{\left|\nabla_{\bm{\xi}} \sigma\right|}{ \max \left|\nabla_{\bm{\xi}} \sigma\right|}\right)^2},
		\end{equation*}
		where $\sigma$ is the water depth $h$ and $\theta = 100$.
	\end{example}

	Table \ref{tab:2D_Well_Balance} lists the 
	$\ell^1$ and $\ell^{\infty}$ errors in the water surface levels and velocities at $t=0.1$ computed by using our schemes with $100\times100$ mesh.  
	These results confirm that the lake-at-rest condition is maintained within the rounding errors of double precision, 
	thus verifying that our schemes are WB on both fixed and adaptive moving meshes. 
	Figure \ref{fig:2D_Well_Balance} displays the results of the water surface levels $h+b$ and the corresponding bottom topography $b$, obtained using the \texttt{MM-SP} scheme with a $100\times100$ mesh. The plots clearly show a concentration of mesh points around areas where $b$ exhibits large gradients. 
	
	\begin{table}[hbt!]
		\centering
		\caption{Example \ref{ex:2D_WB_Test}. Errors in $h+b$, $v_1$, and $v_2$ by using our schemes with $100\times100$ meshes at $t=0.1$,
			for the bottom topographies \eqref{eq:2D_b_Smooth} and \eqref{eq:2D_b_dis}.}\label{tab:2D_Well_Balance}
		\begin{tabular}{cccccc}
			\hline\hline
			\multicolumn{2}{c}{\multirow{2}{*}{}} & \multicolumn{2}{c}{\texttt{UM-SP}} & \multicolumn{2}{c}{\texttt{MM-SP}}  \\ 
			\cmidrule(lr){3-4}\cmidrule(lr){5-6}
			\multicolumn{2}{c}{} & \multicolumn{1}{c}{$\ell^{1}$~error}  & \multicolumn{1}{c}{$\ell^{\infty}$~error} &  \multicolumn{1}{c}{$\ell^{1}$~error}  & \multicolumn{1}{c}{$\ell^{\infty}$~error}   \\
			\hline
			\multirow{3}{*}{$b$ in \eqref{eq:2D_b_Smooth}} &
			$h+b$ & 6.45e-21 	 & 1.12e-15 	 & 6.85e-21 	 & 2.01e-15  \\ 
			&  $v_1$ & 3.24e-15 	 & 2.43e-14 	 &6.84e-16 	 & 1.38e-14   \\ 
			&$v_2$ & 3.38e-15 	 &  2.58e-14	 & 6.92e-16 	 &  1.56e-14   \\ 
			\hline
			\multirow{3}{*}{$b$ in \eqref{eq:2D_b_dis}} &
			$h+b$ & 5.55e-21 	 & 8.33e-16 	 & 9.25e-21 	 &1.83e-15  \\ 
			&  $v_1$ &3.19e-15 	 &2.51e-14 	 & 6.90e-16 	 & 1.64e-14   \\ 
			&$v_2$ & 3.35e-15 	 &  2.60e-14	 & 7.13e-16	 &   1.86e-14   \\ 
			\hline\hline
		\end{tabular}
	\end{table}
	\begin{figure}[hbt!]
		\centering
		\begin{subfigure}[b]{0.39\textwidth}
			\centering
			\includegraphics[width=1.0\linewidth]{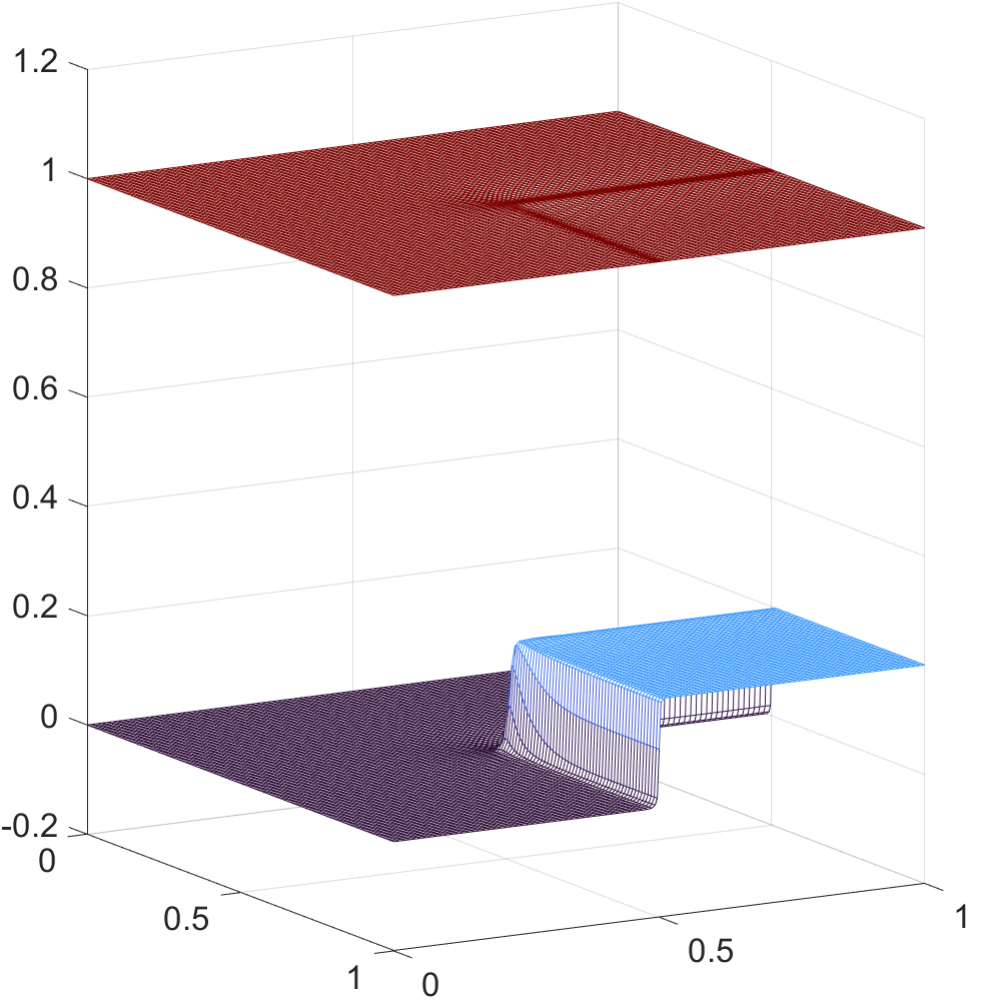}
		\end{subfigure}
		\begin{subfigure}[b]{0.35\textwidth}
			\centering
			\includegraphics[width=1.0\linewidth]{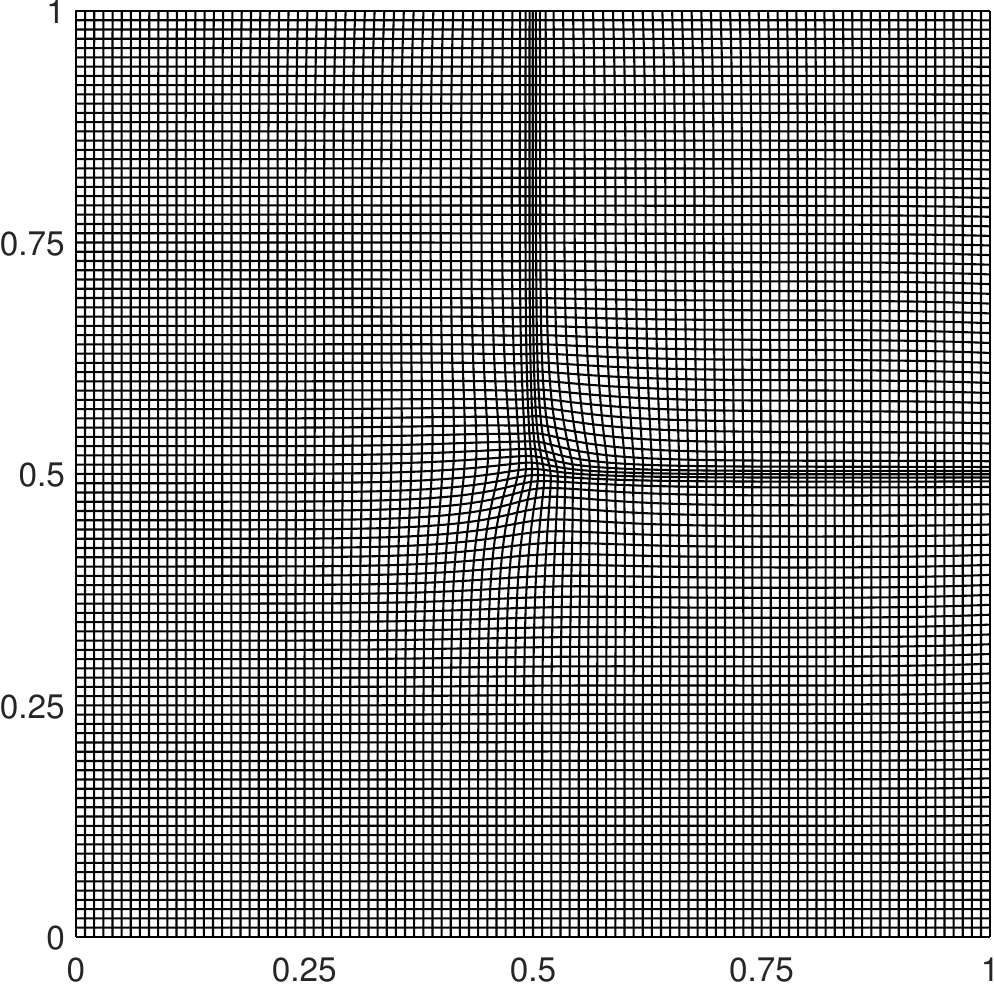}
		\end{subfigure}
		\\
		\begin{subfigure}[b]{0.39\textwidth}
			\centering
			\includegraphics[width=1.0\linewidth]{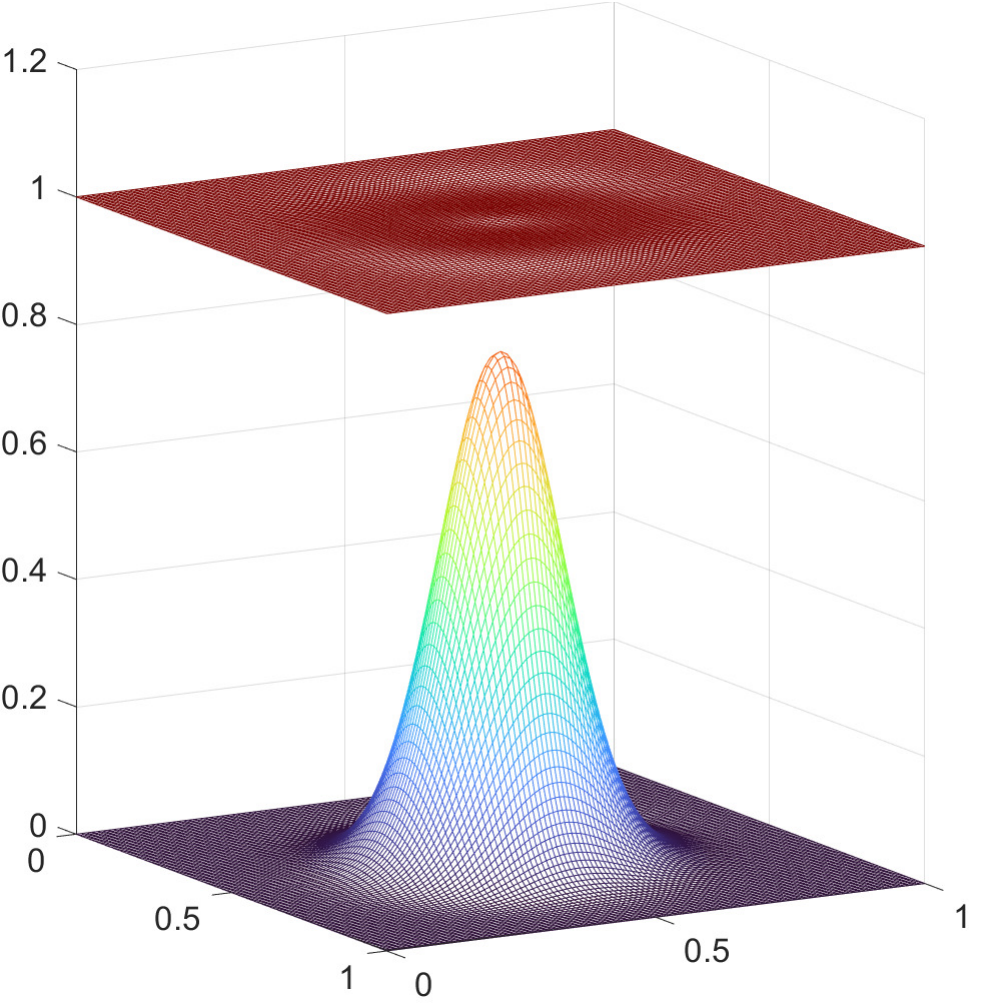}
		\end{subfigure}
		\begin{subfigure}[b]{0.35\textwidth}
			\centering
			\includegraphics[width=1.0\linewidth]{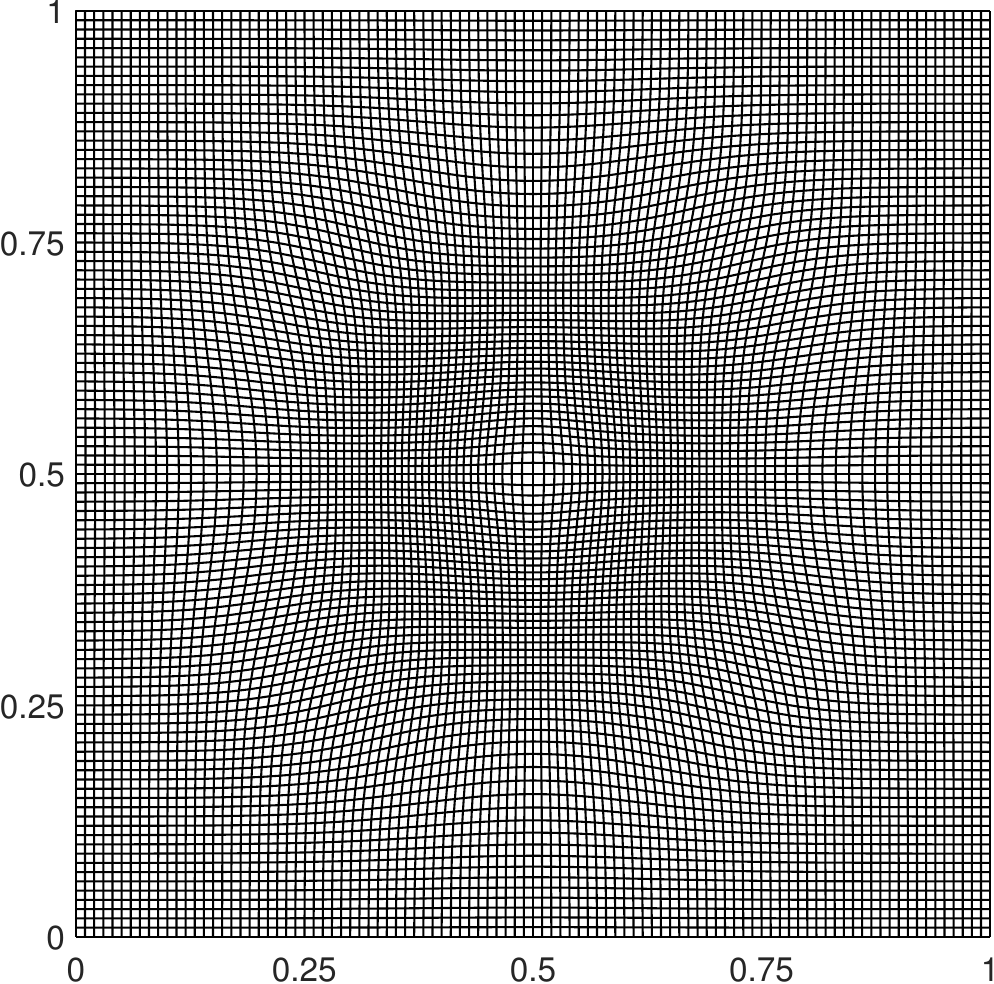}
		\end{subfigure}
		\caption{Example \ref{ex:2D_WB_Test}. The water surface level $h+b$, bottom topography $b$, and adaptive meshes obtained by using the \texttt{MM-SP} scheme with $100 \times 100$ mesh at $t=0.1$.
			The figures in the first and second rows correspond to the cases with the bottom topographies \eqref{eq:2D_b_Smooth} and \eqref{eq:2D_b_dis}, respectively.}
		\label{fig:2D_Well_Balance}
	\end{figure}

	\begin{example}[The perturbed flow in lake at rest]\label{ex:2D_Pertubation}\rm
		This example tests the capability of our schemes in capturing small perturbations over a lake at rest within the domain $[0,2] \times [0,1]$ with outflow boundary conditions. The bottom topography, described as an ``oval hump," is referenced in \cite{Xing2017Numerical,Xing2005High}. The initial conditions are as follows:
		\begin{align*}
			&h= \begin{cases}1.01-b,~&\text{if}\quad x_1 \in[0.05, 0.15], \\
				1-b,~&\text{otherwise},
			\end{cases} \\
			& v_1=v_2=0,\\
			&b=0.8 \exp(-5(x_1-0.9)^2-50(x_2-0.5)^2).
		\end{align*}
		The initial disturbance splits into two waves propagating at speeds of $\pm \sqrt{gh}$, with $g=9.812$. The monitor function used is identical to that in Example \ref{ex:2D_PP_Flat}, but here $\sigma = \log(h+b)$ and $\theta=250$.
	\end{example}
	Figure \ref{fig:2D_Perturbation_Test} displays the contour plot of the water surface level $h+b$ and the adaptive meshes, with cut lines along $x_2 = 0.5$ at $t=0.24$ and $t=0.36$. The results confirm that our schemes accurately capture small features, focusing adaptive mesh points on these areas to enhance resolution. When compared to results obtained by the \texttt{UM-SP} scheme with a finer mesh, the \texttt{MM-SP} scheme achieves comparable outcomes with reduced CPU time, as detailed in Table \ref{tab:Time_Compare}.
	
	\begin{figure}[htb!]
		\centering
		\begin{subfigure}[b]{0.4\textwidth}
			\centering
			\includegraphics[width=1.0\textwidth]{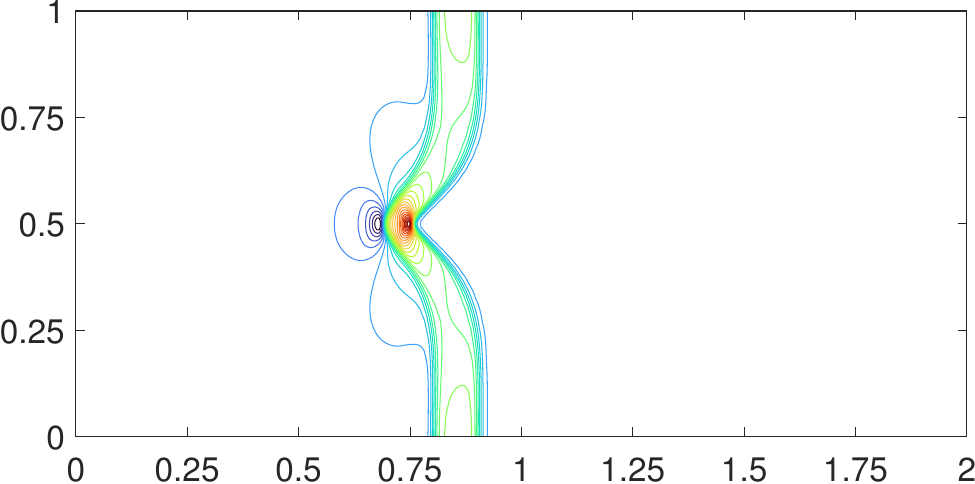}
			\caption{$t=0.24$}
		\end{subfigure}
		\begin{subfigure}[b]{0.4\textwidth}
			\centering
			\includegraphics[width=1.0\textwidth]{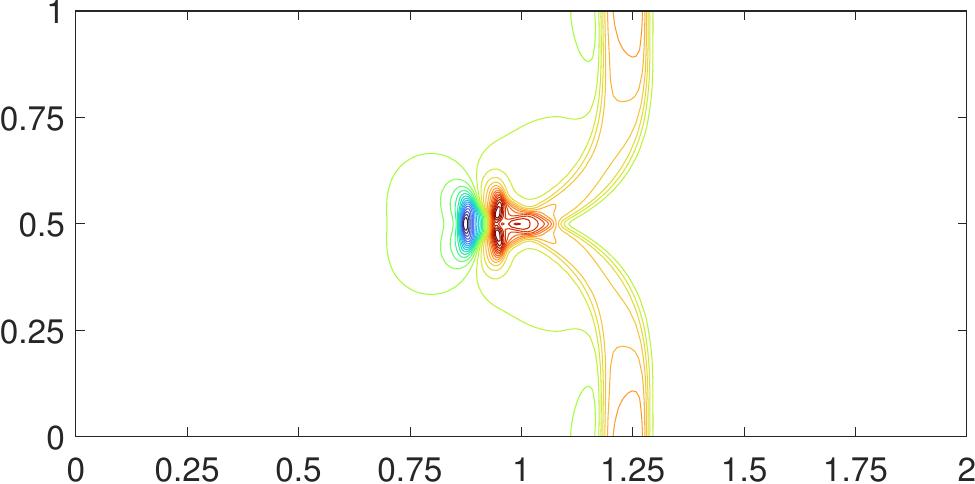}
			\caption{$t=0.36$}
		\end{subfigure}
		\begin{subfigure}[b]{0.4\textwidth}
			\centering
			\includegraphics[width=1.0\textwidth]{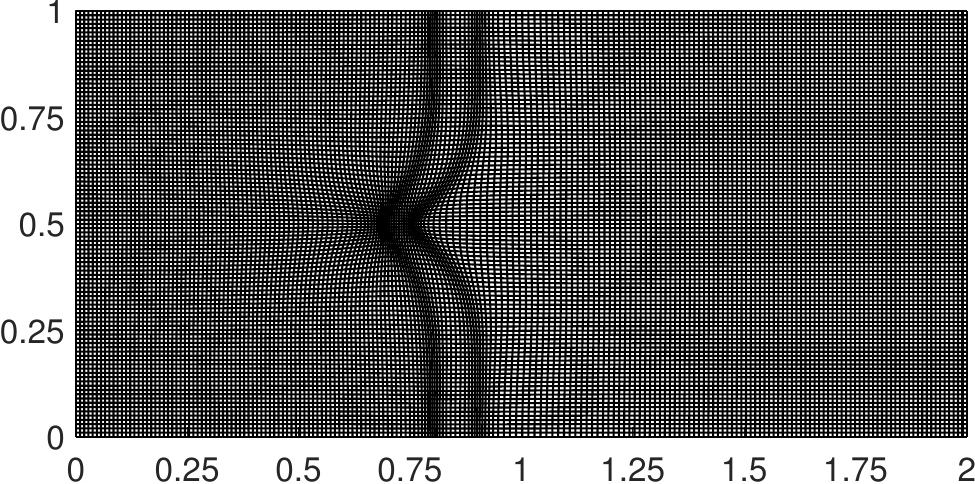}
			\caption{adaptive mesh}
		\end{subfigure}
		\begin{subfigure}[b]{0.4\textwidth}
			\centering
			\includegraphics[width=1.0\textwidth]{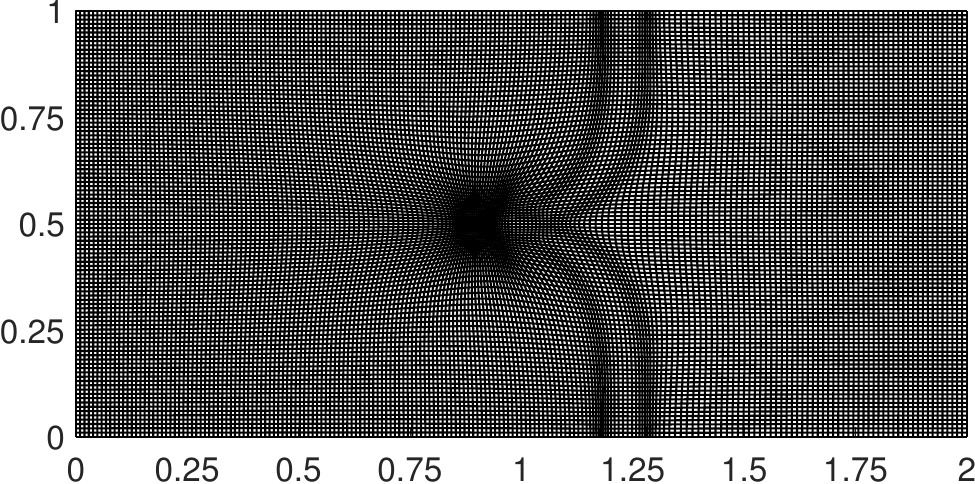}
			\caption{adaptive mesh}
		\end{subfigure}
		
		\begin{subfigure}[b]{0.4\textwidth}
			\centering
			\includegraphics[width=1.0\textwidth]{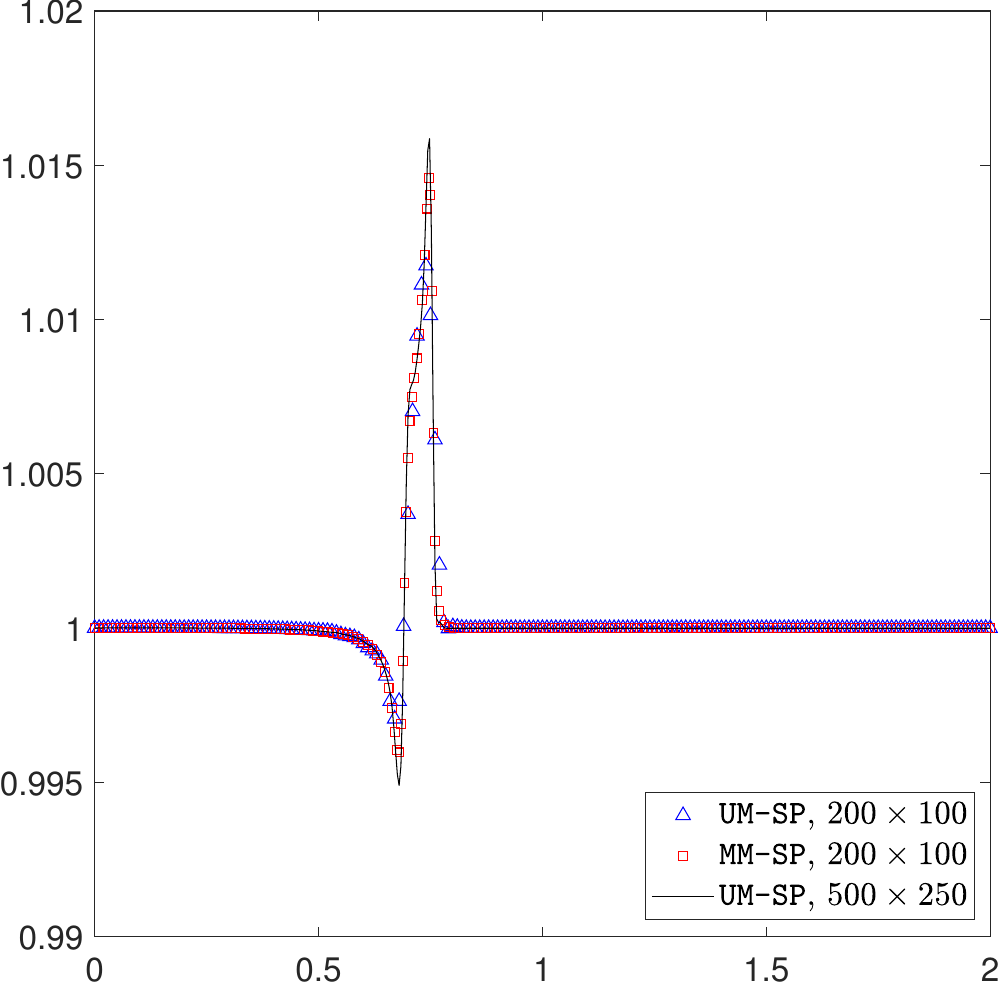}
			\caption{cut line along $x_2=0.5$}
		\end{subfigure}
		\begin{subfigure}[b]{0.4\textwidth}
			\centering
			\includegraphics[width=1.0\textwidth]{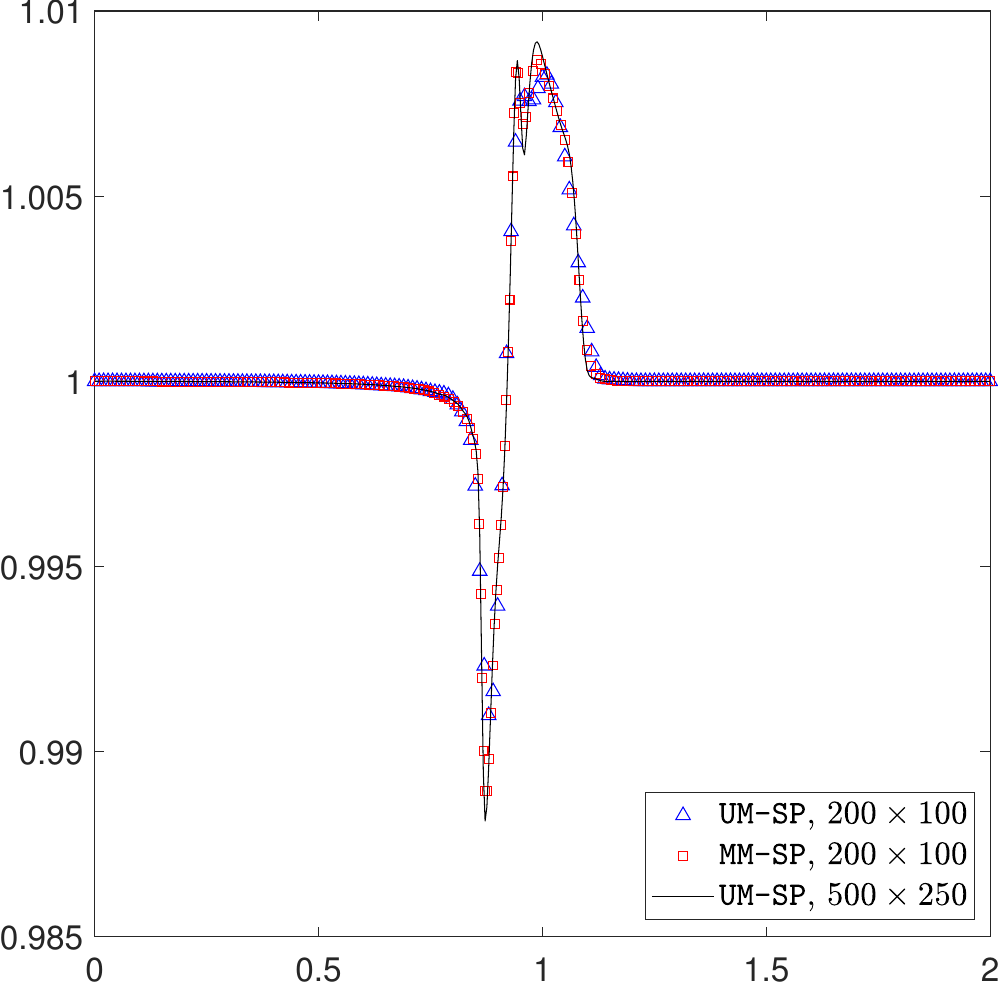}
			\caption{cut line along $x_2=0.5$}
		\end{subfigure}
		\caption{Example \ref{ex:2D_Pertubation}. The numerical results obtained by the \texttt{UM-SP} and \texttt{MM-SP}  scheme. Top: $30$ equally spaced contour lines of $h+b$, middle: $200\times100$ adaptive mesh, bottom: cut line along $x_2 = 0.5$.
		}
		\label{fig:2D_Perturbation_Test}
	\end{figure}
	
	\begin{example}[Water drop problem]\label{ex:2D_Water_Drop}\rm
		This test demonstrates the effectiveness of our moving mesh schemes in capturing complex wave structures by solving the water drop problem over a flat bottom. The physical domain is $[0,2]\times [0,2]$ with reflective boundary conditions. The Gaussian-shaped peak initial condition is considered:
		\begin{equation*}
			h = 1+0.1 e^{-1000(x_1-1)^2+(x_2-1)^2}, 
		\end{equation*}
		accompanied by zero velocities and a gravitational constant $g = 9.812$. The output times are set as $t = 0.35$ and $t = 0.6$. The monitor function is the same as in Example \ref{ex:2D_WB_Test}, but with $\theta = 50$. 
	\end{example}
	Figure \ref{fig:2D_Water_Drop} displays the adaptive meshes, water surface level contours, and a cut line across $x_2 = 1$ using our schemes. The adaptive moving mesh scheme accurately captures complex structures and effectively concentrates mesh points in those areas to enhance resolution. Additionally, our schemes maintain excellent symmetry in this numerical experiment. Table \ref{tab:Time_Compare} illustrates that the CPU time required for the \texttt{MM-SP} scheme with a $150\times 150$ mesh is approximately 50\% less than that required for the \texttt{UM-SP} scheme with a $450\times 450$ mesh.
	\begin{figure}[hbt!]
		\centering
		\begin{subfigure}[b]{0.3\textwidth}
			\centering
			\includegraphics[width=1.0\linewidth]{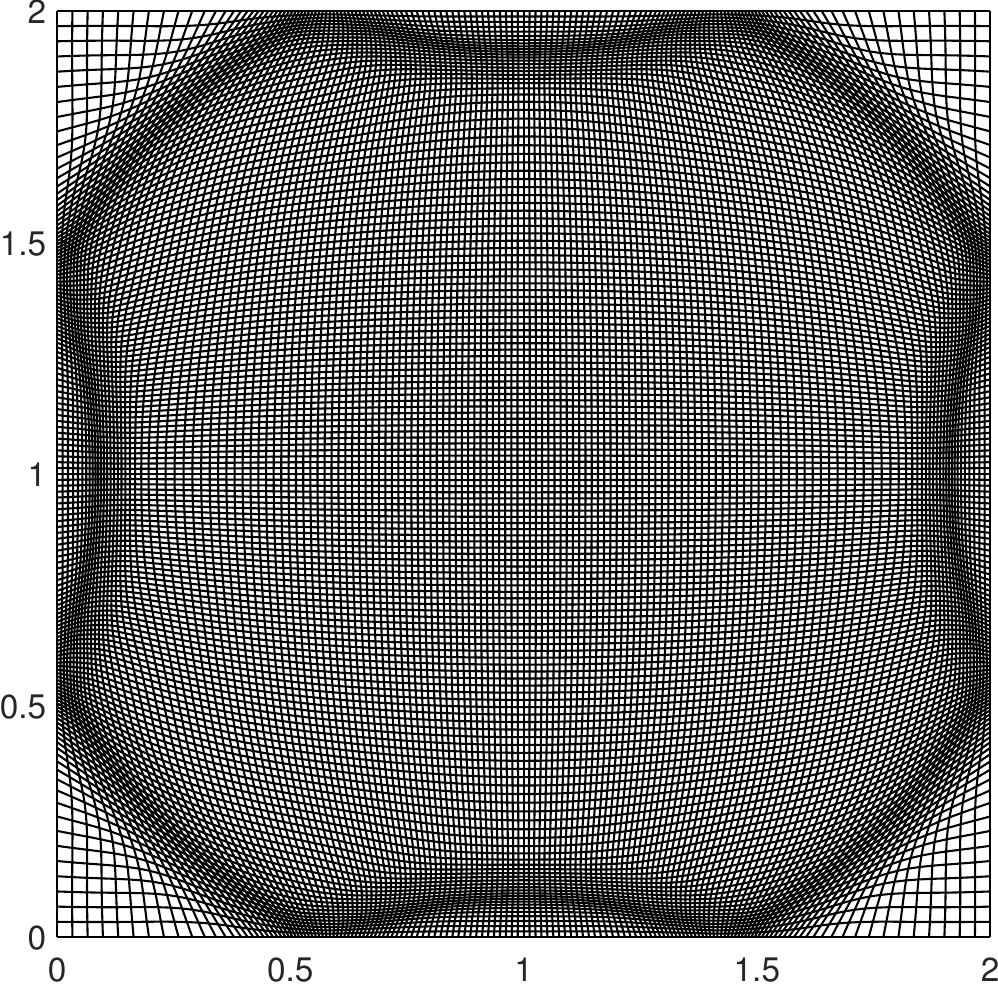}
			\caption{$150\times150$ adaptive mesh, \texttt{MM-SP}}
		\end{subfigure}
		\begin{subfigure}[b]{0.3\textwidth}
			\centering
			\includegraphics[width=1.0\linewidth]{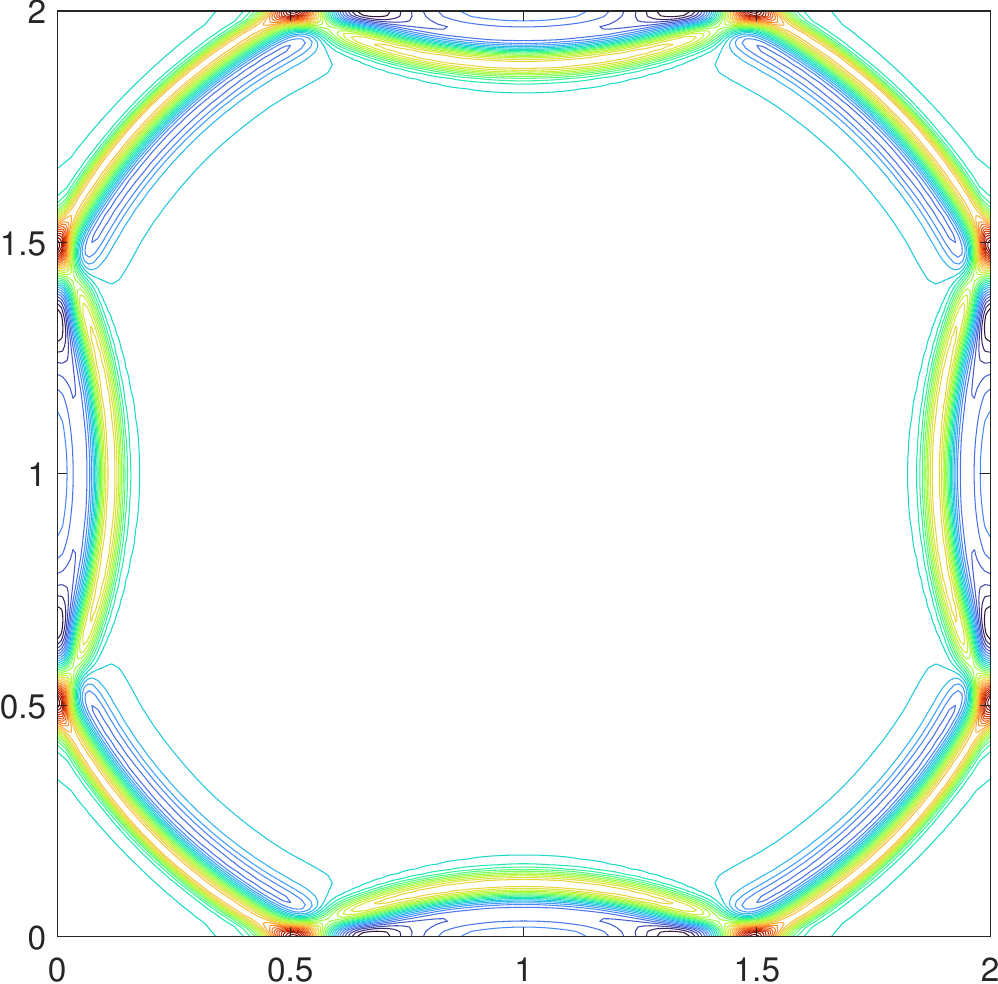}
			\caption{$h+b$, \texttt{MM-SP}}
		\end{subfigure}
		\quad
		\begin{subfigure}[b]{0.3\textwidth}
			\centering
			\includegraphics[width=1.0\linewidth]{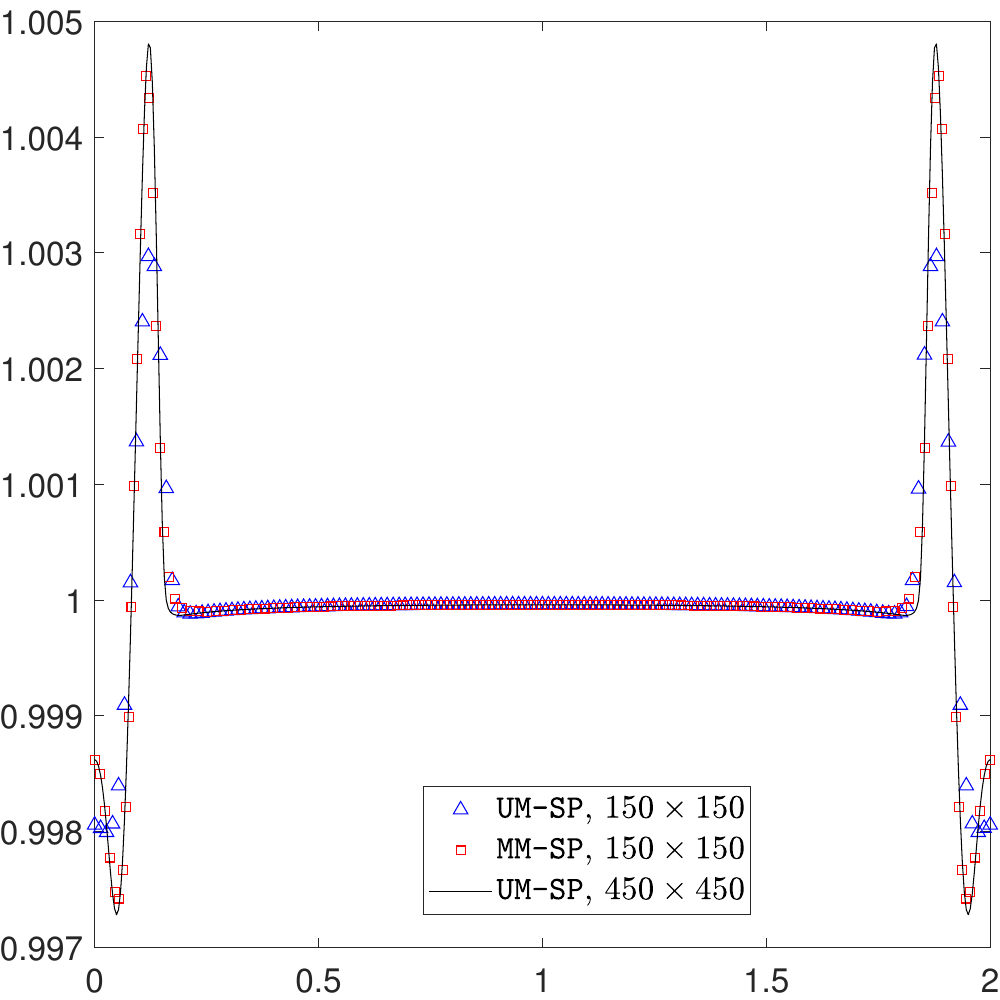}
			\caption{cut lines along $x_2=1$}
		\end{subfigure}
		\begin{subfigure}[b]{0.3\textwidth}
			\centering
			\includegraphics[width=1.0\linewidth]{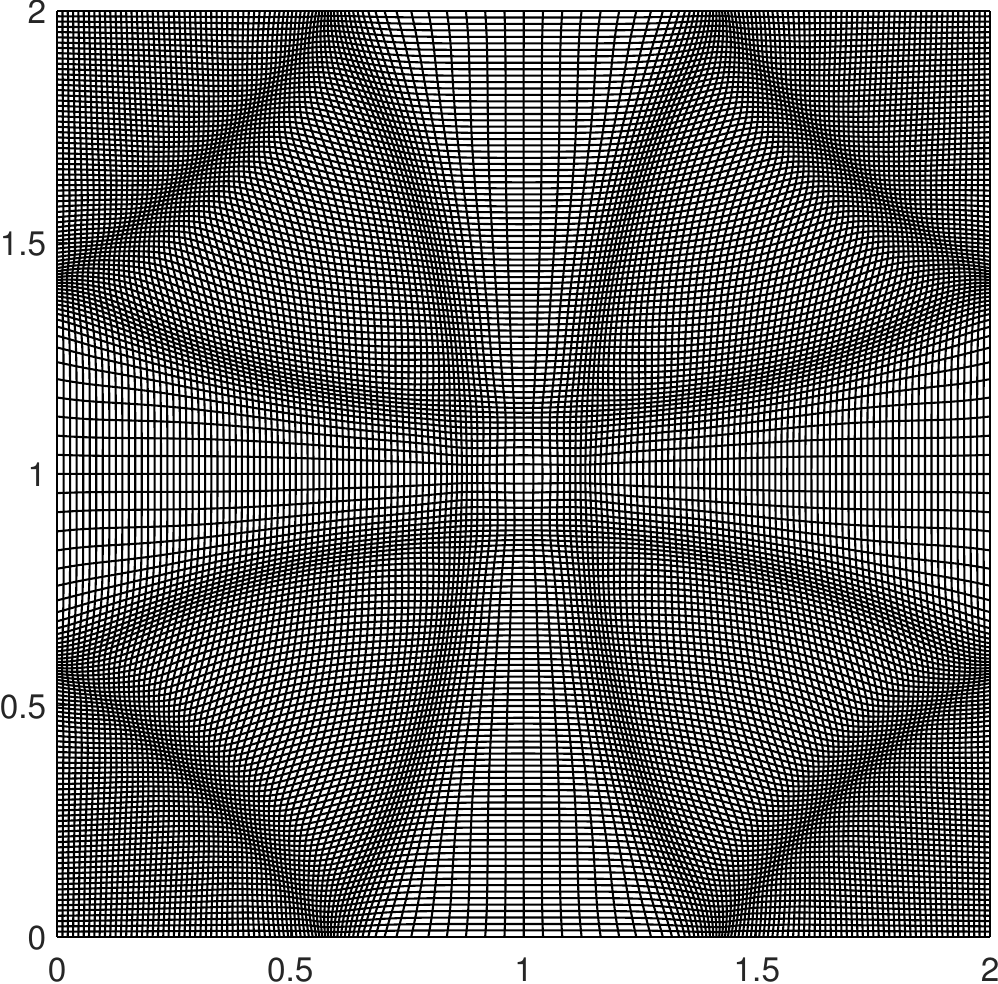}
			\caption{$150\times150$ adaptive mesh, \texttt{MM-SP}}
		\end{subfigure}
		\begin{subfigure}[b]{0.3\textwidth}
			\centering
			\includegraphics[width=1.0\linewidth]{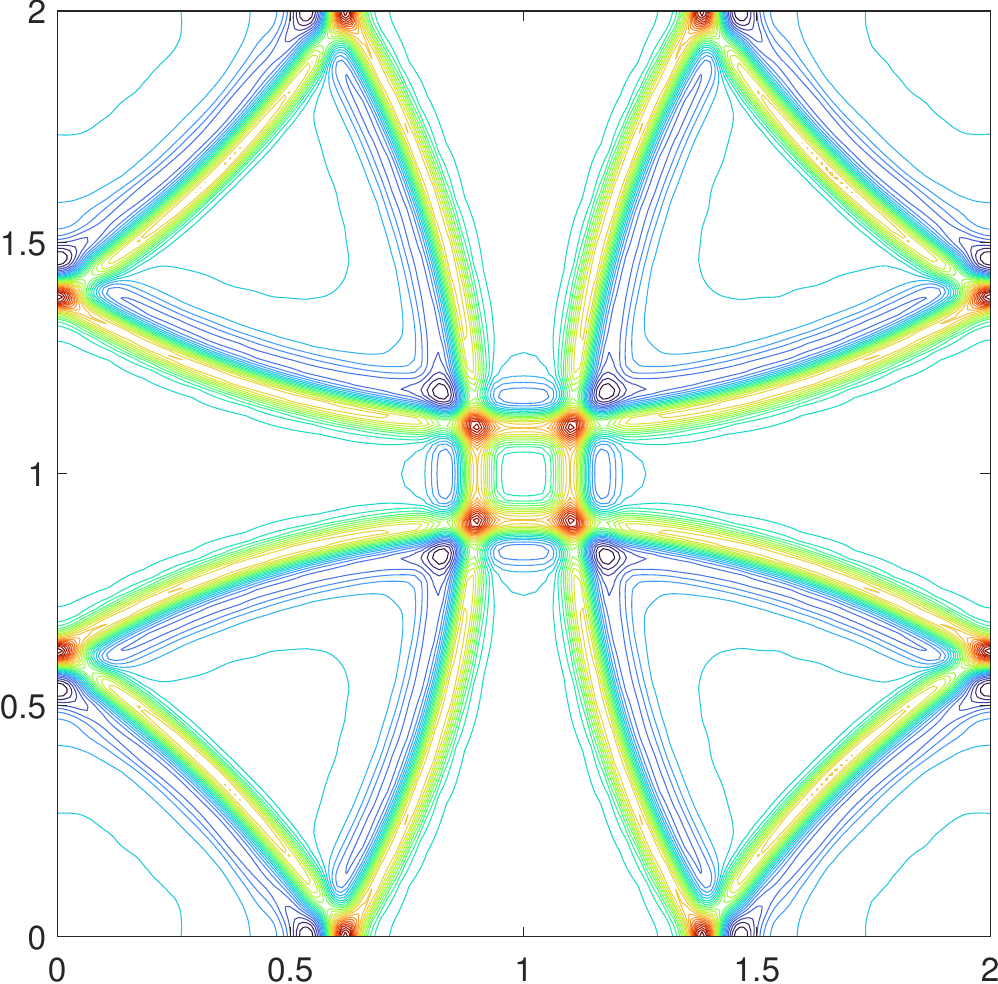}
			\caption{$h+b$, \texttt{MM-SP}}
		\end{subfigure}
		\quad
		\begin{subfigure}[b]{0.3\textwidth}
			\centering
			\includegraphics[width=1.0\linewidth]{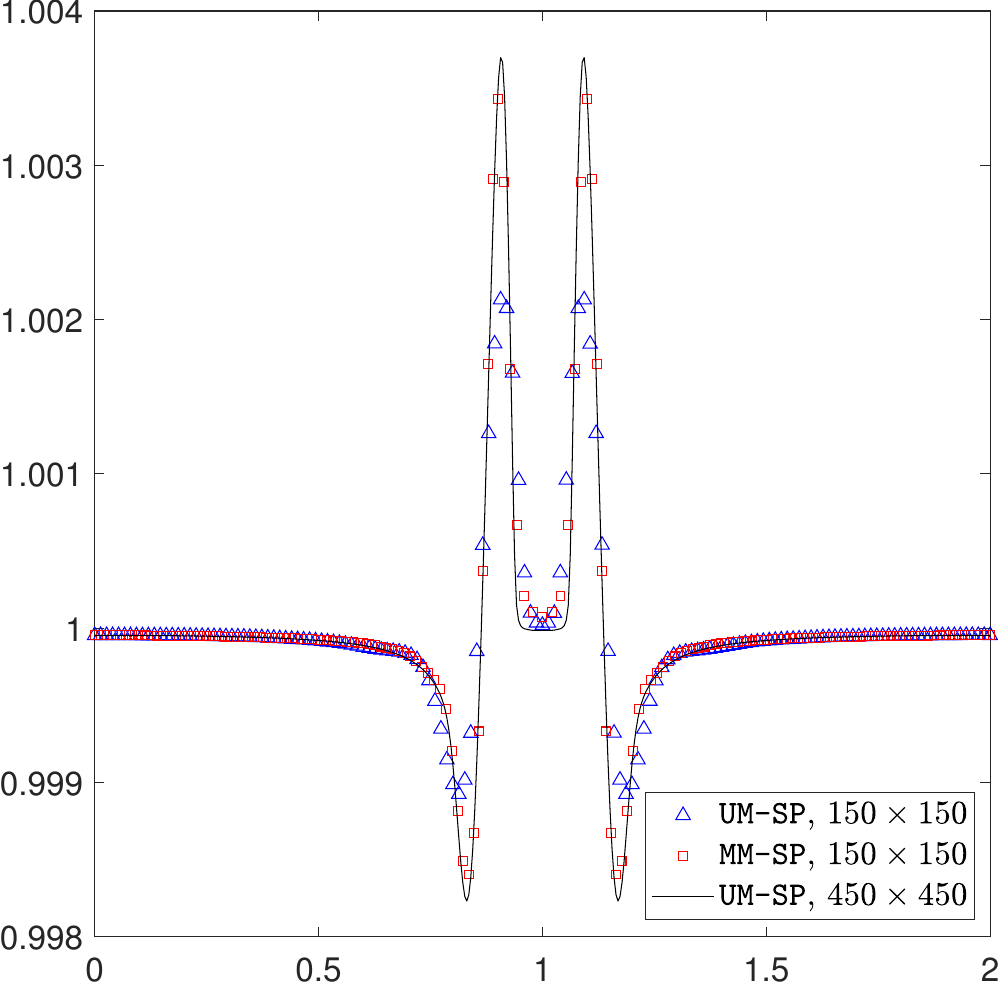}
			\caption{cut lines along $x_2=1$}
		\end{subfigure}
		
		\caption{Example \ref{ex:2D_Water_Drop}. The results obtained by using the \texttt{UM-SP} and \texttt{MM-SP} schemes. Top: $t=0.35$, bottom: $t=0.6$. }\label{fig:2D_Water_Drop}
	\end{figure}

	\begin{example}[Circular dam break on a non-flat river bed]\label{ex:2D_C_DamBreak}\rm
		The circular dam break problem \cite{Capilla2013New,Castro2009High} for the 2D SWEs is tested here to further illustrate the efficiency of our proposed schemes in the physical domain $[0,2]\times[0,2]$ with  outflow boundary conditions. 
		The initial conditions for water depth and velocities are
		\begin{equation*}
			\begin{aligned}
				& h = \begin{cases}
					1.1-b, & \text{if}~\sqrt{(x_1-1.25)^2+(x_2-1)^2} \leqslant 0.1, \\
					0.6-b, & \text {otherwise},
				\end{cases} \\
				& v_1=v_2=0.
			\end{aligned}
		\end{equation*}
		We examine two bottom topographies in our study:
		\begin{equation}\label{eq:2D_C_DamBreak_b}
			b(x_1, x_2) = \begin{cases}\dfrac{1}{8}(\cos (2 \pi(x_1-0.5))+1)(\cos (2 \pi x_2)+1), & \text{if}\quad \sqrt{(x_1-1.5)^2+(x_2-1)^2} \leqslant 0.5, \\ 0, & \text {otherwise},\end{cases}
		\end{equation}
		and 		\begin{equation}\label{eq:2D_C_DamBreak_PP_b}
			b(x_1, x_2) = \begin{cases}\dfrac{3}{20}(\cos (2 \pi(x_1-0.5))+1)(\cos (2 \pi x_2)+1), & \text{if}\quad \sqrt{(x_1-1.5)^2+(x_2-1)^2} \leqslant 0.5, \\ 0, & \text {otherwise}.\end{cases}
		\end{equation}
		The latter configuration is specifically employed to verify the PP properties of the proposed schemes, because it includes a dry region at $x_1 = 0.5$ and $x_2 = 0$. 
		The gravitational acceleration constant $g = 9.812$. The output times for the configurations \eqref{eq:2D_C_DamBreak_b} and \eqref{eq:2D_C_DamBreak_PP_b} are 0.15 and 0.2, repetitively. 
		The monitor function is the same as that in Example \ref{ex:2D_Pertubation}. 
	\end{example}
	
	\begin{figure}[hbt!]
		\centering
		\begin{subfigure}[b]{0.3\textwidth}
			\centering
			\includegraphics[width=1.0\linewidth]{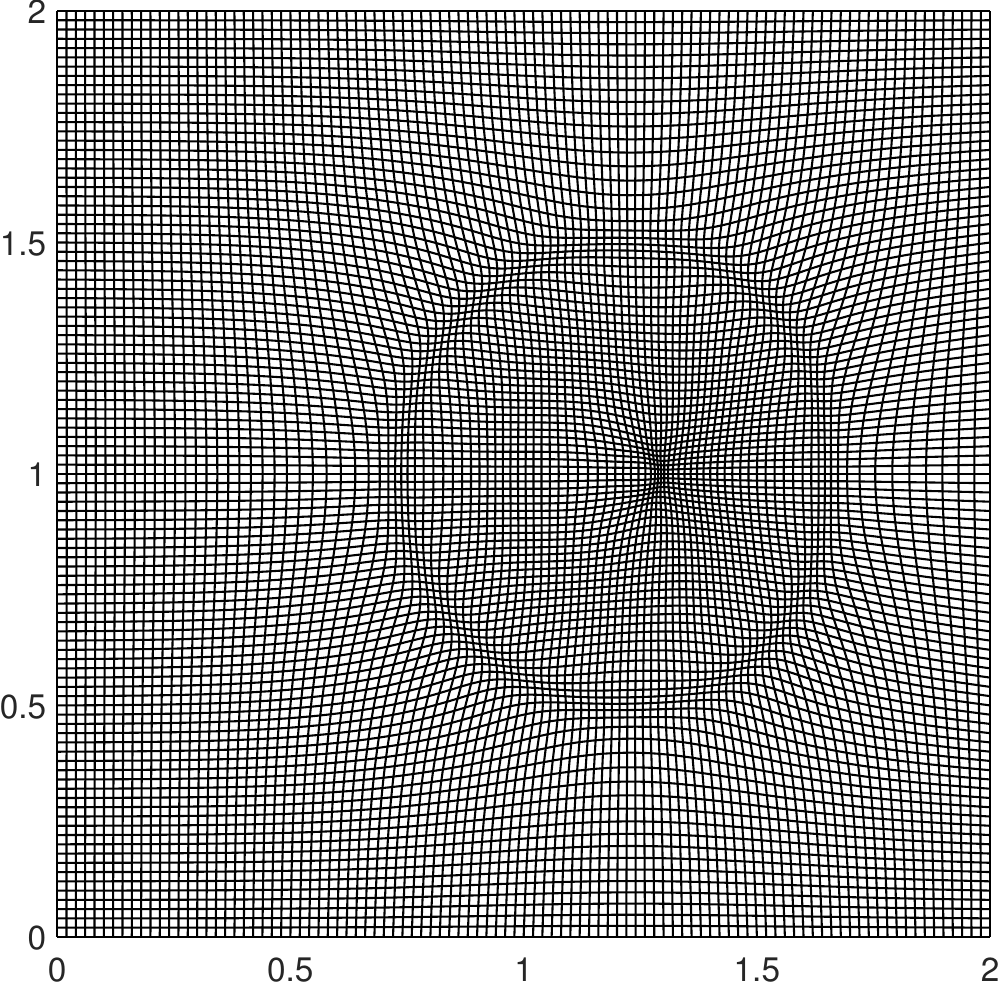}
			\caption{$100\times100$ adaptive mesh, \texttt{MM-SP}}
		\end{subfigure}
		\begin{subfigure}[b]{0.3\textwidth}
			\centering
			\includegraphics[width=1.0\linewidth]{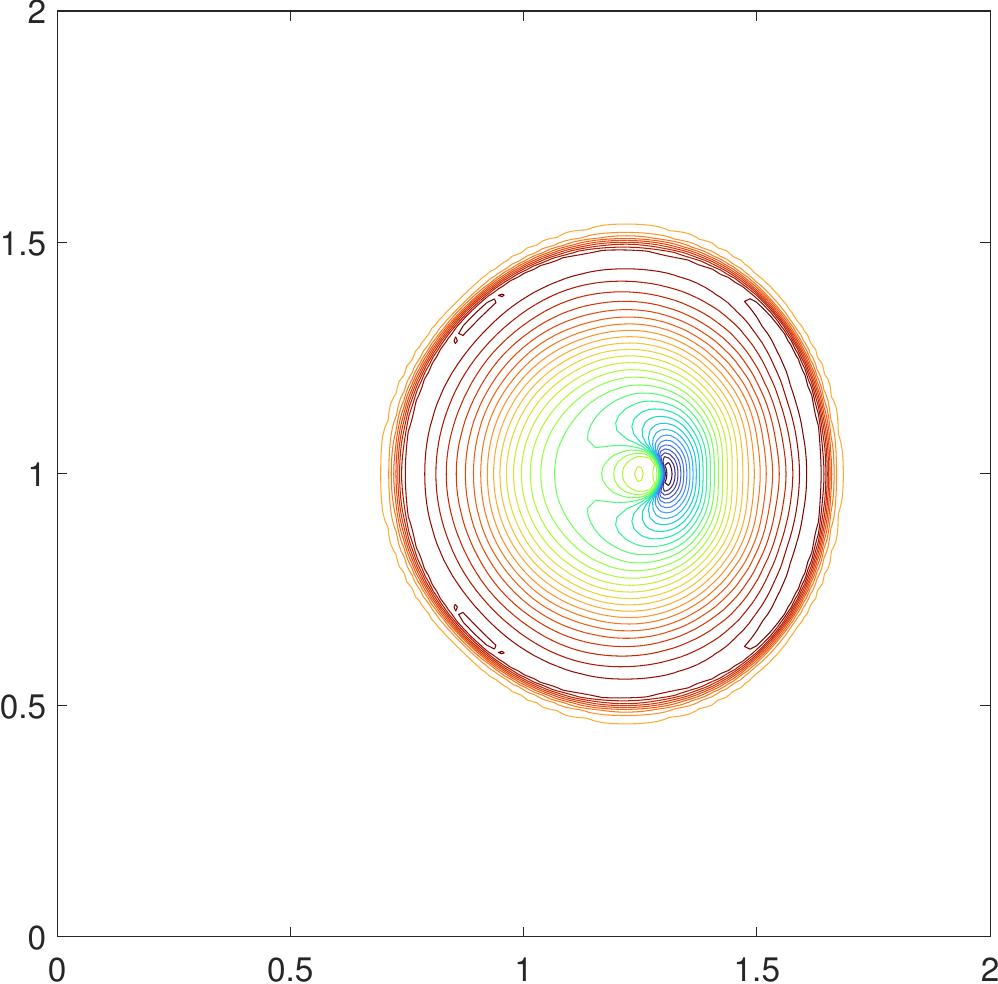}
			\caption{$h+b$, \texttt{MM-SP}}
		\end{subfigure}
		\quad
		\begin{subfigure}[b]{0.3\textwidth}
			\centering
			\includegraphics[width=1.0\linewidth]{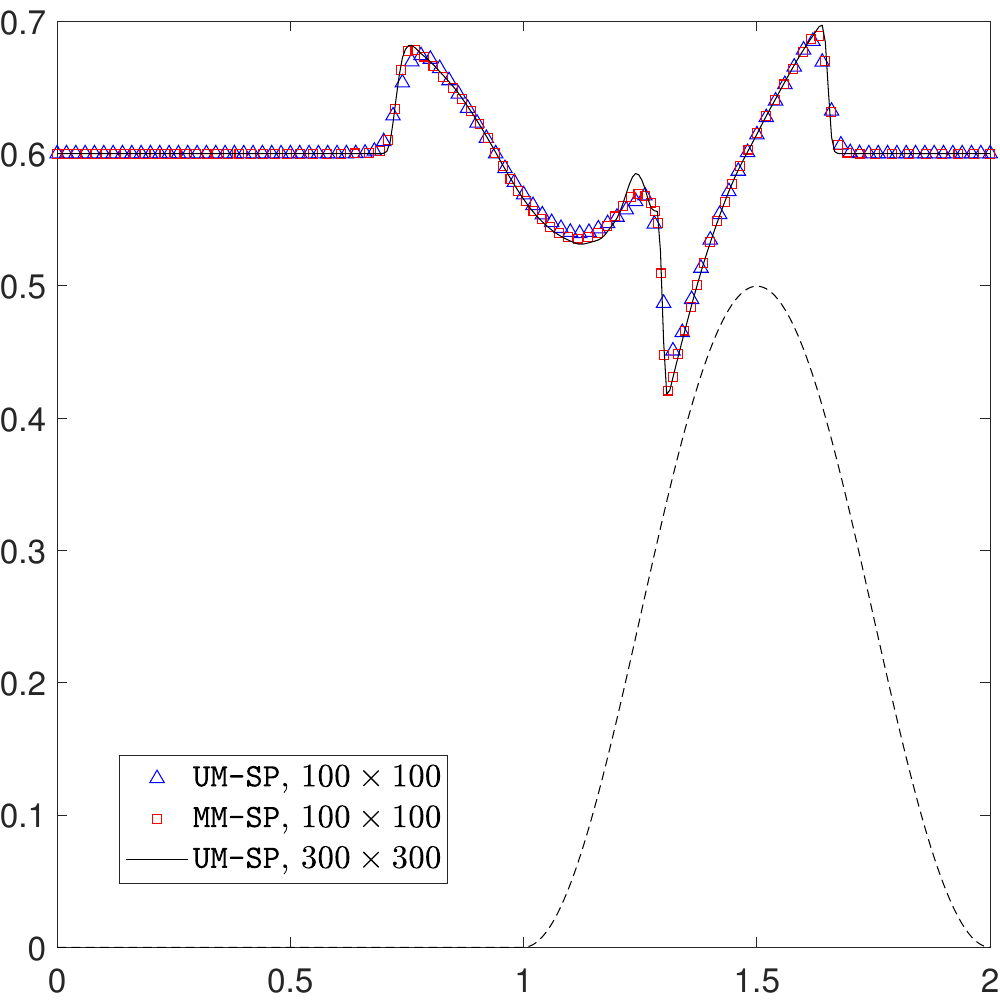}
			\caption{cut lines along $x_2=1$}
		\end{subfigure}
		\begin{subfigure}[b]{0.3\textwidth}
			\centering
			\includegraphics[width=1.0\linewidth]{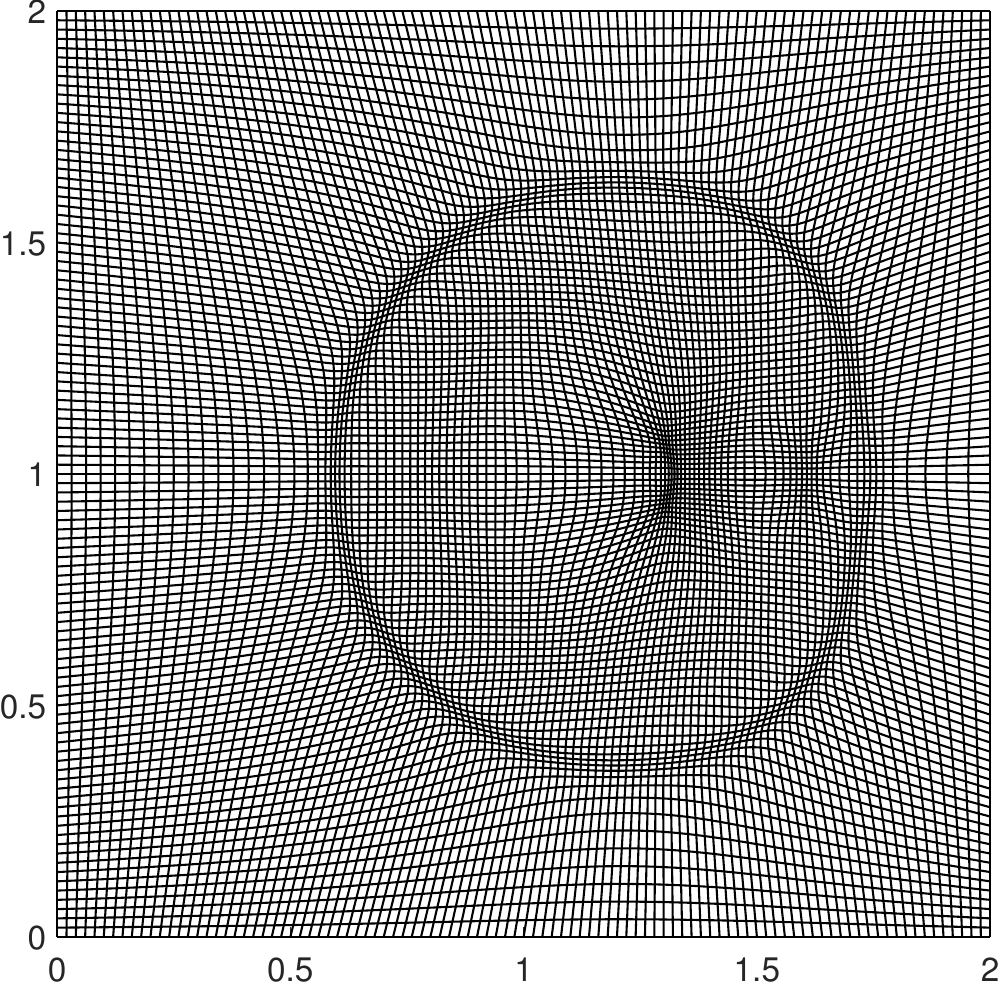}
			\caption{$100\times100$ adaptive mesh, \texttt{MM-SP}}
		\end{subfigure}
		\begin{subfigure}[b]{0.3\textwidth}
			\centering
			\includegraphics[width=1.0\linewidth]{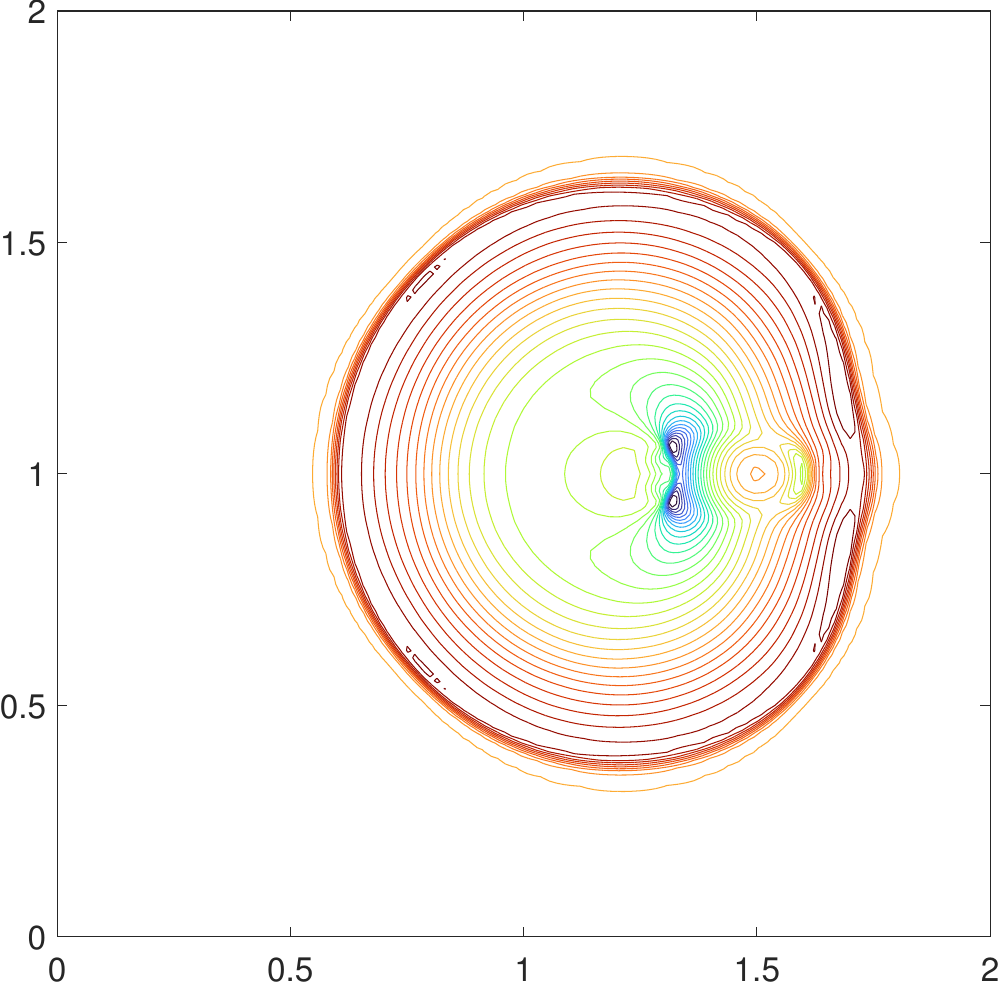}
			\caption{$h+b$, \texttt{MM-SP}}
		\end{subfigure}
		\quad
		\begin{subfigure}[b]{0.3\textwidth}
			\centering
			\includegraphics[width=1.0\linewidth]{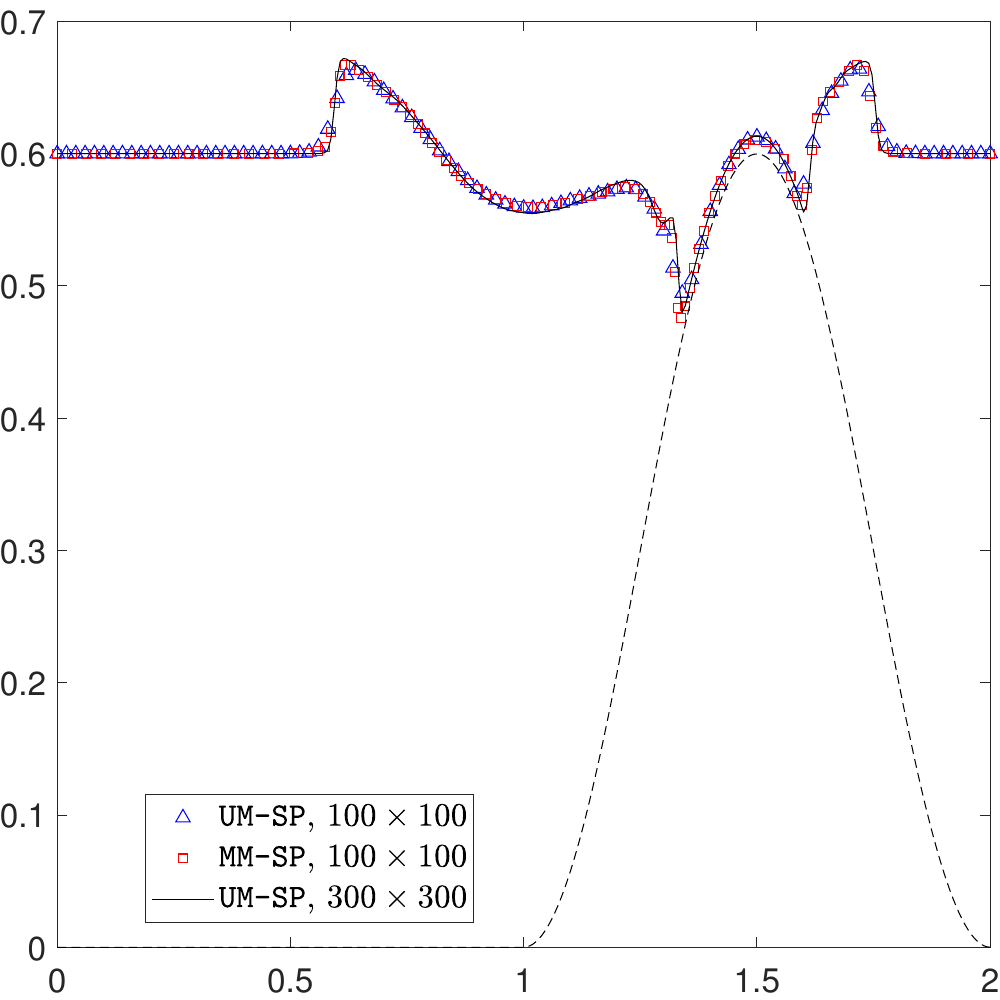}
			\caption{cut lines along $x_2=1$}
		\end{subfigure}
		\caption{Example \ref{ex:2D_C_DamBreak}. The numerical results obtained by using the \texttt{UM-SP} and \texttt{MM-SP} schemes. Top: for bottom topography \eqref{eq:2D_C_DamBreak_b}
			, $t=0.15$, bottom: for bottom topography \eqref{eq:2D_C_DamBreak_PP_b}, $t=0.2$.}\label{fig:2D_Circle_Dam}
	\end{figure}
	
	Figure \ref{fig:2D_Circle_Dam} illustrates the $100\times100$ adaptive mesh, 30 equally spaced contours of the water surface level $h+b$, and a comparison of the cut lines along $x_2 = 1$ obtained by our proposed schemes. The adaptive mesh clearly demonstrates that mesh points  concentrate around localized structures. In comparison, the \texttt{MM-SP} scheme produces sharper results near $x_1 \approx 1.26$ than the \texttt{UM-SP} scheme with the same number of mesh points. 
	Without the PP limiter, the numerical simulation results in negative water depths and quickly fails for the bottom topography in \eqref{eq:2D_C_DamBreak_PP_b}. The CPU time is detailed in Table \ref{tab:Time_Compare}, highlighting the superior performance of our \texttt{MM-SP} scheme. 
	\begin{table}
		\small 
		\caption{CPU times for Examples \ref{ex:2D_Pertubation}--\ref{ex:2D_C_DamBreak}. The measurements are taken on a laptop with an Intel® Core™ i7-8750H
			CPU @2.20GHz and 24 GB of memory, utilizing 8 OpenMP threads. The code is implemented in C++. Here, ``s" denotes seconds, and $N_1 \times N_2$ represents the number of mesh cells.}
		\centering
		\begin{tabular}{cccc} 
			\toprule
			\toprule   Examples   & \texttt{UM-PP}
			&\texttt{UM-PP}&\texttt{MM-PP} \\ 
			\midrule 
			Ex. \ref{ex:2D_Pertubation}~$~(t=0{\rightarrow}0.24)$  & $4.90\times 10^1$ s ($200\times 100$) 
			& $6.89 \times 10^2$ s ($500\times 250$)
			&  $3.56 \times 10^2$ s ($200\times 100$)\\ 
			Ex. \ref{ex:2D_Pertubation}~$~(t=0{\rightarrow}0.36) $ & $7.30\times 10^1$ s ($200\times 100$) 
			& $1.26 \times 10^3$ s ($500\times 250$) & $6.44 \times 10^2$ s ($200\times 100$)
			\\ 
			Ex. \ref{ex:2D_Water_Drop}$~(t=0{\rightarrow}0.35)$  & $3.50\times 10^1$ s ($150\times 150$) 
			& $8.89 \times 10^2$ s ($450\times 450$)
			&  $4.48 \times 10^2$ s ($150\times 150$)
			\\ 
			Ex. \ref{ex:2D_Water_Drop}$~(t=0{\rightarrow}0.60)$ & $5.90\times 10^1$ s ($150\times 150$) 
			& $1.52 \times 10^3$ s ($450\times 450$) & $6.65 \times 10^2$ s ($150\times 150$)
			\\ 
			Ex. \ref{ex:2D_C_DamBreak}$~(t=0{\rightarrow}0.15)$ & $0.60 \times 10^1$ s ($100\times 100$) 
			& $1.30 \times 10^2$ s ($300\times 300$)
			& $6.60 \times 10^1$ s ($100\times 100$)
			\\ Ex. \ref{ex:2D_C_DamBreak}$~(t=0{\rightarrow}0.20)$ & $6.20 \times 10^1$ s ($100\times 100$) 
			& $1.36 \times 10^3$ s ($300\times 300$)
			& $7.44 \times 10^2$ s ($100\times 100$)\\
			\bottomrule
			\bottomrule
		\end{tabular}
		\label{tab:Time_Compare}
	\end{table}

	\section{Conclusion}\label{Sec:Conclusion}
	This paper proposed high-order accurate, well-balanced (WB), and positivity-preserving (PP) finite volume schemes for shallow water equations (SWEs) on adaptive moving structured meshes.
	The WB property in curvilinear coordinates depends not only on the balance between flux gradients and source terms but also on the dynamics of mesh movement.
	To manage these complexities, we decomposed the WB property into two main components: flux source balance and mesh movement balance.
	We achieved flux source balance through the careful decomposition of source terms, the use of numerical fluxes designed via hydrostatic reconstruction, and the suitable discretization of geometric conservation laws (GCLs). These elements combined to ensure the provably WB property in complex curvilinear coordinates. 
	Additionally, we conducted rigorous analyses of the PP property under a sufficient condition enforced by a PP limiter for the proposed moving mesh schemes. Due to the involvement of mesh metrics and movement, these analyses were complicated. The PP property referred to the non-negativity of the water depth and the positivity of the determinant of the coordinate transformation. 
	Mesh adaptation in our method was executed through the iterative solving of Euler--Lagrange equations based on a mesh adaptation functional. The chosen monitor function adeptly concentrated mesh points near significant features, enhancing the efficiency and resolution for discontinuities.
	Extensive numerical examples were provided, demonstrating the high-order accuracy, high efficiency, and robust WB and PP properties of the schemes.
	
	\appendix
	\section{Proof of condition \eqref{eq:2D_condition} }\label{section:Appendix}
	This appendix provides the proof of the condition \eqref{eq:2D_condition}. We first prove 
	\begin{align*}
		\left(
		\widetilde{\bm{F}}_{n_{1}}
		\right)
		_{i+\frac{1}{2},j}^{L,\mu} - \sum\limits_{m=1}^{2}(\overline{{s}}_m)_{i,j}
		\left(
		\left(
		\widetilde{\bm{r}}_m
		\right)_{n_1}
		\right)
		_{i+\frac{1}{2},j}^{\mu}
		=
		\langle
		\left(
		\bm{n}_1
		\right)
		_{i+\frac{1}{2},j}^{\mu}
		,
		\bm{C}
		\rangle
	\end{align*}
	for the numerical fluxes $\left(\widetilde{\bm{F}}_{n_1}\right)_{i+\frac{1}{2},j}^{\mu}$ and $\left(
	\bm{
		\widetilde{\mathcal{R}}
	}_{m1}
	\right)
	_{i+\frac{1}{2},j}$ defined in \eqref{eq:2D_F_1_Flux} and \eqref{eq:2D_R_1_Flux}, respectively. When a steady state is reached, one has
	\begin{align*}
		\left(
		\widetilde{\bm{F}}_{n_{1}}
		\right)
		_{i+\frac{1}{2},j}^{L,\mu} - \sum\limits_{m=1}^{2}(\overline{{s}}_m)_{i,j}
		\left(
		\left(
		\widetilde{\bm{r}}_m
		\right)_{n_1}
		\right)
		_{i+\frac{1}{2},j}^{\mu}
		= 
		&\langle
		\left(
		\bm{n}_1
		\right)
		_{i+\frac{1}{2},j}^{\mu}
		,
		\widetilde{\bm{F}}
		_{i+\frac{1}{2},j}^{L,\mu}
		\rangle - 
		\sum\limits_{m=1}^{2}(\overline{s}_m)_{i,j}\langle
		\left(
		\bm{n}_1
		\right)
		_{i+\frac{1}{2},j}^{\mu}
		,\left(
		{\bm{r}}_m
		\right)_{i+\frac{1}{2},j}^{-,\mu}
		\rangle
		\\
		=&
		\langle
		\left(
		\bm{n}_1
		\right)
		_{i+\frac{1}{2},j}^{\mu}
		,
		\widetilde{\bm{F}}
		_{i+\frac{1}{2},j}^{L,\mu}
		-
		\sum\limits_{m=1}^{2}(\overline{s}_m)_{i,j}
		\left(
		{\bm{r}}_m
		\right)_{i+\frac{1}{2},j}^{-,\mu}
		\rangle.
	\end{align*}
	Note that 
	\begin{align*}
		\widetilde{\bm{F}}
		_{i+\frac{1}{2},j}^{L,\mu}
		-
		\sum\limits_{m=1}^{2}(\overline{s}_m)_{i,j}
		\left(
		{\bm{r}}_m
		\right)_{i+\frac{1}{2},j}^{-,\mu}& = (L\bm{F}_1,L\bm{F}_2)^{\top}
	\end{align*}
	with
	\begin{align*}
		L\bm{F}_1= 
		\left(
		\begin{array}{cc}
			0 \\
			\frac{g}{2}\left(h_{i+\frac{1}{2},j}^{-,\mu}\right)^2+g(h+b)_{i,j}b_{i+\frac{1}{2},j}^{-,\mu} - \frac{1}{2}g (b^2)_{i+\frac{1}{2},j}^{-,\mu}\\
			0
		\end{array}
		\right),
		\\
		L\bm{F}_2= 
		\left(
		\begin{array}{cc}
			0 \\
			0\\
			\frac{g}{2}
			\left(h_{i+\frac{1}{2},j}^{-,\mu}\right)^2
			+g(h+b)_{i,j} 
			b_{i+\frac{1}{2},j}^{-,\mu}-
			\frac{1}{2}g(b^2)_{i+\frac{1}{2},j}^{-,\mu}
		\end{array}
		\right).
	\end{align*}
	Using the condition $(h+b)_{i,j} = (h+b)_{i\pm\frac{1}{2},j}^{\mp,\mu} = C$ yields 
	\begin{align*}
		L\bm{F}_1= 
		\left(
		\begin{array}{cc}
			0 \\
			\frac{g}{2}
			C^2\\
			0
		\end{array}
		\right), \quad 
		L\bm{F}_2= 
		\left(
		\begin{array}{cc}
			0 \\
			0\\
			\frac{g}{2}
			C^2
		\end{array}
		\right).
	\end{align*}
	Therefore, we obtain 
	\begin{align*}
		\left(
		\widetilde{\bm{F}}_{n_{1}}
		\right)
		_{i+\frac{1}{2},j}^{L,\mu} - \sum\limits_{m=1}^{2}(\overline{{s}}_m)_{i,j}
		\left(
		\left(
		\widetilde{\bm{r}}_m
		\right)_{n_1}
		\right)
		_{i+\frac{1}{2},j}^{\mu}
		=
		\langle
		\left(
		\bm{n}_1
		\right)
		_{i+\frac{1}{2},j}^{\mu}
		,
		\bm{C}
		\rangle.
	\end{align*}
	
	The numerical fluxes $\left(\widetilde{\bm{F}}_{n_1}\right)_{i-\frac{1}{2},j}^{R,\mu}$ and $\left(
	\bm{
		\widetilde{\mathcal{R}}
	}_{m1}
	\right)
	_{i-\frac{1}{2},j}  $ are defined as 
	\begin{equation*}
		\begin{aligned}
			&
			\left(\widetilde{\bm{F}}_{n_1}\right)_{i-\frac{1}{2},j}^{R,\mu}
			= \langle
			\left(
			\bm{n}_1
			\right)
			_{i-\frac{1}{2},j}^{\mu}
			,
			\widetilde{\bm{F}}
			_{i-\frac{1}{2},j}^{R,\mu}
			\rangle
			-
			\frac{\beta}{2}
			(
			\bm{U}_{i-\frac{1}{2},j}^{+,\mu,*}
			-
			\bm{U}_{i-\frac{1}{2},j}^{-,\mu,*}
			),
			\\
			&
			\left(
			\bm{
				\widetilde{\mathcal{R}}
			}_{m1}
			\right)
			_{i-\frac{1}{2},j} 
			=   \sum_{\mu=1}^{Q}
			\omega_{\mu}
			\left[
			\left(
			L_{1}
			\right)
			_{i-\frac{1}{2},j}^{\mu}
			\left(
			\left(
			\widetilde{\bm{r}}_m
			\right)_{n_1}
			\right)
			_{i-\frac{1}{2},j}^{\mu}
			\right],\\
			&
			\left(
			\left(
			\widetilde{\bm{r}}_m
			\right)_{n_1}
			\right)
			_{i-\frac{1}{2},j}^{\mu} = 
			\langle
			\left(
			\bm{n}_1
			\right)
			_{i-\frac{1}{2},j}^{\mu}
			,\left(
			{\bm{r}}_m
			\right)_{i-\frac{1}{2},j}^{+,\mu}
			\rangle
		\end{aligned}
	\end{equation*}
	with 
	\begin{equation*}
		\left(\widetilde{\bm{F}}\right)
		_{i-\frac{1}{2},j}^{R,\mu}
		= \left(
		\left(
		\bm{\widetilde{F}}_1
		\right)
		_{i-\frac{1}{2},j}^{R,\mu}
		,
		\left(
		\bm{\widetilde{F}}_2
		\right)
		_{i-\frac{1}{2},j}^{R,\mu}
		\right)^{\top},
	\end{equation*}
	where
	\begin{equation*}
		\left(
		\widetilde{\bm{F}}_1
		\right)
		_{i-\frac{1}{2},j}^{R,\mu}    
		= \dfrac{1}{2}
		\left(
		{\bm{F}}_1
		(
		\bm{U}_{i-\frac{1}{2},j}^{-,\mu,*}
		)
		+
		{\bm{F}}_1
		(
		\bm{U}_{i-\frac{1}{2},j}^{+,\mu,*}
		)
		\right)
		+ \left(\begin{array}{cc}
			0  \\
			\frac{g}{2}\left(h_{i-\frac{1}{2},j}^{+,\mu}\right)^2-\frac{g}{2}
			\left(h_{i-\frac{1}{2},j}
			^{+,\mu,*}
			\right)^2 \\
			0
		\end{array}
		\right),
	\end{equation*}
	\begin{equation*}
		\left(
		\widetilde{\bm{F}}_2
		\right)
		_{i-\frac{1}{2},j}^{R,\mu}    
		= \dfrac{1}{2}
		\left(
		{\bm{F}}_2
		(
		\bm{U}_{i-\frac{1}{2},j}^{-,\mu,*}
		)
		+
		{\bm{F}}_2
		(
		\bm{U}_{i-\frac{1}{2},j}^{+,\mu,*}
		)
		\right)
		+ \left(\begin{array}{cc}
			0  \\
			0  \\
			\frac{g}{2}\left(h_{i-\frac{1}{2},j}^{+,\mu}\right)^2-\frac{g}{2}
			\left(h_{i-\frac{1}{2},j}
			^{+,\mu,*}
			\right)^2  
		\end{array}
		\right).
	\end{equation*}
	Similarly, 
	the numerical fluxes $\left(\widetilde{\bm{F}}_{n_2}\right)_{i,j+\frac{1}{2}}^{L,\mu}$, 
	$\left(\widetilde{\bm{F}}_{n_2}\right)_{i,j-\frac{1}{2}}^{R,\mu}$ and $	\left(
	\bm{
		\widetilde{\mathcal{R}}
	}_{m1}
	\right)
	_{i,j\pm\frac{1}{2}} $ are defined as 
	\begin{equation*}
		\begin{aligned}
			&
			\left(\widetilde{\bm{F}}_{n_2}\right)_{i,j+\frac{1}{2}}^{L,\mu}
			= \langle
			\left(
			\bm{n}_2
			\right)
			_{i,j+\frac{1}{2}}^{\mu}
			,
			\widetilde{\bm{F}}
			_{i,j+\frac{1}{2}}^{L,\mu}
			\rangle
			-
			\frac{\beta}{2}
			(
			\bm{U}_{i,j+\frac{1}{2}}^{+,\mu,*}
			-
			\bm{U}_{i,j+\frac{1}{2}}^{-,\mu,*}
			),
			\\
			&
			\left(\widetilde{\bm{F}}_{n_2}\right)_{i,j-\frac{1}{2}}^{R,\mu}
			= \langle
			\left(
			\bm{n}_2
			\right)
			_{i,j-\frac{1}{2}}^{\mu}
			,
			\widetilde{\bm{F}}
			_{i,j-\frac{1}{2}}^{R,\mu}
			\rangle
			-
			\frac{\beta}{2}
			(
			\bm{U}_{i,j-\frac{1}{2}}^{+,\mu,*}
			-
			\bm{U}_{i,j-\frac{1}{2}}^{-,\mu,*}
			),
			\\
			&
			\left(
			\bm{
				\widetilde{\mathcal{R}}
			}_{m2}
			\right)
			_{i,j\pm\frac{1}{2}} 
			=   \sum\limits_{\mu=1}^{Q}
			\omega_{\mu}
			\left[
			\left(
			L_{2}
			\right)
			_{i,j\pm\frac{1}{2}}^{\mu}
			\left(
			\left(
			\widetilde{\bm{r}}_m
			\right)_{n_2}
			\right)
			_{i,j\pm\frac{1}{2}}^{\mu}
			\right],\\
			&
			\left(
			\left(
			\widetilde{\bm{r}}_m
			\right)_{n_2}
			\right)
			_{i,j\pm\frac{1}{2}}^{\mu} = 
			\langle
			\left(
			\bm{n}_2
			\right)
			_{i,j\pm\frac{1}{2}}^{\mu}
			,\left(
			{\bm{r}}_m
			\right)_{i,j\pm\frac{1}{2}}^{+,\mu}
			\rangle
		\end{aligned}
	\end{equation*}
	with 
	\begin{align*}
		\left(\widetilde{\bm{F}}\right)
		_{i,j+\frac{1}{2}}^{L,\mu}
		= \left(
		\left(
		\bm{\widetilde{F}}_1
		\right)
		_{i,j+\frac{1}{2}}^{L,\mu}
		,
		\left(
		\bm{\widetilde{F}}_2
		\right)
		_{i,j+\frac{1}{2}}^{L,\mu}
		\right)^{\top}, \qquad 
		\left(\widetilde{\bm{F}}\right)
		_{i,j-\frac{1}{2}}^{R,\mu}
		= \left(
		\left(
		\bm{\widetilde{F}}_1
		\right)
		_{i,j-\frac{1}{2}}^{R,\mu}
		,
		\left(
		\bm{\widetilde{F}}_2
		\right)
		_{i,j-\frac{1}{2}}^{R,\mu}
		\right)^{\top},
	\end{align*}
	where
	\begin{align*}
		\left(
		\widetilde{\bm{F}}_1
		\right)
		_{i,j+\frac{1}{2}}^{L,\mu}    
		= \dfrac{1}{2}
		\left(
		{\bm{F}}_1
		(
		\bm{U}_{i,j+\frac{1}{2}}^{-,\mu,*}
		)
		+
		{\bm{F}}_1
		(
		\bm{U}_{i,j+\frac{1}{2}}^{+,\mu,*}
		)
		\right)
		+ \left(\begin{array}{cc}
			0  \\
			\frac{g}{2}\left(h_{i,j+\frac{1}{2}}^{-,\mu}\right)^2-\frac{g}{2}
			\left(h_{i,j+\frac{1}{2}}
			^{-,\mu,*}
			\right)^2 \\
			0
		\end{array}
		\right),
		\\
		\left(
		\widetilde{\bm{F}}_2
		\right)
		_{i,j+\frac{1}{2}}^{L,\mu}    
		= \dfrac{1}{2}
		\left(
		{\bm{F}}_2
		(
		\bm{U}_{i,j+\frac{1}{2}}^{-,\mu,*}
		)
		+
		{\bm{F}}_2
		(
		\bm{U}_{i,j+\frac{1}{2}}^{+,\mu,*}
		)
		\right)
		+ \left(\begin{array}{cc}
			0  \\
			0  \\
			\frac{g}{2}\left(h_{i,j+\frac{1}{2}}^{-,\mu}\right)^2-\frac{g}{2}
			\left(h_{i,j+\frac{1}{2}}
			^{-,\mu,*}
			\right)^2  
		\end{array}
		\right),
		\\
		\left(
		\widetilde{\bm{F}}_1
		\right)
		_{i,j-\frac{1}{2}}^{R,\mu}    
		= \dfrac{1}{2}
		\left(
		{\bm{F}}_1
		(
		\bm{U}_{i,j-\frac{1}{2}}^{-,\mu,*}
		)
		+
		{\bm{F}}_1
		(
		\bm{U}_{i,j-\frac{1}{2}}^{+,\mu,*}
		)
		\right)
		+ \left(\begin{array}{cc}
			0  \\
			\frac{g}{2}\left(h_{i-\frac{1}{2},j}^{+,\mu}\right)^2-\frac{g}{2}
			\left(h_{i,j-\frac{1}{2}}
			^{+,\mu,*}
			\right)^2 \\
			0
		\end{array}
		\right),\\
		\left(
		\widetilde{\bm{F}}_2
		\right)
		_{i,j-\frac{1}{2}}^{R,\mu}    
		= \dfrac{1}{2}
		\left(
		{\bm{F}}_1
		(
		\bm{U}_{i,j-\frac{1}{2}}^{-,\mu,*}
		)
		+
		{\bm{F}}_1
		(
		\bm{U}_{i,j-\frac{1}{2}}^{+,\mu,*}
		)
		\right)
		+ \left(\begin{array}{cc}
			0  \\
			0 \\
			\frac{g}{2}\left(h_{i-\frac{1}{2},j}^{+,\mu}\right)^2-\frac{g}{2}
			\left(h_{i,j-\frac{1}{2}}
			^{+,\mu,*}
			\right)^2
		\end{array}
		\right).
	\end{align*}
	Similar arguments show that  
	these numerical fluxes  also satisfy the condition \eqref{eq:2D_condition}.

	\section*{Acknowledgment}
	The works of Z.~Zhang and H.Z.~Tang were partially supported by the National Key R\&D Program of
	China (Project Numbers 2020YFA0712000 \& 2020YFE0204200), and the National Natural Science Foundation of China (Nos.~12171227, 12326314, \& 12288101). The work of K.~Wu was partially supported by Shenzhen Science and Technology Program
	(No. RCJC20221008092757098) and National Natural Science Foundation of China (No.~12171227).

	
	
	\bibliographystyle{siamplain}
	\bibliography{Ref}
	
\end{document}